\theoremstyle{plain}
\newtheorem{thm}{Theorem}[section]
\newtheorem{cor}[thm]{Corollary}
\newtheorem{lem}[thm]{Lemma}
\newtheorem{prop}[thm]{Proposition}
\theoremstyle{definition}
\newtheorem{defn}[thm]{Definition}
\theoremstyle{remark}
\newtheorem{obs}[thm]{Remark}
\numberwithin{equation}{section}
\newcommand{\average}{{\mathchoice {\kern1ex\vcenter{\hrule height.4pt
width 6pt depth0pt} \kern-9.7pt} {\kern1ex\vcenter{\hrule
height.4pt width 4.3pt depth0pt} \kern-7pt} {} {} }}
\newcommand{\R}{\mathbb R}
\newcommand{\N}{\mathbb N}
\renewcommand{\L}{\mathcal L}
\newcommand{\p}{\partial}
\newcommand{\comment}[1]{}
\newcommand{\Mplus}{\mathcal M^+}
\newcommand{\Mminus}{\mathcal M^-}
\newcommand{\loc}{\mathrm{loc}}
\begin{document}

\title[Parabolic boundary Harnack inequalities with right-hand side]{Parabolic boundary Harnack inequalities\\ with right-hand side}
\author{Clara Torres-Latorre}
\address{Universitat de Barcelona, Departament de Matem\`atiques i Inform\`atica, Gran Via de les Corts Catalanes 585, 08007 Barcelona, Spain.}
\email{\tt claratorreslatorre@ub.edu}
\begin{abstract}
We prove the parabolic boundary Harnack inequality in parabolic flat Lipschitz domains by blow-up techniques, allowing for the first time a non-zero right-hand side. Our method allows us to treat solutions to equations driven by non-divergence form operators with bounded measurable coefficients, and a right-hand side $f \in L^q$ for $q > n+2$. In the case of the heat equation, we also show the optimal $C^{1-\varepsilon}$ regularity of the quotient.

As a corollary, we obtain a new way to prove that flat Lipschitz free boundaries are $C^{1,\alpha}$ in the parabolic obstacle problem and in the parabolic Signorini problem.
\end{abstract}

\thanks{The author has received funding from the European Research Council (ERC) under the Grant Agreement No 801867, and from AEI project PID2021-125021NAI00 (Spain).}
\subjclass{35K10, 35R35}
\keywords{Parabolic equations, boundary Harnack inequality, free boundary}
\maketitle

\section{Introduction}\label{sect:intro}
The well known elliptic boundary Harnack inequality asserts that the rate at which positive harmonic functions approach zero Dirichlet boundary conditions depends only on the geometry of the domain. Quantitatively, if $u$ and $v$ are positive harmonic functions in $\Omega$ that vanish on $\p\Omega$, then the quotient $u/v$ is bounded near the boundary. This is also known as the Carleson estimate.

An important corollary of the boundary Harnack is that $u/v$ is not only bounded, but also Hölder continuous. In the elliptic case, the $C^{0,\alpha}$ regularity of the quotient follows from the Carleson estimate by a standard iteration technique (see \cite{CS05}). However, in the parabolic setting, the question is much more delicate due to the time delay in the interior Harnack inequality. The first proof of the Hölder regularity of the quotient for solutions to the heat equation appeared in \cite{ACS96}, two decades after the first proof of the Carleson estimate for caloric functions \cite{Kem72b}.

When the domains are $C^{1,\mathrm{Dini}}$ or smoother, a combination of the Hopf lemma and the Lipschitz regularity of solutions implies that all solutions to elliptic and parabolic equations decay linearly as they approach zero Dirichlet boundary conditions. However, in less regular domains where the Hopf lemma and Lipschitz continuity do not hold, the fact that the quotient of solutions is bounded is far from trivial and needs to be studied separately.

In this work, we provide a new approach to boundary Harnack inequalities with right-hand side, extending the previous results of Allen and Shahgholian \cite{AS19}, and Ros-Oton and the author \cite{RT21} to the parabolic setting. Our proof relies on comparison and scaling arguments, as in \cite{RT21}, as well as a blow-up argument inspired by \cite{RS17a}. We will also consider the applications of our result to free boundary problems, such as the parabolic obstacle problem and the parabolic Signorini problem. Using the boundary Harnack, we can prove $C^{1,\alpha}$ regularity of free boundaries when we know they are flat Lipschitz.

\begin{obs}
    In flat Lipschitz and $C^1$ domains, the boundary Harnack and the boundary regularity of solutions do not follow from each other. Nevertheless, they are intimately related, and both can be proved by a contradiction-compactness argument where the Hölder exponent is determined by a Liouville theorem in the half-space.
    
    In this regard, we will prove that, if $u$ and $v$ are positive harmonic functions with zero Dirichlet boundary conditions, $u/v \in C^{1-\varepsilon}$ if the boundary is sufficiently flat, matching the known regularity up to the boundary of $u$ and $v$, and the fact that the only harmonic function with sublinear growth in a half-space and zero Dirichlet boundary conditions is zero.
\end{obs}

\subsection{Main results}

In the following, $\mathcal{L}$ will denote a non-divergence form elliptic operator with bounded measurable coefficients,
\begin{equation}\label{eq:non-divergence_operator}
\mathcal{L}u = \sum\limits_{i,j = 1}^na_{ij}(x)\p^2_{ij}u, \quad \text{with} \quad \lambda I \leq A(x) \leq \Lambda I,
\end{equation}
with $0 < \lambda \leq \Lambda$.

We denote by $\alpha_0(\lambda,\Lambda) \in (0,1)$ a universal constant (only dependent on the dimension and the ellipticity constants), which is defined as the minimum of the following:
\begin{itemize}
\item The $C^{1,\alpha_0}$ boundary regularity estimate in \cite[Theorem 2.1]{Wan92b}.
    \item The interior $C^{0,\alpha_0}$ regularity estimate in \cite[Lemma 5.1]{CKS00}.
\end{itemize}
We will define $\alpha_0(1,1) := 1$ instead if the operator is the Laplacian.

Our main result is the following boundary Harnack inequality, which extends the main result in \cite{ACS96} to general non-divergence form operators and equations with right-hand side. Here, $C^{0,\gamma}_p$ is the parabolic Hölder space defined in Section \ref{sect:previ}.

\begin{thm}\label{thm:main}
Let $q > n+2$, $0 < \gamma < \min\{\alpha_0,1-\frac{n+2}{q}\}$, $m \in (0,1]$, and let $\L$ be a non-divergence form operator as in \eqref{eq:non-divergence_operator}. There exists $c_0 \in (0,1)$, only depending on $q$, $\gamma$, the dimension and the ellipticity constants, such that the following holds.

Let $\Omega \subset \R^{n+1}$ be a parabolic Lipschitz domain in $Q_1$ in the sense of Definition \ref{defn:lipschitz_domain} with Lipschitz constant $L \leq c_0$. Let $u$ and $v$ be solutions to
$$\left\{\begin{array}{rclll}
u_t - \L u & = & f_1 & \text{in} & \Omega\\ 
u & = & 0 & \text{on} & \p_\Gamma\Omega
\end{array}\right.\quad\text{and}\quad\left\{\begin{array}{rclll}
v_t - \L v & = & f_2 & \text{in} & \Omega\\ 
v & = & 0 & \text{on} & \p_\Gamma\Omega,
\end{array}\right.$$
and assume that $\|u\|_{L^\infty(Q_1)} \leq 1$, $\|v\|_{L^\infty(Q_1)} = 1$, $v > 0$, $v\left(\frac{e_n}{2},-\frac{3}{4}\right) \geq m$, ${\|f_1\|_{L^q(Q_1)} \leq 1}$ and $\|f_2\|_{L^q(Q_1)} \leq c_0m$.

Then,
$$\left\|\frac{u}{v}\right\|_{C^{0,\gamma}_p(\Omega\cap Q_{1/2})} \leq C,$$ where $C$ depends only on $q$, $m$, $\gamma$, the dimension and the ellipticity constants.
\end{thm}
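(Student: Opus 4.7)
The plan is to prove Theorem \ref{thm:main} by combining a boundedness-of-quotient argument with a Campanato-type iteration, where the key one-step improvement is obtained by a contradiction-compactness blow-up procedure that falls back on a Liouville theorem in the half-space.

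First I would establish the boundedness of $u/v$ on $\Omega \cap Q_{1/2}$. Since $q > n+2$, classical non-divergence form theory (\cite[Theorem 2.1]{Wan92b}) gives that both $u$ and $v$ are $C^{1,\alpha_0}_p$ up to the flat Lipschitz boundary. The lower bound $v(e_n/2,-3/4) \geq m$ together with an interior parabolic Harnack chain, carefully taking into account the time-delay and the small right-hand side $\|f_2\|_{L^q}\leq c_0 m$ (so that $v$ stays strictly positive), propagates to an estimate of the form $v(x,t) \geq c\, d_p((x,t), \p_\Gamma\Omega)$ on a sub-domain. Combined with $|u(x,t)| \leq C d_p((x,t),\p_\Gamma\Omega)$ coming from the $C^{1,\alpha_0}_p$ boundary regularity of $u$, this yields $|u/v| \leq K$ for a universal constant $K$.

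Next I would reduce the Hölder continuity of the quotient to a Campanato-type decay. Namely, I aim to show that for every $z_0 \in \overline{\Omega}\cap Q_{1/2}$ there exists a constant $c(z_0)\in\R$ such that
$$\sup_{\Omega\cap Q_r(z_0)} |u - c(z_0) v| \leq C\, r^\gamma, \qquad \forall\, r\in(0,r_0].$$
Since $v(z)\sim d_p(z,\p_\Gamma\Omega)$ and $u/v$ is bounded, an elementary argument then yields $u/v \in C^{0,\gamma}_p$. The estimate above is obtained by iterating a one-step improvement: there exist $\theta\in(0,1)$ and $\rho\in(0,1)$ such that, under suitable normalization, one can find $c_*\in\R$ with
$$\sup_{\Omega\cap Q_{\rho}} |u - c_* v| \;\leq\; \rho^\gamma \sup_{\Omega\cap Q_{1}} |u - c_{\mathrm{prev}}\,v|,$$
up to harmless additive contributions from $\|f_1\|_{L^q}$ and $\|f_2\|_{L^q}$ that decay under rescaling.

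The heart of the matter is proving this one-step improvement by contradiction and compactness. Assuming it fails, there is a sequence $(u_k,v_k,\Omega_k,\L_k,f_{1,k},f_{2,k})$ of counterexamples. After normalizing $u_k - c_{k} v_k$ by the sup-norm seminorm that controls the decay, I obtain blown-up solutions $\tilde u_k$, $\tilde v_k$ with uniform $C^{1,\alpha_0}_p$ bounds up to the boundary; Arzel\`a--Ascoli then yields a subsequential limit $(\tilde u_\infty, \tilde v_\infty)$ on a limit domain. The smallness $L\leq c_0$ of the Lipschitz constant and the scaling invariance force the limit domain to be the half-space $\{x_n>0\}\times\R$, and the scaling of the right-hand sides, governed by the exponent $2-\gamma-(n+2)/q>0$ (which is positive precisely because $\gamma<1-(n+2)/q$), makes the limit right-hand sides vanish. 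Thus $\tilde u_\infty,\tilde v_\infty$ solve a homogeneous constant-coefficient parabolic equation in the half-space, vanishing on $\{x_n=0\}$, with $\tilde v_\infty$ strictly positive (from the propagated lower bound) and $\tilde u_\infty$ having sublinear growth $|\tilde u_\infty(x,t)|\lesssim (|x|+|t|^{1/2})^\gamma$. A Liouville theorem then forces both $\tilde u_\infty$ and $\tilde v_\infty$ to be multiples of $x_n$; subtracting the correct multiple of $\tilde v_\infty$ from $\tilde u_\infty$ contradicts the normalization.

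The main obstacle I expect is the bookkeeping of the two rescalings happening simultaneously — one for the domain flattening and one for the Campanato-type iteration — combined with the need to keep the rescaled right-hand sides under control in $L^q$. The hypothesis $\gamma < 1-\tfrac{n+2}{q}$ enters here exactly so that the Morrey-type scaling closes; the hypothesis $\gamma < \alpha_0$ is what allows the $C^{1,\alpha_0}_p$ up-to-boundary estimate to provide equicontinuity across scales; and the hypothesis $\|f_2\|_{L^q}\leq c_0 m$ is what keeps $v$ positive throughout the blow-up so that the limit $\tilde v_\infty$ is nontrivial and the quotient $u/v$ is well defined. Making the Liouville theorem quantitative enough to conclude at the sharp exponent (while still allowing a non-constant-coefficient limit $\L_\infty$) is the technically heaviest component; for the Laplacian the limit is the heat equation, and one can push $\gamma$ up to $1-\varepsilon$, recovering the optimal $C^{1-\varepsilon}_p$ statement announced in the introduction.
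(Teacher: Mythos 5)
Your overall architecture (contradiction--compactness blow-up, Liouville theorem in the half-space, scaling exponent $1-\tfrac{n+2}{q}$ controlling the right-hand side) is the same as the paper's, but two of your steps fail as stated, and the failure is not cosmetic.

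First, the boundedness step rests on a false regularity claim. In a parabolic Lipschitz domain -- even with arbitrarily small Lipschitz constant -- solutions vanishing on $\p_\Gamma\Omega$ are \emph{not} $C^{1,\alpha_0}_p$ up to the boundary, and neither the upper bound $|u|\leq C\,d$ nor the lower bound $v\geq c\,d$ holds: there is no Hopf lemma below $C^{1,\mathrm{Dini}}$ regularity of the boundary. What one can prove (this is Proposition \ref{prop:growth} of the paper, via the barrier constructions of Section \ref{sect:bdry_growth_reg}) is only $|u|\leq Cd^{1-\varepsilon}$ and $v\geq cmd^{1+\varepsilon}$, which gives the useless a priori bound $|u/v|\leq Cd^{-2\varepsilon}$. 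The boundedness of $u/v$ is therefore not a preliminary step; it is essentially equivalent to the theorem and must come out of the expansion itself.

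Second, your Campanato exponent is off by one, and this breaks the iteration. If $\sup_{Q_r(z_0)}|u-c(z_0)v|\leq Cr^\gamma$ with $\gamma<1$ and $v\sim d\lesssim r$ on $Q_r$, then at points with $d\sim r$ you only get $|u/v-c(z_0)|\lesssim r^{\gamma-1}\to\infty$; you cannot divide by $v$. The decay needed is $\sup_{Q_r}|u-K_rv|\leq Cr^{1+\alpha}$ with $\alpha>\gamma$ (the paper's Proposition \ref{prop:expansion}), and correspondingly the blow-up must be normalized at order $1+\alpha$, which is exactly where the hypothesis $\gamma<1-\tfrac{n+2}{q}$ enters ($1+\alpha<2-\tfrac{n+2}{q}$, not your $2-\gamma-\tfrac{n+2}{q}>0$). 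A related gap: to run the blow-up (the analogue of Lemma \ref{lem:aux2}) you must subtract multiples of a function with a quantified doubling property $\sup_{Q_{r_1}}\varphi/\sup_{Q_{r_2}}\varphi\gtrsim(r_1/r_2)^{1+\varepsilon}$; $v$ itself does not come with such a property a priori (establishing it for $v$ with right-hand side is again circular), which is why the paper first constructs the special near-linear solution $\varphi$ of the homogeneous equation in Section \ref{sect:special_soln} and expands both $u$ and $v$ against it. Finally, with the correct superlinear normalization the blow-up limit is $(x_n)_+$ rather than a sublinear function, and the contradiction is reached through the orthogonality condition built into the choice of $K_r$, a mechanism your sketch does not supply.
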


\begin{obs}\label{obs:main_alpha}
We will actually prove Theorem \ref{thm:main} under the more general assumption that $f_i = g_i + h_i$ with
$$\|d^{1-\alpha}g_1\|_{L^\infty(Q_1)}+\|d^{-1/(n+1)-\alpha}h_1\|_{L^{n+1}(Q_1)} \leq 1$$
and
$$\|d^{1-\alpha}g_2\|_{L^\infty(Q_1)}+\|d^{-1/(n+1)-\alpha}h_2\|_{L^{n+1}(Q_1)} \leq c_0m,$$
where $d(x',x_n,t) = x_n - \Gamma(x',t)$, and $\alpha \in (\gamma,\min\{\alpha_0,1-\frac{n+2}{q}\})$  (see Proposition \ref{prop:dist_interpolation}). In this case, $C$ depends also on $\alpha$.
\end{obs}

\begin{obs}
The result is sharp in the following sense:
    \begin{itemize}
        \item If the domain is $C^1$, $\L = \Delta$ and $q = \infty$, we may take $\gamma \rightarrow 1$.
        \item If the Lipschitz constant is not small, the result fails, even for $\L = \Delta$ and $q = \infty$.
        \item If the norm of the right-hand side is big, the result fails.
        \item If $q = n+2$, the result fails for any $c_0 > 0$, even for $\L = \Delta$.
    \end{itemize}
    Counterexamples can be constructed by a straightforward adaptation of \cite[Section 6]{RT21} to the parabolic setting. See also \cite{AS19}.
\end{obs}

Assuming that both solutions are positive and the right-hand sides of the equations are small, we can use symmetry to deduce the Carleson estimate.

\begin{cor}\label{cor:main_carleson}
Let $q > n+2$, $m \in (0,1]$, and let $\L$ be a non-divergence form operator as in \eqref{eq:non-divergence_operator}. There exists $c_0 \in (0,1)$, only depending on $q$, the dimension and the ellipticity constants, such that the following holds.

Let $\Omega \subset \R^{n+1}$ be a parabolic Lipschitz domain in $Q_1$ in the sense of Definition \ref{defn:lipschitz_domain} with Lipschitz constant $L \leq c_0$. Let $u$ and $v$ be positive solutions to
$$\left\{\begin{array}{rclll}
u_t - \L u & = & f_1 & \text{in} & \Omega\\ 
u & = & 0 & \text{on} & \p_\Gamma\Omega
\end{array}\right.\quad\text{and}\quad\left\{\begin{array}{rclll}
v_t - \L v & = & f_2 & \text{in} & \Omega\\ 
v & = & 0 & \text{on} & \p_\Gamma\Omega,
\end{array}\right.$$
and assume that $\|u\|_{L^\infty(Q_1)} = \|v\|_{L^\infty(Q_1)} = 1$, $v\left(\frac{e_n}{2},-\frac{3}{4}\right) \geq m$, $u\left(\frac{e_n}{2},-\frac{3}{4}\right) \geq m$ and $\|f_i\|_{L^q(Q_1)} \leq c_0m$.

Then,
$$\frac{1}{C} \leq \frac{u}{v} \leq C \quad \text{in} \ \Omega\cap Q_{1/2},$$
where $C$ depends only on $q$, $m$, the dimension and the ellipticity constants.
\end{cor}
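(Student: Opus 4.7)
\medskip

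\noindent\textbf{Proof plan for Corollary \ref{cor:main_carleson}.} The plan is to deduce the two-sided bound from two applications of Theorem \ref{thm:main}, exploiting the fully symmetric nature of the hypotheses in $u$ and $v$.

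First I would fix any admissible Hölder exponent for Theorem \ref{thm:main}, for instance $\gamma := \tfrac{1}{2}\min\{\alpha_0,\,1-\tfrac{n+2}{q}\}$, and let $c_0 \in (0,1)$ be the constant furnished by that theorem for this particular choice of $\gamma$ (this $c_0$ then depends only on $q$, the dimension and the ellipticity constants, which matches the dependencies declared in the corollary). The first application uses the assignment $(u,v,f_1,f_2)\mapsto(u,v,f_1,f_2)$. The hypotheses of Theorem \ref{thm:main} are all satisfied: $\|u\|_{L^\infty(Q_1)}=1\le 1$, $\|v\|_{L^\infty(Q_1)}=1$, $v>0$, $v(e_n/2,-3/4)\ge m$, $\|f_1\|_{L^q(Q_1)}\le c_0m\le 1$, and $\|f_2\|_{L^q(Q_1)}\le c_0m$. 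The conclusion gives $\|u/v\|_{C^{0,\gamma}_p(\Omega\cap Q_{1/2})}\le C$, and in particular the upper bound $u/v \le C$ on $\Omega\cap Q_{1/2}$.

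For the lower bound I would simply swap the roles of $u$ and $v$ (and of $f_1$ and $f_2$), applying Theorem \ref{thm:main} to the quadruple $(v,u,f_2,f_1)$. The symmetry of the corollary's hypotheses — both functions satisfy $\|\cdot\|_{L^\infty(Q_1)}=1$, both are strictly positive, both are bounded below by $m$ at the interior reference point $(e_n/2,-3/4)$, and both right-hand sides satisfy $\|f_i\|_{L^q(Q_1)}\le c_0m$ — ensures the hypotheses of Theorem \ref{thm:main} hold verbatim with the roles exchanged. The theorem then gives $v/u \le C$ on $\Omega\cap Q_{1/2}$, that is, $u/v \ge 1/C$. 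Combining the two bounds yields $1/C \le u/v \le C$ on $\Omega\cap Q_{1/2}$, as desired.

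There is essentially no obstacle here beyond correct bookkeeping: all the analytic work is already contained in Theorem \ref{thm:main}, and the corollary reduces to verifying that the hypotheses of the theorem are compatible with the exchange $u\leftrightarrow v$. The only minor point worth noting is that one must choose the constant $c_0$ in the corollary small enough to simultaneously serve as the constant in Theorem \ref{thm:main} for both applications; since $c_0$ in the theorem depends on $\gamma$ but not on which function is labelled $u$, fixing $\gamma$ once and for all and using the same $c_0$ for both applications suffices.
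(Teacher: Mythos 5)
Your proposal is correct and is exactly the argument the paper intends: the text preceding the corollary states that the Carleson estimate follows from Theorem \ref{thm:main} "by symmetry," i.e.\ by applying the theorem once to the pair $(u,v,f_1,f_2)$ and once with the roles swapped, precisely as you do. The bookkeeping about fixing one admissible $\gamma$ and using the resulting $c_0$ for both applications is also handled correctly.
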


We will also deal with solutions to the heat equation in slit domains, that appear naturally when studying the parabolic Signorini problem. Our result gives an alternative proof to the boundary Harnack in \cite{PS14} when the Lipschitz constant of the domain is small, relaxing the condition on the right-hand side from $L^\infty$ to $L^q$, and providing the optimal Hölder regularity of the quotient.

\begin{thm}\label{thm:slit_main}
Let $q > n+2$, $0 < \gamma < \min\{1,\frac{3}{2}-\frac{n+3}{q}\}$, and $m \in (0,1]$. There exists $c_0 \in (0,\frac{1}{8})$, only depending on $q$, $\gamma$ and the dimension such that the following holds.

Let $\Omega \subset \R^{n+2}$ be a parabolic slit domain in $Q_1$ in the sense of Definition \ref{defn:slit_domain} with Lipschitz constant $L \leq c_0$. Let $u$ and $v$ be solutions to
$$\left\{\begin{array}{rclll}
u_t - \Delta u & = & f_1 & \text{in} & \Omega\\ 
u & = & 0 & \text{on} & \p_\Gamma\Omega
\end{array}\right.\quad\text{and}\quad\left\{\begin{array}{rclll}
v_t - \Delta v & = & f_2 & \text{in} & \Omega\\ 
v & = & 0 & \text{on} & \p_\Gamma\Omega,
\end{array}\right.$$
and assume that $u$ and $v$ are even in $x_{n+1}$, $\|u\|_{L^\infty(Q_1)} \leq 1$, $\|v\|_{L^\infty(Q_1)} = 1$, $v > 0$, $v\left(\frac{e_n}{2},-\frac{3}{4}\right) \geq m$, and that $\|f_1\|_{L^q(Q_1)} \leq 1$ and $\|f_2\|_{L^q(Q_1)} \leq c_0m$.

Then,
$$\left\|\frac{u}{v}\right\|_{C^{0,\gamma}_p(\Omega\cap Q_{1/2})} \leq C,$$ where $C$ depends only on $m$, $q$, $\gamma$ and the dimension.
\end{thm}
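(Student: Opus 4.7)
The plan is to mirror the contradiction-compactness scheme that proves Theorem \ref{thm:main}, adapted to the slit configuration. Since $u$ and $v$ are even in $x_{n+1}$, the problem can be viewed as the heat equation on $\R^{n+2}$ off the codimension-two slit $\{x_{n+1}=0\}\cap\{x_n\leq\Gamma(x',t)\}$, with zero Dirichlet data on the slit. I would argue by contradiction: suppose there is no uniform $C^{0,\gamma}_p$-bound, and extract sequences $(u_k,v_k,\Omega_k,f_{1,k},f_{2,k})$ whose $C^{0,\gamma}_p$-seminorms of $u_k/v_k$ diverge. A Campanato-type excess
\[
\Theta_k(r) := \sup_{r'\geq r}\,(r')^{-\gamma}\inf_{c\in\R}\,\big\|u_k - c\, v_k\big\|_{L^\infty(\Omega_k\cap Q_{r'}(z_k))}
\]
localizes the failure to points $z_k\in\overline{\Omega_k\cap Q_{1/2}}$ and scales $r_k\downarrow 0$ at which the excess is essentially attained.

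The second step is the blow-up. Letting $c_k$ be the optimal constant in the excess at scale $r_k$, set
\[
w_k(x,t) := \frac{u_k(z_k+(r_kx,r_k^2t)) - c_k\,v_k(z_k+(r_kx,r_k^2t))}{r_k^\gamma\,\Theta_k(r_k)}.
\]
Interior parabolic estimates together with boundary Hölder estimates for the heat equation on slit domains (available because $\gamma<3/2-(n+3)/q$ stays strictly below the Signorini exponent) give local uniform convergence along a subsequence to some $w_\infty$. The right-hand side is handled as in Theorem \ref{thm:main} via Proposition \ref{prop:dist_interpolation}: the scaling factor $r_k^{2-(n+3)/q-\gamma} = r_k^{(3/2-(n+3)/q-\gamma)+1/2}\to 0$ kills the inhomogeneity in the limit. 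The smallness $L\leq c_0$ forces the blow-ups of $\Omega_k$ to converge to either the flat half-slit $\{x_{n+1}=0\}\cap\{x_n\leq 0\}$ (when $z_k$ approaches the tip of the slit) or the hyperplane $\{x_{n+1}=0\}$ (when $z_k$ stays bounded away from the tip).

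The limit $w_\infty$ is a caloric function on $\R^{n+2}$ off the flat slit, even in $x_{n+1}$, vanishing on the slit, with global growth $|w_\infty(x,t)|\leq C(|x|+\sqrt{|t|})^\gamma$, and nondegenerate because the Campanato minimality of $c_k$ pins $\|w_k\|_{L^\infty(Q_1)}$ to stay of order one. The final input is a Liouville theorem: any such caloric function with growth strictly less than $3/2$ vanishes identically. This is the parabolic counterpart of the fact that the smallest nontrivial homogeneity of a global solution at the tip of the Signorini slit is $3/2$; it can be obtained from a time-frozen Fourier expansion in the eigenfunctions of the spherical Signorini operator, the growth bound annihilating every mode of homogeneity $\geq 3/2$. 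When the limit domain is the hyperplane it reduces to the classical caloric Liouville in a half-space, where the smallest boundary homogeneity is $1$. Since $w_\infty\equiv 0$ contradicts its nondegeneracy, the proof is complete. The main obstacle is establishing (or importing from the Signorini literature) the parabolic slit Liouville and verifying that the Campanato minimality survives the passage to the limit; the rest is a direct transcription of the Theorem \ref{thm:main} argument, with the even-in-$x_{n+1}$ symmetry playing the role that zero Dirichlet on a flat boundary plays in the half-space case.
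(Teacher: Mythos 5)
There is a genuine gap, and it sits exactly at the step you flag as the ``final input.'' The Liouville theorem you invoke is false as stated: the smallest homogeneity of a nontrivial even caloric function vanishing on the flat slit $\{x_n\le 0,\ x_{n+1}=0\}$ is $\frac12$, not $\frac32$, realized by $\varphi_0(x,t)=\operatorname{Re}\bigl(\sqrt{x_n+ix_{n+1}}\bigr)$. The exponent $\frac32$ is the homogeneity of the \emph{Signorini solution} at regular points, which is a nonnegative function subject to complementarity conditions; it is not the bottom of the spectrum for the linear Dirichlet problem on the slit. The correct classification (Theorem \ref{thm:slit_liouville} in the paper) is that an even solution with growth $O(R^{1+\alpha})$, $\alpha<\frac12$, equals $a\varphi_0$ for some $a\in\R$ --- it need not vanish. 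Your blow-up limit $w_\infty$ will generically be a nonzero multiple of $\varphi_0$, so no contradiction arises. Note also that the growth window cannot be shrunk below $\frac12$ to evade this: since $v\sim d^{1/2}$ near the slit, an excess decay $\inf_c\|u-cv\|_{L^\infty(Q_r)}\lesssim r^{\gamma}$ only yields $u/v\in C^{0,\gamma-1/2-\varepsilon}$ after dividing by $v\gtrsim mr^{1/2+\varepsilon}$; the decay one actually needs is $r^{\gamma+1/2}\in (r^{3/2},r^{1/2})$, which forces the blow-up to live precisely in the growth range where the $\varphi_0$ mode survives.

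To repair this one must subtract the $\frac12$-homogeneous profile rather than hope it is absent, which is what the paper does: it constructs homogeneous cone barriers $\varphi_\pm$ of degree $\frac12\pm\varepsilon$ (Proposition \ref{prop:cone_solns}) to get growth $d^{1/2\pm\varepsilon}$ and boundary H\"older estimates, builds a special almost-$\frac12$-homogeneous solution $\varphi$ in the actual Lipschitz slit domain (Proposition \ref{prop:slit_special}), and proves the expansion $u=K_r\varphi+O(r^{1+\alpha})$ (Proposition \ref{prop:slit_expansion}) by choosing $K_r$ as the $L^2$-projection of $u$ onto $\varphi$ --- it is this orthogonality, passed to the limit against $\varphi_0$, that kills the $a\varphi_0$ mode and produces the contradiction ($\int_{Q_1}\varphi_0^2=0$). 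The theorem then follows by a covering argument with four cases, two of which (points near the slit but away from its edge) are handled by odd reflection/restriction to a half-space and an appeal to the one-sided Theorem \ref{thm:main}. Your $L^\infty$-minimality of $c_k$ does not by itself supply the orthogonality needed to exclude $\varphi_0$, so as written the scheme does not close.
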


\begin{obs}
    Our method for studying non-divergence form operators relies on constructing homogeneous barriers. While we can create sub- and supersolutions with near-linear homogeneity in almost-flat parabolic Lipschitz domains, this method fails in the case of slit domains when the operator is not the Laplacian. Specifically, there is a gap between the homogeneity of sub- and supersolutions, which is expected when the equation is driven by an operator with coefficients \cite{RS17b, CFR23}. However, in the case of the Laplacian, we can still construct barriers with homogeneity close to $\frac{1}{2}$, as shown in Proposition \ref{prop:cone_solns}.
\end{obs}

\subsection{Known results}

The boundary Harnack is a fundamental tool in the realm of analysis and partial differential equations that has had significant impact over the past 50 years. While there is a vast array of literature on this topic and its applications, we have compiled a representative sample of the most noteworthy advancements, exclusively focusing on elliptic and parabolic equations in domains less regular than $C^{1,\mathrm{Dini}}$.

\subsubsection{Elliptic boundary Harnack}
The first proof of the classical case for harmonic functions in Lipschitz domains was given by Kemper in \cite{Kem72a}. Caffarelli, Fabes, Mortola and Salsa considered operators in divergence form in Lipschitz domains, while the case of operators in non-divergence form was treated by Fabes, Garofalo, Marin-Malave and Salsa \cite{CFMS81,FGMS88}. Jerison and Kenig extended the same result to NTA domains for divergence form operators \cite{JK82}, and the case of non-divergence operators in Hölder domains with $\alpha > 1/2$ was treated with probabilistic techniques in \cite{BB94} by Bass and Burdzy. A simple and unified proof of these previous results was recently presented by De Silva and Savin \cite{DS20, DS22a}.

The boundary Harnack inequality also holds for solutions to elliptic equations with right-hand side. 
Allen and Shahgholian investigated operators in divergence form in Lipschitz domains with a right-hand side in a weighted $L^\infty$ space \cite{AS19}. In a subsequent work with Kriventsov, they developed a general theory to derive boundary Harnack inequalities for equations with right-hand side based on the boundary Harnack for homogeneous equations \cite{AKS22}. Ros-Oton and the author studied non-divergence and divergence form operators in Lipschitz domains with small Lipschitz constant and small right-hand side in $L^q$ with $q > n$ \cite{RT21}.

\subsubsection{Parabolic boundary Harnack}
The presence of a waiting time in the parabolic interior Harnack inequality exacerbates the complexity of the parabolic problem, as it renders several approaches to the elliptic setting inapplicable.

Kemper first established the parabolic boundary Harnack inequality for the heat equation \cite{Kem72b}. Fabes, Garofalo, and Salsa extended it to equations in divergence form in Lipschitz cylinders \cite{Sal81, FGS86}. Athanasopoulos, Caffarelli, and Salsa proved the Hölder continuity of quotients of positive solutions for divergence form operators \cite{ACS96}, and Fabes, Safonov, and Yuan extended this result to non-divergence form equations \cite{FS97, FSY99}. Bass and Burdzy used probabilistic techniques to handle Hölder cylindrical domains \cite{BB92}, while Hoffman, Lewis, and Nyström treated unbounded parabolically Reifenberg flat domains \cite{HLN04}, and Petrosyan and Shi dealt with Lipschitz slit domains, where they allowed for a $L^\infty$ right-hand side in the equation \cite{PS14}. Recently, De Silva and Savin developed a unified and simplified approach to prove the Carleson estimate (but not the $C^{0,\alpha}$ regularity of the quotient) for both divergence and non-divergence equations in various settings \cite{DS22b}.

\subsection{Parabolic obstacle problems}

Boundary Harnack inequalities are a key tool in regularity theory for obstacle problems. They are used to establish $C^{1,\alpha}$ regularity of free boundaries from Lipschitz regularity, following from an original idea of Athanasopoulos and Caffarelli \cite{AC85}. For an introduction to this strategy, see also \cite[Section 6.2]{PSU12} and \cite[Section 5.6]{FR22}.

Let us briefly sketch how the technique works. In the elliptic setting, if $u$ is a solution to the obstacle problem
\begin{equation*}
\left\{
\begin{array}{rcl}
\Delta u & = & f\chi_{\{u > 0\}}\\
u & \geq & 0,
\end{array}
\right.
\end{equation*}
then we call $\{u = 0\}$ the contact set and $\p\{u > 0\}$ the free boundary. The derivatives of $u$ are solutions of
\begin{equation*}
\left\{
\begin{array}{rcll}
\Delta (\p_e u) & = & \p_e f & \text{in } \{u > 0\}\\
\p_e u & = & 0 & \text{on } \p\{u > 0\},
\end{array}
\right.
\end{equation*}
where $e$ is an unit vector. Then, applying the boundary Harnack to the derivatives of $u$ yields $u_i/u_n \in C^{0,\alpha}$ for every coordinate $i$, implying that the normal vector to $\p\{u > 0\}$ is $C^{0,\alpha}$ and hence $\p\{u > 0\}$ is $C^{1,\alpha}$.

Previously, this approach was restricted to the case where $f$ was constant because the known boundary Harnack inequalities applied only to equations without a right-hand side. Consequently, alternative methods such as those found in \cite{Bla01, ALS13, AK23} were used to establish $C^1$ or $C^{1,\alpha}$ regularity of free boundaries where the direct application of boundary Harnack was not feasible. However, new boundary inequalities that accommodate a right-hand side have been developed, enabling simpler proofs and reduced regularity assumptions on the obstacle \cite{AS19, RT21, AKS22}.

In the parabolic obstacle problem with a smooth obstacle, it is well known that the free boundary is $C^{1,\alpha}$ at the points where it is flat Lipschitz \cite{Caf77, AK23}. Consider the parabolic obstacle problem
\begin{equation}\label{eq:parabolic_obstacle}
\left\{
\begin{array}{rcl}
\p_t u - \Delta u & = & f\chi_{\{u > 0\}}\\
u & \geq & 0.
\end{array}
\right.
\end{equation}

Then, we have the following improvement of flatness result for the free boundary. We will state the hypotheses that are typically assumed in regular points as part of the full program to prove free boundary regularity, but we will present a standalone result that is simpler to state and prove.

\begin{cor}\label{cor:classic_parabolic_obstacle}
There exists a dimensional constant $L(n)$ such that the following holds. Let $u \in C^{1,1}_x\cap C^1_t(Q_1)$ be a solution to the parabolic obstacle problem \eqref{eq:parabolic_obstacle} with $f \in W^{1,q}$, where $q > n+2$. Assume that $(0,0) \in \p\{u > 0\}$ and that $\{u > 0\}$ is a parabolic Lipschitz domain in $Q_1$ in the sense of Definition \ref{defn:lipschitz_domain} with Lipschitz constant $L(n)$. Additionally, assume that $\p_nu \geq cd$ in $\{u > 0\}$, where $c > 0$ and $d$ is the distance to $\p\{u > 0\}$.

Then, $\p\{u > 0\}$ is a $C^{1,\alpha}$ graph in a neighbourhood of $(0,0)$ for some $\alpha > 0$.
\end{cor}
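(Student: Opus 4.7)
\emph{Plan.} The strategy is the classical Athanasopoulos--Caffarelli reduction of free boundary regularity to a boundary Harnack for partial derivatives of $u$. Setting $\Omega = \{u > 0\} \cap Q_1$ and fixing $L(n) := c_0$, with $c_0$ the constant from Theorem~\ref{thm:main}, the function $u$ solves $\p_t u - \Delta u = f$ in $\Omega$. Each spatial derivative $\p_e u$ ($e \in \mathbb{S}^{n-1}$) and the time derivative $\p_t u$ then solves a similar linear equation with right-hand side $\p_e f, \p_t f \in L^q$, via the parabolic interpretation of $f \in W^{1,q}$. Because $u \in C^{1,1}_x \cap C^1_t$ and $u \equiv 0$ on the contact set, both $\p_e u$ and $\p_t u$ extend continuously by $0$ to $\p_\Gamma\Omega$.

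Next, take the denominator $v := \p_n u$. By the hypothesis $\p_n u \geq c\,d$, $v > 0$ in $\Omega$; and since $L(n)$ is dimensional and small, the distance $d(e_n/2, -3/4)$ admits a dimensional lower bound $c_1$, so $v(e_n/2, -3/4) \geq c c_1 =: m > 0$. The remaining normalizations $\|v\|_{L^\infty(Q_1)} = 1$, $\|\p_e u\|_{L^\infty(Q_1)} \leq 1$, and the required smallness of the $L^q$ norms of $\p_e f$ and $\p_n f$ can be achieved by a preliminary parabolic rescaling $u(x,t) \mapsto u(rx, r^2 t)/r^2$, which preserves the parabolic Lipschitz constant of $\p\Omega$ and contracts $\|\nabla f\|_{L^q}$, $\|\p_t f\|_{L^q}$ by the factor $r^{1-(n+2)/q} \to 0$ as $r \to 0^+$, using $q > n+2$. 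Theorem~\ref{thm:main} then yields
\begin{equation*}
\left\|\frac{\p_e u}{\p_n u}\right\|_{C^{0,\gamma}_p(\Omega \cap Q_{1/2})} + \left\|\frac{\p_t u}{\p_n u}\right\|_{C^{0,\gamma}_p(\Omega \cap Q_{1/2})} \leq C
\end{equation*}
for every spatial $e \in \mathbb{S}^{n-1}$ and some $\gamma \in (0,1)$.

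Finally, write $\p\{u > 0\} = \{x_n = g(x',t)\}$ locally. On each positive level set $\{u = s\}$, $s > 0$ small, the condition $\p_n u > 0$ lets the implicit function theorem express the level set as a graph $x_n = g_s(x',t)$ with $\p_i g_s = -\p_i u/\p_n u$ and $\p_t g_s = -\p_t u/\p_n u$ evaluated on that level set. Letting $s \to 0^+$ and using the Hölder extension of the ratios to $\p\Omega$ from the previous step, one obtains $\nabla_{x'} g, \p_t g \in C^{0,\gamma}_p$ on the free boundary, hence $g \in C^{1,\gamma}_p$ in a neighborhood of $(0,0)$, which is the claim with $\alpha = \gamma$.

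The bulk of the work, granted Theorem~\ref{thm:main}, lies in verifying its hypotheses: the normalization and strict positivity of $v = \p_n u$ at the interior reference point, combining the nondegeneracy $\p_n u \geq c\,d$ with the smallness of $L(n)$, and the smallness of the $L^q$ right-hand sides via the rescaling, possible precisely because $q > n+2$. The identification of the limits of the ratios with $\p_i g$ and $\p_t g$ on the free boundary is then a standard implicit-function-theorem argument on positive level sets, supported by the uniform nondegeneracy of $\p_n u$.
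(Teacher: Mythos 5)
Your proposal is correct and follows essentially the same route as the paper: rescale so that the derivatives $\p_e u$, $\p_n u$ satisfy the normalization and smallness hypotheses of Theorem~\ref{thm:main} (using $C^{1,1}_x\cap C^1_t$ regularity for the upper bounds, the nondegeneracy $\p_n u\geq cd$ for the lower bound at the reference point, and $q>n+2$ so the rescaled right-hand side $r^{1-(n+2)/q}\|\nabla f\|_{L^q}$ becomes small), then deduce Hölder continuity of the quotients and pass to the free boundary via the level sets $\{u=s\}$, $s\downarrow 0$. The only cosmetic difference is that the paper normalizes each derivative by $\max\{\|u_e\|_{L^\infty(Q_r)},Cr\}$ rather than by a global rescaling of $u$, and phrases the last step in terms of the unit normal $(\nabla u,u_t)/|(\nabla u,u_t)|$ to the level sets instead of the implicit function theorem; these are equivalent.
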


We will now examine the no-sign parabolic obstacle problem, expressed as:
\begin{equation}\label{eq:nosign_parabolic_obstacle}
\left\{
\begin{array}{rcl}
\p_t u - \Delta u & = & f\chi_{\{u \neq 0\}}\\
u & \geq & 0,
\end{array}
\right.
\end{equation}
This problem was first studied in \cite{CPS04} for $f \equiv 1$. In \cite{ALS13}, it was established that $f$ must belong to $C^{\mathrm{Dini}}$ to guarantee free boundary regularity, and that the free boundary is $C^1$ with respect to the parabolic metric at regular points. Furthermore, assuming $f \in W^{1,q}$ with $q > n + 2$, we can derive $C^{1,\alpha}$ regularity of the free boundary in space and time at those points using a similar technique to the proof of Corollary~\ref{cor:classic_parabolic_obstacle}.

Our next example is the fully nonlinear parabolic obstacle problem, which has been studied in \cite{AK23}. The problem is expressed as follows:
\begin{equation}\label{eq:FN}
    \left\{\begin{array}{rclll}
    \p_tu - F(D^2u,x) & = & f(x)\chi_{\{u > 0\}} & \text{in} & Q_1\\
    u \geq 0, & & u_t \geq 0 & \text{in} & Q_1,
    \end{array}\right.
\end{equation}
where we assume that $F$ satisfies the following conditions:
\begin{itemize}
    \item[\textit{(H1)}] $F$ is uniformly elliptic and $F(0,\cdot) \equiv 0$.
    \item[\textit{(H2)}] $F$ is convex and $C^1$ in the first variable.
    \item[\textit{(H3)}] $F$ is $W^{1,q}$ in the second variable for some $q > n+2$.
\end{itemize}
In \cite{AK23}, it is shown that the free boundary is $C^\infty$ at regular points under the assumption that $F$ and $f$ are smooth. Prior studies of the fully nonlinear parabolic obstacle problem in various settings have established the $C^1_p$ regularity of the free boundary at regular points \cite{FS15, IM16}. We believe that the technique used in \cite[Corollary 1.7]{RT21} for the elliptic case can be adapted to the parabolic problem to derive $C^{1,\alpha}$ regularity of the free boundary when $f \in W^{1,q}$ with $q > n + 2$.

\subsection{The parabolic Signorini problem}
The parabolic thin obstacle problem, also known as the parabolic Signorini problem, has been extensively studied in \cite{DGPT17}. In $\R^{n+2}$, it can be formulated as follows, with $Q_1^+ := Q_1\cap\{x_{n+1} > 0\}$:
\begin{equation}\label{eq:parabolic_signorini}
\left\{
\begin{array}{rclll}
\p_t u - \Delta u & = & f & \text{in} & Q_1^+\\
u \ \geq \ 0, \ -\p_{n+1}u \ \geq \ 0, \ u\p_{n+1}u & = & 0 & \text{on} & \{x_{n+1} = 0\}
\end{array}
\right.
\end{equation}
After an even extension in the $x_{n+1}$ variable, solutions to \eqref{eq:parabolic_signorini} also satisfy
\begin{equation}\label{eq:parabolic_signorini_even}
\left\{
\begin{array}{rclll}
\p_t u - \Delta u & = & f & \text{in} & Q_1 \setminus \Lambda(u)\\
u & = & 0 & \text{on} & \Lambda(u),
\end{array}
\right.
\end{equation}
where $\Lambda(u) \subset {x_{n+1} = 0}$ and $u(x',x_n,x_{n+1},t) = u(x',x_n,-x_{n+1},t)$.

Smoothness of the free boundary near regular points has been established in the case where $f$ is smooth \cite{BSZ17}. For obstacles with lower regularity, it has been proven in \cite{PZ19} that $u_t$ is continuous at regular free boundary points, implying that the free boundary is locally a $C^{1,\alpha}$ graph in both space and time near regular points, under the assumption that $f \in C^2_p$.

If $f$ is independent of time, corresponding to a stationary obstacle, the arguments presented in \cite{PZ19} hold. In this case, it is expected that the condition for the parabolic Lipschitz regularity of the free boundary, currently established with the assumption that $f \in C^{3/2}_p$ in \cite{DGPT17}, can be relaxed. On the other hand, once the free boundary is known to be Lipschitz, the weaker assumption that $f \in W^{1,q}$ for some $q > n + 2$ is sufficient to deduce that it is $C^{1,\alpha}$, thanks to Theorem \ref{thm:slit_main}. In the following result, we assume the nondegeneracy condition that is expected to hold at regular points, similar to Corollary \ref{cor:classic_parabolic_obstacle}.

\begin{cor}\label{cor:parabolic_signorini}
There exists a dimensional constant $L(n)$ such that the following holds. Let $u \in C^{3/2}_x\cap C^1_t(Q_1)$ be a solution to \eqref{eq:parabolic_signorini_even} with $f \in W^{1,q}$, where $q > n + 3$. Assume that $(0,0) \in \p\{u > 0\}$ and that $\{u > 0\}$ is a parabolic Lipschitz slit domain in $Q_1$ in the sense of Definition \ref{defn:slit_domain} with Lipschitz constant $L(n)$. Additionally, assume that $\p_nu \geq cd^{1/2}$ in $\{u > 0\}\cap\{x_{n+1} = 0\}$, where $c > 0$ and $d$ is the distance to $\p\{u > 0\}$.

Then, $\p\{u > 0\}$ is a $C^{1,\alpha}$ graph in a neighbourhood of $(0,0)$ for some $\alpha > 0$ (in the relative topology of $\{x_{n+1} = 0\}$).
\end{cor}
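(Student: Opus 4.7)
The plan is to follow the Athanasopoulos--Caffarelli scheme adapted to the thin setting: differentiate the equation in tangential and time directions, apply Theorem \ref{thm:slit_main} to the resulting pairs of solutions, and read the free boundary slope off the boundary traces of the Hölder ratios $\p_iu/\p_nu$ ($i<n$) and $\p_tu/\p_nu$. After the even extension in $x_{n+1}$, $u$ satisfies the heat equation in $\Omega:=Q_1\setminus\Lambda(u)$; for each $i\in\{1,\dots,n\}$ the derivative $\p_iu$ is even in $x_{n+1}$, solves $\p_t(\p_iu)-\Delta(\p_iu)=\p_if$ in $\Omega$, and vanishes on $\Lambda(u)$ because $e_i$ is tangent to $\{x_{n+1}=0\}$ and $u\equiv 0$ there. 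The same holds for $\p_tu$: it is even, solves the heat equation with source $\p_tf$, and vanishes on $\Lambda(u)$ since the non-negative function $u$ attains its minimum $0$ on the contact set and $\p_tu$ is continuous. The hypothesis $f\in W^{1,q}(Q_1)$ with $q>n+3$ places every such source in $L^q(Q_1)$ with $q$ above the threshold required by Theorem \ref{thm:slit_main} in this ambient dimension.

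\textbf{Applying the boundary Harnack.} A parabolic zoom $u_\rho(x,t):=\rho^{-3/2}u(\rho x,\rho^2 t)$ preserves the structural hypotheses (evenness, slit geometry, nondegeneracy $\p_nu_\rho\geq cd^{1/2}$) while forcing the $L^q$-norms of the rescaled sources $\p_if_\rho,\p_tf_\rho$ to decay like $\rho^{3/2-(n+3)/q}\to 0$ (using $q>n+3$). Taking $\rho$ small enough, we may arrange $\|\p_nu_\rho\|_{L^\infty(Q_1)}=1$, $\|\p_iu_\rho\|_{L^\infty(Q_1)},\|\p_tu_\rho\|_{L^\infty(Q_1)}\leq 1$, and every relevant right-hand side has $L^q$-norm below $c_0m$, where $c_0$ is the constant of Theorem \ref{thm:slit_main}. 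Setting $L(n):=c_0$, the domain meets the Lipschitz hypothesis. The thin-space nondegeneracy, the even symmetry, and the interior regularity of $v:=\p_nu_\rho$ together imply $v>0$ throughout $\Omega$ and a universal lower bound $v(\tfrac{e_n}{2},-\tfrac{3}{4})\geq m$, since the distance from that point to the free boundary is of order one. Theorem \ref{thm:slit_main}, applied with this $v$ and with the theorem's $u$ taken successively as $\p_iu_\rho$ ($i=1,\dots,n-1$) and as $\p_tu_\rho$, yields
$$
\Bigl\|\tfrac{\p_iu_\rho}{\p_nu_\rho}\Bigr\|_{C^{0,\gamma}_p(\Omega\cap Q_{1/2})}+\Bigl\|\tfrac{\p_tu_\rho}{\p_nu_\rho}\Bigr\|_{C^{0,\gamma}_p(\Omega\cap Q_{1/2})}\leq C.
$$

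\textbf{From Hölder ratios to a $C^{1,\alpha}_p$ free boundary.} Since $\p_nu>0$ on $\{u>0\}\cap\{x_{n+1}=0\}$, the positivity set is locally the supergraph $\{x_n>\phi(x'',t)\}$ of a Lipschitz function in the thin space, with $x''=(x_1,\dots,x_{n-1})$. For small $\varepsilon>0$ the level sets $\{u=\varepsilon\}\cap\{x_{n+1}=0\}$ are smooth graphs $x_n=\phi_\varepsilon(x'',t)$ along which the implicit function theorem gives $\p_i\phi_\varepsilon=-\p_iu/\p_nu$ and $\p_t\phi_\varepsilon=-\p_tu/\p_nu$. Letting $\varepsilon\downarrow 0$ and invoking the Hölder boundary traces from the previous step, we conclude that $\nabla_{x''}\phi,\p_t\phi\in C^{0,\alpha}_p$, so $\phi\in C^{1,\alpha}_p$, which is the asserted regularity of $\p\{u>0\}$.

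\textbf{Main obstacle.} The most delicate point will be the transfer of information between the thin space and the bulk of the slit domain: both the quantitative bulk lower bound $v(\tfrac{e_n}{2},-\tfrac{3}{4})\geq m$ (derived from the one-sided nondegeneracy $\p_nu\geq cd^{1/2}$ on $\{x_{n+1}=0\}$) and the passage to the limit $\phi_\varepsilon\to\phi$ in $C^1_p$ as $\varepsilon\downarrow 0$ hinge on combining the interior regularity of $\p_nu$, its even symmetry in $x_{n+1}$, and the Lipschitz control on the free boundary supplied by the hypothesis.
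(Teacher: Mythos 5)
Your proposal is correct and follows essentially the same route as the paper: differentiate the equation in the tangential and time directions, rescale parabolically so that the nondegeneracy $\p_nu\geq cd^{1/2}$ and the $C^{3/2}_x$ bound normalize $\p_nu$ while the $L^q$-norms of the differentiated right-hand sides decay like $r^{3/2-(n+3)/q}$ (which is where $q>n+3$ enters), apply Theorem \ref{thm:slit_main} to each pair $(\p_iu,\p_nu)$ and $(\p_tu,\p_nu)$, and recover the $C^{1,\alpha}$ graph via the level sets. The only cosmetic difference is that the paper works with an arbitrary unit direction $e$ before specializing to the coordinate basis, whereas you restrict to tangential directions from the start; both handle the remaining details (positivity of $\p_nu$ in the bulk, the limit of the level-set graphs) at the same level of rigor.
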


Determining the optimal regularity for the obstacle to ensure that the free boundary is $C^{1,\alpha}$ at regular points remains an open problem.

\subsection{Optimal boundary regularity for the quotient of harmonic functions}

We begin by recalling the results for the regularity of harmonic functions in different types of domains (see \cite[Section 2.6]{FR22} and the references therein). If $\Delta u = 0$ in $\Omega$, then the regularity of $u$ depends on the regularity of $\Omega$ as follows:
\begin{itemize}
    \item If $\Omega$ is a $C^{1,\alpha}$ domain, $u \in C^{1,\alpha}(\overline{\Omega})$.
    \item If $\Omega$ is a $C^1$ domain, $u \in C^{1-\varepsilon}(\overline{\Omega})$ for all $\varepsilon > 0$.
    \item If $\Omega$ is a Lipschitz domain with constant $L$, $u \in C^{0,\gamma}(\overline{\Omega})$, with $\gamma$ depending only on the dimension and $L$. Moreover, $\gamma \nearrow 1$ as $L \searrow 0$.
\end{itemize}
After flattening the boundary with a change of variables, we obtain another angle to see the same phenomenon. If $\L u = 0$ in a half space, where $\L u := \operatorname{Div}(A(x)\nabla u)$ is an elliptic operator in divergence form,
\begin{itemize}
    \item If $A(x) \in C^{0,\alpha}$, then $u \in C^{1,\alpha}$ by Schauder theory.
    \item If $A(x) \in C^0$, then $u \in C^{1-\varepsilon}$ for all $\varepsilon > 0$ by Cordes-Nirenberg.
    \item If $A(x) \in L^\infty$, then $u \in C^{0,\gamma}$ for a small $\gamma$ by De Giorgi. Moreover, $\gamma \nearrow 1$ as $A(x) \rightarrow I$ by Cordes-Nirenberg.
\end{itemize}
On the other hand, the regularity of the quotient of two harmonic functions with zero Dirichlet boundary data goes as follows. If $\Delta u = \Delta v = 0$ in $\Omega$ and $u = v = 0$ on $\p\Omega$, then
\begin{itemize}
    \item If $\Omega$ is a $C^{1,\alpha}$ domain, $u/v \in C^{1,\alpha}(\overline{\Omega})$, \cite{DS15}.
    \item If $\Omega$ is a $C^1$ domain, $u/v \in C^{1-\varepsilon}(\overline{\Omega})$ for all $\varepsilon > 0$, \cite{Kuk22}.
    \item If $\Omega$ is a Lipschitz domain with constant $L$, $u/v \in C^{0,\gamma}$.
\end{itemize}

The Hölder exponent in Boundary Harnack inequalities for Lipschitz domains is often suboptimal due to its dependence on an iteration scheme. However, our Theorem \ref{thm:main} provides a way to bridge the gap between Lipschitz and $C^1$ domains by showing that as the Lipschitz constant $L$ approaches zero, the exponent $\gamma$ can approach $1$.

\begin{cor}\label{cor:optimal_quotient_regularity}
Let $\varepsilon > 0$. There exists $L_0 > 0$, only depending on the dimension and $\varepsilon$, such that the following holds.

Let $\Omega$ be a Lipschitz domain in $B_1$ in the sense of Definition \ref{defn:elliptic_domains} with Lipschitz constant $L_0$. Let $u$ and $v$ be solutions to
$$\left\{\begin{array}{rclll}
\Delta u & = & 0 & \text{in} & \Omega\\ 
u & = & 0 & \text{on} & \p_\Gamma\Omega,
\end{array}\right.\quad\text{and}\quad\left\{\begin{array}{rclll}
\Delta v & = & 0 & \text{in} & \Omega\\ 
v & = & 0 & \text{on} & \p_\Gamma\Omega.
\end{array}\right.$$

Then, $u/v \in C^{1 - \varepsilon}(B_{1/2}\cap\overline{\Omega})$.
\end{cor}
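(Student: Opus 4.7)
The plan is to reduce Corollary \ref{cor:optimal_quotient_regularity} to Theorem \ref{thm:main} by viewing the harmonic functions $u$ and $v$ as time-independent solutions of the heat equation. The key point is that for $\L=\Delta$ one has $\alpha_0(1,1) := 1$, so the restriction $\gamma < \min\{\alpha_0,\, 1-(n+2)/q\}$ from Theorem \ref{thm:main} can be pushed arbitrarily close to $\gamma = 1$ by taking $q$ large. Given $\varepsilon > 0$, I would fix $\gamma := 1-\varepsilon$, choose $q > (n+2)/\varepsilon$ so that $1-(n+2)/q > \gamma$, and set $L_0 := c_0(q,\gamma,n,1,1)$, where $c_0$ is the constant supplied by Theorem \ref{thm:main}.

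Next I would extend $u$ and $v$ trivially in $t$, regarding them as solutions of $w_t - \Delta w = 0$ on the parabolic cylinder $\widetilde{\Omega} := \Omega \times (-1, 0]$ with zero lateral boundary data. Since the graph defining $\Omega$ is $t$-independent, $\widetilde{\Omega}$ is a parabolic Lipschitz domain in the sense of Definition \ref{defn:lipschitz_domain} with the same Lipschitz constant $L_0$, and the right-hand sides $f_i \equiv 0$ satisfy every $L^q$ bound automatically. I would then normalize so that $\|v\|_{L^\infty(B_1)} = 1$ and $\|u\|_{L^\infty(B_1)} \leq 1$; this is harmless because the conclusion $u/v \in C^{1-\varepsilon}$ is homogeneous of degree zero in $u$ and $v$. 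For $L_0$ small, the point $e_n/2$ lies at positive distance from $\partial\Omega$, and strict positivity of $v$ forces $m := v(e_n/2) > 0$; by time-independence, $v(e_n/2,\, -3/4) = m$. At this stage every hypothesis of Theorem \ref{thm:main} is satisfied.

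Applying Theorem \ref{thm:main} yields $u/v \in C^{0,\gamma}_p(\widetilde{\Omega} \cap Q_{1/2})$ with $\gamma = 1-\varepsilon$, and since $u/v$ is independent of $t$, this is equivalent to $u/v \in C^{0,\,1-\varepsilon}(\Omega \cap B_{1/2})$, as claimed. The proof is essentially bookkeeping, and I do not expect any substantial obstacle: the one point worth checking carefully is that a time-independent extension of an elliptic Lipschitz domain with constant $L_0$ satisfies the parabolic Lipschitz condition of Definition \ref{defn:lipschitz_domain} with the same constant, which is immediate since its defining graph is spatially Lipschitz with constant $L_0$ and has no $t$-dependence. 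All the analytic content is packaged into Theorem \ref{thm:main} via the convention $\alpha_0(1,1) := 1$.
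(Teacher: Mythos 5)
Your proposal is correct and is exactly the route the paper intends: the paper's entire proof is the remark ``The proof follows immediately from Theorem \ref{thm:main},'' and your write-up supplies the bookkeeping (time-independent extension, $\alpha_0(1,1)=1$, large $q$, normalization, and the implicit positivity of $v$ giving $m>0$) that makes this reduction rigorous. No gaps.
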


The proof follows immediately from Theorem \ref{thm:main}.

\subsection{A general Hopf lemma for parabolic equations with right-hand side}
In this section, we present a Hopf lemma for parabolic equations in parabolic $C^{1,\mathrm{Dini}}$ domains. As far as we know, this is the first result of this kind in the parabolic setting that allows for equations with right-hand side. While this is a relatively unexplored area in the literature, similar results have been established previously in the elliptic case \cite{BM18,RT21}.

\begin{cor}\label{cor:hopf_parabolic_rhs}
Let $\alpha \in (0,\alpha_0)$, and let $\L$ be a non-divergence form operator as in \eqref{eq:non-divergence_operator}. There exists $c_0 > 0$, only depending on $\alpha$, the dimension, and the ellipticity constants, such that the following holds.

Let $\Omega$ be a parabolic Lipschitz domain in $Q_1$ in the sense of Definition \ref{defn:lipschitz_domain} with Lipschitz constant $L \leq c_0$, and assume that it satisfies the interior $C^{1,\mathrm{Dini}}$ condition at $0$ in the sense of Definition \ref{defn:interior_C1_Dini}. Let $u$ be a positive solution to
$$\left\{\begin{array}{rclll}
u_t - \L u & = & f & \text{in} & \Omega\\ 
u & = & 0 & \text{on} & \p_\Gamma\Omega,
\end{array}\right.$$
and assume that $f = g + h$ , with
$$\|d^{1-\alpha}g\|_{L^\infty(Q_1)}+\|d^{-1/(n+1)-\alpha}h\|_{L^{n+1}(Q_1)} \leq c_0u\left(\frac{e_n}{2},-\frac{1}{2}\right),$$
where $d(x',x_n,t) = x_n - \Gamma(x',t)$. Then, for all $r \in (0,\delta)$,
$$u(re_n,0) \geq cr,$$
for some small $c, \delta > 0$.
\end{cor}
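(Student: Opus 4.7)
The plan is to derive the Hopf-type lower bound for $u$ by comparing it with a positive caloric reference function $v$ whose linear growth at $(0,0)$ comes from the classical parabolic Hopf lemma for the homogeneous equation in $C^{1,\mathrm{Dini}}$ domains, and to transfer that growth to $u$ via the boundary Harnack of Theorem \ref{thm:main} in the generalized form of Remark \ref{obs:main_alpha}.

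Concretely, after normalizing $u$ so that $u\left(\frac{e_n}{2},-\frac{1}{2}\right) = 1$, the hypothesis reads $\|d^{1-\alpha}g\|_{L^\infty(Q_1)}+\|d^{-1/(n+1)-\alpha}h\|_{L^{n+1}(Q_1)} \leq c_0$. I would then let $v\geq 0$ be the solution of the Cauchy-Dirichlet problem
\begin{equation*}
\left\{\begin{array}{rcll}
v_t - \L v & = & 0 & \text{in } \Omega\cap Q_1,\\
v & = & 0 & \text{on } \p_\Gamma\Omega,\\
v & = & \phi & \text{on } \Omega\cap\{t = -1\},
\end{array}\right.
\end{equation*}
with $\phi$ a fixed nonnegative bump supported near the interior centerline, normalized so that $\|v\|_{L^\infty(Q_1)} = 1$ and, via an interior parabolic Harnack chain, $v\left(\frac{e_n}{2},-\frac{3}{4}\right) \geq m_0$ for a universal $m_0(n,\lambda,\Lambda) > 0$. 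Since $\Omega$ satisfies the interior $C^{1,\mathrm{Dini}}$ condition at $0$, the classical parabolic Hopf lemma for the homogeneous equation in $C^{1,\mathrm{Dini}}$ domains gives $v(re_n,0)\geq c_1 r$ for $r\in(0,\delta_1)$, via an interior barrier matched to the Dini modulus of the boundary graph together with the boundary $C^{1,\alpha_0}$ estimate of \cite{Wan92b}.

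With the Corollary's $c_0$ chosen small enough depending on $m_0$, Theorem \ref{thm:main} applied to the pair $(u,v)$ then yields $\|u/v\|_{C^{0,\gamma}_p(\Omega\cap Q_{1/2})}\leq C$. To obtain the reverse bound $u/v\geq c_3 > 0$ on a small parabolic neighborhood of $(0,0)$, I would first use the forward parabolic Harnack inequality with right-hand side to propagate $u\left(\frac{e_n}{2},-\frac{1}{2}\right)=1$ to a uniform lower bound on $u$ on a fixed interior cylinder at times $t\in(-\frac{1}{2}+\tau,0)$ (using that $c_0$ is small so the RHS does not destroy positivity), and then iterate Theorem \ref{thm:main} on parabolic cubes of decreasing size centered at $(0,0)$, which together with the Hölder control of the quotient propagates the interior lower bound to $u/v\geq c_3$ throughout $\Omega\cap Q_{r_0}$ for some small $r_0 > 0$. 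Combining this with the classical Hopf estimate yields
$$u(re_n,0)\geq c_3\, v(re_n,0)\geq c_1 c_3\, r, \qquad r\in(0,\min(\delta_1,r_0)),$$
which is the desired conclusion.

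The main technical obstacle is the classical parabolic Hopf lemma used in the second paragraph: it does not follow directly from Theorem \ref{thm:main}, and must be obtained by a Dini-summation barrier construction for the non-divergence operator $\L$ in the spirit of the elliptic arguments of \cite{BM18,RT21}, now adapted to the parabolic setting. A secondary difficulty is the mismatch between the hypothesis's reference point $\left(\frac{e_n}{2},-\frac{1}{2}\right)$ and the reference point $\left(\frac{e_n}{2},-\frac{3}{4}\right)$ required by the boundary Harnack, which is precisely why one needs the forward Harnack step and the iterative propagation of the lower bound on $u/v$ rather than a single application of the Carleson estimate.
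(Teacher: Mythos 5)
Your overall architecture coincides with the paper's: normalize $u$, produce a positive solution of the \emph{homogeneous} equation in $\Omega$ that enjoys the linear lower bound at $0$ thanks to the interior $C^{1,\mathrm{Dini}}$ condition, and then transfer that bound to $u$ through the boundary Harnack. Two remarks on the transfer step. First, your choice of reference function (a Cauchy--Dirichlet solution with bump data) is interchangeable with the paper's choice, which is the special near-linear solution $\varphi$ of Proposition \ref{prop:special}. Second, your route to the lower bound $u \geq c\,v$ — forward Harnack to seed an interior lower bound, then an iteration of Theorem \ref{thm:main} on shrinking cubes — is more elaborate than needed: since the homogeneous reference solution trivially satisfies the smallness condition on its right-hand side, one can invoke the symmetric Carleson estimate, Corollary \ref{cor:main_carleson}, applied to the pair $(\varphi,u)$, to get $\varphi/u \leq C$ in $Q_{1/2}$ in one stroke; this is what the paper does. (The reference-point mismatch you flag, $(\tfrac{e_n}{2},-\tfrac12)$ versus $(\tfrac{e_n}{2},-\tfrac34)$, is real but handled by an ABP splitting $u = u_0 + u_f$ plus the backward Harnack for positive caloric functions vanishing on the lateral boundary; your forward-Harnack workaround goes in the wrong time direction for this purpose, since the interior Harnack only bounds values at \emph{earlier} times by values at later times.)

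The genuine gap is the ingredient you call the ``classical parabolic Hopf lemma for the homogeneous equation in $C^{1,\mathrm{Dini}}$ domains.'' This is not available as a citation: the paper states explicitly that it could not find such a result in the literature and proves it from scratch as Theorem \ref{thm:hopf_parabolic}. The proof there is exactly the Dini-summation scheme you gesture at, but it has real content: one shows inductively that $u \geq a_k x_n - b_k$ in $\Omega\cap Q_{2^{-k}}\cap\{x_n>0\}$, where at each step the caloric function $u - a_k x_n$ is split (via the comparison principle) into a piece controlled by boundary Lipschitz estimates on the flat half-space and a piece controlled by the maximum principle using $\sup_{\p_\Gamma\Omega\cap Q_{2^{-k}}} x_n \leq 2^{-k}\omega(2^{-k})$; the resulting recurrences $a_{k+1} = a_k - 2^kCb_k$, $b_{k+1} = 2^{-k}\omega(2^{-k})a_k$ are then shown (Lemma \ref{lem:aux_seq}) to keep $a_k$ bounded below precisely because $\sum_k \omega(2^{-k}) < \infty$. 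Until you carry out this construction (or an equivalent barrier argument), your proof is incomplete at its most substantive point, even though you have correctly identified both the missing lemma and the right strategy for proving it.
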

\vspace{-0.15cm}
\begin{obs}
    If the boundary of $\Omega$ is a $C^1$ graph, one can obtain a Lipschitz constant as small as necessary by scaling.
\end{obs}
\vspace{-0.15cm}
\subsection{Plan of the paper}

The paper is organized as follows.

We begin in Section \ref{sect:previ} by recalling some classical tools that we will use throughout the paper, such as interior regularity estimates and a Liouville theorem.

In Section \ref{sect:bdry_growth_reg}, we derive precise growth estimates near the boundary for solutions of parabolic equations. This allows us to construct a special solution in Section \ref{sect:special_soln} that is almost proportional to the distance to the boundary. Using this special solution, we prove our main result, Theorem \ref{thm:main}, in Section \ref{sect:proof_BH}. Our proof relies crucially on the growth properties of the special solution.

In Section \ref{sect:slit}, we apply the same strategy used in Sections \ref{sect:bdry_growth_reg}-\ref{sect:special_soln}-\ref{sect:proof_BH} to slit domains and prove Theorem \ref{thm:slit_main}. Furthermore, in Section \ref{sect:FB}, we prove our free boundary regularity results, Corollaries \ref{cor:classic_parabolic_obstacle} and \ref{cor:parabolic_signorini}.

In Section \ref{sect:elliptic}, we explain how to apply the ideas of the paper to elliptic equations, leading to the elliptic version of our main theorem, Theorem \ref{thm:elliptic_main}. Finally, in Section \ref{sect:hopf}, we prove Corollary \ref{cor:hopf_parabolic_rhs}.

\vspace{-0.35cm}
\section{Preliminaries}\label{sect:previ}
\subsection{Setting}

Throughout the paper, given $x \in \R^n$, we will denote $x' = (x_1,\ldots,x_{n-1})$. $B_r(x)$ will denote the ball of radius $r$ of $\R^n$, centered at $x$, and $B'_r(x')$ will be the one of $\R^{n-1}$, with center at $x'$. We also introduce the parabolic cylinders
$$Q_r(x,t) := B_r'(x')\times(x_n-r,x_n+r)\times(t-r^2,t) \subset \R^{n+1},$$
and we will write $Q_r := Q_r(0,0)$.

We define the parabolic Lipschitz ($\alpha = 1$) and Hölder ($\alpha \in (0,1)$) seminorms of a function $g : \Omega \subset \R^n\times\R \to \R$ as follows:
$$[g]_{C^{0,\alpha}_p(\Omega)} = \sup\frac{|g(x,t)-g(y,s)|}{(|x-y|+|t-s|^{1/2})^\alpha},$$
where the supremum is taken over all pairs of different points $(x,t) \neq (y,s)$ in $\Omega$, and we define correspondingly the parabolic Lipschitz and Hölder norms
$$\|g\|_{C^{0,\alpha}_p(\Omega)} := \|g\|_{L^\infty(\Omega)} + [g]_{C^{0,\alpha}_p(\Omega)}.$$
We will omit the domain where there is no case for confusion.

\vspace{0.1cm}

We will denote by
$$\mathcal{M^-}(D^2u) := \inf\limits_{\lambda I \leq A \leq \Lambda I}\operatorname{Tr}(AD^2u), \quad \mathcal{M^+}(D^2u) := \sup\limits_{\lambda I \leq A \leq \Lambda I}\operatorname{Tr}(AD^2u)$$
the Pucci extremal operators; see \cite{CC95} or \cite{FR22} for their properties.

In our work, we will consider the following notion of solutions.
\begin{defn}\label{defn:strong_solns}
Let $f \in L^{n+1}_{\mathrm{loc}}$. We say $u \in C^0\cap L^{n+1}_{\mathrm{loc}}$ is a \textit{strong} solution to
$$u_t - \L u = f$$
if $D^2_xu, \p_tu \in L^{n+1}_{\mathrm{loc}}$, and the equation holds almost everywhere. The condition $u \in C^0$ is redundant, but we write it to fix ideas, see \cite[Theorem 10.4]{BIN78}.
\end{defn}

\vspace{0.1cm}

We will consider parabolic Lipschitz domains of the following form.
\begin{defn}\label{defn:lipschitz_domain}
We say $\Omega$ is a parabolic Lipschitz domain in $Q_R$ with Lipschitz constant $L$ if $\Omega$ is the epigraph of a parabolic Lipschitz function ${\Gamma : B_R'\times[-R^2,0] \to \R}$, with $\Gamma(0,0) = 0$:
$$\Omega = \big\{(x,t) \in Q_R \ | \ x_n > \Gamma(x',t) \big\}, \quad \|\Gamma\|_{C_p^{0,1}} \leq L.$$
In this context, we will denote the lateral boundary
$$\p_\Gamma\Omega := \big\{(x,t) \in Q_R \ | \ x_n = \Gamma(x',t) \big\},$$
and the parabolic boundary
$$\p_p\Omega := \p_\Gamma\Omega\cup\big(\overline{\Omega}\cap\p Q_R\cap\{t < 0\}\big).$$
\end{defn}

\subsection{Technical tools} Let us start with the parabolic interior Harnack.
\vspace{0.1cm}

\begin{thm}[\protect{\cite[Theorem 4.18]{Wan92a}}]\label{thm:interior_Harnack}
Let $\L$ be a non-divergence form operator as in \eqref{eq:non-divergence_operator}, and let $u$ be a strong solution to $u_t - \L u = 0$ in $Q_r$, with $u \geq 0$.

Then,
$$\sup\limits_{Q_{r/2}\left(0,-\frac{r}{2}\right)}u \leq C\inf\limits_{Q_{r/2}}u,$$
where $C$ depends only on the dimension and the ellipticity constants.
\end{thm}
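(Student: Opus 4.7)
This is the classical Krylov--Safonov parabolic Harnack inequality for non-divergence form operators with bounded measurable coefficients. By the usual scaling $u_r(x,t) := u(rx,r^2t)$, which preserves the class of operators \eqref{eq:non-divergence_operator} and maps $Q_r$ to $Q_1$, it suffices to treat the case $r = 1$. I will follow the Krylov--Safonov machinery, proving in order: a parabolic Alexandrov--Bakelman--Pucci (ABP) estimate, a measure (growth) lemma, a weak Harnack inequality, and finally the full Harnack by combining with a local maximum principle applied on the past.

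The first step is the parabolic ABP estimate: if $w$ satisfies $w_t - \mathcal{M}^+(D^2 w) \leq f$ in a cylinder $Q$ and $w \leq 0$ on $\partial_p Q$, then $\sup Q\, w^+$ is controlled by $\|f\|_{L^{n+1}}$ taken over the parabolic contact set where $w$ coincides with its monotone concave envelope. This is where the natural integrability exponent $n+1$ (coming from $n$ space plus $1$ time) enters. The second step is the measure estimate: one constructs an explicit barrier (based on a polynomial of the form $|x|^{-p}$ cut off appropriately in time) and combines it with the ABP to show that if $u \geq 0$ solves $u_t - \mathcal{L}u = 0$ in $Q_1$ and $\inf_{Q_{1/2}}u \leq 1$, then for some dimensional $\mu \in (0,1)$ and $\eta > 0$, the set $\{u > M\}\cap Q$ has measure at most $(1-\mu)|Q|$ for $M$ large inside a cylinder shifted in time into the past. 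Equivalently, a pointwise lower bound at one point propagates to a positive-measure lower bound in a shifted cylinder.

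The third step is the key iteration: one applies a parabolic Calderón--Zygmund cube decomposition, accommodating the parabolic scaling $|x|^2 \sim |t|$, to upgrade the measure estimate into an exponential decay $|\{u > M^k\}\cap Q| \leq (1-\mu)^k |Q|$, which by summation gives a weak $L^p$ bound: $\|u\|_{L^p(Q^-)} \leq C \inf_{Q_{1/2}} u$ for a small $p = p(n,\lambda,\Lambda) > 0$, where $Q^-$ is a parabolic cylinder located in the past. The fourth step is a local maximum principle on $Q_{1/2}(0,-1/2)$: combining Theorem 4.18-style barrier arguments with the ABP again, one bounds $\sup_{Q_{1/2}(0,-1/2)} u$ by $\|u\|_{L^p(Q^-)}$ for any $p > 0$. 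Chaining this with the weak Harnack yields the claimed inequality.

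The subtle point — the main obstacle in the parabolic setting, which distinguishes it from the elliptic case — is the time lag between where the supremum and the infimum are taken. In the elliptic setting one can connect any two points by a chain of balls, but parabolically one cannot propagate information from a later time backwards, so the parabolic Calderón--Zygmund decomposition must be tuned so that each iteration of the measure estimate shifts the relevant cylinder strictly into the past. This forces the careful choice $Q_{r/2}(0,-r/2)$ versus $Q_{r/2}$ in the statement and is precisely where the assumption $u \geq 0$ and the non-divergence structure interact. Since this is Theorem 4.18 of \cite{Wan92a}, I would simply cite the result; the sketch above records what is behind the citation.
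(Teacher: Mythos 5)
Your proposal is correct and matches the paper's treatment: the paper states this as a cited classical result (\cite[Theorem 4.18]{Wan92a}) with no proof of its own, and your Krylov--Safonov sketch accurately records the standard argument behind that citation, including the essential time shift placing the supremum cylinder in the past.
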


Then, we recall the Alexandrov-Bakelman-Pucci-Krylov-Tso estimate; see \cite{Kry76,Tso85}.

\begin{thm}\label{thm:ABPKT}
Let $\L$ be a non-divergence form operator as in \eqref{eq:non-divergence_operator} and let $u$ be a strong solution to $u_t - \L u = f$ in $Q_r$, with $f \in L^{n+1}(Q_r)$.

Then,
$$\sup\limits_{Q_r} u \leq \sup\limits_{\p_p Q_r} u^+ + Cr^{n/(n+1)}\|f\|_{L^{n+1}(Q_r)},$$
where $C$ depends only on the dimension and the ellipticity constants.
\end{thm}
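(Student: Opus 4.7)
This is the classical parabolic Alexandrov-Bakelman-Pucci (Krylov-Tso) estimate, and the plan is to adapt the space-time contact set argument from the original papers. By approximation I would first reduce to the case that $u$ is smooth and $f$ is continuous. Subtracting $\sup_{\p_pQ_r} u^+$ from $u$ reduces to the case $u \le 0$ on $\p_pQ_r$ with $M := \sup_{Q_r} u > 0$; the goal becomes to show
\[
M \;\lesssim\; r^{n/(n+1)}\, \|f^+\|_{L^{n+1}(Q_r)}.
\]

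The key object is the upper contact set
\[
\mathcal{C} := \bigl\{(x_0,t_0)\in Q_r : \exists\, p\in\R^n \text{ with } u(y,s)\le u(x_0,t_0) + p\cdot(y-x_0) \text{ for all } (y,s)\in Q_r,\ s\le t_0\bigr\}.
\]
At each point of $\mathcal{C}$, $u(\cdot,t_0)$ is touched from above by an affine function at $x_0$, so $D^2u\le 0$, and $t\mapsto u(x_0,t)$ is non-decreasing at $t_0$, so $u_t\ge 0$. Combining these with $u_t - \L u = f$ gives at such points $\L u \le 0$, and hence
\[
\lambda \sum_i |\mu_i| \;\le\; -\L u \;=\; f - u_t \;\le\; f^+, \qquad 0\le u_t \le f^+,
\]
where $\mu_1,\ldots,\mu_n\le 0$ are the eigenvalues of $D^2u$. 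The arithmetic-geometric mean inequality applied to the $n+1$ non-negative numbers $|\mu_1|,\ldots,|\mu_n|,u_t$ then yields
\[
\det(-D^2u)\cdot u_t \;\le\; C\,(f^+)^{n+1}.
\]

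The second ingredient is an area formula. I would consider the parabolic Legendre-type map $\Phi:\mathcal{C}\to\R^{n+1}$ defined by
\[
\Phi(x,t) = \bigl(\nabla u(x,t),\ u(x,t)-\nabla u(x,t)\cdot x\bigr).
\]
A direct Schur-complement computation shows that the Jacobian of $\Phi$ equals $\det(D^2u)\cdot u_t$, i.e., $|\det D^2u|\cdot u_t$ in absolute value. A sweeping argument then shows that for a range of parameters $(p,h)\in\R^n\times\R$ of Lebesgue measure $\gtrsim M^{n+1}/r^n$ (roughly $|p|\lesssim M/r$, $h\sim M$), the affine function $y\mapsto p\cdot y + h$, initially strictly above $u$ on all of $Q_r$, first touches the graph of $u$ at some interior point $(x_0,t_0)\in\mathcal{C}$ with $\Phi(x_0,t_0)=(p,h)$. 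The area formula combined with the pointwise bound above then gives
\[
\frac{M^{n+1}}{r^n} \;\lesssim\; \int_\mathcal{C} |\det D^2u| \cdot u_t\, dx\, dt \;\lesssim\; \int_{Q_r}(f^+)^{n+1}\, dx\, dt,
\]
which is the desired estimate after extracting $(n+1)$-th roots.

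The main obstacle is the sweeping / area-covering step: identifying precisely the range of $(p,h)$ that forces a first contact strictly inside $Q_r$ (using $u\le 0$ on $\p_pQ_r$ together with $\sup u = M$ to prevent first contact on the boundary), and showing that $\Phi$ behaves like a proper change of variables on the measurable set $\mathcal{C}$. For general strong solutions the argument is carried out via approximation, exploiting the $L^{n+1}$ integrability of $D^2u$ and $u_t$ from Definition \ref{defn:strong_solns}. The appearance of $L^{n+1}$ rather than the $L^n$ of elliptic ABP is dictated precisely by the fact that $n+1$ factors—the $n$ spatial eigenvalues and the time derivative—enter the Jacobian.
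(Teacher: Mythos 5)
Your sketch reproduces the classical Krylov--Tso contact-set argument, which is exactly what the paper relies on: Theorem \ref{thm:ABPKT} is quoted without proof, with a citation to \cite{Kry76,Tso85}, and those references prove it precisely via the parabolic upper contact set, the map $\Phi(x,t)=(\nabla u, u-\nabla u\cdot x)$ with Jacobian $\det(D^2u)\,u_t$, the AM--GM bound through the equation, and the sweeping/measure estimate $|\Phi(\mathcal C)|\gtrsim M^{n+1}/r^n$. The outline is correct (including the scaling $r^{n/(n+1)}$ and the role of the $n+1$ factors), so the only work left is the standard bookkeeping you already flag: the admissible range of $(p,h)$ and the approximation needed to apply the area formula to strong solutions.
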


We will state together the interior regularity estimates for the heat equation and for equations with bounded measurable coefficients.

\begin{thm}\label{thm:interior_Calpha}
Let $\L$ be a non-divergence form operator as in \eqref{eq:non-divergence_operator}, let $\alpha \in (0,\alpha_0)$, and let $u$ be a strong solution to $u_t - \L u = f$ in $Q_r$, with $f \in L^{n+1}(Q_r)$.

Then,
$$[u]_{C^{0,\alpha}_p(Q_{r/2})} \leq C(r^{-\alpha}\|u\|_{L^\infty(Q_r)} + r^{n/(n+1)-\alpha}\|f\|_{L^{n+1}(Q_r)}),$$
where $C$ depends only on $\alpha$, the dimension and the ellipticity constants.
\end{thm}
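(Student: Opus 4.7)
The plan is to reduce the estimate to unit scale ($r = 1$) by parabolic scaling and then invoke the interior Hölder regularity results already named in the definition of $\alpha_0(\lambda,\Lambda)$ in Section~\ref{sect:intro}.

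First I would set $\tilde u(x,t) := u(rx, r^2 t)$ for $(x,t) \in Q_1$. Then $\tilde u$ is a strong solution of $\tilde u_t - \tilde{\mathcal L}\tilde u = \tilde f$ in $Q_1$, where $\tilde{\mathcal L}$ is a non-divergence form operator sharing the ellipticity constants $\lambda,\Lambda$ of $\mathcal L$, and $\tilde f(x,t) := r^2 f(rx, r^2 t)$. Elementary changes of variables yield
$$\|\tilde u\|_{L^\infty(Q_1)} = \|u\|_{L^\infty(Q_r)}, \qquad \|\tilde f\|_{L^{n+1}(Q_1)} = r^{2 - (n+2)/(n+1)} \|f\|_{L^{n+1}(Q_r)} = r^{n/(n+1)} \|f\|_{L^{n+1}(Q_r)},$$
and $[\tilde u]_{C^{0,\alpha}_p(Q_{1/2})} = r^\alpha [u]_{C^{0,\alpha}_p(Q_{r/2})}$. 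Therefore it suffices to prove the unit-scale estimate
$$[\tilde u]_{C^{0,\alpha}_p(Q_{1/2})} \leq C \bigl(\|\tilde u\|_{L^\infty(Q_1)} + \|\tilde f\|_{L^{n+1}(Q_1)}\bigr),$$
from which the general case follows immediately by undoing the scaling and rearranging.

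For a general non-divergence operator $\mathcal L$, this unit-scale estimate with exponent $\alpha_0$ is precisely the Krylov--Safonov-type interior Hölder estimate \cite[Lemma 5.1]{CKS00}, cited in Section~\ref{sect:intro} as one of the two defining ingredients of $\alpha_0(\lambda,\Lambda)$; since $\alpha < \alpha_0$ this gives the desired bound. For $\mathcal L = \Delta$ with $\alpha_0 := 1$, the analogous unit-scale estimate for any $\alpha < 1$ follows from classical parabolic Schauder / $W^{2,p}$ Calderón--Zygmund theory, for instance by decomposing $\tilde u = v + w$, where $w$ is the Cauchy--Dirichlet solution of the inhomogeneous heat equation on $Q_{3/4}$ with zero parabolic boundary data (absorbing $\tilde f$) and $v$ is a caloric function smooth in the interior. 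The only mildly delicate point is keeping track of the parabolic scaling factor $r^{n/(n+1)}$ attached to $\|f\|_{L^{n+1}}$, which arises from balancing the parabolic dimension $n+2$ of $Q_r$ against the integrability exponent $n+1$; no new ideas are needed beyond the cited interior regularity theory.
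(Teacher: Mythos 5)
Your argument is essentially the paper's proof: reduce to $r=1$ by parabolic scaling (your scaling identities for $\|\tilde f\|_{L^{n+1}}$ and for the seminorm are exactly right), invoke \cite[Lemma 5.1]{CKS00} for general non-divergence operators, and use parabolic Calder\'on--Zygmund theory for the Laplacian. The paper's version of the Laplacian step is only marginally more specific: it applies \cite[Theorem 6]{Wan03} to get $\|D^2u\|_{L^{n+1}(Q_{1/2})}+\|u_t\|_{L^{n+1}(Q_{1/2})}\leq C(\|u\|_{L^\infty(Q_1)}+\|f\|_{L^{n+1}(Q_1)})$ and then concludes by the parabolic Sobolev embedding \cite[Theorem 1.4.1 (ii)]{WYW06}, rather than your caloric-plus-Dirichlet decomposition; these are interchangeable.

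One point deserves a caveat. In the Laplacian case you assert the unit-scale $C^{0,\alpha}_p$ bound ``for any $\alpha<1$'' from $W^{2,1}_{n+1}$ theory. But the parabolic Sobolev embedding from $D^2_xu,\,u_t\in L^{p}$ gives $u\in C^{0,\beta}_p$ only with $\beta=2-(n+2)/p$, which for $p=n+1$ is $\beta=n/(n+1)<1$; a potential-theoretic computation with the heat kernel shows this exponent is sharp for $f\in L^{n+1}$. So this route delivers the stated estimate only for $\alpha\leq n/(n+1)$, not for all $\alpha<\alpha_0=1$. This limitation is identical in the paper's own proof (which relies on the same embedding), so your argument is no weaker than the published one, but if you intend the full range $\alpha\in(n/(n+1),1)$ you would need either more integrability on $f$ or a different argument; as written, the claim ``any $\alpha<1$ follows from $W^{2,p}$ Calder\'on--Zygmund theory'' overshoots what that theory provides.
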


\begin{proof}
    Let us write the proof for $r = 1$, as the dependence on $r$ follows by scaling.

    First, when $\L$ is the Laplacian, we apply the Calderón-Zygmund estimates in \cite[Theorem 6]{Wan03} to obtain:
    $$\|D^2u\|_{L^{n+1}(Q_{1/2})} + \|u_t\|_{L^{n+1}(Q_{1/2})} \leq C(\|u\|_{L^\infty(Q_1)} + \|f\|_{L^{n+1}(Q_1)}).$$
    The conclusion follows via the parabolic Sobolev embedding in \cite[Theorem 1.4.1 (ii)]{WYW06}.

    In the case of operators with coefficients, this result is \cite[Lemma 5.1]{CKS00}.\footnote{The result is actually stated for $L^p$-viscosity solutions, but it automatically extends to strong solutions. See the introduction of \cite{CKS00}.}
\end{proof}

We also need the following covering result  (cf. \cite[Lemma B.2]{Kuk22}).

\begin{lem}\label{lem:teo_b2}
Let $\alpha \in (0,1)$, and let $\Omega$ be a parabolic Lipschitz domain in $Q_1$ with Lipschitz constant $\frac{1}{8}$, in the sense of Definition \ref{defn:lipschitz_domain}.

Assume that $u : \Omega\cap Q_1 \to \R$ satisfies
$$[u]_{C^{0,\alpha}_p(Q_r(x_0,t_0))} \leq C_0$$
whenever $Q_{2r}(x_0,t_0) \subset \Omega\cap Q_1$. Then, for any $0 < \sigma < 1$,
$$[u]_{C^{0,\alpha}_p(\Omega\cap Q_\sigma)} \leq CC_0.$$
The constant $C$ depends only on $\alpha$ and $\sigma$.
\end{lem}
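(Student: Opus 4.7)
The plan is a Whitney-type chaining argument tailored to the parabolic setting, using the smallness $L \leq \frac{1}{8}$ to guarantee that every interior point has a good-sized admissible cylinder around it. For $(z,\tau) \in \Omega \cap Q_1$, let $\delta(z,\tau)$ denote the largest $r > 0$ with $Q_{2r}(z,\tau) \subset \Omega \cap Q_1$. Because $L \leq \frac{1}{8}$, $\delta(z,\tau)$ is comparable to $\min\{z_n - \Gamma(z',\tau),\,\operatorname{dist}_p((z,\tau),\partial_p Q_1)\}$ up to a universal constant, and the hypothesis immediately gives $[u]_{C^{0,\alpha}_p(Q_{\delta(z,\tau)}(z,\tau))} \leq C_0$.

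Given $(x,t),(y,s)\in\Omega\cap Q_\sigma$ with $\rho := |x-y|+|t-s|^{1/2}$, I would split into two cases. If $\rho \leq \frac{1}{2}\max\{\delta(x,t),\delta(y,s)\}$, both points lie in a single admissible cylinder and the Hölder bound yields $|u(x,t)-u(y,s)| \leq C_0\rho^\alpha$ directly. Otherwise, both points are close to $\partial_p(\Omega \cap Q_1)$ compared to $\rho$, and we chain upward in the $e_n$-direction: from $(y,s)$, set $y^{(k)} := y + (2^k-1)\delta(y,s)\,e_n$, so that $\delta(y^{(k)},s) \sim 2^k\delta(y,s)$ and consecutive points $y^{(k)}, y^{(k+1)}$ share a common admissible cylinder. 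Each step contributes at most $C_0(2^k\delta(y,s))^\alpha$, and because $\alpha > 0$ the geometric series with ratio $2^\alpha$ sums up to the stopping index $N$ with $2^N\delta(y,s) \sim \rho$ to give $|u(y,s)-u(y^{(N)},s)| \leq CC_0\rho^\alpha$. Performing the same construction from $(x,t)$ produces a deep endpoint at depth $\sim\rho$, and one final application of the interior Hölder bound on a bridging cylinder of radius $\sim\rho$ joins the two deep points, yielding $|u(x,t)-u(y,s)| \leq CC_0\rho^\alpha$.

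The main obstacle is ensuring every cylinder in the chain actually fits inside $\Omega\cap Q_1$, which is the hypothesis we are allowed to invoke. When $\rho \lesssim 1-\sigma$ this is automatic: the total vertical excursion along each chain is $O(\rho)$, the starting points lie in $Q_\sigma$, and the smallness of $L$ controls the slope of the lateral boundary, so the chain stays in $Q_1$. When $\rho \gtrsim 1-\sigma$, I would cap each chain at depth $\sim 1-\sigma$ and join the two resulting endpoints through a fixed deep reference point such as $(e_n/2,-1/2)$, where $\delta$ is a universal constant, by one additional application of the interior Hölder estimate on a macroscopic cylinder of bounded size. This case absorbs a factor of $(1-\sigma)^{-\alpha}$ into the final constant, which is precisely the $\sigma$-dependence appearing in the statement. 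The remaining work is purely book-keeping of geometric constants ensuring that the doubled cylinder $Q_{2r}$ required in the hypothesis fits inside $\Omega\cap Q_1$ at every step, which is where $L\leq\frac{1}{8}$ enters quantitatively.
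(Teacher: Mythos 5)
Your chaining argument is precisely the ``standard covering argument'' that the paper itself invokes for this lemma (it cites \cite[Appendix A]{FR22} and gives no details), and the structure is sound: escape the lateral boundary geometrically in the $e_n$-direction, sum the resulting series in $2^{k\alpha}$, bridge at scale $\rho$ (centering the bridging cylinder at the later of the two times, since $Q_r$ only looks backward and $|t-s|\leq\rho^2$), and handle $\rho\gtrsim 1-\sigma$ by capping and joining through the deep interior with a $\sigma$-dependent constant. One quantitative adjustment is needed in the chain: with $y^{(k+1)}-y^{(k)}=2^k\delta\,e_n$ the largest admissible radius at $y^{(k+1)}$ is roughly $0.4\,(2^{k+1}+1)\delta\approx 0.8\cdot 2^k\delta$, which is smaller than the gap $2^k\delta$, so consecutive dyadic points do not quite share a single admissible cylinder when $L=\tfrac18$; replacing the ratio $2$ by a smaller factor (or inserting a bounded number of intermediate points per dyadic step) fixes this without affecting the convergence of the geometric series.
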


The proof follows by a standard covering argument (see \cite[Appendix A]{FR22}). In addition, the following convergence result for limits of solutions will be useful to make contradiction-compactness arguments.
\begin{prop}[\protect{\cite[Theorem 6.1]{CKS00}}]\label{prop:viscosity_limit}
Let $f_k \in L^{n+1}(\Omega)$ and let $u_k$ be solutions to
$$(\p_t - \Mplus)u_k \leq f_k \leq (\p_t - \Mminus)u_k \quad \text{in} \ \Omega,$$
such that $u_k \rightarrow u_0$ locally uniformly, and that for every $K \subset\subset \Omega$, $\|f_k\|_{L^{n+1}(K)} \rightarrow 0$.

Then,
$$(\p_t - \Mplus)u_0 \leq 0 \leq (\p_t - \Mminus)u_0$$
in the viscosity sense.
\end{prop}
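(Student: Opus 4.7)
\textbf{Proof sketch of Proposition \ref{prop:viscosity_limit}.}
The plan is to run the standard stability argument for viscosity solutions, adapted to the strong-solution setting by replacing the pointwise maximum-principle step with the ABP--Krylov--Tso estimate (Theorem \ref{thm:ABPKT}). I will only argue the subsolution inequality $(\p_t - \Mplus)u_0 \leq 0$; the supersolution side is symmetric, with $\Mminus$ and minima replacing $\Mplus$ and maxima.

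Suppose for contradiction that $u_0$ fails to be a viscosity subsolution at some $(x_0,t_0) \in \Omega$. Then there exists a $C^2$ function $\varphi$ such that $u_0 - \varphi$ has a strict local maximum at $(x_0,t_0)$ with $(\p_t\varphi - \Mplus(D^2\varphi))(x_0,t_0) = 3\delta > 0$. By continuity of the derivatives of $\varphi$, the inequality $\p_t\varphi - \Mplus(D^2\varphi) \geq \delta$ persists on a parabolic cylinder $Q_r := Q_r(x_0,t_0) \subset\subset \Omega$. Shifting $\varphi$ by a constant, I can arrange $(u_0 - \varphi)(x_0,t_0) = \eta_0 > 0$ while $u_0 - \varphi \leq -\eta_0$ on $\p_p Q_r$ (using strictness and compactness of $\p_p Q_r$). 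Local uniform convergence $u_k \to u_0$ then gives, for $k$ large,
$$\sup_{Q_r}(u_k - \varphi) \geq \eta_0/2 \quad\text{and}\quad (u_k - \varphi)^+ \equiv 0 \text{ on } \p_p Q_r.$$

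Set $w_k := u_k - \varphi$. The sub-additivity $\Mplus(A) - \Mplus(B) \leq \Mplus(A-B)$, combined with the a.e.\ hypothesis $(\p_t - \Mplus) u_k \leq f_k$ and the $C^2$ regularity of $\varphi$, yields
$$\p_t w_k - \Mplus(D^2 w_k) \leq f_k - \bigl(\p_t\varphi - \Mplus(D^2\varphi)\bigr) \leq f_k - \delta \quad\text{a.e.\ in } Q_r.$$
Applying Theorem \ref{thm:ABPKT} to the subsolution $w_k$ on $Q_r$,
$$\eta_0/2 \;\leq\; \sup_{Q_r} w_k \;\leq\; \sup_{\p_p Q_r} w_k^+ + C r^{n/(n+1)} \|(f_k - \delta)^+\|_{L^{n+1}(Q_r)} \;=\; C r^{n/(n+1)} \|(f_k - \delta)^+\|_{L^{n+1}(Q_r)}.$$
Since $(f_k - \delta)^+ \leq f_k^+$ pointwise and $\|f_k\|_{L^{n+1}(Q_r)} \to 0$ by hypothesis, the right-hand side tends to $0$, contradicting the fixed positive bound $\eta_0/2$.

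The key technical obstacle is that $u_k$ is only a strong (a.e.) or $L^{n+1}$-viscosity solution, so the classical one-point test-function argument, which would directly conclude $(\p_t\varphi - \Mplus(D^2\varphi))(x_k,t_k) \leq f_k(x_k,t_k)$ at a touching point, is not available. The ABP--Krylov--Tso estimate is precisely the integral substitute for that pointwise step, and the assumption of $L^{n+1}$-smallness of $f_k$ (rather than mere pointwise convergence) is exactly what is needed to kill the error term and close the contradiction.
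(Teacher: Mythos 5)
The paper does not prove this proposition; it is quoted from \cite[Theorem 6.1]{CKS00}, so there is no internal proof to compare against. Your argument is the standard stability-by-contradiction proof and it is correct: the touching test function with $\p_t\varphi-\Mplus(D^2\varphi)\geq\delta>0$ on a small backward cylinder, the Pucci sub-additivity $\Mplus(A)-\Mplus(B)\leq\Mplus(A-B)$ to transfer the differential inequality to $w_k=u_k-\varphi$, and the ABP--Krylov--Tso estimate to convert the $L^{n+1}$-smallness of $f_k$ into a contradiction with $\sup_{Q_r}w_k\geq\eta_0/2$ while $w_k^+\equiv0$ on $\p_pQ_r$. This is essentially the mechanism behind the cited result, specialized to the case where the $u_k$ are strong ($W^{2,1}_{n+1,\loc}$) solutions, which is the only case the paper uses; the version in \cite{CKS00} is stated for $L^p$-viscosity solutions and varying operators and is correspondingly more technical. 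Two small points you should make explicit if this were written out in full: (i) Theorem \ref{thm:ABPKT} is stated for equations $u_t-\L u=f$ with a fixed linear operator, so to apply it to $w_k$ you should observe that a.e.\ one may select a measurable matrix $A(x,t)$ with $\lambda I\leq A\leq\Lambda I$ realizing $\Mplus(D^2w_k)$, reducing the Pucci inequality to a linear one with right-hand side bounded above by $(f_k-\delta)^+$ (the estimate is one-sided, so only the positive part of the right-hand side enters); (ii) the strictness of the maximum is arranged by perturbing $\varphi$ with terms like $|x-x_0|^4+(t_0-t)^2$, which vanish to second order at $(x_0,t_0)$ and hence do not disturb the inequality $\p_t\varphi-\Mplus(D^2\varphi)=3\delta$ there. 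Neither point is a gap, just a detail to record.
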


Finally, a key step in our proofs is to classify certain solutions in special domains. We will do so with the following Liouville theorem for the half-space, that follows from standard arguments from the boundary regularity estimate \mbox{\cite[Theorem 2.1]{Wan92b}}.
\begin{thm}\label{thm:liouville}
Let $\L$ be a non-divergence form operator as in \eqref{eq:non-divergence_operator}, let $\alpha \in (0,\alpha_0)$ and let $u$ be a solution to
$$\left\{\begin{array}{rccclll}
(\p_t - \Mplus)u & \leq & 0 & \leq & (\p_t - \Mminus)u & \text{in} & \{x_n > 0\}\\
u & = & 0 &&& \text{in} & \{x_n \leq 0\}
\end{array}\right.$$
with the growth condition
$$\|u\|_{L^\infty(Q_R)} \leq C(1+R^{1+\alpha}), \ \forall R \geq 1.$$
Then, $u = k(x_n)_+$ for some $k \in \R$.
\end{thm}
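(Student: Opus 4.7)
The plan is to set up an iterated flatness estimate at the flat boundary $\{x_n=0\}$, using the boundary $C^{1,\alpha_0}$ regularity of \cite[Theorem 2.1]{Wan92b} as the one-scale input, and then to let the scale go to infinity to kill the remainder. Concretely, for $R>0$ define
$$M(R) := \inf_{k\in\R}\,\|u - k(x_n)_+\|_{L^\infty(Q_R)}.$$
The goal is to show $M(R)=0$ for every $R$, from which the theorem follows: the vanishing of $M(R)$ gives $u = k_R(x_n)_+$ in $Q_R$, and then the slopes $k_R$ must coincide on nested cubes, yielding a single $k\in\R$ with $u=k(x_n)_+$ throughout $\R^n\times\R$.

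The key step is the one-scale flatness improvement. Fix $R\geq 1$, let $k_R$ be a near-minimizer in the definition of $M(R)$, and rescale
$$v_R(y,s) := \frac{u(Ry,R^2s) - k_R(Ry_n)_+}{M(R)},\qquad (y,s)\in Q_1.$$
Parabolic scaling and the fact that $k_Rx_n$ lies in the kernel of both Pucci operators (and has zero time derivative) show that $v_R$ satisfies the same Pucci inequalities in $Q_1\cap\{y_n>0\}$ and still vanishes on $\{y_n\leq 0\}$, with $\|v_R\|_{L^\infty(Q_1)}\leq 1$. Applying \cite[Theorem 2.1]{Wan92b} gives $a_R\in\R$ with $|a_R|\leq C$ and
$$\bigl|v_R(y,s)-a_R(y_n)_+\bigr| \leq C\bigl(|y|+|s|^{1/2}\bigr)^{1+\alpha_0},\qquad (y,s)\in Q_{1/2}.$$
Unscaling yields, for every $\rho\in(0,\tfrac12]$,
$$\Bigl\|u - \bigl(k_R + a_R M(R)/R\bigr)(x_n)_+\Bigr\|_{L^\infty(Q_{R\rho})} \leq C\,M(R)\,\rho^{1+\alpha_0},$$
and hence
$$M(R\rho)\leq C\,\rho^{1+\alpha_0}\,M(R),\qquad 0<\rho\leq\tfrac12. \qquad(\ast)$$

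Now I exploit the growth hypothesis. Trivially $M(R)\leq \|u\|_{L^\infty(Q_R)}\leq C(1+R^{1+\alpha})$, so for any fixed $r\geq 1$, applying $(\ast)$ with $\rho=r/R$ (for $R\geq 2r$) gives
$$M(r)\leq C\,(r/R)^{1+\alpha_0}\,M(R) \leq C\,r^{1+\alpha_0}\,R^{\alpha-\alpha_0}.$$
Since $\alpha<\alpha_0$, the right-hand side tends to $0$ as $R\to\infty$, forcing $M(r)=0$ for every $r\geq 1$; the same conclusion for $r<1$ is then immediate from $M(r)\leq M(1)=0$. Consequently, on each $Q_r$ there exists $k_r\in\R$ with $u\equiv k_r(x_n)_+$; comparing on $Q_r\cap Q_{r'}$ gives $k_r=k_{r'}$, so $u=k(x_n)_+$ in all of $\R^{n+1}$, as desired.

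The one genuinely delicate point is checking that the boundary $C^{1,\alpha_0}$ estimate is truly applicable to $v_R$, i.e.\ that subtracting the linear profile $k_R x_n$ does not destroy either Pucci inequality (it does not, since $k_R x_n$ is caloric for every choice of matrix) and that the parabolic scaling is the right one ($y=x/R$, $s=t/R^2$, no extra factor in front of $v_R$ beyond the normalization by $M(R)$). Everything else is bookkeeping, and the gap $\alpha<\alpha_0$ is exactly what is needed to close the iteration against the prescribed growth $R^{1+\alpha}$.
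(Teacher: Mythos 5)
Your proof is correct, and it rests on exactly the same key ingredient as the paper's: the boundary $C^{1,\alpha_0}$ estimate of \cite[Theorem 2.1]{Wan92b}, played against the growth exponent $\alpha<\alpha_0$ after parabolic rescaling. The difference is in how the conclusion is extracted. The paper rescales $u_r=u(rx,r^2t)/r^{1+\alpha}$, bounds the full seminorm $[u]_{C^{1,\alpha_0}(Q_r)}\leq Cr^{\alpha-\alpha_0}$, and lets $r\to\infty$ to conclude that the seminorm vanishes identically, so $u$ is affine; this requires combining the boundary estimate with interior regularity to control the seminorm on all of $Q_1$. You instead run an improvement-of-flatness scheme for $M(R)=\inf_k\|u-k(x_n)_+\|_{L^\infty(Q_R)}$, using only the \emph{pointwise} $C^{1,\alpha_0}$ expansion at the single boundary point $(0,0)$, and propagate the decay $(\ast)$ from scale $R$ down to scale $r$ before sending $R\to\infty$. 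This is arguably the more robust packaging: for viscosity solutions of the two Pucci inequalities (the class relevant here), interior regularity is only $C^{0,\alpha_0}$, so a genuine interior $C^{1,\alpha_0}$ seminorm bound is not available for general $\L$, whereas the pointwise boundary expansion you invoke is precisely what Wang's theorem provides. Your verification that subtracting $k_Rx_n$ preserves both Pucci inequalities, and that the near-minimizer normalization keeps the constants uniform in $R$, are the right points to check; the only cosmetic caveat is that you should note the trivial case $M(R)=0$ before dividing by it, and that the natural domain is $\R^n\times(-\infty,0]$ rather than all of $\R^{n+1}$ (an imprecision the paper shares).
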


\begin{proof}
First, observe that, for every $r \geq 1$,
$$u_r(x,t) := \frac{u(rx,r^2t)}{r^{1+\alpha}}$$
also satisfies the hypotheses.

Now, from \cite[Theorem 2.1]{Wan92b} and interior regularity estimates (in the case of the heat equation, by $C^2$ boundary regularity estimates) it follows that
$$[u]_{C^{1,\alpha_0}(Q_r)} = r^{\alpha-\alpha_0}[u_r]_{C^{1,\alpha_0}(Q_1)} \leq Cr^{\alpha-\alpha_0}\|u_r\|_{L^\infty(Q_2)} \leq Cr^{\alpha-\alpha_0},$$
and then letting $r \rightarrow \infty$ we deduce that
$$[u]_{C^{1,\alpha_0}(\R^{n+1})} = 0,$$
and therefore $u$ is a linear function. From the boundary conditions, we deduce that $u = k(x_n)_+$.
\end{proof}

\section{Boundary growth and regularity estimates}\label{sect:bdry_growth_reg}
The main goal of this section is to prove that for sufficiently flat domains, solutions that vanish on the boundary of the domain grow like $d^{1\pm\varepsilon}$ and are $C^{0,\gamma}_p$.

\begin{prop}\label{prop:bdry_Calpha}
Let $\gamma \in (0,\alpha_0)$. There exists $L_0 > 0$, depending only on $\gamma$, the dimension and the ellipticity constants, such that the following holds.

Let $\Omega$ be a parabolic Lipschitz domain in $Q_1$ as in Definition \ref{defn:lipschitz_domain} with Lipschitz constant $L \leq L_0$. Let $d(x',x_n,t) = x_n - \Gamma(x',t)$, and let $u\in C(Q_1)$ be a viscosity solution to
\[ u_t - \Mminus u \ge -K_0(d^{\gamma-2}+f) \quad \mbox{and}\quad u_t - \Mplus u \le K_0(d^{\gamma-2}+f) \quad \mbox{in }\Omega\cap Q_1,\]
\[ u= 0 \quad \mbox{in }\p_\Gamma\Omega\cap Q_1.\]
Assume that $\|d^{-(\gamma-n/(n+1))_+}f\|_{L^{n+1}(Q_1)} \leq 1$. Then,
\[\|u\|_{C^{0,\gamma}_p(Q_{1/2})}\leq C(\|u\|_{L^\infty(Q_1)} + K_0).\]
The constant $C$ depends only on $n$, $\gamma$ and the ellipticity constants.
\end{prop}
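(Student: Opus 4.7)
The approach is a contradiction--compactness (blow-up) argument. After normalizing $\|u\|_{L^\infty(Q_1)}+K_0\leq 1$, interior $C^{0,\gamma}_p$ estimates (Theorem~\ref{thm:interior_Calpha}) combined with the covering Lemma~\ref{lem:teo_b2} reduce the problem to proving the boundary growth bound
$$\sup_{Q_r(x_0,t_0)\cap\Omega}|u|\leq C r^{\gamma},\qquad (x_0,t_0)\in\p_\Gamma\Omega\cap\overline{Q_{3/4}},\ r\in(0,1/4).$$
Arguing by contradiction, suppose the bound fails. Then there exist domains $\Omega_k$ of Lipschitz constant $\leq L_0$, solutions $u_k$, points $(x_k,t_k)\in\p_\Gamma\Omega_k\cap\overline{Q_{3/4}}$ and radii $r_k$ with $\theta_k:=r_k^{-\gamma}\sup_{Q_{r_k}(x_k,t_k)}|u_k|\to\infty$; by the standard maximal-sup selection trick, I can assume that, for every $R\geq 1$ with $Rr_k\leq 1/4$,
$$\sup_{Q_{Rr_k}(x_k,t_k)}|u_k|\leq 2 R^{\gamma} r_k^{\gamma}\theta_k.$$

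Define the rescaling $w_k(x,t):=u_k(x_k+r_k x,t_k+r_k^{2}t)/(r_k^{\gamma}\theta_k)$ and let $d_{r_k}$ denote the distance function in the rescaled domain $\Omega_k'$, which still has Lipschitz constant $\leq L_0$. Then $\sup_{Q_1}|w_k|=1$, $\sup_{Q_R}|w_k|\leq 2R^{\gamma}$ for $R\geq 1$, and the $\Omega_k'$ converge (along a subsequence, locally in Hausdorff distance) to a global Lipschitz epigraph $\Omega_\infty$. A direct scaling computation shows that the $d^{\gamma-2}$ term in the equation for $w_k$ has coefficient $\theta_k^{-1}\to 0$ and stays bounded in $L^{n+1}(K)$ for every $K\subset\subset\Omega_\infty$; moreover the weight exponent $(\gamma-n/(n+1))_+$ in the hypothesis on $f$ is precisely tuned so that the rescaled $f$-term is invariant (or contracts) in its weighted $L^{n+1}$-norm, hence its contribution also vanishes because of the extra $\theta_k^{-1}$ factor. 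Interior Hölder estimates and the boundary Hölder estimate for Lipschitz domains with small constant give local equicontinuity of $\{w_k\}$ up to $\p\Omega_\infty$, and Proposition~\ref{prop:viscosity_limit} produces a locally uniform subsequential limit $w_\infty$ with $(\p_t-\Mplus)w_\infty\leq 0\leq (\p_t-\Mminus)w_\infty$ in $\Omega_\infty$, $w_\infty=0$ on $\p\Omega_\infty$, $\sup_{Q_1}|w_\infty|=1$ and $\sup_{Q_R}|w_\infty|\leq 2R^{\gamma}$.

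I then conclude with a Liouville theorem in the epigraph $\Omega_\infty$: since $\gamma<\alpha_0$ and $L_0$ is small enough for the boundary $C^{0,\alpha_0}_p$ estimate to apply in $\Omega_\infty$, scaling gives
$$[w_\infty]_{C^{0,\alpha_0}_p(\Omega_\infty\cap Q_R)}\leq C R^{-\alpha_0}\|w_\infty\|_{L^\infty(Q_{2R})}\leq C' R^{\gamma-\alpha_0}\to 0\quad\text{as}\ R\to\infty,$$
forcing $w_\infty$ to be constant and hence $w_\infty\equiv 0$ by the boundary condition, which contradicts $\sup_{Q_1}|w_\infty|=1$. The main obstacle is the singular right-hand side $d^{\gamma-2}$: since $d^{\gamma-2}\notin L^{n+1}$ globally for $\gamma<1$, I can only exploit its $L^{n+1}$-smallness on compact subsets of the interior of $\Omega_\infty$, which suffices because Proposition~\ref{prop:viscosity_limit} requires only $L^{n+1}_{\loc}$-convergence. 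A secondary subtlety is that the blow-up limit is a Lipschitz epigraph rather than the half-space of Theorem~\ref{thm:liouville}; this is exactly what the smallness of $L_0$ is for, ensuring that the boundary Hölder exponent in $\Omega_\infty$ exceeds $\gamma$ so that the Liouville step runs.
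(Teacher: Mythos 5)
Your reduction of the proposition to the boundary growth bound $\sup_{Q_r(x_0,t_0)\cap\Omega}|u|\leq Cr^\gamma$, followed by interior estimates and Lemma \ref{lem:teo_b2}, matches the second half of the paper's proof (its Case 1/Case 2 dichotomy), and your bookkeeping of how the $d^{\gamma-2}$ and weighted-$f$ terms scale under the blow-up is correct. The divergence is in how the growth bound itself is obtained: the paper proves it by explicit comparison with barriers of the form $Kd^{1-\varepsilon}-t+|x'|^2$ built from the regularized distance (Lemmas \ref{lem:barrier_super} and \ref{lem:barrier_super_abp}, assembled in Proposition \ref{prop:growth}), whereas you propose a contradiction--compactness argument ending in a Liouville theorem in the limiting Lipschitz epigraph.

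That last step is where there is a genuine gap, and it is a circularity. Your Liouville argument needs the bound $[w_\infty]_{C^{0,\alpha_0}_p(\Omega_\infty\cap Q_R)}\leq CR^{-\alpha_0}\|w_\infty\|_{L^\infty(Q_{2R})}$, i.e.\ a boundary H\"older estimate with exponent $\alpha_0>\gamma$ in a Lipschitz epigraph with small constant. No such estimate is available a priori: the paper's Theorem \ref{thm:liouville} (and the underlying \cite[Theorem 2.1]{Wan92b} estimate) applies only to the exact half-space $\{x_n>0\}$, and your blow-up limit $\Omega_\infty$ is \emph{not} a half-space, because the Lipschitz constant is scale-invariant and stays equal to that of $\Omega$ rather than tending to zero. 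The generic Krylov--Safonov boundary oscillation decay in a Lipschitz domain yields only a small universal exponent, which does not beat $\gamma$ when $\gamma$ is close to $\alpha_0$. The statement "solutions of the homogeneous problem in a Lipschitz epigraph with constant $L\leq L_0(\gamma)$ are $C^{0,\beta}$ up to the boundary for some $\beta>\gamma$" is precisely the homogeneous case of the proposition you are proving, and in this paper it is established exactly by the barrier construction you are trying to bypass. The same circularity infects the compactness step: to get equicontinuity of $\{w_k\}$ up to $\p\Omega_\infty$ (needed both so that $w_\infty=0$ on $\p\Omega_\infty$ and so that the normalization $\sup_{Q_1}|w_\infty|=1$ survives the limit, since $Q_1$ is centered at a boundary point) you invoke "the boundary H\"older estimate for Lipschitz domains with small constant" for equations with the singular right-hand side $\tilde d^{\gamma-2}$ --- again the unproven target. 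To repair the argument you would have to insert a uniform barrier bound of the type $|w_k|\leq C\tilde d^{\gamma}$ near $\p\Omega_k'$, at which point you have reconstructed the paper's proof and the compactness scheme becomes superfluous.
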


We start by introducing the regularized distance \cite{Lie85, Lie96}, a technical tool that will be useful to construct barriers.

\begin{lem}\label{lem:regularized_distance}
Let $\Omega$ be a parabolic Lipschitz domain with Lipschitz constant $L \leq 1$, in the sense of Definition \ref{defn:lipschitz_domain}. Then, there exists a function $d : \Omega \cap Q_1 \to \R$ satisfying the following:
\begin{gather*}
    \frac{1}{2}(x_n - \Gamma(x',t)) \leq d \leq \frac{3}{2}(x_n - \Gamma(x',t))\\
    \frac{2}{3} \leq |\nabla_x d| \leq C_1\\
    |\p_td| + |D^2_xd| \leq C_2Ld^{-1},
\end{gather*}
where $C_1$ and $C_2$ depend only on the dimension.
\end{lem}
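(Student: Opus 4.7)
The plan is to build $d$ by mollifying the boundary graph $\Gamma$ at a scale comparable to the vertical distance $\rho(x,t) := x_n - \Gamma(x',t)$, using a parabolic kernel that matches the anisotropic regularity of $\Gamma$. Fix a smooth, nonnegative, radially symmetric $\eta \in C_c^\infty(B'_1\times(-1,1))$ with $\int \eta = 1$ and set
$$\Gamma_\epsilon(x',t) := \int \Gamma(x'-\epsilon y',\,t-\epsilon^2 s)\,\eta(y',s)\,dy'ds.$$
A routine ``subtract $\Gamma(x',t)$ and use $\int \nabla \eta_\epsilon = 0$'' argument, together with the parabolic Lipschitz bound $|\Gamma(z',\tau)-\Gamma(x',t)| \leq L(|x'-z'|+|t-\tau|^{1/2}) \leq CL\epsilon$ on the support, produces $|\Gamma-\Gamma_\epsilon|\leq CL\epsilon$, $|\nabla_{x'}\Gamma_\epsilon|\leq L$, $|\p_\epsilon \Gamma_\epsilon|\leq CL$ and the scaling bounds $|D^2_{x'}\Gamma_\epsilon|+|\p_t\Gamma_\epsilon|\leq CL/\epsilon$; more generally, every mixed derivative of order $k+\ell\geq 1$ obeys $|\p_\epsilon^k\nabla_{x'}^\ell\Gamma_\epsilon|\leq CL\epsilon^{1-k-\ell}$, since each additional spatial/scale derivative costs one factor of $1/\epsilon$ and each time derivative costs $1/\epsilon^2$.

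Next, I would define $d$ implicitly as the unique solution of
$$d = x_n - \Gamma_{\delta d}(x',t)$$
for a small absolute constant $\delta>0$. The right-hand side is monotone in $d$ with slope at most $\delta L\leq \delta<1$, so existence, uniqueness, and smooth dependence on $(x,t)$ follow from the implicit function theorem whenever $x_n>\Gamma(x',t)$. The estimate $|\Gamma-\Gamma_{\delta d}|\leq CL\delta d$ then gives $(1-CL\delta)\,d\leq \rho\leq (1+CL\delta)\,d$, so choosing $\delta$ small (depending only on $n$) yields $\tfrac{1}{2}\rho\leq d\leq \tfrac{3}{2}\rho$ since $L\leq 1$.

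The derivative bounds come from differentiating $F(x,t,d):=d-x_n+\Gamma_{\delta d}(x',t)=0$. One has $\p_d F = 1+\delta\,\p_\epsilon\Gamma_{\delta d}\geq 1/2$ for $\delta$ small, together with $\p_{x_n}F=-1$ and $\p_{x'_i}F=\p_{x'_i}\Gamma_{\delta d}$, so $\p_{x_n}d\geq 2/3$, $|\nabla_x d|\leq C_1$, and $\p_t d = -\p_t\Gamma_{\delta d}/\p_d F$ is bounded by $CL/(\delta d)\leq C_2 L/d$. Differentiating once more and invoking $|D^2_{x'}\Gamma_\epsilon|\leq CL/\epsilon$ together with the first-order estimates yields $|D^2_x d|\leq C_2 L/d$.

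The main obstacle is the bookkeeping for $D^2_x d$: implicit differentiation of $\nabla d$ generates cross terms such as $\delta\,(\p_\epsilon \p_{x'_i}\Gamma_{\delta d})\,\p_{x_j}d$ and $\delta^2\,(\p_\epsilon^2\Gamma_{\delta d})\,\p_{x_i}d\,\p_{x_j}d$, and one must check that each respects the $L/d$ scaling. This is exactly where the uniform estimate $|\p_\epsilon^k \nabla_{x'}^\ell \Gamma_\epsilon|\leq CL\epsilon^{1-k-\ell}$ is essential: with $\epsilon=\delta d$ and the first-order $d$-derivatives already controlled, every term in the expansion carries exactly one factor of $L$ and one missing power of $d$, collapsing to the claimed bound. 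The identical reasoning applied to $\p_t\nabla d$ gives the corresponding estimate for $\p_t d$, completing the proof.
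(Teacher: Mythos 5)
Your construction is exactly Lieberman's regularized distance, which is what the paper's Appendix A sketch does (citing \cite{Lie85,Lie96}): mollify $\Gamma$ at a parabolic scale proportional to the unknown $\rho$, solve the fixed-point equation $\rho = x_n - \Gamma_{\delta\rho}$ via the implicit function theorem, and derive all bounds from the mollification estimates $|\p_\epsilon^k\nabla_{x'}^\ell\Gamma_\epsilon|\lesssim L\epsilon^{1-k-\ell}$, $|\p_t\Gamma_\epsilon|\lesssim L/\epsilon$ obtained by the subtract-and-vanishing-moment trick. The argument is correct and matches the paper's route up to notational normalization of the mollification scale.
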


We will sketch the proof in Appendix \ref{sect:app}.

As a direct consequence, we obtain the following scaling property.
\begin{lem}\label{lem:dist_scaling}
Let $\Omega$ be a parabolic Lipschitz domain in $Q_R$, in the sense of Definition \ref{defn:lipschitz_domain}. Let $r \in (0,1)$ and let
$$\tilde\Omega := \{(x,t) \in Q_R \ | \ (rx,r^2t) \in \Omega\}.$$
Let $d$ and $\tilde d$  be the regularized distances in $\Omega$ and $\tilde\Omega$, respectively. Then,
$$\frac{1}{3}\tilde d(x,t) \leq \frac{1}{r}d(rx,r^2t) \leq 3\tilde d(x,t).$$
\end{lem}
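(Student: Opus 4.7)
My plan is to leverage the two-sided sandwich between the regularized distance and the ordinary vertical distance to the graph given by Lemma \ref{lem:regularized_distance}, together with the fact that parabolic scaling preserves Lipschitz constants. Specifically, if $\Gamma$ is the defining function of $\Omega$, then $\tilde\Omega$ is the epigraph of
\[
\tilde\Gamma(x',t) := \tfrac{1}{r}\,\Gamma(rx',r^2t),
\]
and a direct check shows $\|\tilde\Gamma\|_{C^{0,1}_p}=\|\Gamma\|_{C^{0,1}_p}\leq L\leq 1$, since the parabolic Hölder seminorm is homogeneous of degree $0$ under $(x,t)\mapsto(rx,r^2t)$. In particular, Lemma \ref{lem:regularized_distance} applies both to $d$ in $\Omega$ and to $\tilde d$ in $\tilde\Omega$.

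The key observation is the algebraic identity
\[
x_n-\tilde\Gamma(x',t) \;=\; \tfrac{1}{r}\bigl(rx_n-\Gamma(rx',r^2t)\bigr),
\]
which means the ordinary vertical distance scales exactly like the coordinates. Applying the bounds $\tfrac12 (x_n-\Gamma)\leq d\leq \tfrac32(x_n-\Gamma)$ at the point $(rx,r^2t)\in\Omega$ and dividing by $r$ gives
\[
\tfrac12\bigl(x_n-\tilde\Gamma(x',t)\bigr) \;\leq\; \tfrac{1}{r}\,d(rx,r^2t) \;\leq\; \tfrac32\bigl(x_n-\tilde\Gamma(x',t)\bigr).
\]
Applying the same lemma in $\tilde\Omega$ in reverse form yields $\tfrac23 \tilde d(x,t)\leq x_n-\tilde\Gamma(x',t)\leq 2\tilde d(x,t)$. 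Chaining these four inequalities produces
\[
\tfrac13\tilde d(x,t) \;=\; \tfrac12\cdot\tfrac23\,\tilde d(x,t) \;\leq\; \tfrac{1}{r}\,d(rx,r^2t) \;\leq\; \tfrac32\cdot 2\,\tilde d(x,t) \;=\; 3\,\tilde d(x,t),
\]
which is exactly the claim.

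I do not expect any real obstacle here: the only subtle point is to confirm the invariance of the parabolic Lipschitz constant under the scaling, since $x'$ and $t$ scale differently. Note also that one cannot simply invoke uniqueness of the regularized distance, because its construction involves a smoothing that is not scale-equivariant in an exact way; passing through Lemma \ref{lem:regularized_distance} absorbs these multiplicative constants into the factor $3$.
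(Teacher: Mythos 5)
Your proof is correct and is essentially the paper's argument: the paper simply states that the lemma "follows from Lemma \ref{lem:regularized_distance}", and your chaining of the two-sided comparison $\tfrac12(x_n-\Gamma)\le d\le\tfrac32(x_n-\Gamma)$ in both domains with the exact scaling of the vertical distance is precisely how that deduction goes. The check that the parabolic Lipschitz seminorm is invariant under the scaling is the right (and only) point needing verification, and you handle it correctly.
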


\begin{proof} It follows from Lemma \ref{lem:regularized_distance}.
\end{proof}

The following barriers are constructed in a very similar way to \cite[Lemmas 3.2 and 3.3]{Kuk22}. We start with a supersolution.

\begin{lem}\label{lem:barrier_super}
Let $\varepsilon \in (0,1)$. There exist sufficiently small $\eta > 0$ and sufficiently large $K > 0$, only depending on the dimension, $\varepsilon$ and the ellipticity constants, such that the following holds. 

Let $u$ be a solution to
\begin{equation*}
\left\{
\begin{array}{rclll}
u_t - \Mplus u & \leq & \eta d^{-1 -\varepsilon} & \text{in} & \Omega\\
u & \leq & 1 & \text{on} & \p_p\Omega\\
u & \leq & 0 & \text{on} & \p_\Gamma\Omega,
\end{array}
\right.
\end{equation*}
where $\Omega$ is a parabolic Lipschitz domain in $Q_1$ in the sense of Definition \ref{defn:lipschitz_domain} with Lipschitz constant $\eta$. Then,
$$u \leq Kd^{1-\varepsilon}-t+|x'|^2 \quad \text{in} \ \Omega,$$
where $d$ is the regularized distance introduced in Lemma \ref{lem:regularized_distance}, and
$$u(re_n,0) \leq Kr^{1-\varepsilon}, \quad \forall r \in (0,1).$$
\end{lem}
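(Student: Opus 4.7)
The plan is to explicitly construct the supersolution $w := Kd^{1-\varepsilon} - t + |x'|^2$ using the regularized distance from Lemma \ref{lem:regularized_distance}, and then conclude by comparison.

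First, I would verify the differential inequality. Computing derivatives,
\begin{align*}
w_t &= K(1-\varepsilon)d^{-\varepsilon}\p_t d - 1,\\
D^2_x w &= -K\varepsilon(1-\varepsilon)d^{-1-\varepsilon}\nabla d\otimes\nabla d + K(1-\varepsilon)d^{-\varepsilon}D^2_x d + 2\operatorname{diag}(1,\dots,1,0).
\end{align*}
The crucial observation is that the first matrix on the right is rank-one and \emph{negative} semidefinite (since $K,\varepsilon,1-\varepsilon>0$), so the Pucci operator must pick up the small ellipticity constant:
$$\Mplus\bigl(-K\varepsilon(1-\varepsilon)d^{-1-\varepsilon}\nabla d\otimes\nabla d\bigr) = -\lambda K\varepsilon(1-\varepsilon)|\nabla d|^2 d^{-1-\varepsilon} \leq -\tfrac{4}{9}\lambda K\varepsilon(1-\varepsilon)d^{-1-\varepsilon},$$
using $|\nabla d|\geq 2/3$. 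By subadditivity of $\Mplus$, together with the bounds $|D^2_x d|+|\p_t d|\leq C_2 L d^{-1}$ from Lemma \ref{lem:regularized_distance} and the trivial estimate $\Mplus(2\operatorname{diag}(1,\dots,1,0)) = 2(n-1)\Lambda$, the remaining contributions are bounded by $C(n,\Lambda)KL\, d^{-1-\varepsilon} + 2(n-1)\Lambda$. Assembling,
$$w_t - \Mplus w \geq K(1-\varepsilon)\left[\tfrac{4\lambda\varepsilon}{9} - C(n,\Lambda)L\right]d^{-1-\varepsilon} - \bigl(1 + 2(n-1)\Lambda\bigr).$$
Choosing $\eta := L$ small enough so that the bracket exceeds $\tfrac{2\lambda\varepsilon}{9}$ (depending on $\varepsilon, n, \lambda, \Lambda$), and then $K$ large enough (using $d\leq 3$ in $\Omega\cap Q_1$ to absorb the additive constant into $d^{-1-\varepsilon}$), one gets $w_t - \Mplus w \geq \eta d^{-1-\varepsilon}$.

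Next, I would check the boundary inequalities on $\p_p\Omega$. On $\p_\Gamma\Omega$ we have $d=0$, so $w = -t + |x'|^2 \geq 0 \geq u$. On the remaining parabolic boundary, a short case analysis of $\p Q_1\cap\overline\Omega$ shows that either $-t\geq 1$ (initial time $t=-1$), $|x'|^2 = 1$ (side $|x'|=1$), or $d$ is bounded below by a universal constant (top face $x_n=1$, using $L\leq 1$), and in each case $w \geq 1 \geq u$ provided $K$ is large enough. With these two facts, the comparison principle for sub/supersolutions of the Pucci extremal equation gives $u \leq w$ throughout $\Omega$. Finally, evaluating at $(re_n,0)$, Lemma \ref{lem:regularized_distance} yields $d(re_n,0)\leq \tfrac{3}{2}r$, hence $u(re_n,0) \leq w(re_n,0) = Kd^{1-\varepsilon}(re_n,0) \leq K(3/2)^{1-\varepsilon}r^{1-\varepsilon}$, which is the claimed estimate after renaming $K$.

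The main obstacle is the algebraic juggling in the first step: one needs to recognize that the rank-one negative semidefinite piece of $D^2_x(d^{1-\varepsilon})$ forces $\Mplus$ to act with the small constant $\lambda$, creating a strictly positive coefficient of $d^{-1-\varepsilon}$; all the perturbation terms from $D^2_x d$ and $\p_t d$ scale linearly in $L$, which is the very reason one needs smallness of the Lipschitz constant to absorb them, while the bounded remainder from $-t+|x'|^2$ is harmlessly dominated by taking $K$ large. The $-t+|x'|^2$ correction has no role in the interior differential inequality beyond a controllable error, but it is essential on $\p_\Gamma\Omega$ (to kill the sign of $u$) and on the top/sides of the parabolic cylinder.
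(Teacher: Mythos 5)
Your proposal is correct and follows essentially the same route as the paper: the identical barrier $Kd^{1-\varepsilon}-t+|x'|^2$, the same computation in which the rank-one negative semidefinite part of $D^2_x(d^{1-\varepsilon})$ produces the good term $\lambda\varepsilon(1-\varepsilon)|\nabla d|^2d^{-1-\varepsilon}$ while the $\p_td$ and $D^2_xd$ contributions are absorbed using smallness of $L$, the same case analysis on $\p_p\Omega$, and the same comparison-principle conclusion. Your explicit use of subadditivity of $\Mplus$ is in fact slightly cleaner than the paper's chain of equalities, but the argument is the same.
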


\begin{proof}
We will use the comparison principle with a supersolution that has the desired growth.

Let
$$\varphi = Kd^{1-\varepsilon}-t+|x'|^2,$$
where $d$ is the regularized distance introduced in Lemma \ref{lem:regularized_distance}, and $K > 0$ is a large constant to be chosen later. 

Notice that $\varphi \geq 0$ on $\p_\Gamma\Omega$, and that $\varphi \geq 1$ on $\{t=-1\}$ and $\{|x'|=1\}$. We can also check
$$\varphi \geq K\left(\frac{x_n - \Gamma(x',t)}{2}\right)^{1-\varepsilon} - t + |x'|^2 > \frac{K(1-2\eta)}{2} \geq 1 \quad \text{on} \quad \{x_n = 1\}\cap\overline{\Omega}.$$

On the other hand, using the estimates in Lemma \ref{lem:regularized_distance},
\begin{align*}
    (\p_t-\Mplus)d^{1-\varepsilon} &= (1-\varepsilon)d^{-\varepsilon}\p_td - \sup\limits_{\lambda I \leq A \leq \Lambda I}\operatorname{Tr}(AD^2_xd^{1-\varepsilon})\\
    &= (1-\varepsilon)\left(d^{-\varepsilon}\p_td - \sup\limits_{\lambda I \leq A \leq \Lambda I}\left[\sum\limits_{i=1}^n\sum\limits_{j=1}^na_{ij}(d^{-\varepsilon}\p^2_{ij}d - \varepsilon d^{-1-\varepsilon}\p_id\p_jd)\right]\right)\\
    &=(1-\varepsilon)d^{-\varepsilon}(\p_t - \Mplus)d + \varepsilon(1-\varepsilon)d^{-1-\varepsilon}\inf\limits_{\lambda I \leq A \leq \Lambda I}\nabla_xd^\top A\nabla_xd\\
    &\geq (1-\varepsilon)\left(-C\eta + \frac{4\lambda\varepsilon}{9}\right)d^{-1-\varepsilon},
\end{align*}
where we omitted the dependence of $A$ and $a_{ij}$ on $x$ for formatting. Then,
$$(\p_t-\Mplus)\varphi \geq K(1-\varepsilon)\left(-C\eta + \frac{4\lambda\varepsilon}{9}\right)d^{-1-\varepsilon} - 1 - 2(n-1)\Lambda \geq \eta d^{-1-\varepsilon},$$
where we chose $\eta$ small, $K$ large, and used that $d < \frac{3}{2}$.

Therefore, applying the comparison principle to $u$ and $\varphi$, we obtain that $u \leq \varphi$ in $\Omega$, and in particular
$$u(re_n,0) \leq \varphi(re_n,0) = Kd(re_n,0)^{1-\varepsilon}\leq K' r^{1-\varepsilon}.$$
\end{proof}

Analogously, we can consider a subsolution.

\begin{lem}\label{lem:barrier_sub}
Let $\varepsilon \in (0,1)$. There exist sufficiently small $\eta, k, r_0 > 0$, only depending on the dimension, $\varepsilon$ and the ellipticity constants, such that the following holds. 

Let $u \geq 0$ be a solution to
\begin{equation*}
\left\{
\begin{array}{rclll}
u_t - \Mminus u & \geq & -\eta d^{\varepsilon - 1} & \text{in} & \Omega\\
u & \geq & 1 & \text{on} & \p_{\mathrm{up}} \Omega\\
\end{array}
\right.
\end{equation*}
where $\Omega$ is a parabolic Lipschitz domain in $Q_1$ in the sense of Definition \ref{defn:lipschitz_domain} with Lipschitz constant $\eta$, and
$$\p_{\mathrm{up}} \Omega := \{x_n = 1\} \cap \overline{\Omega}$$
is the top part of the boundary. Then,
$$u \geq kd^{1+\varepsilon}+t-|x'|^2 \quad \text{in} \ \Omega\cap Q_{r_0},$$
where $d$ is the regularized distance introduced in Lemma \ref{lem:regularized_distance}, and
$$u(re_n,0) \geq k r^{1+\varepsilon}, \quad \forall r \in (0,1).$$
\end{lem}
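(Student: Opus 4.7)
The plan is to mirror the proof of Lemma \ref{lem:barrier_super} with inequality directions reversed. The candidate is the explicit subsolution
$\varphi(x,t) = k\, d(x,t)^{1+\varepsilon} + t - |x'|^2,$
where $d$ is the regularized distance from Lemma \ref{lem:regularized_distance} and $k, r_0, \eta$ are small positive parameters to be chosen, and the Pucci comparison principle is applied in $\Omega \cap Q_{r_0}$. The two tasks are: (a) showing $\varphi$ is a viscosity subsolution of the same equation as $u$ in that cylinder, and (b) verifying $\varphi \leq u$ on its parabolic boundary.

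For (a), I first expand $D^2 d^{1+\varepsilon} = (1+\varepsilon) d^\varepsilon D^2 d + \varepsilon(1+\varepsilon) d^{\varepsilon-1} \nabla d \otimes \nabla d$ and use the super-additivity $\Mminus(A+B) \geq \Mminus(A) + \Mminus(B)$ together with the bounds $|\nabla d| \geq 2/3$ and $|\p_t d|, |D^2 d| \leq C_2 L d^{-1}$ from Lemma \ref{lem:regularized_distance}. Choosing $L = \eta$ small enough that the positive $\tfrac{4\varepsilon\lambda(1+\varepsilon)}{9} d^{\varepsilon-1}$ contribution dominates the $O(Ld^{\varepsilon-1})$ error, I obtain $(\p_t - \Mminus) d^{1+\varepsilon} \leq -c_1 d^{\varepsilon-1}$ for some $c_1 = c_1(\varepsilon, n, \lambda, \Lambda) > 0$. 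Sub-additivity of $\p_t - \Mminus$ together with $(\p_t - \Mminus)(t) = 1$ and $(\p_t - \Mminus)(-|x'|^2) = 2(n-1)\Lambda$ then gives
$(\p_t - \Mminus) \varphi \leq -kc_1 d^{\varepsilon-1} + 1 + 2(n-1)\Lambda.$
In $\Omega \cap Q_{r_0}$, $d \leq 2r_0$ forces $d^{\varepsilon-1} \geq (2r_0)^{\varepsilon-1}$, so I pick $r_0$ small enough that the constant $1 + 2(n-1)\Lambda$ is absorbed into $\tfrac{kc_1}{2} d^{\varepsilon-1}$, leaving $(\p_t - \Mminus) \varphi \leq -\eta d^{\varepsilon-1}$ provided $\eta \leq kc_1/2$.

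For (b), on $\p_\Gamma\Omega \cap \overline{Q_{r_0}}$ we have $d = 0$, so $\varphi = t - |x'|^2 \leq 0 \leq u$. On the bottom $\{t = -r_0^2\}$ and the lateral face $\{|x'| = r_0\}$, the parabolic slack $t - |x'|^2 \leq -r_0^2$ outweighs $kd^{1+\varepsilon} \leq k(2r_0)^{1+\varepsilon}$ as soon as $k \lesssim r_0^{1-\varepsilon}$, so again $\varphi \leq 0 \leq u$. The only problematic face is $\{x_n = r_0\}$, where $\varphi$ can be positive (of size at most $k(2r_0)^{1+\varepsilon}$) and matching $\varphi \leq u$ there requires a positive interior lower bound for $u$. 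I would obtain this by a separate comparison in the full $\Omega$: build a global subsolution $\psi$, e.g.\ a caloric-type barrier linear in $x_n$ corrected by a multiple of the parabolic distance, satisfying $(\p_t - \Mminus)\psi \leq -\eta d^{\varepsilon-1}$, $\psi \leq 1$ on $\p_{\mathrm{up}} \Omega$, and $\psi \leq 0$ on the rest of $\p_p \Omega$. Comparison gives $u \geq \psi \geq c_* > 0$ on $\{x_n = r_0\}$ for some $c_*(r_0, n, \lambda, \Lambda)$, and a final shrinking of $k$ so that $k(2r_0)^{1+\varepsilon} \leq c_*$ closes the boundary check.

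Once these pieces are in place, the Pucci comparison principle delivers $u \geq \varphi$ in $\Omega \cap Q_{r_0}$. At $(re_n, 0)$, $d(re_n, 0) \geq r/2$, giving $u(re_n,0) \geq k(r/2)^{1+\varepsilon}$ for $r \leq r_0$; the same pointwise bound for $r \in [r_0, 1)$ follows directly from the interior lower bound $u \geq c_*$, after renaming constants. The main obstacle I expect is not the barrier calculation itself, which is a sign-reversed analogue of Lemma \ref{lem:barrier_super}, but the auxiliary bridging step producing $u \geq c_*$ on $\{x_n = r_0\}$: the auxiliary subsolution $\psi$ must simultaneously dominate the singular RHS $-\eta d^{\varepsilon-1}$, vanish on $\p_\Gamma \Omega$, and not exceed $1$ on $\p_{\mathrm{up}} \Omega$, and the three parameters $k, r_0, \eta$ must be chosen in a coordinated sequence so that every constraint closes up at once.
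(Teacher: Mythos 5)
Your main barrier step is correct and is exactly the paper's: the computation $(\p_t-\Mminus)d^{1+\varepsilon}\leq -c_1d^{\varepsilon-1}$, the absorption of the constant $1+2(n-1)\Lambda$ using $d\lesssim r_0$, and the boundary checks on $\p_\Gamma\Omega$ and on the faces where $t-|x'|^2$ is sufficiently negative all match the paper's proof (the paper runs the comparison in the slightly larger box $\{|x'|<\tfrac12,\ x_n<r_0,\ t>-\tfrac12\}$ rather than $Q_{r_0}$, which only changes the bookkeeping of constants).

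The genuine gap is the step you yourself flag: producing $u\geq c_*>0$ on $\{x_n=r_0\}$ (and, for the claim $u(re_n,0)\geq kr^{1+\varepsilon}$ on all of $r\in[r_0,1)$, on the whole segment $\{re_n\}\times\{0\}$, not just the slice $x_n=r_0$). Your proposed auxiliary barrier, ``linear in $x_n$ corrected by a multiple of the parabolic distance,'' cannot satisfy the required inequality $(\p_t-\Mminus)\psi\leq-\eta d^{\varepsilon-1}<0$: the linear part contributes $0$ and the quadratic corrections $bt-b|x'|^2$ contribute the \emph{positive} constant $b(1+2(n-1)\Lambda)$, so $(\p_t-\Mminus)\psi>0$ everywhere. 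To make such a $\psi$ a strict subsolution against the singular right-hand side all the way down to $\p_\Gamma\Omega$ you would have to reintroduce a term like $k_0d^{1+\varepsilon}$, i.e.\ rebuild the very barrier whose boundary data you are trying to supply. The paper avoids this circularity by never comparing against the singular right-hand side in the lower region: it restricts the auxiliary comparison to $\Omega^{(3)}=Q_1\setminus\{x_n\leq r_0/2\}$, where $d\gtrsim r_0$ makes the forcing bounded by $2\eta r_0^{\varepsilon-1}$, so that $w:=u+2\eta r_0^{\varepsilon-1}(t+1)$ is a genuine supersolution there; it then compares $w$ with the solution $v$ of the \emph{homogeneous} Dirichlet problem in $\Omega^{(3)}$ and invokes the strong maximum principle (not an explicit barrier) to get $v\geq 2c_0$ on the relevant compact set, whence $u\geq c_0$ on $\p_{\mathrm{up}}\Omega^{(1)}$ and along $\{(re_n,0):r\in[r_0,1]\}$ after taking $\eta$ small. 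You should either adopt this two-region scheme or redesign $\psi$ to include a $d^{1+\varepsilon}$ term with all the attendant constraints; as written, the bridging step does not close.
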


\begin{proof}
First, let
\begin{align*}
    \Omega^{(1)} &:= \left\{(x,t) \in \Omega : |x'| < \frac{1}{2}, x_n < r_0, -\frac{1}{2} < t\right\},\\
    \Omega^{(2)} &:= \left\{(x,t) \in \Omega : |x'| < \frac{1}{2}, x_n > r_0, -\frac{1}{2} < t\right\},
\end{align*}
and
$$\Omega^{(3)} := Q_1 \setminus \left\{x_n \leq \frac{r_0}{2}\right\}
,$$
with $r_0 \in (0,1)$ to be chosen later. Let also $$\p_{\mathrm{up}}\Omega^{(1)} := \{x_n = r_0\}\cap\overline{\Omega^{(1)}}.$$

Now, let $v$ be the solution to
\begin{equation*}
\left\{
\begin{array}{rclll}
v_t - \Mminus v & = & 0 & \text{in} & \Omega^{(3)}\\
v & = & 1 & \text{on} & \p_{\mathrm{up}} \Omega\\
v & = & 0 & \text{in} & \p_p\Omega^{(3)} \setminus \p_{\mathrm{up}} \Omega.
\end{array}
\right.
\end{equation*}

By the strong maximum principle, $\min\limits_{\overline{\Omega^{(2)}}}v \geq 2c_0 > 0$. On the other hand, by the comparison principle,
$$w := u + 2\eta r_0^{\varepsilon-1}(t+1) \geq v,$$
because $w \geq u$ on $\p_p\Omega^{(3)}$ and
$$(\p_t - \Mminus)w = (\p_t - \Mminus)u + 2\eta r_0^{\varepsilon-1} \geq \eta(-d^{\varepsilon-1}+2r_0^{\varepsilon-1}) \geq 0.$$
Hence, noting that $\p_{\mathrm{up}}\Omega^{(1)} \subset \overline{\Omega^{(2)}}$,
$$\min\limits_{\p_{\mathrm{up}}\Omega^{(1)}}u \geq \min\limits_{\overline{\Omega^{(2)}}}v - 2r_0^{\varepsilon-1}\eta \geq c_0 > 0,$$
and $\min\limits_{r \in [r_0,1]}u(re_n,0) \geq c_0$ as well, choosing $\eta$ small enough.

Now, let
$$\varphi(x,t) = kd^{1+\varepsilon} + t - |x'|^2,$$
where $d$ is the regularized distance introduced in Lemma \ref{lem:regularized_distance}, for the domain $\Omega$ (that coincides with the regularized distance for the domain $\Omega^{(1)}$ because the two domains lie above the same graph), and $k = \min\{c_0/(4r_0),1/32\}$.

Then, $\varphi \leq u$ on the parabolic boundary of $\Omega^{(1)}$. Indeed, since $d \leq 2r_0$ on $\p_{\mathrm{up}}\Omega^{(1)}$, $\varphi < c_0$ on $\p_{\mathrm{up}}\Omega^{(1)}$. When $t = -1/2$, since $d \leq 2$,
$$\varphi \leq \frac{1}{32}d^{1+\varepsilon}-\frac{1}{2} \leq \frac{1}{8} - \frac{1}{2} < 0,$$
and if $|x'| = 1/2$,
$$\varphi \leq \frac{1}{8}-\frac{1}{4} < 0.$$
Finally, $\varphi \leq 0$ on $\p_\Gamma\Omega$.

Hence, by an analogous computation to the proof of Lemma \ref{lem:barrier_super}, we obtain
$$\varphi_t - \Mminus\varphi \leq (1+\varepsilon)k\left(C\eta - \frac{4\lambda\varepsilon}{9}\right)d^{\varepsilon-1} + 1 + 2(n-1)\Lambda \leq -\eta d^{\varepsilon-1},$$
using that $\eta$ and $d < 2r_0$ can be chosen arbitrarily small.

Therefore, applying the comparison principle to $u$ and $\varphi$, we obtain that $u \geq \varphi$ in $\Omega^{(1)}$, and in particular in $\Omega\cap Q_{r_0}$, and also that for all $r \in (0,r_0)$
$$u(re_n,0) \geq kd^{1+\varepsilon} \geq k'r^{1+\varepsilon}.$$

On the other hand, if $r > r_0$, we can use directly that $u(re_n,0) \geq c_0$.
\end{proof}

The growth upper bound still holds when we consider a (small) right-hand side in $L^{n+1}$, thanks to Theorem \ref{thm:ABPKT}.

\begin{lem}\label{lem:barrier_super_abp}
Let $\varepsilon \in (0,1)$. There exist sufficiently small $\eta > 0$ and sufficiently large $K > 0$, only depending on the dimension, $\varepsilon$, and the ellipticity constants, such that the following holds. 

Let $d(x',x_n,t) = x_n - \Gamma(x',t)$, and let $u$ be a solution to
\begin{equation*}
\left\{
\begin{array}{rclll}
u_t - \Mplus u & \leq & \eta d^{-1 -\varepsilon} + f & \text{in} & \Omega\\
u & \leq & 1 & \text{on} & \p_p\Omega\\
u & \leq & 0 & \text{on} & \p_\Gamma\Omega\\
\end{array}
\right.
\end{equation*}
where $\|d^{-(1/(n+1)-\varepsilon)_+}f\|_{L^{n+1}(\Omega)} \leq \eta$, and $\Omega$ is a parabolic Lipschitz domain in $Q_1$ in the sense of Definition \ref{defn:lipschitz_domain} with Lipschitz constant $\eta$. Then,
$$u(re_n,0) \leq K r^{1-\varepsilon}, \quad \forall r \in (0,1).$$
\end{lem}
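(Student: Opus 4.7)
The plan is to adapt the proof of Lemma~\ref{lem:barrier_super} (the $f\equiv 0$ case) by accounting for the right-hand side via the parabolic ABP--Krylov--Tso estimate (Theorem~\ref{thm:ABPKT}). The exponent in the weight $d^{-(1/(n+1)-\varepsilon)_+}$ is tuned so that, on a parabolic cylinder of radius $r$ at distance $\sim r$ from the boundary, the ABP contribution from $f$ is of size $O(\eta r^{1-\varepsilon})$, matching the growth rate of the barrier $d^{1-\varepsilon}$ that dominated $u$ in the $f\equiv 0$ case.

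I would reuse the supersolution $\varphi:=Kd^{1-\varepsilon}-t+|x'|^2$ from Lemma~\ref{lem:barrier_super}, which satisfies $(\p_t - \Mplus)\varphi \geq \eta d^{-1-\varepsilon}$ and dominates $u$ on the parabolic boundary. Setting $w:=u-\varphi$, the subadditivity of $\Mplus$ yields $w_t - \Mplus w \leq f$ in $\Omega$ with $w \leq 0$ on $\p_p\Omega$, so it suffices to prove $w(re_n,0) \leq C\eta r^{1-\varepsilon}$. For fixed $r\in(0,1)$, I would apply Theorem~\ref{thm:ABPKT} on the cylinder $Q_{r/8}(re_n,0)$, which for $\eta$ sufficiently small is contained in $\Omega$ and satisfies $d\sim r$ throughout. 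The weighted hypothesis then gives $\|f\|_{L^{n+1}(Q_{r/8}(re_n,0))} \leq C\eta r^{(1/(n+1)-\varepsilon)_+}$, yielding
\[w(re_n,0) \leq \sup_{\p_p Q_{r/8}(re_n,0)} w^+ + C\eta r^{1-\varepsilon},\]
where the exponent check uses $\frac{n}{n+1}+\left(\frac{1}{n+1}-\varepsilon\right)_+ \geq 1-\varepsilon$ in both regimes $\varepsilon \leq 1/(n+1)$ and $\varepsilon > 1/(n+1)$.

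To control the supremum of $w^+$ on $\p_p Q_{r/8}(re_n,0)$, I would iterate dyadically. The points there have height $\sim r$, and interior $C^{0,\alpha}$ regularity (Theorem~\ref{thm:interior_Calpha}) combined with further applications of ABP--KT allows comparison of $u$ (and hence $w^+$) at those points with $u(2re_n,0)$, at the cost of an additional error of the same form $C\eta r^{1-\varepsilon}$. At the dyadic scales $r_k=2^{-k}$ this produces a recursion whose accumulated errors form a convergent geometric series $\sum_k r_k^{1-\varepsilon}$ (since $1-\varepsilon>0$); starting from the trivial base case $w \lesssim 1$ at unit scale one obtains $w(re_n,0) \leq C\eta r^{1-\varepsilon}$. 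Adding back $\varphi(re_n,0) \leq K'r^{1-\varepsilon}$ then gives the claimed bound. The main obstacle is arranging the iteration so that the multiplicative constants emerging at each dyadic step do not overwhelm the geometric decay; this is possible precisely because the weighted $L^{n+1}$ norm of $f$ is (super-)critically scale-invariant, which is exactly the role of the exponent $(1/(n+1)-\varepsilon)_+$ in the hypothesis.
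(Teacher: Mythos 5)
Your setup is sound and matches the paper's in spirit: the weight $d^{-(1/(n+1)-\varepsilon)_+}$ is indeed tuned so that the ABP--Krylov--Tso contribution of $f$ on a scale-$r$ cylinder is $O(\eta r^{1-\varepsilon})$, and the intended engine is an iteration of the homogeneous barrier of Lemma \ref{lem:barrier_super} combined with Theorem \ref{thm:ABPKT}. However, the iteration as you describe it has two genuine gaps. First, the step controlling $\sup_{\p_p Q_{r/8}(re_n,0)} w^+$ by $u(2re_n,0)$ plus an error $C\eta r^{1-\varepsilon}$ does not follow from interior $C^{0,\alpha}$ estimates: those bound the oscillation of $w$ on $Q_{r/8}(re_n,0)$ only by the $L^\infty$ norm of $w$ on a comparable cylinder, which is exactly the quantity of size $\sim r^{1-\varepsilon}$ you are trying to estimate, and since $u$ may change sign there is no Harnack inequality to compare values at distinct points (in particular the bottom time-slice of $Q_{r/8}(re_n,0)$ cannot be tied to the single value $u(2re_n,0)$ with a small additive error). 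Second, and more fundamentally, a recursion whose errors at scale $2^{-k}$ merely \emph{accumulate additively} into a convergent series $\sum_k 2^{-k(1-\varepsilon)}$ produces a constant bound, not the decay $r^{1-\varepsilon}$: starting from $w\lesssim 1$ at unit scale you would only conclude $w(re_n,0)\leq 1+C\eta$. To extract the rate one needs a \emph{contractive} recursion $a_{j+1}\leq\theta a_j+\mathrm{err}$ in variables already normalized by $\rho^{j(1-\varepsilon)}$, with $\theta<1$, and the source of this contraction is the crux you do not identify.

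The paper obtains the contraction by applying Lemma \ref{lem:barrier_super} with exponent $\varepsilon/2$ (not $\varepsilon$) to the rescaled functions $u_j(x,t)=u(\rho^jx,\rho^{2j}t)/\rho^{j(1-\varepsilon)}$: the homogeneous barrier then gives $\sup_{Q_\rho}u_j\leq K\rho^{1-\varepsilon/2}a_j+2\rho^2a_j$, and since $K\rho^{1-\varepsilon/2}=K\rho^{\varepsilon/2}\cdot\rho^{1-\varepsilon}\leq\tfrac12\rho^{1-\varepsilon}$ for $\rho$ small, the induction $a_j:=\|u_j\|_{L^\infty}\leq C_0$ closes, with the ABP term bounded uniformly in $j$ precisely by the scale-invariance of the weighted norm you noted. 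I recommend restructuring your argument along these lines: work with the full rescaled solution on the whole dyadic domain (so that the barrier of Lemma \ref{lem:barrier_super} handles the entire parabolic boundary at each scale, avoiding the problematic interior comparison), and make the gap between the barrier exponent $\varepsilon/2$ and the target exponent $\varepsilon$ do the work of the contraction.
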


\begin{obs}
    Thanks to Lemma \ref{lem:regularized_distance}, we can interchange the regularized distance with $x_n - \Gamma(x',t)$ up to a constant. We will do so in the following.
\end{obs}

\begin{proof}
We will iterate Lemma \ref{lem:barrier_super} combined with Theorem \ref{thm:ABPKT}. We define the rescaled functions
$$u_j(x,t) := \frac{u(\rho^jx,\rho^{2j}t)}{\rho^{j(1-\varepsilon)}},$$
with $\rho > 0$ to be chosen later.

Now,
$$(\p_t-\Mplus)u_j \leq \rho^{j(1+\varepsilon)}(\eta(\rho^jd)^{-1-\varepsilon}+\tilde f_j) \leq \eta d^{-1-\varepsilon}+\rho^{j(1+\varepsilon)}\tilde f_j,$$
with $\tilde f_j(x,t) := f(\rho^jx,\rho^{2j}t)$.

Let $a_j := \|u_j\|_{L^\infty(\tilde\Omega)}$. By Lemma \ref{lem:barrier_super} and Theorem \ref{thm:ABPKT}, $a_0 \leq C_0$. We will show by induction that $a_j \leq C_0$ for all $j \geq 0$.

Again by Lemma \ref{lem:barrier_super} (with $\varepsilon/2$) and Theorem \ref{thm:ABPKT} applied to $u_j$,
\begin{align*}
    a_{j+1}\rho^{1-\varepsilon} &\leq K\rho^{1-\varepsilon/2}a_j+2\rho^2a_j+C\rho^{j(1+\varepsilon)}\|\tilde f_j\|_{L^{n+1}(\tilde\Omega)}\\
    &\leq K\rho^{1-\varepsilon/2}C_0+2\rho^2C_0+C\rho^{j(\varepsilon-1/(n+1))}\|f\|_{L^{n+1}(\rho^j\Omega)}\\
    &\leq \rho^{1-\varepsilon}C_0/2+2\rho^2C_0+C\|d^{-(1/(n+1)-\varepsilon)_+}f\|_{L^{n+1}(\Omega)} \leq \rho^{1-\varepsilon}C_0,
\end{align*}
choosing adequately small $\rho$ and $\eta$.

The conclusion follows by observing that given $r \in (0,1)$, for all $j$ such that $\rho^j \geq r$, $u(re_n,0) \leq a_j\rho^{j(1-\varepsilon)}$, and hence $u(re_n,0) \leq C_0(r/\rho)^{1-\varepsilon}$.
\end{proof}

Conversely, we can also add a more general right-hand side to the equation for the subsolution.

\begin{lem}\label{lem:barrier_sub_abp}
Let $\varepsilon \in (0,1)$. There exist sufficiently small $\eta, k > 0$, only depending on the dimension, $\varepsilon$, and the ellipticity constants, such that the following holds. 

Let $d(x',x_n,t) = x_n - \Gamma(x',t)$, and let $u \geq 0$ be a solution to
\begin{equation*}
\left\{
\begin{array}{rclll}
u_t - \Mminus u & \geq & -\eta d^{\varepsilon - 1} + f & \text{in} & \Omega\\
u & \geq & 1 & \text{on} & \p_{\mathrm{up}} \Omega\\
\end{array}
\right.
\end{equation*}
where $\|d^{-\varepsilon-1/(n+1)}f\|_{L^{n+1}(\Omega)} \leq \eta$, and $\Omega$ and $\p_{\mathrm{up}}\Omega$ are defined as in Lemma \ref{lem:barrier_sub}. Then,
$$u(re_n,0) \geq k r^{1+\varepsilon}, \quad \forall r \in (0,1).$$
\end{lem}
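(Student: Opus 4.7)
The argument mirrors the proof of Lemma \ref{lem:barrier_super_abp}, using a dyadic iteration to absorb the right-hand side contribution via the parabolic ABP estimate (Theorem \ref{thm:ABPKT}) and invoking Lemma \ref{lem:barrier_sub} in place of Lemma \ref{lem:barrier_super}, with all inequalities flipped to produce a lower bound instead of an upper bound. Define the rescaled functions
\[
u_j(x,t) := \frac{u(\rho^j x, \rho^{2j} t)}{\rho^{j(1+\varepsilon)}}
\]
on $\tilde\Omega_j := \rho^{-j}\Omega \cap Q_1$, with $\rho \in (0,1)$ to be fixed small. Lemma \ref{lem:dist_scaling} implies that
\[
(u_j)_t - \Mminus u_j \geq -\eta\, \tilde d_j^{\varepsilon-1} + \tilde f_j, \qquad \tilde f_j(x,t) := \rho^{j(1-\varepsilon)} f(\rho^j x, \rho^{2j} t),
\]
and the change-of-variables computation at the heart of Lemma \ref{lem:barrier_super_abp} yields the scaling-invariant bound $\|\tilde d_j^{-\varepsilon - 1/(n+1)} \tilde f_j\|_{L^{n+1}(\tilde\Omega_j)} \leq \eta$.

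Setting $b_j := u(\rho^j e_n, -\rho^{2j}/2)/\rho^{j(1+\varepsilon)}$, the plan is to show $b_j \geq k_0 > 0$ uniformly in $j$ by induction, from which $u(re_n,0) \geq k r^{1+\varepsilon}$ for $r \in (0,1)$ follows by choosing $j$ with $\rho^{j+1} < r \leq \rho^j$ and applying Lemma \ref{lem:barrier_sub} one last time at scale $j$. For the inductive step, combining Lemma \ref{lem:barrier_sub} (with $\varepsilon/2$ in place of $\varepsilon$) and Theorem \ref{thm:ABPKT} applied to $u_j$ leads to a recursive inequality
\[
b_{j+1}\rho^{1+\varepsilon} \geq k\, \rho^{1+\varepsilon/2} b_j - 2\rho^2 b_j - C\rho^{j(1+\varepsilon)}\|\tilde f_j\|_{L^{n+1}(\tilde\Omega_j)},
\]
analogous to the one in Lemma \ref{lem:barrier_super_abp}. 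The final (ABP correction) term is controlled by $C\eta$ via the scaling identity; choosing $\rho$ small so that $k\rho^{-\varepsilon/2}$ dominates and then $\eta$ small relative to $\rho^{1+\varepsilon}$ closes the induction at a uniform level $k_0$. The base case uses Lemma \ref{lem:barrier_sub} applied to $u$ itself, with the contribution of $f$ absorbed by the ABP corrector $\Phi$ solving $\Phi_t - \Mplus\Phi = -f$ in $\Omega\cap Q_1$ and $\Phi = 0$ on $\p_p(\Omega \cap Q_1)$: the superadditivity-type inequality $\Mminus(X+Y) \leq \Mminus X + \Mplus Y$ implies that $u + \Phi$ obeys the RHS-free supersolution inequality of Lemma \ref{lem:barrier_sub}, while $\|\Phi\|_{L^\infty} \leq C\eta$ by Theorem \ref{thm:ABPKT} together with the bound $\|f\|_{L^{n+1}(\Omega)} \leq \eta$ (which follows from $d \leq C$ on $\Omega \cap Q_1$).

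The hard part will be propagating the upper-boundary condition of Lemma \ref{lem:barrier_sub} — namely $u_j \geq 1$ on $\{x_n=1\}\cap\overline{\tilde\Omega_j}$ — through the iteration, since it does not rescale directly. The remedy is to invoke the parabolic interior Harnack inequality (Theorem \ref{thm:interior_Harnack}) along a short chain of cylinders to transfer the inductive lower bound $b_j$ from the interior reference point out to the relevant portion of the upper boundary of $\tilde\Omega_j$. Because Harnack only propagates lower bounds forward in time, anchoring $b_j$ at time $-\rho^{2j}/2$ (strictly earlier than $0$) leaves enough temporal room for the forward Harnack chain to cover the required upper-boundary slice before the next scale is invoked.
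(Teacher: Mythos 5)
Your overall skeleton (dyadic rescaling, Lemma \ref{lem:barrier_sub} with $\varepsilon/2$ at each scale, ABP to absorb $f$, the scaling-invariant weighted $L^{n+1}$ bound) is the same as the paper's, and your base-case device with the corrector $\Phi$ is fine. The gap is in your choice of inductive quantity. You track the interior point value $b_j = u(\rho^j e_n,-\rho^{2j}/2)/\rho^{j(1+\varepsilon)}$, which is \emph{not} the hypothesis that Lemma \ref{lem:barrier_sub} consumes; that hypothesis is a lower bound on all of $\p_{\mathrm{up}}$ of the rescaled domain, i.e.\ on the slice $\{x_n=\rho^j\}\cap\overline\Omega$ over the \emph{full} time range of that scale. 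You correctly identify this as the hard part, but the Harnack-chain remedy you sketch does not close as stated, for two reasons. First, the slice you must cover contains times $t\in[-\rho^{2j},-\rho^{2j}/2]$ that lie \emph{before} your anchor time $-\rho^{2j}/2$, and the parabolic Harnack inequality only propagates lower bounds forward in time; so either you must re-anchor at the previous scale (entangling consecutive steps of the induction) or you must reprove Lemma \ref{lem:barrier_sub} with boundary data prescribed only on a shortened time interval. Second, and more seriously, if the chain is run from height $\sim\rho^j$ down to the height-$\rho^{j+1}$ slice needed at the next scale, it must shrink geometrically to stay inside the domain, costing a factor $c^{\log(1/\rho)}=\rho^{N}$ with $N$ determined by the Harnack constant. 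The induction closes only if this beats the gain $k\rho^{-\varepsilon/2}$ from using exponent $\varepsilon/2$, i.e.\ only if $N<\varepsilon/2$, which you cannot guarantee since $N$ is a fixed (possibly large) universal number and $\varepsilon$ may be small.

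The paper sidesteps all of this by taking $a_j:=\inf_{\p_{\mathrm{up}}\Omega_j}u_j$ as the inductive quantity. Then the hypothesis of Lemma \ref{lem:barrier_sub} at scale $j$ is literally $a_j\ge 1$, and its conclusion $u_j\ge k a_j d^{1+\varepsilon/2}+t-|x'|^2$, evaluated on the height-$\rho$ slice (where $d\approx\rho$, $|t|\le\rho^2$, $|x'|\le\rho$), gives directly $a_{j+1}\rho^{1+\varepsilon}\ge k\rho^{1+\varepsilon/2}a_j-2\rho^2-C\eta$, with no Harnack chain and no $\rho$-dependent loss beyond the explicit powers; the induction then closes by choosing $\rho$ and $\eta$ small exactly as you intend. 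I recommend replacing $b_j$ by $a_j$; the rest of your argument then goes through essentially verbatim. (Minor bookkeeping: with your convention that $\tilde f_j$ already carries the prefactor $\rho^{j(1-\varepsilon)}$, the ABP error term in your recursive inequality should read $C\lVert\tilde f_j\rVert_{L^{n+1}(\tilde\Omega_j)}$ rather than $C\rho^{j(1+\varepsilon)}\lVert\tilde f_j\rVert_{L^{n+1}(\tilde\Omega_j)}$.)
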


\begin{proof}
We will use a similar strategy to the proof of Lemma \ref{lem:barrier_super_abp}. We define the rescaled functions
$$u_j(x,t) := \frac{u(\rho^jx,\rho^{2j}t)}{\rho^{j(1+\varepsilon)}},$$
with $\rho > 0$ to be chosen later.

Now,
$$(\p_t-\Mminus)u_j \geq \rho^{j(1-\varepsilon)}(-\eta(\rho^jd)^{-1+\varepsilon}+\tilde f_j) \geq -\eta d^{-1+\varepsilon}+\rho^{j(1-\varepsilon)}\tilde f_j,$$
with $\tilde f_j(x,t) := f(\rho^jx,\rho^{2j}t)$.

Let $a_j := \inf\limits_{\p_{\mathrm{up}} \Omega} u_j$. By hypothesis, $a_0 = 1$. We will show by induction that $a_j \geq 1$ for all $j \geq 0$.

Again by Lemma \ref{lem:barrier_sub} (with $\varepsilon/2$) and Theorem \ref{thm:ABPKT}, applied to $u_j$,
\begin{align*}
    a_{j+1}\rho^{1+\varepsilon} &\geq k\rho^{1+\varepsilon/2}a_j-2\rho^2-C\rho^{j(1-\varepsilon)}\|\tilde f_j\|_{L^{n+1}(\tilde\Omega)}\\
    &\geq k\rho^{1+\varepsilon/2}-2\rho^2-C\rho^{-j(\varepsilon+1/(n+1))}\|f\|_{L^{n+1}(\rho^j\Omega)}\\
    &\geq 2\rho^{1+\varepsilon}-2\rho^2-C\|d^{-\varepsilon-1/(n+1)}f\|_{L^{n+1}(\Omega)} \geq \rho^{1+\varepsilon},
\end{align*}
choosing adequately small $\rho$ and $\eta$.

Finally, by Lemma \ref{lem:barrier_sub} and Theorem \ref{thm:ABPKT},
$$u_j(re_n,0) \geq k a_j - C\eta \geq k' > 0,$$
for all $r \in (\rho,1)$, provided that $\eta$ is small enough. Hence, undoing the scaling, $u(re_n,0) \geq k'r^{1+\varepsilon}$, as we wanted to prove.
\end{proof}

Combining the previous estimates, we can now give the following.

\begin{prop}\label{prop:growth}
Let $\varepsilon \in (0,1)$ and let $\L$ be a non-divergence form operator as in \eqref{eq:non-divergence_operator}. There exists  sufficiently small $\eta > 0$, only depending on the dimension, $\varepsilon$ and the ellipticity constants, such that the following holds.

Let $\Omega$ be a parabolic Lipschitz domain in $Q_1$ with Lipschitz constant $\eta$ in the sense of Definition \ref{defn:lipschitz_domain}. Let $d(x',x_n,t) = x_n - \Gamma(x',t)$, and let $u$ be a solution to
$$\left\{\begin{array}{rclll}
u_t - \L u & = & f & \text{in} & \Omega,\\
u & = & 0 & \text{on} & \p_\Gamma\Omega.
\end{array}\right.$$
Assume that $\|u\|_{L^\infty(Q_1)} \leq 1$, and $f = g + h$, with
$$\|d^{\varepsilon+1}g\|_{L^\infty(Q_1)}+\|d^{-(1/(n+1) - \varepsilon)_+}h\|_{L^{n+1}(Q_1)} \leq \eta.$$
Then,
$$|u| \leq Cd^{1-\varepsilon} \quad \text{in} \quad \Omega \cap Q_{3/4}.$$
Moreover, if $u$ is nonnegative, $m = u\left(\frac{e_n}{2},-\frac{3}{4}\right) > 0$ and
$$\|d^{-\varepsilon+1}g\|_{L^\infty(Q_1)}+\|d^{-\varepsilon-1/(n+1)}h\|_{L^{n+1}(Q_1)} \leq \eta m,$$
then,
$$u \geq cmd^{1+\varepsilon} \quad \text{in} \quad \Omega \cap Q_{3/4}.$$
The constants $C$ and $c$ are positive and depend only on the dimension, $\varepsilon$, and the ellipticity constants.
\end{prop}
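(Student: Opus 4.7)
The plan is to reduce, via translation and scaling to each boundary point of $\p_\Gamma\Omega\cap Q_{3/4}$, to the one-point estimates of Lemmas~\ref{lem:barrier_super_abp} and \ref{lem:barrier_sub_abp}. The hypotheses on $g$ (weighted $L^\infty$) and $h$ (weighted $L^{n+1}$) are tailored precisely so that the Pucci inequalities satisfied by $u$ fit into the assumptions of those lemmas.

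\emph{Upper bound.} Since $u$ solves $u_t-\L u=g+h$, both $u$ and $-u$ satisfy $(\p_t-\Mplus)(\pm u)\le |g|+|h|\le\eta d^{-1-\varepsilon}+|h|$, vanish on $\p_\Gamma\Omega$, and are bounded by $1$ on $\p_p\Omega$. Fix $(x_0,t_0)\in\Omega\cap Q_{3/4}$ and let $y_0=(x_0',\Gamma(x_0',t_0),t_0)\in\p_\Gamma\Omega$. After translating to $y_0$ and rescaling by a universal factor $\rho\in(0,1/8]$ (so that the new unit cylinder sits inside $Q_1$; for $\eta$ small, $y_0\in Q_{7/8}$), the rescaled function satisfies the hypotheses of Lemma~\ref{lem:barrier_super_abp}: the Lipschitz constant is invariant, $\|u\|_{L^\infty}\le 1$ is preserved, and the weights on $g$ and $h$ transform with positive powers of $\rho$ (using $\tilde d(y,s)=d(\rho y+y_0,\rho^2 s+t_0)/\rho$ together with $\tilde g(y,s)=\rho^2 g(\rho y+y_0,\rho^2 s+t_0)$), so the norms remain $\le\eta$. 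Applying the lemma to both signs along the vertical axis and undoing the rescaling yields $|u(x_0,t_0)|\le C d(x_0,t_0)^{1-\varepsilon}$ when $d(x_0,t_0)<\rho$; the case $d(x_0,t_0)\ge\rho$ is trivial from $\|u\|_{L^\infty}\le 1$.

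\emph{Lower bound.} The additional step here is to upgrade the single-point positivity $u(e_n/2,-3/4)\ge m$ to a uniform lower bound $u\ge c_1m$ on a thick interior region
$$\mathcal{R}=\big\{(x,t)\in\Omega\cap Q_{7/8}\,:\,d(x,t)\ge r_0,\ -\tfrac{1}{2}\le t\le 0\big\},$$
for a fixed small $r_0>0$. This is a parabolic Harnack chain: one covers $\mathcal{R}$ by overlapping cylinders contained in $\{d\ge r_0/2\}$, chronologically ordered forward from $(e_n/2,-3/4)$, applying Theorem~\ref{thm:interior_Harnack} in each. The inhomogeneity is absorbed via Theorem~\ref{thm:ABPKT}: in the region $\{d\ge r_0/2\}$, the bound $|g|\le\eta m\, d^{\varepsilon-1}\le C(r_0)\eta m$ together with $\|d^{-\varepsilon-1/(n+1)}h\|_{L^{n+1}}\le\eta m$ yields an additive error of order $\eta m$, absorbable into $m/2$ for $\eta$ small enough. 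Once $u\ge c_1 m$ on $\mathcal{R}$, for any $(x_0,t_0)\in\Omega\cap Q_{3/4}$ we translate and rescale by $\rho$ as before, now taking $\rho$ of order $r_0$ so that the top face $\{y^n=1\}$ of the rescaled unit cylinder lies inside $\mathcal{R}$; the rescaled $u/c_1m$ then satisfies the hypotheses of Lemma~\ref{lem:barrier_sub_abp} (the factor $1/c_1$ is absorbed by taking $\eta$ slightly smaller). The conclusion $u(x_0,t_0)\ge cm\,d(x_0,t_0)^{1+\varepsilon}$ follows, trivially again when $d(x_0,t_0)\ge\rho$.

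\emph{Main obstacle.} The delicate step is the Harnack chain used in the lower bound. One has to verify that a chain of universally bounded length can be run inside $\{d\ge r_0/2\}$ connecting $(e_n/2,-3/4)$ forward in time to every point of $\mathcal{R}$, and that the cumulative ABP error stays below $m/2$. Here the almost-flatness of $\Omega$ (small $\eta$) is essential to ensure that $\{d\ge r_0/2\}\cap Q_{7/8}$ is comparable to an interior region of the flat half-space and admits such a chain; everything else reduces to the bookkeeping of constants in the rescalings.
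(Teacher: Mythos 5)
Your proposal is correct and follows essentially the same route as the paper: the upper bound is obtained by translating/rescaling to boundary points and invoking Lemma~\ref{lem:barrier_super_abp}, and the lower bound by first spreading the positivity at $\left(\frac{e_n}{2},-\frac{3}{4}\right)$ to a uniform interior region via the interior Harnack inequality and then invoking Lemma~\ref{lem:barrier_sub_abp}. The only bookkeeping adjustment is that your region $\mathcal{R}$ must reach back to times slightly earlier than $-\frac{1}{2}$ (the paper's set $E$ uses $t\ge -\frac{37}{64}$), since the top faces of the rescaled cylinders attached to boundary points of $Q_{3/4}$ involve times down to $-\frac{9}{16}-\rho^2$.
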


\begin{proof}
For the first estimate, let $(x_0,t_0) \in \p_\Gamma\Omega\cap Q_{3/4}$, and consider the function
$$v(x,t) := u\left(x_0+\frac{1}{4}x,t_0+\frac{1}{16}t\right).$$

Then, by Lemma \ref{lem:barrier_super_abp}, $v(re_n,0) \leq K r^{1-\varepsilon}$ for all $r \in (0,1)$. Since $d$ is comparable to $x_n - \Gamma(x',t)$, it follows that $u \leq Cd^{1-\varepsilon}$ in $\Omega\cap Q_{3/4}\cap\{x_n < 1/4\}$.

Finally, notice that
$$d \geq \frac{1}{2}(x_n - \Gamma(x',t)) \geq \frac{1}{2}\left(\frac{1}{4}-2\eta\right) > \frac{1}{9} \quad \text{in} \ \Omega\cap\{x_n \geq 1/4\},$$
and the conclusion follows adjusting $C$ if necessary.

For the second estimate, let $\Omega(x_0,t_0) := \Omega \cap Q_{1/8}(x_0,t_0)$ and notice that
$$\bigcup\limits_{(x_0,t_0) \in \p_\Gamma\Omega\cap Q_{3/4}} \p_{\mathrm{up}}\Omega(x_0,t_0) \subset E := \overline{B_{7/8}'}\times[1/8-2\eta,1/8+2\eta]\times[-37/64,0],$$
where $\p_{\mathrm{up}}\Omega(x_0,t_0) := \overline{\Omega(x_0,t_0)}\cap\{x_n = x_{0,n}+1/8\}$, analogously to Lemma \ref{lem:barrier_sub}.

Then, by the interior Harnack (Theorem \ref{thm:interior_Harnack}), $u \geq c_1m$ in $E$, and by an analogous reasoning to the upper bound with Lemma \ref{lem:barrier_sub_abp} instead of Lemma \ref{lem:barrier_super_abp}, the conclusion follows.
\end{proof}

We are finally able to prove our $C^{0,\gamma}$ boundary regularity result.

\begin{proof}[Proof of Proposition \ref{prop:bdry_Calpha}]
We may assume that $\|u\|_{L^\infty(Q_1)} \leq 1$ and $K_0 = \eta$ (with $\eta$ from Proposition \ref{prop:growth}) without loss of generality after dividing by a constant. Then, by Proposition \ref{prop:growth} and Lemma \ref{lem:regularized_distance},
$$|u| \leq K((x_n - \Gamma(x',t))^\gamma \quad \text{in} \ \Omega\cap Q_{3/4}.$$

Then, we will use interior estimates in combination with Lemma \ref{lem:teo_b2} to deduce the result.

Let $p = (y',y_n,s)$ and $\rho \in (0,\frac{1}{16})$ such that $Q_{2\rho}(p) \subset \Omega\cap Q_{5/8}$, and let
$$R := \max\left\{\rho,\frac{y_n-\Gamma(y',s)}{3}\right\}.$$
Note that $Q_{2R}(p) \subset \Omega$. We distinguish two cases:

\textit{Case 1.} $R \geq \frac{1}{16}$. Then, $Q_{1/8}(p) \subset \Omega\cap Q_{3/4}$, and for all $(x',x_n,t) \in Q_{1/8}(p)$,
$$x_n \geq \Gamma(x',t) + \frac{R}{2} \geq \Gamma(x',t) + \frac{1}{32}.$$
Hence,
\begin{align*}
    u_t - \Mminus u &\geq -d^{\gamma-2}+f \geq -2^{12}+f,\\
    u_t - \Mplus u &\leq d^{\gamma-2}+f \leq 2^{12}+f,
\end{align*}
which together with the fact that $\|u\|_{L^\infty(Q_1)} \leq 1$, Theorem \ref{thm:interior_Calpha}, and a covering argument, gives
$$[u]_{C^{0,\gamma}_p(Q_\rho(p))} \leq [u]_{C^{0,\gamma}_p(Q_{1/16}(p))} \leq C.$$

\textit{Case 2.} $R < \frac{1}{16}$. Notice that if $\rho < R$, $y_n - \Gamma(y',s) = 3R$, and if $\rho = R$, using that $Q_{2\rho}(p) \subset \Omega$, $y_n - \Gamma(y',s) \geq 2\rho = 2R$. In either case,
$$2R \leq y_n - \Gamma(y',s) \leq 3R.$$

Now, for all $(x',x_n,t) \in Q_{3R/2}(p)$,
\begin{align*}
    x_n - \Gamma(x',t) &\geq y_n - \Gamma(y',s) - \frac{3}{2}R - |\Gamma(x',t)-\Gamma(y',s)| \geq \frac{R}{4},\\
    x_n - \Gamma(x',t) &\leq y_n - \Gamma(y',s) + \frac{3}{2}R + |\Gamma(x',t)-\Gamma(y',s)| \leq 5R
\end{align*}
using the parabolic Lipschitz character of $\Gamma$ and that $L_0 \leq \frac{1}{8}$.

Therefore, 
\begin{align*}
    u_t - \Mminus u &\geq -CR^{\gamma-2}+f,\\
    u_t - \Mplus u &\leq CR^{\gamma-2}+f,
\end{align*}
and $\|u\|_{L^\infty(Q_{3R/2})} \leq K(5R)^\gamma$, which combined with Theorem \ref{thm:interior_Calpha} gives
\begin{align*}
    [u]_{C^{0,\gamma}_p(Q_R(p))} &\lesssim R^{-\gamma}(5R)^\gamma + R^{n/(n+1)-\gamma}\|R^{\gamma-2}+f\|_{L^{n+1}(Q_{3R/2}(p))}\\
    &\lesssim 1 + R^{n/(n+1)-\gamma}|Q_{3R/2}|^{1/(n+1)}R^{\gamma-2}+R^{n/(n+1)-\gamma}\|f\|_{L^{n+1}(Q_{3R/2}(p))}\\
    &\lesssim 1 + 1 + \|d^{-(\gamma - n/(n+1))_+}f\|_{L^{n+1}(Q_{3R/2}(p))} \lesssim 1.
\end{align*}

The conclusion follows by Lemma \ref{lem:teo_b2}.
\end{proof}

\section{The near-linear solution}\label{sect:special_soln}

Our goal now is to find a special solution satisfying the following.
\begin{prop}\label{prop:special}
Let $\varepsilon \in (0,\alpha_0)$, and let $\L$ be a non-divergence form operator as in \eqref{eq:non-divergence_operator}. Then, there exists $\delta > 0$, only depending on $\varepsilon$, the dimension and the ellipticity constants, such that the following holds.

Let $\Omega$ be a parabolic Lipschitz domain in $Q_1$ in the sense of Definition \ref{defn:lipschitz_domain} with Lipschitz constant $\delta$, and let $d(x',x_n,t) = x_n - \Gamma(x',t)$. Then, there exists $\varphi: \Omega \to \R$ such that
$$\left\{\begin{array}{rclll}
     \varphi_t - \L\varphi & = & 0 & \text{in} & \Omega\\
     \varphi & = & 0 & \text{on} & \p_\Gamma\Omega,
\end{array}\right.$$
$\varphi \geq 0$, $\|\varphi\|_{L^\infty(Q_1)} = 1$,
$$\frac{1}{24}d^{1+\varepsilon} \leq \varphi \leq 64d^{1-\varepsilon},$$
and for all $0 < r_1 < r_2 \leq 1$,
$$\frac{\sup\limits_{Q_{r_1}}\varphi}{\sup\limits_{Q_{r_2}}\varphi} \geq \frac{1}{8}\left(\frac{r_1}{r_2}\right)^{1+\varepsilon}.$$
\end{prop}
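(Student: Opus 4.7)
The plan is to construct $\varphi$ as a carefully normalized Perron solution to the homogeneous Dirichlet problem and then to extract all three stated properties using Proposition \ref{prop:growth} together with the interior Harnack inequality. Specifically, I would let $w$ solve $w_t - \L w = 0$ in $\Omega \cap Q_1$ with $w = 0$ on $\p_\Gamma\Omega$ and $w \equiv 1$ on the remaining part of the parabolic boundary, and set $\varphi := w/\|w\|_{L^\infty(Q_1)}$. Existence of $w$ is standard (via the Perron method), and by the comparison principle $0 \leq w \leq 1$, so the renormalized $\varphi$ solves the same equation with vanishing lateral data and satisfies $\|\varphi\|_{L^\infty(Q_1)} = 1$.

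For the pointwise bounds, the interior Harnack inequality (Theorem \ref{thm:interior_Harnack}), applied along a chain of cylinders connecting the top of $Q_1$ to $(e_n/2,-3/4)$, yields $w(e_n/2,-3/4) \geq c_H > 0$ for some dimensional constant, and hence $\varphi(e_n/2,-3/4) \geq c_H$. I would then apply Proposition \ref{prop:growth} with $f \equiv 0$ and $m = \varphi(e_n/2,-3/4) \geq c_H$ to obtain
\[
c\, d^{1+\varepsilon} \leq \varphi \leq C\, d^{1-\varepsilon} \quad \text{in } \Omega \cap Q_{3/4},
\]
with $c = c(\varepsilon,n,\lambda,\Lambda)$ and $C = C(\varepsilon,n,\lambda,\Lambda)$. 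The specific numerical values $1/24$ and $64$ in the statement are then secured by shrinking $\delta$: intuitively, as $\delta \to 0$ the domain flattens into a half-space and, by the Liouville Theorem \ref{thm:liouville}, $\varphi$ tends to a multiple of $(x_n)_+$, so the exponents $1\pm\varepsilon$ collapse and the constants can be forced as close to linear as desired.

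For the supremum ratio, the first attempt is to test the lower pointwise bound at the interior point $p_r := (r e_n/2,-r^2/4) \in Q_r$, which satisfies $d(p_r) \geq r/8$ once $\delta \leq 1/2$. This yields $\sup_{Q_{r_1}}\varphi \geq \varphi(p_{r_1}) \geq (1/24)(r_1/8)^{1+\varepsilon}$, and combined with the trivial upper bound $\sup_{Q_{r_2}}\varphi \leq 1$ it produces the correct $(r_1/r_2)^{1+\varepsilon}$ scaling. The main obstacle will be locking down the explicit constant $1/8$: the gap between $d^{1+\varepsilon}$ and $d^{1-\varepsilon}$ in Proposition \ref{prop:growth} introduces a fixed multiplicative loss that the naive estimate cannot absorb, so picking a single point is not enough.

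To close this gap I would resort to a compactness-Liouville contradiction. If the sup ratio failed, one would obtain a sequence of domains $\Omega_k$ with Lipschitz constant $\delta_k \to 0$, operators $\L_k$, and normalized solutions $\varphi_k$ violating the ratio for pairs $(r_{1,k},r_{2,k})$. Rescaling each $\varphi_k$ by $r_{2,k}$ and dividing by $\sup_{Q_{r_{2,k}}}\varphi_k$, the uniform $C^{0,\gamma}_p$ estimates of Proposition \ref{prop:bdry_Calpha} together with the stability statement of Proposition \ref{prop:viscosity_limit} let me extract a locally uniform limit $\varphi_\infty \geq 0$ in the half-space $\{x_n > 0\}$ that vanishes on $\{x_n = 0\}$ and inherits the growth hypothesis of Theorem \ref{thm:liouville}. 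That theorem forces $\varphi_\infty = k(x_n)_+$, whose sup ratio equals exactly $r_1/r_2 \geq (r_1/r_2)^{1+\varepsilon}$, contradicting the assumed violation once $\delta$ is small enough to be in this regime. The delicate bookkeeping of the explicit constant $1/8$ — ensuring the rescaled sequence is genuinely captured by the half-space Liouville — is the step I expect to occupy most of the effort.
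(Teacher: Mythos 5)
Your construction and the two-sided bound are in the right spirit (the paper also builds $\varphi$ by solving the homogeneous equation with prescribed lateral data and sandwiching it between barriers), but there are two genuine gaps. The smaller one: the explicit constants $1/24$ and $64$ do not follow from Proposition \ref{prop:growth} plus ``shrinking $\delta$''. The constants in Proposition \ref{prop:growth} are fixed once $\varepsilon$ is fixed and do not improve as the domain flattens; closeness of $\varphi$ to $k(x_n)_+$ in $L^\infty$ says nothing about the ratio $\varphi/d^{1\pm\varepsilon}$ near the boundary, where both quantities vanish. The paper instead prescribes the boundary datum $(2R)^{\varepsilon}d$ on $\p_p\Omega$ in the large cylinder $Q_R$, $R=2^{1/\varepsilon}$, so that $d^{1+\varepsilon}\leq\tilde\varphi\leq(2R)^{2\varepsilon}d^{1-\varepsilon}$ follows directly from the comparison principle (the powers $d^{1\pm\varepsilon}$ being super-/subsolutions), and the numerical constants drop out of the normalization.

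The serious gap is the supremum-ratio property. A single compactness--Liouville contradiction cannot handle all pairs $0<r_1<r_2\leq1$ simultaneously. If the violating pairs satisfy $r_{1,k}/r_{2,k}\to0$, locally uniform convergence $\psi_k\to k(x_n)_+$ in $Q_1$ only gives $\sup_{Q_{\rho_k}}\psi_k\geq\rho_k-\epsilon_k$ with $\epsilon_k=\|\psi_k-k(x_n)_+\|_{L^\infty(Q_1)}$, and nothing prevents $\epsilon_k\gg\rho_k$, so no contradiction with $\sup_{Q_{\rho_k}}\psi_k<\tfrac18\rho_k^{1+\varepsilon}$ is reached. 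Worse, for Theorem \ref{thm:liouville} to classify the limit you need the rescaled functions to be defined on expanding cylinders $Q_R$, $R\leq1/r_{2,k}$, with a uniform growth bound $\sup_{Q_{Rr_{2,k}}}\varphi_k/\sup_{Q_{r_{2,k}}}\varphi_k\lesssim R^{1+\alpha}$ --- a doubling property which is essentially the statement you are trying to prove, and which your $\varphi$ (defined only in $Q_1$ with constant data) does not carry; if $r_{2,k}$ stays bounded below the limit lives only in $Q_1\cap\{x_n>0\}$, where caloric functions like $c\,x_ne^{Kx_1+K^2t}$ show the desired lower bound on $\sup_{Q_\rho}$ is simply false without global information. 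The paper's resolution is structural: it builds $\varphi$ in a huge cylinder $Q_{R_0}$, $R_0=2^{n_0}$, with the a priori dyadic growth control $\|\varphi\|_{L^\infty(Q_{2^k})}\leq128\cdot2^{k(1+1/n_0+\varepsilon)}$ inherited from the barrier; proves the single dyadic step $\sup_{Q_{1/2}}\varphi\geq2^{-(1+1/n_0+\varepsilon)}$ by compactness (Lemma \ref{lem:special_1/2}), where the growth control at scales up to $R_0$ is exactly what feeds the Liouville theorem; checks that $\varphi(x/2,t/4)/\sup_{Q_{1/2}}\varphi$ again satisfies the same hypotheses; and iterates (Lemma \ref{lem:special_induction}) to reach all pairs $r_1<r_2$ with the fixed constant $\tfrac18$. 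Without this self-improving one-step estimate your argument does not close.
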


We start by constructing solutions with a controlled growth.
\begin{lem}\label{lem:special_exists}
Let $\varepsilon \in (0,1)$, and let $\L$ be a non-divergence form operator as in \eqref{eq:non-divergence_operator}. There exists $\delta_1 \in (0,\varepsilon)$, only depending on the dimension, $\varepsilon$ and the ellipticity constants, such that the following holds.

Let $R = 2^{1/\varepsilon}$, let $\Omega$ be a parabolic Lipschitz domain in $Q_R$ in the sense of Definition \ref{defn:lipschitz_domain} with Lipschitz constant $\delta_1$, and let $d = x_n - \Gamma(x',t)$. Then, there exists $\varphi: \Omega \to \R$ such that
$$\left\{\begin{array}{rclll}
     \varphi_t - \L\varphi & = & 0 & \text{in} & \Omega\\
     \varphi & = & 0 & \text{on} & \p_\Gamma\Omega,
\end{array}\right.$$
$\varphi \geq 0$, $\|\varphi\|_{L^\infty(Q_1)} = 1$, and
$$\frac{1}{24}d^{1+\varepsilon} \leq \varphi \leq 64d^{1-\varepsilon} \quad \text{in} \ Q_R.$$
In particular, $\|\varphi\|_{L^\infty(Q_r)} \leq 128r^{1-\varepsilon}$ for all $r \in [1,R]$.
\end{lem}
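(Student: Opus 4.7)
The plan is to obtain $\varphi$ by normalizing a specific nonnegative solution $\Phi$ of the homogeneous Dirichlet problem on $\Omega \cap Q_{R'}$, with $R' := \tfrac{4R}{3}$, and to read off both growth bounds from Proposition \ref{prop:growth} after a single parabolic rescaling by $R'$. The key point is that $R = 2^{1/\varepsilon}$ depends only on $\varepsilon$, so the scaling factor and any Harnack chain lengths involved remain universal.

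First I would choose $\delta_1 \in (0,\varepsilon)$ small enough that Proposition \ref{prop:growth} and the interior Harnack inequality both apply in any parabolic Lipschitz subdomain with Lipschitz constant $\delta_1$ (this is scale invariant). Then I would take $\Phi$ as the unique strong solution of
\[\begin{cases} \Phi_t - \L \Phi = 0 & \text{in } \Omega \cap Q_{R'},\\ \Phi = 0 & \text{on } \p_\Gamma \Omega \cap \overline{Q_{R'}},\\ \Phi = G & \text{on the remainder of } \p_p(\Omega \cap Q_{R'}),\end{cases}\]
where $G$ is a fixed nonnegative continuous datum concentrated on the past boundary $\{t = -R'^{2}\} \cap \{x_n > R'/2\}$ and bounded by $1$. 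The comparison principle yields $0 \le \Phi \le 1$, and by parabolic propagation of the positive past data one checks that $\Phi(R'e_n/2, -3R'^{2}/4) \ge m_0 > 0$ for a universal $m_0$.

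Rescaling $\tilde\Phi(x,t) := \Phi(R'x, R'^{2}t)$ on the rescaled domain $\tilde\Omega$ (with the same Lipschitz constant $\delta_1$), one obtains $\|\tilde\Phi\|_{L^\infty(Q_1)} \le 1$, $\tilde\Phi = 0$ on $\p_\Gamma\tilde\Omega$, and $\tilde\Phi(e_n/2, -3/4) \ge m_0$. Applying both halves of Proposition \ref{prop:growth} with $f \equiv 0$ gives
\[c_1 m_0\, \tilde d^{\,1+\varepsilon} \;\le\; \tilde\Phi \;\le\; C_1\, \tilde d^{\,1-\varepsilon} \quad\text{in } \tilde\Omega \cap Q_{3/4}.\]
Undoing the rescaling via Lemma \ref{lem:dist_scaling}, together with $Q_{3R'/4} = Q_R$, yields two-sided bounds $c_2\, d^{\,1+\varepsilon} \le \Phi \le C_2\, d^{\,1-\varepsilon}$ on $\Omega \cap Q_R$, with $c_2, C_2$ depending only on $\varepsilon$, the dimension, and the ellipticity constants.

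Finally, setting $\varphi := \Phi/\|\Phi\|_{L^\infty(Q_1)}$ and noting that interior Harnack at scale $O(1)$ makes $\|\Phi\|_{L^\infty(Q_1)}$ itself a positive universal constant, the bounds transfer to $\varphi$. The explicit constants $\tfrac{1}{24}$ and $64$ follow after additionally shrinking $\delta_1$: the constants in Proposition \ref{prop:growth} are inherited from the barriers in Lemmas \ref{lem:barrier_super_abp} and \ref{lem:barrier_sub_abp}, which improve as the Lipschitz constant tends to zero. The ``in particular'' bound $\|\varphi\|_{L^\infty(Q_r)} \le 128\, r^{1-\varepsilon}$ for $r \in [1,R]$ is immediate from $\varphi \le 64\, d^{1-\varepsilon}$ together with $d \le 2r$ on $Q_r$. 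The main obstacle will be the bookkeeping for the explicit numerical constants under the rescaling by $R'$; this is tractable precisely because $R$ is itself a universal constant (so the Harnack chain has universally bounded length) — otherwise one would need a dyadic iteration combined with the Liouville theorem (Theorem \ref{thm:liouville}) in the half-space.
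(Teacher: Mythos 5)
Your route is genuinely different from the paper's: the paper never invokes Proposition \ref{prop:growth} here. Instead it observes that, for $\delta_1$ small relative to $\varepsilon$, the powers $d^{1-\varepsilon}$ and $d^{1+\varepsilon}$ of the regularized distance are themselves a supersolution and a subsolution of $\p_t - \L$ (the same computation as in Lemmas \ref{lem:barrier_super} and \ref{lem:barrier_sub}), solves the Dirichlet problem with boundary datum $(2R)^\varepsilon d$ on $\p_p\Omega$, and sandwiches the solution between $d^{1+\varepsilon}$ and $(2R)^{2\varepsilon}d^{1-\varepsilon}$ by one application of the comparison principle. The whole point of taking $R = 2^{1/\varepsilon}$ is that $(2R)^{2\varepsilon} \leq 16$, so every constant in the sandwich is an absolute number and the stated $\tfrac{1}{24}$ and $64$ drop out after normalizing on $Q_1$.

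This is where your proposal has a genuine gap. You claim the explicit constants ``follow after additionally shrinking $\delta_1$'' because the constants in Proposition \ref{prop:growth} ``improve as the Lipschitz constant tends to zero.'' That is false. The upper-bound constant in Proposition \ref{prop:growth} is inherited from the barrier $Kd^{1-\varepsilon}-t+|x'|^2$ of Lemma \ref{lem:barrier_super}, where $K$ must satisfy $K(1-\varepsilon)\frac{4\lambda\varepsilon}{9} \gtrsim 1 + 2(n-1)\Lambda$ regardless of how small $\eta$ is; so $K \gtrsim \Lambda/(\lambda\varepsilon)$, and after the dyadic iteration in Lemma \ref{lem:barrier_super_abp} (where $\rho$ is chosen with $\rho^{\varepsilon/2}\lesssim 1/K$) the resulting constant is of order $K^{C/\varepsilon}$. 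It does not converge to anything as $\delta_1\to 0$, it is not explicit, and it blows up as $\varepsilon\to 0$. This matters beyond cosmetics: Lemma \ref{lem:special_exists} is fed into the compactness argument of Lemma \ref{lem:special_1/2} with $\varepsilon = 1/n_k \to 0$, and the hypothesis there requires the growth bound $\|\varphi\|_{L^\infty(Q_{2^k})}\leq 128\cdot 2^{k(1+1/n_k+\varepsilon)}$ with a constant uniform in $n_k$; a bound with a constant $C(\varepsilon)\to\infty$ would destroy the growth control on the blow-down limit. The rest of your argument (the choice of $G$, the Harnack chain giving $m_0$, the rescaling bookkeeping with $R'^{2\varepsilon}$ bounded) is fine, but to close the proof you must replace the appeal to Proposition \ref{prop:growth} by the direct comparison with $d^{1\pm\varepsilon}$, which is exactly the paper's argument.
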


\begin{proof}
First, by the same computations in Lemmas \ref{lem:barrier_super} and \ref{lem:barrier_sub},
\begin{align*}
    (\p_t-\L)d^{1-\varepsilon} &\geq (1-\varepsilon)d^{-1-\varepsilon}(-C\delta_1+C'\varepsilon) \geq 0\\
    (\p_t-\L)d^{1+\varepsilon} &\leq (1+\varepsilon)d^{-1+\varepsilon}(C\delta_1-C'\varepsilon) \leq 0,
\end{align*}
provided that $\delta_1$ is small enough. Assume without loss of generality that ${\delta_1 \in (0,1/6)}$. Then, since

$$d(x,t) \leq \frac{3}{2}(x_n - \Gamma(x',t)) \leq \frac{3}{2}(R+2\delta_1 R) \leq 2R,$$
it follows that
$$d^{1+\varepsilon} \leq (2R)^\varepsilon d \leq (2R)^{2\varepsilon}d^{1-\varepsilon}.$$

Now, let $\tilde\varphi$ be the solution to
$$\left\{\begin{array}{rclll}
\tilde\varphi_t-\L\tilde\varphi & = & 0 & \text{in} & \Omega\\
\tilde\varphi & = & (2R)^\varepsilon d & \text{on} & \p_p\Omega.
\end{array}\right.$$
By the comparison principle, it follows that
$$d^{1+\varepsilon} \leq \tilde\varphi \leq (2R)^{2\varepsilon}d^{1-\varepsilon}.$$
Then, by Lemma \ref{lem:regularized_distance},
\begin{align*}
    \|\tilde\varphi\|_{L^\infty(Q_1)} &\leq (2R)^{2\varepsilon}\|d^{1-\varepsilon}\|_{L^\infty(Q_1)} \leq 4R^{2\varepsilon}\frac{3}{2} = 24,\\
    \|\tilde\varphi\|_{L^\infty(Q_1)} &\geq \|d^{1+\varepsilon}\|_{L^\infty(Q_1)} \geq \frac{1}{4}.
\end{align*}

Let now

$$\varphi := \frac{\tilde\varphi}{\|\tilde\varphi\|_{L^\infty(Q_1)}}.$$

The first conclusion follows from the previous estimate. For the second one, notice that for $r \geq 1$,

$$\|\varphi\|_{L^\infty(Q_r)} \leq 64\|d^{1-\varepsilon}\|_{L^\infty(Q_r)} \leq 64\left(\frac{3}{2}r(1+2\delta_1)\right)^{1-\varepsilon} \leq 128r^{1-\varepsilon}.$$
\end{proof}

This special solutions satisfy the following estimate.
\begin{lem}\label{lem:special_1/2}
Let $\varepsilon \in (0,\alpha_0)$ and let $\L$ be a non-divergence form operator as in \eqref{eq:non-divergence_operator}. There exists an integer $n_0 > 1/\varepsilon$, only depending on $\varepsilon$, the dimension and the ellipticity constants, such that the following holds.

Let $R_0 = 2^{n_0}$, let $\Omega$ be a parabolic Lipschitz domain in $Q_{R_0}$ in the sense of Definition \ref{defn:lipschitz_domain} with Lipschitz constant $1/n_0$, and let $d = x_n - \Gamma(x',t)$. Let $\varphi : \Omega \to \R$ satisfy the following properties:
$$\left\{\begin{array}{rcll}
\varphi_t - \L\varphi & = & 0 & \text{in} \ \Omega\\
\varphi & = & 0 & \text{on} \ \p_\Gamma\Omega\\
\varphi & \geq & 0 &\\
\|\varphi\|_{L^\infty(Q_{2^k})} & \leq & 128\cdot 2^{k(1+1/n_0+\varepsilon)} & \forall \ k \in \{0,\ldots,n_0\} \\
\|\varphi\|_{L^\infty(Q_1)} \ & = & 1 &
\end{array}\right.$$

Then,
$$\sup\limits_{Q_{1/2}}\varphi \geq \left(\frac{1}{2}\right)^{1+1/n_0+\varepsilon}.$$
\end{lem}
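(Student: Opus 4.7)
The plan is a contradiction-compactness argument. Suppose the lemma fails. Then for every $k\in\N$ with $k>1/\varepsilon$, I can find an integer $n_k\geq k$, a parabolic Lipschitz domain $\Omega_k\subset Q_{2^{n_k}}$ with Lipschitz constant $1/n_k$, and a nonnegative $\mathcal{L}$-caloric function $\varphi_k$ vanishing on $\p_\Gamma\Omega_k$, with $\|\varphi_k\|_{L^\infty(Q_1)}=1$ and $\|\varphi_k\|_{L^\infty(Q_{2^j})}\leq 128\cdot 2^{j(1+1/n_k+\varepsilon)}$ for $0\le j\le n_k$, yet
\[\sup_{Q_{1/2}}\varphi_k<\left(\tfrac12\right)^{1+1/n_k+\varepsilon}.\]
Extending each $\varphi_k$ by zero to $Q_{2^{n_k}}\setminus\Omega_k$, and using $\Gamma_k(0,0)=0$ together with $\|\Gamma_k\|_{C^{0,1}_p}\leq 1/n_k$, the graphs $\Gamma_k$ converge uniformly to zero on every compact set, so the domains flatten to $\{x_n>0\}$.

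My first step is to obtain uniform $C^{0,\gamma}_p$ estimates on every $Q_R$, for a fixed $\gamma\in(\varepsilon,\alpha_0)$. For each dyadic scale $2^{j+1}\le 2^{n_k}$ I rescale $Q_{2^{j+1}}$ to $Q_1$: the rescaled functions have $L^\infty$ norm controlled by the growth assumption $128\cdot 2^{j(1+1/n_k+\varepsilon)}$, they satisfy a Pucci-trapped homogeneous equation, and the rescaled domains are parabolic Lipschitz with the same (small) constant $1/n_k$. Proposition~\ref{prop:bdry_Calpha}, applied with zero right-hand side, then delivers a uniform $C^{0,\gamma}_p$-bound on $Q_{2^j}$. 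Arzelà-Ascoli and a diagonal extraction produce a subsequential locally uniform limit $\varphi_\infty:\R^n\times(-\infty,0]\to\R$ that is nonnegative, continuous, vanishes on $\{x_n\leq 0\}$, has $\|\varphi_\infty\|_{L^\infty(Q_1)}=1$, and obeys $\|\varphi_\infty\|_{L^\infty(Q_R)}\leq CR^{1+\varepsilon}$ for all $R\ge 1$.

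The second step is to identify $\varphi_\infty$. Each $\varphi_k$ satisfies $\p_t\varphi_k-\Mplus\varphi_k\le 0\le\p_t\varphi_k-\Mminus\varphi_k$, and by Proposition~\ref{prop:viscosity_limit} these Pucci inequalities are inherited by $\varphi_\infty$ in $\{x_n>0\}$. Since $\varepsilon<\alpha_0$, the Liouville Theorem~\ref{thm:liouville} forces $\varphi_\infty=c(x_n)_+$, and the normalization $\|\varphi_\infty\|_{L^\infty(Q_1)}=1$ fixes $c=1$. In particular $\sup_{Q_{1/2}}\varphi_\infty=\tfrac12$. Letting $k\to\infty$ in the contradiction hypothesis,
\[\tfrac12=\sup_{Q_{1/2}}\varphi_\infty=\lim_k\sup_{Q_{1/2}}\varphi_k\le\lim_k\left(\tfrac12\right)^{1+1/n_k+\varepsilon}=\left(\tfrac12\right)^{1+\varepsilon}<\tfrac12,\]
which is the desired contradiction.

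The main difficulty will be the first step: the $L^\infty$ bound on $\varphi_k$ grows like $R^{1+\varepsilon}$ at large scales, and the regularity estimate has to scale consistently with this growth so that everything survives a diagonal extraction. The growth factor $128\cdot 2^{j(1+1/n_0+\varepsilon)}$ in the hypothesis is tuned exactly for this, and the shrinking Lipschitz constants $1/n_k\to 0$ make Proposition~\ref{prop:bdry_Calpha} applicable uniformly at every dyadic scale. Once compactness and the Liouville classification are in place, the strict gap $\left(\tfrac12\right)^{1+\varepsilon}<\tfrac12$ closes the argument.
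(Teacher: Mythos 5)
Your proposal is correct and follows essentially the same route as the paper's proof: a contradiction-compactness argument using Proposition \ref{prop:bdry_Calpha} for uniform Hölder estimates at each dyadic scale, Arzelà-Ascoli plus Proposition \ref{prop:viscosity_limit} to pass to a viscosity solution in the half-space, Theorem \ref{thm:liouville} to identify the limit as $(x_n)_+$, and the strict gap $\left(\tfrac12\right)^{1+\varepsilon}<\tfrac12$ to conclude. The only cosmetic difference is that you spell out the dyadic rescaling and diagonal extraction that the paper leaves implicit.
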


\begin{proof}
Let us proceed by contradiction: assume there does not exist $n_0$ satisfying the conclusion. Then, by Lemma \ref{lem:special_exists} with $\varepsilon = 1/n_0$, there exist $n_k \uparrow \infty$, $\L_k$ non-divergence form operators, $\Omega_k$ parabolic Lipschitz domains in $Q_{R_k}$ (with Lipschitz constant $1/n_k$ and $R_k = 2^{n_k}$), and $\varphi_k : \Omega_k \to \R$ such that
$$\left\{\begin{array}{rcll}
(\p_t-\L_k)\varphi_k & = & 0 & \text{in} \ \Omega_k\\
\varphi_k & = & 0 & \text{on} \ \p_\Gamma\Omega_k\\
\varphi_k & \geq & 0 &\\
\|\varphi_k\|_{L^\infty(Q_{2^k})} & \leq & 128\cdot 2^{k(1+1/n_k+\varepsilon)} & \forall \ k \in \{0,\ldots,n_k\} \\
\|\varphi_k\|_{L^\infty(Q_1)} \ & = & 1 &
\end{array}\right.$$
while also satisfying
$$\sup\limits_{Q_{1/2}}\varphi_k < \left(\frac{1}{2}\right)^{1+1/n_k+\varepsilon}.$$

Then, by Proposition \ref{prop:bdry_Calpha}, for all $r \geq 1$
$$\|\varphi_k\|_{C^{0,\alpha}_p(Q_r)} \leq C(r),$$
for sufficiently large $k$, and therefore by Arzelà-Ascoli $\varphi_k \rightarrow \varphi_0$ locally uniformly, up to a subsequence.

Therefore, by Proposition \ref{prop:viscosity_limit}, $\varphi_0$ is a viscosity solution to
$$(\p_t-\Mplus)\varphi_0 \leq 0 \leq (\p_t-\Mminus)\varphi_0 \quad \text{in} \ \{x_n > 0\}$$
with $\varphi_0 = 0$ on $\{x_n = 0\}$, $\varphi_0 \geq 0$, $\|\varphi_0\|_{L^\infty(Q_1)} = 1$, and the growth control $\|\varphi_0\|_{L^\infty(Q_{2^k})} \leq 128\cdot 2^{k(1+\varepsilon)}$ for all $k \in \N$.

Hence, by Theorem \ref{thm:liouville}, $\varphi_0 = (x_n)_+$, contradicting the fact that
$$\frac{1}{2} = \sup\limits_{Q_{1/2}}\varphi_0 \leq \limsup\limits_{k\rightarrow\infty}\sup\limits_{Q_{1/2}}\varphi_k \leq \lim\limits_{k\rightarrow\infty} \left(\frac{1}{2}\right)^{1+1/n_k+\varepsilon} = \frac{1}{2^{1+\varepsilon}} < \frac{1}{2}.$$
\end{proof}

The next step is to iterate the inequality to obtain the following.

\begin{lem}\label{lem:special_induction}
Under the hypotheses of Lemma \ref{lem:special_1/2}, for all $0 < r_1 < r_2 \leq 1$,
$$\frac{\sup\limits_{Q_{r_1}}\varphi}{\sup\limits_{Q_{r_2}}\varphi} \geq \frac{1}{8}\left(\frac{r_1}{r_2}\right)^{1+1/n_0+\varepsilon}.$$
\end{lem}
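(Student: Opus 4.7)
Set $\alpha := 1 + 1/n_0 + \varepsilon$ and $M_j := \sup_{Q_{2^{-j}}} \varphi$ for integers $j \geq 0$. The plan is to first establish the dyadic estimate $M_{j+1} \geq 2^{-\alpha} M_j$ for all $j \geq 0$ by induction on $j$, and then deduce the full inequality for arbitrary $0 < r_1 < r_2 \leq 1$ by comparing each radius to its nearest dyadic scale.

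The base case $j = 0$ of the induction is precisely the conclusion of Lemma \ref{lem:special_1/2}. For the inductive step, assume $M_i/M_{i-1} \geq 2^{-\alpha}$ for $i = 1, \ldots, j$. I would apply Lemma \ref{lem:special_1/2} to the parabolic rescaling
\[\psi_j(x,t) := \frac{\varphi(2^{-j}x,\, 2^{-2j}t)}{M_j},\]
which solves an equation of the same class in a parabolic Lipschitz domain still having Lipschitz constant $1/n_0$ (since scaling preserves Lipschitz constants), vanishes on the lateral boundary, is nonnegative, and satisfies $\|\psi_j\|_{L^\infty(Q_1)} = 1$. The heart of the argument is to verify the growth hypothesis $\|\psi_j\|_{L^\infty(Q_{2^k})} \leq 128 \cdot 2^{k\alpha}$ for every $k \in \{0,\ldots,n_0\}$. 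For $k \leq j$, one writes $\|\psi_j\|_{L^\infty(Q_{2^k})} = M_{j-k}/M_j$ and telescopes the inductive hypothesis to get the bound $(2^{\alpha})^k = 2^{k\alpha}$, well within the allowed constant. For $k > j$, one writes $\|\psi_j\|_{L^\infty(Q_{2^k})} = \sup_{Q_{2^{k-j}}} \varphi \,/\, M_j$; the original growth bound on $\varphi$ controls the numerator by $128 \cdot 2^{(k-j)\alpha}$ (allowed because $k - j \leq n_0$), and the inductive lower bound $M_j \geq 2^{-j\alpha}$ controls the denominator, yielding exactly $128 \cdot 2^{k\alpha}$. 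Lemma \ref{lem:special_1/2} then gives $\sup_{Q_{1/2}} \psi_j \geq 2^{-\alpha}$, which rescales to $M_{j+1}/M_j \geq 2^{-\alpha}$ and closes the induction.

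To deduce the general inequality, given $0 < r_1 < r_2 \leq 1$, I set $j_1 := \lceil \log_2 r_1^{-1} \rceil$ and $j_2 := \lfloor \log_2 r_2^{-1} \rfloor$, so that $Q_{2^{-j_1}} \subset Q_{r_1}$ and $Q_{r_2} \subset Q_{2^{-j_2}}$. Monotonicity of the supremum combined with the iterated dyadic estimate yields
\[\frac{\sup_{Q_{r_1}}\varphi}{\sup_{Q_{r_2}}\varphi} \geq \frac{M_{j_1}}{M_{j_2}} \geq 2^{-(j_1-j_2)\alpha},\]
and the choices of $j_1, j_2$ force $r_1/r_2 < 4 \cdot 2^{-(j_1-j_2)}$, hence $2^{-(j_1-j_2)\alpha} \geq (r_1/r_2)^\alpha / 4^\alpha \geq (r_1/r_2)^\alpha / 8$, the last inequality relying on $\alpha \leq 3/2$, which is the relevant regime of $\varepsilon$ when Lemma \ref{lem:special_induction} is later invoked in Proposition \ref{prop:special}.

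The main obstacle is the bookkeeping in the inductive growth verification: the constant $128$ must be reproduced exactly at every scale. The two sub-cases $k \leq j$ and $k > j$ have to glue seamlessly at $k = j$, which forces the inductive hypothesis to deliver both the one-step ratio $M_{i}/M_{i-1} \geq 2^{-\alpha}$ and the cumulative bound $M_j \geq 2^{-j\alpha}$ with constant exactly $1$, leaving no room for any weakening factor that would compound and break the iteration.
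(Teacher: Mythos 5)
Your proof is correct and follows essentially the same route as the paper: an inductive dyadic rescaling in which the growth hypothesis of Lemma \ref{lem:special_1/2} is re-verified at each scale (using the already-established one-step ratios for small $k$ and the original growth bound for large $k$), followed by the same dyadic-to-general-radii comparison losing the factor $\frac{1}{8}$ via $4^{1+1/n_0+\varepsilon}\leq 8$. The only difference is organizational — you run a single induction on the one-step ratio $M_{j+1}/M_j$, whereas the paper first iterates the half-scale estimate and then rescales to compare arbitrary dyadic levels — but the underlying argument is identical.
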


\begin{proof}
Assume without loss of generality that $\varepsilon \in (0,\frac{1}{2})$. Let us first prove by induction that
$$\sup\limits_{Q_{2^{-k}}}\varphi \geq 2^{-k(1+\varepsilon+1/n_0)}.$$

It suffices to prove that the function
$$\bar\varphi(x,t) := \frac{\varphi(x/2,t/4)}{\sup\limits_{Q_{1/2}}\varphi}$$
also satisfies the hypotheses of Lemma \ref{lem:special_1/2}, and then the argument can be iterated. By construction, $(\p_t-\bar\L)\bar\varphi = 0$ in $Q_{2R_0}$, $\bar\varphi \geq 0$ and $\|\bar\varphi\|_{L^\infty(Q_1)} = 1$. Additionally, by Lemma \ref{lem:special_1/2}, for all $k \in \{1,\ldots,n_0+1\}$,
$$\sup\limits_{Q_{2^k}}\bar\varphi = \frac{\sup\limits_{Q_{2^{k-1}}}\varphi}{\sup\limits_{Q_{1/2}}\varphi} \leq 128\cdot 2^{(k-1)(1+1/n_0+\varepsilon)}\cdot2^{1+1/n_0+\varepsilon} = 128\cdot 2^{k(1+1/n_0+\varepsilon)}.$$

On the other hand, the reasoning  with $\bar\varphi$ implies that for any $k \in \N$,
$$\tilde\varphi(x,t) := \frac{\varphi(x/2^k,t/4^k)}{\sup\limits_{Q_{2^{-k}}}\varphi}$$
also satisfies the hypotheses of Lemma \ref{lem:special_1/2}, and hence, by the first part of the proof
$$\frac{\sup\limits_{2^{-k-m}}\varphi}{\sup\limits_{2^{-k}}\varphi} = \sup\limits_{Q_{2^{-m}}}\tilde\varphi \geq 2^{-m(1+\varepsilon+1/n_0)}.$$

Now, choose $k$ and $m$ in such a way that $2^{-k-m} \leq r_1 < 2^{-k-m+1}$ and $2^{-k-1} < r_2 \leq 2^{-k}$. Then,
$$\frac{\sup\limits_{Q_{r_1}}\varphi}{\sup\limits_{Q_{r_2}}\varphi} \geq \frac{\sup\limits_{2^{-k-m}}\varphi}{\sup\limits_{2^{-k}}\varphi} \geq 2^{-m(1+\varepsilon+1/n_0)} > \left(\frac{r_1}{4r_2}\right)^{1+\varepsilon+1/n_0} > \frac{1}{8}\left(\frac{r_1}{r_2}\right)^{1+\varepsilon+1/n_0}.$$
\end{proof}

Finally we can combine Lemma \ref{lem:special_exists} with Lemma \ref{lem:special_induction} to prove our target result.

\begin{proof}[Proof of Proposition \ref{prop:special}]
Choose $n_0$ from Lemma \ref{lem:special_1/2} with $\varepsilon/2$ instead of $\varepsilon$. Then, the function introduced in Lemma \ref{lem:special_exists} with $\varepsilon = 1/n_0$ satisfies the hypotheses of Lemma \ref{lem:special_1/2}, and the conclusion follows by Lemma \ref{lem:special_induction} (with $\varepsilon/2$ instead of $\varepsilon$).
\end{proof}

\section{Proof of the boundary Harnack}\label{sect:proof_BH}

The main ingredient in the proof of the boundary Harnack is the following expansion result.

\begin{prop}\label{prop:expansion}
Let $\alpha \in (0,\alpha_0)$, and let $\L$ be a non-divergence form operator as in \eqref{eq:non-divergence_operator}. There exists $\varepsilon_0 \in (0,1)$, only depending on $\alpha$, the dimension and the ellipticity constants, such that the following holds.

Let $\Omega$ be a parabolic Lipschitz domain in $Q_1$ in the sense of Definition \ref{defn:lipschitz_domain} with Lipschitz constant $\varepsilon_0$. Let $d(x',x_n,t) = x_n - \Gamma(x',t)$, and let $u$ be a solution to
$$\left\{\begin{array}{rclll}
u_t - \L u & = & f & \text{in} & \Omega\\ 
u & = & 0 & \text{on} & \p_\Gamma\Omega,
\end{array}\right.$$
and assume that $\|u\|_{L^\infty(Q_1)} \leq 1$, and that $f = g + h$ with
$$\|d^{1-\alpha}g\|_{L^\infty(Q_1)}+\|d^{-1/(n+1)-\alpha}h\|_{L^{n+1}(Q_1)} \leq 1.$$

Then, for each $r \in (0,1]$ there exists $K_r \in \R$ such that $|K_r| \leq C$ and
$$\|u-K_r\varphi\|_{L^\infty(Q_r)} \leq Cr^{1+\alpha},$$
where $\varphi$ is the near-linear solution introduced in Proposition \ref{prop:special} and $C$ depends only on $\alpha$, the dimension and the ellipticity constants.
\end{prop}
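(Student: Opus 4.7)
The plan is to prove Proposition \ref{prop:expansion} by iterating a one-step improvement lemma proved via contradiction-compactness, paralleling the elliptic argument in \cite{RT21}. The ingredients are the boundary Hölder estimate of Proposition \ref{prop:bdry_Calpha}, the near-linear solution of Proposition \ref{prop:special} (whose exponent I fix strictly less than $\alpha$, using $\varepsilon$ for that parameter), the convergence result Proposition \ref{prop:viscosity_limit}, and the half-space boundary $C^{1,\alpha_0}$ estimate of \cite[Theorem 2.1]{Wan92b}.

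\textbf{Iteration scheme.} Fix $\rho \in (0,1/4)$ small. I prove by induction on $k \geq 0$ the existence of $K_k \in \R$ with $K_0 = 0$ and
\[
\|u - K_k \varphi\|_{L^\infty(Q_{\rho^k})} \leq \rho^{k(1+\alpha)}, \qquad |K_{k+1} - K_k| \leq C_0 \rho^{k(\alpha - \varepsilon)},
\]
the second bound using the lower estimate $\sup_{Q_{\rho^k}}\varphi \geq \rho^{k(1+\varepsilon)}/8$ from Proposition \ref{prop:special}. Since $\varepsilon < \alpha$, the increments are summable and $|K_k|$ is uniformly bounded; for arbitrary $r \in (0,1]$ one sets $K_r := K_k$ for the unique $k$ with $\rho^{k+1} < r \leq \rho^k$. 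The inductive step reduces, via the rescaling $v(x,t) := \rho^{-k(1+\alpha)}\bigl(u(\rho^k x,\rho^{2k} t) - K_k\varphi(\rho^k x,\rho^{2k} t)\bigr)$, to a one-step improvement lemma applied to $v$; this rescaling preserves the Lipschitz constant, the operator class, and the right-hand side norm by a direct scaling computation (as exploited in Proposition \ref{prop:growth}).

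\textbf{One-step lemma via blow-up.} The claim is that there exist universal $\rho \in (0,1/4)$, $C_0 > 0$ and small $\eta_0 > 0$ such that under the hypotheses with $\|u\|_{L^\infty(Q_1)}\leq 1$ and RHS norm $\leq \eta_0$, there is $K$ with $|K|\leq C_0$ and $\|u - K\varphi\|_{L^\infty(Q_\rho)} \leq \rho^{1+\alpha}$. Suppose it fails. A diagonal extraction produces sequences $u_j, \varphi_j, \Omega_j, \L_j, f_j$ with Lipschitz constants $\varepsilon_j \to 0$ and vanishing RHS norm, yet $\inf_{|K|\leq C_0}\|u_j - K\varphi_j\|_{L^\infty(Q_\rho)} > \rho^{1+\alpha}$. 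By Proposition \ref{prop:bdry_Calpha}, the $u_j, \varphi_j$ are uniformly $C^{0,\gamma}_p(\overline{Q_{1/2}})$ for some $\gamma \in (0,\alpha_0)$, so Arzelà-Ascoli yields uniform subsequential limits $u_\infty, \varphi_\infty$ in the half-space $\{x_n > 0\}$ (the limit domain since $\varepsilon_j \to 0$); Proposition \ref{prop:viscosity_limit} shows both solve the Pucci inequalities in $\{x_n > 0\} \cap Q_{1/2}$ and vanish on $\{x_n = 0\}$, and $\varphi_\infty$ inherits $\sup_{Q_r}\varphi_\infty \geq r^{1+\varepsilon}/8$.

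\textbf{Conclusion and main obstacle.} The half-space $C^{1,\alpha_0}_p$ estimate \cite[Theorem 2.1]{Wan92b} applied to $u_\infty$ and $\varphi_\infty$ yields expansions
\[
u_\infty(x,t) = a_\infty(x_n)_+ + R_u(x,t), \qquad \varphi_\infty(x,t) = b_\infty(x_n)_+ + R_\varphi(x,t),
\]
with $\|R_u\|_{L^\infty(Q_r)}, \|R_\varphi\|_{L^\infty(Q_r)} \leq Cr^{1+\alpha_0}$ for small $r$ and universal $|a_\infty|$. Combining the growth lower bound with the expansion of $\varphi_\infty$ gives $b_\infty \geq r^\varepsilon/8 - Cr^{\alpha_0}$ for every small $r$, and optimising in $r$ (using $\varepsilon < \alpha_0$) produces a universal $b_\infty \geq c_1 > 0$. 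Setting $K_\infty := a_\infty/b_\infty$ yields $|K_\infty| \leq C_0$ universal and $\|u_\infty - K_\infty\varphi_\infty\|_{L^\infty(Q_\rho)} \leq C\rho^{1+\alpha_0}$; choosing $\rho$ small so that $C\rho^{1+\alpha_0} < \rho^{1+\alpha}/2$ (possible since $\alpha < \alpha_0$), uniform convergence contradicts the failure assumption for large $j$. The crux is the quantitative lower bound $b_\infty \geq c_1 > 0$: without it, $K_\infty$ and hence the iterative tail $\sum_k |K_{k+1}-K_k|$ would not stay controlled. This is precisely why the growth estimates of Proposition \ref{prop:special} are needed in their full quantitative form, and why $\varepsilon$ must be chosen strictly less than $\alpha$.
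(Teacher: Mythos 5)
Your argument is correct, but it follows a genuinely different route from the paper. The paper fixes the canonical choice $K_{r,j}=\int_{Q_r}u_j\varphi_j/\int_{Q_r}\varphi_j^2$, assumes failure at some scale, and invokes the all-scales blow-up machinery of Lemma \ref{lem:aux2} to produce a normalized difference $w_m$ with growth control $\|w_m\|_{L^\infty(Q_R)}\leq CR^{1+\alpha}$ for every $R\in[1,1/\rho_m)$; the limit is then classified by the Liouville theorem (Theorem \ref{thm:liouville}) as $k(x_n)_+$, and the contradiction comes from passing the orthogonality $\int_{Q_1}w_m\tilde\varphi_m=0$ to the limit $\int_{Q_1}(x_n)_+^2=0$. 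You instead prove a single-scale improvement-of-flatness lemma by compactness, classify the half-space limits not via Liouville but via the boundary $C^{1,\alpha_0}_p$ estimate of \cite[Theorem 2.1]{Wan92b} (which yields the first-order expansions $u_\infty=a_\infty(x_n)_++O(r^{1+\alpha_0})$, $\varphi_\infty=b_\infty(x_n)_++O(r^{1+\alpha_0})$ directly), take $K_\infty=a_\infty/b_\infty$, and then iterate geometrically. Your approach dispenses with Lemma \ref{lem:aux2} and the growth control at all scales, at the price of an explicit iteration in which you must verify that the hypotheses are preserved under rescaling; this works because the weighted right-hand-side norms and the doubling condition $\sup_{Q_{r_1}}\varphi/\sup_{Q_{r_2}}\varphi\geq\frac18(r_1/r_2)^{1+\varepsilon}$ are exactly scale-invariant, and it is precisely this last condition (rather than the pointwise bound $\varphi\geq\frac{1}{24}d^{1+\varepsilon}$, which degrades under renormalization) that you correctly isolate as the hypothesis giving the universal lower bound $b_\infty\geq c_1>0$. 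Two points you should make explicit when writing this up: the one-step lemma must be stated for the whole scale-invariant class of comparison functions $\varphi$ (normalized, nonnegative, solving the homogeneous equation, satisfying the doubling bound), since it is the rescaled $\varphi(\rho^k\cdot)/\|\varphi\|_{L^\infty(Q_{\rho^k})}$ that enters at step $k$; and the initial normalization reducing the right-hand side from norm $1$ to norm $\eta_0$ costs a factor $\eta_0^{-1}$ in the final constant, which is harmless since $\eta_0$ is universal. Both proofs ultimately rest on the same three pillars: Proposition \ref{prop:bdry_Calpha}, Proposition \ref{prop:special}, and a classification of limits in the half-space.
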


Before proving the expansion, we need to introduce the following growth estimate for blow-ups (cf. \cite[Lemma 4.4]{BFR18}), which is independent of the PDE and valid for general functions.

\begin{lem}\label{lem:aux2}
Let $\beta > \gamma > 0$. For every $j \in \N$, let $\Omega_j \subset \R^{n+1}$, and let ${u_j,\varphi_j : \Omega_j\to\R}$ such that $\|u_j\|_{L^\infty(Q_1)} \leq 1$, $\|\varphi_j\|_{L^\infty(Q_1)} = 1$, and, for every $0 < r_1 < r_2 < 1$,
$$\frac{\sup\limits_{Q_{r_1}}\varphi}{\sup\limits_{Q_{r_2}}\varphi} \geq c_1\left(\frac{r_1}{r_2}\right)^{\gamma}.$$

Let $K_{r,j} \in \R$ for every $r \in (0,1]$ and $j \in \N$, and assume that
$$\sup\limits_{j \in \N} |K_{r,j}| < \infty$$
and
$$\sup\limits_{r\in (0,1]}\theta(r) = \infty,$$
where
$$\theta(r) := \sup\limits_{\rho \in (r,1]}\sup\limits_{j \in \N}\rho^{-\beta}\|u_j - K_{\rho,j}\varphi_j\|_{L^\infty(Q_\rho)}.$$

Then, there exist sequences $\rho_m \downarrow 0$ and $j_m$ such that
$$\rho_m^{-\beta}\|u_{j_m}-K_{\rho_m,j_m}\varphi_{j_m}\|_{L^\infty(Q_{\rho_m})} \geq \frac{1}{2}\theta(\rho_m),$$
and
$$w_m := \frac{u_{j_m}(\rho_mx,\rho_m^2t)-K_{\rho_m,j_m}\varphi_{j_m}(\rho_mx,\rho_m^2t)}{\|u_{j_m}(\rho_mx,\rho_m^2t)-K_{\rho_m,j_m}\varphi_{j_m}(\rho_mx,\rho_m^2t)\|_{L^\infty(Q_1)}}$$
satisfies
$$\|w_m\|_{L^\infty(Q_R)} \leq CR^\beta \ \forall R \in [1,1/\rho_m).$$

Moreover, for every $0 < r_1 < r_2 < 1$,
$$\|(K_{r_2,j}-K_{r_1,j})\varphi_j\|_{L^\infty(Q_{r_2})} \leq Cr_2^\beta\theta(r_1).$$

The constant $C$ depends only on $\beta$, $\gamma$, and $c_1$.
\end{lem}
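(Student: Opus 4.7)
The plan is to prove the lemma in three steps: select the near-maximizers $(\rho_m, j_m)$, establish the ``moreover'' telescoping estimate for the increments of $K_{r, j}$, and use this estimate to derive the growth bound on $w_m$.

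First I would observe that $\theta$ is nonincreasing in $r$, since the supremum defining it is over the widening family $\rho \in (r, 1]$. Combined with the hypothesis $\sup_r \theta(r) = \infty$, this forces $\theta(r) \to \infty$ as $r \downarrow 0$. For each $m$ I would then pick $\rho_m$ and $j_m$ almost attaining the supremum in $\theta(\rho_m)$, so that
$$\rho_m^{-\beta}\|u_{j_m} - K_{\rho_m, j_m}\varphi_{j_m}\|_{L^\infty(Q_{\rho_m})} \geq \tfrac{1}{2}\theta(\rho_m).$$
A short contradiction using $\|u_j\|_\infty \leq 1$ and the local boundedness in $j$ of $|K_{r,j}|$ at each fixed $r$ ensures $\rho_m \to 0$: otherwise the left-hand side would stay uniformly bounded while the right-hand side blows up.

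The heart of the proof is the moreover estimate. The algebraic identity
$$(K_{r_2, j} - K_{r_1, j})\varphi_j = (u_j - K_{r_1, j}\varphi_j) - (u_j - K_{r_2, j}\varphi_j)$$
together with the definition of $\theta(r_1)$ immediately yields, for any $r_1 < \rho_1 < \rho_2 \leq 1$,
$$\|(K_{\rho_2, j} - K_{\rho_1, j})\varphi_j\|_{L^\infty(Q_{\rho_1})} \leq 2\rho_2^\beta\,\theta(r_1).$$
The subtle point is that the target norm lives on the \emph{larger} cube $Q_{\rho_2}$. Since $(K_{\rho_2,j}-K_{\rho_1,j})\varphi_j$ is a scalar multiple of $\varphi_j$, passing from $Q_{\rho_1}$ to $Q_{\rho_2}$ costs exactly the ratio $\sup_{Q_{\rho_2}}\varphi_j/\sup_{Q_{\rho_1}}\varphi_j$, which the growth hypothesis bounds by $c_1^{-1}(\rho_2/\rho_1)^\gamma$. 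For one dyadic step $\rho_1=\rho_2/2$ this costs only a constant, but a one-shot estimate with $(\rho_1,\rho_2)=(r_1,r_2)$ would introduce a prohibitive factor $r_1^{-\gamma}$. The remedy is to telescope along the dyadic chain $\rho_k=r_2\,2^{-k}$ stopping just before $r_1$, transferring each dyadic increment into $L^\infty(Q_{r_2})$ at cost $(r_2/\rho_k)^\gamma$; each contribution is of order $r_2^\gamma \rho_k^{\beta-\gamma}\theta(r_1)$, and the resulting geometric series converges precisely because $\beta>\gamma$, giving $\|(K_{r_2,j}-K_{r_1,j})\varphi_j\|_{L^\infty(Q_{r_2})}\leq Cr_2^\beta\theta(r_1)$.

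Finally, with the moreover estimate in hand, the growth bound on $w_m$ is immediate. For $R \in [1, 1/\rho_m)$ I would split
$$u_{j_m} - K_{\rho_m,j_m}\varphi_{j_m} = (u_{j_m} - K_{R\rho_m,j_m}\varphi_{j_m}) + (K_{R\rho_m,j_m} - K_{\rho_m,j_m})\varphi_{j_m},$$
bound the first term on $Q_{R\rho_m}$ by $(R\rho_m)^\beta \theta(\rho_m)$ directly from the definition of $\theta(\rho_m)$ (noting $R\rho_m\in(\rho_m,1]$), and bound the second by $C(R\rho_m)^\beta \theta(\rho_m)$ via the moreover step with $r_1=\rho_m$ and $r_2=R\rho_m$. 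Dividing by the normalization $\tfrac{1}{2}\rho_m^\beta\theta(\rho_m)$ then yields $\|w_m\|_{L^\infty(Q_R)} \leq CR^\beta$. The main obstacle is the telescoping step itself: one must carefully keep track of which cube carries the direct estimate and which carries the target norm, and the whole scheme is saved only by the strict gap $\beta>\gamma$ that turns the cost factors $(r_2/\rho_k)^\gamma$ into a convergent geometric series.
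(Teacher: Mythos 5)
Your proposal is correct and follows essentially the same route as the paper's proof: select near-maximizers of $\theta$, control the increments $(K_{2\rho,j}-K_{\rho,j})\varphi_j$ by a dyadic telescoping in which each increment is transferred to the larger cube at cost $c_1^{-1}(r_2/\rho)^\gamma$ and summed as a geometric series convergent because $\beta>\gamma$, then split $u_{j_m}-K_{\rho_m,j_m}\varphi_{j_m}$ to get the $R^\beta$ growth of $w_m$. The only difference is cosmetic (you index the dyadic chain downward from $r_2$ rather than upward from $r_1$).
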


We defer the proof to Appendix \ref{sect:app}. Using Lemma \ref{lem:aux2}, we can prove the expansion:

\begin{proof}[Proof of Proposition \ref{prop:expansion}]
We divide the proof into four steps.

\textit{Step 1.} We reason by contradiction and construct a blow-up sequence. Let us prove first the following modified claim:

\textit{Claim.} For every $r \in (0,1]$, there exists $K_r$ with $|K_r| \leq C_0r^{-\alpha}$ such that
$$\|u-K_r\varphi\|_{L^\infty(Q_r)} \leq Cr^{1+\alpha},$$
with $C_0$ to be chosen later.

If we assume the claim does not hold, there are sequences $u_j, \varphi_j$, and $\Omega_j$ (with parabolic Lipschitz constant less than $1/j$), such that
$$\left\{\begin{array}{rclll}
(\p_t-\L_j)u_j & = & f_j & \text{in} & \Omega_j\\
u_j & = & 0 & \text{on} & \p_\Gamma\Omega_j,
\end{array}\right.$$
with $f_j = g_j+h_j$ such that 
$$\|d^{1-\alpha}g_j\|_{L^\infty(Q_1)}+\|d^{-1/(n+1)-\alpha}h_j\|_{L^{n+1}(Q_1)} \leq 1,$$
and
$$\|u_j - K_{r,j}\varphi_j\|_{L^\infty(Q_{r_j})} \geq jr_j^{1+\alpha},$$
where we choose
$$K_{r,j} := \frac{\int_{Q_r} u_j\varphi_j}{\int_{Q_r}\varphi_j^2}.$$

Then, by Proposition \ref{prop:special} with $\varepsilon = \alpha/2$,
$$\frac{\sup\limits_{Q_{r_1}}\varphi_j}{\sup\limits_{Q_{r_2}}\varphi_j} \geq \frac{1}{8}(r_1/r_2)^{1+\alpha/2},$$
and by Propositions \ref{prop:growth} and \ref{prop:special},
$$|K_{r,j}| \leq \frac{\left(\int_{Q_r}u_j^2\right)^{1/2}}{\left(\int_{Q_r}\varphi_j^2\right)^{1/2}} \leq \frac{Cr^{1-\alpha/2}}{cr^{1+\alpha/2}} =: C_0r^{-\alpha},$$
where we choose the constant $C_0$ from this computation.

Then, by Lemma \ref{lem:aux2} with $\gamma = 1 + \alpha/2$ and $\beta = 1 + \alpha$, there exists a sequence $\rho_m \downarrow 0$ such that
$$w_m := \frac{u_{j_m}(\rho_mx,\rho_m^2t)-K_{\rho_m,j_m}\varphi_{j_m}(\rho_mx,\rho_m^2t)}{\|u_{j_m}(\rho_mx,\rho_m^2t)-K_{\rho_m,j_m}\varphi_{j_m}(\rho_mx,\rho_m^2t)\|_{L^\infty(Q_1)}}$$
satisfies $\|w_m\|_{L^\infty(Q_1)} = 1$,
$$\|w_m\|_{L^\infty(Q_R)} \leq CR^{1+\alpha}, \ \forall R \in [1,1/\rho_m),$$
and
$$\int_{Q_1}w_m(x,t)\varphi_{j_m}(\rho_mx,\rho_m^2t) = 0$$
from the choice of $K_{\rho_m,j}$.

\textit{Step 2.} We will prove that $w_m \rightarrow (x_n)_+$ locally uniformly along a subsequence.

First, by the construction of $w_m$, we have (omitting the dependence of $f$ on $j_m$)
$$(\p_t - \Mplus)w_m \leq (\p_t - \L_m)w_m \leq \frac{2\rho_m^{1-\alpha}}{\theta(\rho_m)} |f(\rho_mx,\rho_m^2t)| \quad \text{in} \ \tilde \Omega_{j_m},$$
where $\L_m$ is the corresponding scaled operator, that has the same ellipticity constants, and
$$\tilde\Omega_{j_m} := \{(x,t) : (\rho_mx,\rho_m^2t) \in \Omega_{j_m}\}.$$
Note that $\tilde\Omega_{j_m}$ has Lipschitz constant lower or equal to $1/j_m$.

Let $d$ be the regularized distance in the domain $\Omega_{j_m}$, $\tilde d$ the regularized distance in $\tilde\Omega_{j_m}$, and let us omit the dependence of $g,h$ on $j$. Then, using Lemma \ref{lem:dist_scaling},
\begin{align*}
    \left\|\tilde d^{1-\alpha}\frac{2\rho_m^{1-\alpha}}{\theta(\rho_m)} |g(\rho_mx,\rho_m^2t)|\right\|_{L^\infty(Q_{1/\rho_m})} &\leq \left\|\frac{2\rho_m^{1-\alpha}}{\theta(\rho_m)}\left[\left(\frac{3d}{\rho_m}\right)^{1-\alpha}|g|\right](\rho_mx,\rho_m^2t)\right\|_{L^\infty(Q_{1/\rho_m})}\\
    &\leq \frac{6}{\theta(\rho_m)}\|d^{1-\alpha}|g|\|_{L^\infty(Q_1)}.
\end{align*}
Similarly, by the scaling of the $L^{n+1}$ norm,
\begin{align*}
    &\left\|\tilde d^{-1/(n+1)-\alpha}\frac{2\rho_m^{1-\alpha}}{\theta(\rho_m)} |h(\rho_mx,\rho_m^2t)|\right\|_{L^{n+1}(Q_{1/\rho_m})}\\
    &\quad\leq \left\|\frac{2\rho_m^{1-\alpha}}{\theta(\rho_m)}\left[\left(\frac{d}{3\rho_m}\right)^{-1/(n+1)-\alpha}|h|\right](\rho_mx,\rho_m^2t)\right\|_{L^{n+1}(Q_{1/\rho_m})}\\
    &\quad\leq \frac{18\rho_m^{(n+2)/(n+1)}}{\theta(\rho_m)}\|(d^{-1/(n+1)-\alpha}|h|)(\rho_mx,\rho_m^2t)\|_{L^{n+1}(Q_{1/\rho_m})}\\
    &\quad= \frac{18}{\theta(\rho_m)}\|d^{-1/(n+1)-\alpha}|h|\|_{L^{n+1}(Q_1)}.
\end{align*}

Therefore,
$$(\p_t-\Mplus)w_m \leq |g_m| + |h_m| \quad \text{in} \quad Q_{1/\rho_m}\cap\tilde\Omega_{j_m},$$
with
$$\|\tilde d^{1-\alpha}g_m\|_{L^\infty(Q_{1/\rho_m})} + \|\tilde d^{-1/(n+1)-\alpha}h_m\|_{L^{n+1}(Q_{1/\rho_m})} \leq \frac{18}{\theta(\rho_m)}.$$

Analogously,
$$(\p_t-\Mminus)w_m \geq -|g_m| - |h_m| \quad \text{in} \quad Q_{1/\rho_m}\cap\tilde\Omega_{j_m}.$$

Moreover, $w_m = 0$ on $\p_\Gamma\tilde\Omega_{j_m}$, and, for every $R \geq 1$, $\|w_m\|_{L^\infty(Q_R)} \leq CR^{1+\alpha}$ for sufficiently large $m$. Hence, by Proposition \ref{prop:bdry_Calpha},
$$\|w_m\|_{C^{0,\alpha}_p(Q_R)} \leq C(R),$$
uniformly in $m$, for $m$ large enough. Then, by Arzelà-Ascoli and Proposition \ref{prop:viscosity_limit}, we obtain that
$$w_m \rightarrow w \in C(\R^{n+1}),$$
locally uniformly along a subsequence, where $w$ is a viscosity solution of
\begin{equation*}
\left\{\begin{array}{rccclll}
w_t - \Mplus w &\leq & 0 &\leq & w_t - \Mminus w & \text{in} & \{x_n > 0\}\\
w & = & 0 & & & \text{on} & \{x_n = 0\},
\end{array}\right.
\end{equation*}
$\|w\|_{L^\infty(Q_1)} = 1$ and $\|w\|_{L^\infty(Q_R)} \leq CR^{1+\alpha}$ for all $R \geq 1$. Therefore, by Theorem \ref{thm:liouville}, $w = (x_n)_+$.

\textit{Step 3.} Let us consider the functions
$$\tilde \varphi_{j_m}(x,t) := \frac{\varphi_{j_m}(\rho_mx,\rho_m^2t)}{\|\varphi_{j_m}\|_{L^\infty(Q_{\rho_m})}}.$$

Then, $\|\tilde \varphi_{j_m}\|_{L^\infty(Q_1)} = 1$, and, by Proposition \ref{prop:special} with $\varepsilon = \alpha/2$, for all $1 \leq R \leq 1/\rho_m$,
$$\|\tilde \varphi_{j_m}\|_{L^\infty(Q_R)} \leq 8R^{1+\alpha/2}.$$

Finally, by the same arguments as in Step 2, $\tilde\varphi_{j_m} \rightarrow (x_n)_+$ locally uniformly along a subsequence.

\textit{Step 4.} We have $w_m \rightarrow (x_n)_+$ and $\tilde \varphi_m \rightarrow (x_n)_+$ locally uniformly. Now, recall that by the choice of $K_{r,j}$ in the construction of $w_m$,
$$\int_{Q_1}w_m\tilde \varphi_m = 0,$$
and passing to the limit,
$$\int_{Q_1}(x_n)_+^2 = 0,$$
which is a contradiction. Therefore, for every $r \in (0,1]$, there exists $|K_r| \leq C_0r^{-\alpha}$ such that
$$\|u - K_r\varphi\|_{L^\infty(Q_r)} \leq Cr^{1+\alpha}.$$
This is enough for $r \in (\frac{1}{2},1]$. For smaller values of $r$, observe that
\begin{align*}
    |K_r - K_{r/2}|(r/4)^{1+\alpha/2} &\leq \|(K_r - K_{r/2})\varphi\|_{L^\infty(Q_{r/2})}\\
    &\leq \|u - K_r\varphi\|_{L^\infty(Q_r)} + \|u - K_{r/2}\varphi\|_{L^\infty(Q_{r/2})} \leq Cr^{1+\alpha}.
\end{align*}
It follows that $|K_r - K_{r/2}| \leq Cr^{\alpha/2}$. Then, for $r \leq \frac{1}{2}$ we can write $r = 2^{-a}r_0$, with $r_0 \in (\frac{1}{2},1]$, and estimate
\begin{align*}
    |K_r| &\leq |K_{r_0}| + \sum\limits_{i = 0}^{a-1}|K_{2^{-i}r_0} - K_{2^{-i-1}r_0}| \leq C_0r_0^{-\alpha} + C\sum\limits_{i=0}^{a-1}(2^{-i}r_0)^{\alpha/2} \leq C.
\end{align*}
\end{proof}

Finally, we prove our main result.

\begin{proof}[Proof of Theorem \ref{thm:main}]
First, we will use a similar strategy to the proof of Proposition \ref{prop:bdry_Calpha} to estimate the Hölder seminorm of the quotient. Let $\varepsilon > 0$ in Proposition \ref{prop:special} such that $\gamma = \alpha - 7\varepsilon$. Recall that $\alpha$ is chosen in Remark \ref{obs:main_alpha}.

Let $p = (y',y_n,s)$ and $\rho \in (0,\frac{1}{16})$ such that $Q_{2\rho}(p) \subset \Omega\cap Q_{5/8}$, and let
$$R := \max\left\{\rho,\frac{y_n-\Gamma(y',s)}{3}\right\}.$$
Then, we distinguish two cases (cf. Proposition \ref{prop:bdry_Calpha}).

\textit{Case 1.} $R \geq \frac{1}{16}$. Then, $Q_{1/8}(p) \subset \Omega\cap Q_{3/4}$, and for all $(x',x_n,t) \in Q_{1/8}(p)$, $x_n \geq \Gamma(x',t) + \frac{1}{16}$, provided that the Lipschitz constant of the domain is small enough. By Proposition \ref{prop:growth}, $v \geq cm > 0$ in $Q_{1/8}(p)$. Furthermore, by Theorem \ref{thm:interior_Calpha}, $\|u\|_{C^{0,\gamma}_p(Q_{1/8}(p))} \leq C$ and $\|v\|_{C^{0,\gamma}_p(Q_{1/8}(p))} \leq C$. Therefore,
\begin{align*}
    &\left\|\frac{u}{v}\right\|_{C^{0,\gamma}_p(Q_\rho(p))} \leq \left\|\frac{u}{v}\right\|_{C^{0,\gamma}_p(Q_{1/8}(p))}\\
    &\qquad\quad\leq \frac{\|u\|_{C^{0,\gamma}_p(Q_{1/8}(p))}\|v\|_{L^\infty(Q_{1/8}(p))}+\|u\|_{L^\infty(Q_{1/8}(p))}\|v\|_{C^{0,\gamma}_p(Q_{1/8}(p))}}{\inf\limits_{Q_{1/8}(p)}v^2} \leq Cm^{-2}.
\end{align*}

\textit{Case 2.} $R < \frac{1}{16}$. Then, for all $(x',x_n,t) \in Q_{3R/2}(p)$,
$$\Gamma(x',t) + \frac{R}{2} \leq x_n \leq \Gamma(x',t) + 5R.$$

Let $\varphi$ be the special solution defined in Proposition \ref{prop:special}, centered at $(y',\Gamma(y',s),s)$. Then, by a translation of Proposition \ref{prop:expansion}, $w_1 = u - K_u\varphi$ and $w_2 = v - K_v\varphi$ satisfy
$$\|w_1\|_{L^\infty(Q_R(p))} \leq CR^{1+\alpha} \quad \text{and} \quad \|w_2\|_{L^\infty(Q_R(p))} \leq CR^{1+\alpha}.$$

Using that $d$ is comparable to $R$ in $Q_R(p)$, the right-hand side of the equation for $u$ can be estimated as
\begin{align*}
    \|f_1\|_{L^{n+1}(Q_R(p))} &\leq \|g_1\|_{L^{n+1}(Q_R(p))} + \|h_1\|_{L^{n+1}(Q_R(p))}\\
    &\lesssim R^{(n+2)/(n+1)}R^{\alpha-1}\|d^{1-\alpha}g_1\|_{L^\infty(Q_R(p))}\\
    &\quad + R^{\alpha+1/(n+1)}\|d^{-1/(n+1)-\alpha}h_1\|_{L^{n+1}(Q_R(p)))} \leq CR^{\alpha + 1/(n+1)},
\end{align*}
and analogously, in the equation for $v$, $\|f_2\|_{L^{n+1}(Q_R(p))} \leq CmR^{\alpha+1/(n+1)}$. Thus, by the interior estimates in Theorem \ref{thm:interior_Calpha}, and the growth of $v$ and $\varphi$, (see Propositions \ref{prop:growth} and \ref{prop:special}),
\begin{align*}
    [w_1]_{C^{0,\gamma}_p(Q_R(p))} &\leq CR^{-\gamma}(CR^{1+\alpha}+CR^{1+\alpha}) \leq CR^{1+7\varepsilon}\\
    [w_2]_{C^{0,\gamma}_p(Q_R(p))} &\leq CR^{-\gamma}(CR^{1+\alpha}+CmR^{1+\alpha}) \leq CR^{1+7\varepsilon}\\
    [v]_{C^{0,\gamma}_p(Q_R(p))} &\leq CR^{-\gamma}(CR^{1-\varepsilon}+CmR^{1+\alpha}) \leq CR^{1-\gamma-\varepsilon}\\
    [\varphi]_{C^{0,\gamma}_p(Q_R(p))} &\leq CR^{-\gamma}(CR^{1-\varepsilon}) \leq CR^{1-\gamma-\varepsilon}.
\end{align*}
Now, using that $u = w_1 + K_u\varphi$, we estimate first
\begin{align*}
    [w_1/v]_{C^{0,\gamma}_p(Q_R(p))} &\leq \frac{[w_1]_{C^{0,\gamma}_p(Q_R(p))}\|v\|_{L^\infty(Q_R(p))}+\|w_1\|_{L^\infty(Q_R(p))}[v]_{C^{0,\gamma}_p(Q_R(p))}}{\inf\limits_{Q_R(p)}v^2}\\
    &\leq C\frac{R^{1+7\varepsilon}R^{1-\varepsilon}+R^{1+\alpha}R^{1-\gamma-\varepsilon}}{m^2R^{2(1+\varepsilon)}} \leq Cm^{-2},
\end{align*}
where we used Proposition \ref{prop:growth} again to deduce that $v \geq cmR^{1+\varepsilon}$ in $Q_R(p)$. Then, we estimate
\begin{align*}
    [\varphi/v]_{C^{0,\gamma}_p(Q_R(p))} &\leq \frac{[v/\varphi]_{C^{0,\gamma}_p(Q_R(p))}}{\inf\limits_{Q_R(p)}(v/\varphi)^2} \leq \frac{[w_2/\varphi]_{C^{0,\gamma}_p(Q_R(p))}}{\inf\limits_{Q_R(p)}(v/\varphi)^2}\\
    &\leq \frac{[\varphi]_{C^{0,\gamma}_p}\|w_2\|_{L^\infty}+[w_2]_{C^{0,\gamma}_p}\|\varphi\|_{L^\infty}}{\inf(v/\varphi)^2\inf\varphi^2}\\
    &\leq C\frac{R^{1-\varepsilon-\gamma}R^{1+\alpha}+R^{1+7\varepsilon}R^{1-\varepsilon}}{(mR^{2\varepsilon})^2(R^{1+\varepsilon})^2} = 2Cm^{-2},
\end{align*}
where we omitted the domain in the second line to improve readability. Therefore
$$[u/v]_{C^{0,\gamma}_p(Q_R(p))} \leq [w_1/v]_{C^{0,\gamma}_p(Q_R(p))} + |K_u|[\varphi/v]_{C^{0,\gamma}_p(Q_R(p))} \leq Cm^{-2}.$$

Combining the three cases, by Lemma \ref{lem:teo_b2}, $[u/\varphi]_{C^{0,\gamma}_p(\Omega\cap Q_{1/2})} \leq Cm^{-2}.$

To obtain a bound for $\|u/\varphi\|_{C^{0,\gamma}_p(\Omega\cap Q_{1/2})}$, observe that 
$$\left\|\frac{u}{\varphi}\right\|_{L^\infty(Q_{1/2})} \leq \frac{u(e_n/4,0)}{v(e_n/4,0)} +[u/\varphi]_{C^{0,\gamma}_p(\Omega\cap Q_{1/2})}\left(\left|x-\frac{1}{4}e_n\right|+|t|^{1/2}\right) \leq \frac{1}{cm} + Cm^{-2},$$
where we used that $\|u\|_{L^\infty(Q_1)} \leq 1$ and $v(e_n/4,0) \geq cm > 0$ by the interior Harnack. Therefore, $\|u/\varphi\|_{C^{0,\gamma}_p(\Omega\cap Q_{1/2})} \leq Cm^{-2}$, as we wanted to prove.
\end{proof}

\section{Slit domains}\label{sect:slit}
Slit domains appear naturally when studying thin obstacle problems. In the case of parabolic slit domains, they appear in the time-dependent Signorini problem, and the boundary Harnack is known to hold in the homogeneous case; see \cite{PS14,DS22b}.

First, we will introduce a slightly different notation for this section. Given $x \in \R^{n+1}$, we will denote $x' = (x_1,\ldots,x_{n-1})$, i.e. $x = (x',x_n,x_{n+1})$. $B_r(x',x_n)$ will denote the $n$-dimensional ball of radius $r$ centered at $(x',x_n)$, and $B'_r(x')$ will be the one of $\R^{n-1}$. We also introduce the \textit{slit parabolic cylinders}:
$$Q_r(x,t) := B'_r(x') \times (x_n-r,x_n+r) \times (x_{n+1}-r,x_{n+1}+r) \times (t-r^2,t) \subset \R^{n+2}.$$

In this section, the domains that we will work with will be the following.
\begin{defn}\label{defn:slit_domain}
We say $\Omega$ is a parabolic Lipschitz slit domain in $Q_R$ with Lipschitz constant $L$ if $\Omega = Q_R \setminus E$, where
$$E = \big\{(x',x_n,0,t) \in Q_R \ | \ x_n \leq \Gamma(x',t) \big\},$$
and $\Gamma : B_R'\times[-R^2,0] \to \R$, with $\Gamma(0,0) = 0$ and $\|\Gamma\|_{C^{0,1}_p} \leq L$.

We will say that $E$ is the lateral boundary of $\Omega$, and write $\p_\Gamma\Omega := E$. The parabolic boundary will be defined as
$$\p_p\Omega := \p_\Gamma\Omega\cup\big(\overline{\Omega}\cap\p Q_R\cap\{t < 0\}\big).$$
\end{defn}

The goal of this section is to prove Theorem \ref{thm:slit_main}.

\subsection{Growth and boundary regularity}\label{subsect:slit_growth_reg}

In this section, we will follow the same scheme that in Section \ref{sect:bdry_growth_reg} to obtain growth estimates and regularity up to the boundary of solutions to the heat equation in slit Lipschitz domains. The final goal of the section is to prove the following:

\begin{prop}\label{prop:slit_bdry_Calpha}
Let $\gamma \in (0,\frac{1}{2})$. There exists $L_0 > 0$, depending only on $\gamma$ and the dimension, such that the following holds.

Let $\Omega$ be a parabolic slit domain in $Q_1$ as in Definition \ref{defn:slit_domain} with Lipschitz constant $L \leq L_0$. Let $u$ be a solution to
$$\left\{\begin{array}{rclll}
u_t - \Delta u & = & f & \text{in} & \Omega,\\
u & = & 0 & \text{on} & \p_\Gamma\Omega.
\end{array}\right.$$
Assume that $\|
f\|_{L^{n+2}(Q_1)} \leq K_0$. Then,
\[\|u\|_{C^{0,\gamma}_p(Q_{1/2})}\leq C(\|u\|_{L^\infty(Q_1)} + K_0).\]
The constant $C$ depends only on $\gamma$ and the dimension.
\end{prop}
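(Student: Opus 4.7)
The plan is to follow the same outline as Proposition \ref{prop:bdry_Calpha}, adapted to the slit geometry where the natural boundary homogeneity is $1/2$ rather than $1$. First, I would construct a nonnegative supersolution $\varphi$ that vanishes on $\p_\Gamma\Omega$ and satisfies $\varphi \le K d^{1/2-\varepsilon}$, where $d$ denotes a suitable regularized distance to the slit. Then I would iterate this barrier across dyadic scales using Theorem \ref{thm:ABPKT} with $L^{n+2}$ (the spatial dimension being $n+1$) to absorb the right-hand side, as in Lemmas \ref{lem:barrier_super_abp} and \ref{lem:barrier_sub_abp}, obtaining a boundary growth estimate of the form $|u(p)| \le C(\|u\|_{L^\infty(Q_1)} + K_0)\, d(p)^{1/2 - \varepsilon}$ on $\Omega \cap Q_{3/4}$. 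Finally, I would combine this with Theorem \ref{thm:interior_Calpha} and a covering argument analogous to Lemma \ref{lem:teo_b2} to deduce $\|u\|_{C^{0,\gamma}_p(Q_{1/2})} \le C(\|u\|_{L^\infty(Q_1)} + K_0)$ whenever $\gamma < 1/2 - 2\varepsilon$; choosing $\varepsilon$ small enough then covers the whole range $\gamma \in (0, 1/2)$.

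The model for the barrier is planar. In the $(x_n, x_{n+1})$-plane normal to the edge of a flat slit, using polar coordinates $(\rho, \theta) \in (0,\infty) \times (-\pi, \pi)$ so that the slit sits at $\theta = \pm \pi$, I would take
\[\phi_0(\rho,\theta) := \rho^{1/2-\varepsilon}\cos(\theta/2).\]
The formula $\Delta = \p_{\rho\rho} + \rho^{-1}\p_\rho + \rho^{-2}\p_{\theta\theta}$ together with the eigenvalue identity $(1/2-\varepsilon)^2 - (1/2)^2 = -\varepsilon + \varepsilon^2$ yields $-\Delta\phi_0 = \varepsilon(1-\varepsilon)\,\rho^{-3/2-\varepsilon}\cos(\theta/2) \ge 0$, so $\phi_0$ is a strict supersolution. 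Moreover, $\phi_0$ vanishes on the slit, is even in $x_{n+1}$, and a direct check gives $\phi_0 \lesssim d^{1/2-\varepsilon}$ both near the tip (where $d \sim \rho$) and near the flat part (writing $\theta = \pm(\pi-\delta)$, one has $d \sim \rho\sin\delta$ and $\cos(\theta/2) = \sin(\delta/2)$, so $\phi_0 \sim \rho^{1/2-\varepsilon}\sin(\delta/2) \lesssim d^{1/2-\varepsilon}$). To pass to a Lipschitz slit of small constant $L$, I would plug a regularized distance to the edge $\{x_{n+1}=0,\ x_n = \Gamma(x',t)\}$ and a regularized angular coordinate into $\phi_0$, and add lower-order corrections $-t + |x'|^2$ as in Lemma \ref{lem:barrier_super}, so that the strict gain $\varepsilon\,d^{-3/2-\varepsilon}$ in the supersolution inequality dominates the $O(L)$ errors from second derivatives of the regularized coordinates.

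The main obstacle will be verifying the supersolution property of this perturbed barrier. For ordinary Lipschitz domains (Lemma \ref{lem:barrier_super}), the Hessian estimate $|D^2_x d| \lesssim L/d$ on the regularized distance suffices because the barrier $d^{1-\varepsilon}$ depends on only one coordinate. In the slit case the barrier depends on two model variables $(\rho,\theta)$, so one needs matching second-derivative bounds on both a regularized edge-distance and a regularized angle simultaneously, and this is precisely where the smallness hypothesis $L \le L_0(\gamma, n)$ is consumed. Once the barrier is in hand, the rest of the argument (the dyadic ABP iteration, the boundary growth bound, and the covering upgrade to $C^{0,\gamma}_p$) follows Propositions \ref{prop:growth} and \ref{prop:bdry_Calpha} almost verbatim, with only the dimension and the model exponent $1 \mapsto 1/2$ adjusted.
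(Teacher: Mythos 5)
Your overall architecture matches the paper's: a supersolution barrier with growth $d^{1/2-\varepsilon}$, a dyadic iteration using the ABP--Krylov--Tso estimate in $L^{n+2}$ to absorb $f$, and then interior estimates plus a covering argument. Your planar model computation is also correct: $\rho^{1/2-\varepsilon}\cos(\theta/2)$ is indeed a strict supersolution with gain $\varepsilon(1-\varepsilon)\rho^{-3/2-\varepsilon}\cos(\theta/2)$. Where you diverge is in how the barrier is transported to a genuinely Lipschitz, time-dependent slit, and this is where your proposal has a real gap that you yourself flag but do not close. Building a parabolic regularized edge-distance $\rho(x,t)$ \emph{and} a regularized angle $\theta(x,t)$ with matching bounds $|D^2_x\rho|+|\partial_t\rho|\lesssim L/\rho$ (and likewise for $\theta$, plus control of the cross terms $\nabla\rho\cdot\nabla\theta$ and $|\nabla\rho|^2-1$ that appear when you compose $\phi_0$ with non-orthonormal coordinates) is not available off the shelf -- Lemma \ref{lem:regularized_distance} only regularizes a graph in one normal direction -- so as written the supersolution property of your perturbed barrier is unproved. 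The paper avoids this entirely: by Proposition \ref{prop:cone_solns} it produces \emph{exactly} caloric, parabolically $(\tfrac12\pm\varepsilon)$-homogeneous functions $\varphi_\pm$ in the complements of slightly tilted parabolic cones $C^\pm_{\eta_\pm}$ (via the Ornstein--Uhlenbeck eigenvalue problem), and consumes the smallness $L\le\eta_-$ purely through the set inclusion $\Omega\subset Q_1\setminus C^-_{\eta_-}$, so that $\varphi_-$ is caloric on all of $\Omega$ and no perturbation estimate is ever needed. The barrier in Lemma \ref{lem:slit_super_abp} is then $a\varphi_-+|x'|^2+x_n^2-2nx_{n+1}^2-2nt$, with the Hopf-type bound $\varphi_-\ge c|x_{n+1}|$ used to dominate the polynomial corrections on the boundary. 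If you want to salvage your route you must either carry out the two-coordinate regularization in full, or switch to the tilted-cone device.

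Two smaller points. First, for Proposition \ref{prop:slit_bdry_Calpha} only the upper growth bound is needed, so your reference to the subsolution counterpart is superfluous here (it is needed later, for Proposition \ref{prop:slit_growth} and the boundary Harnack itself). Second, the covering step is not quite ``Lemma \ref{lem:teo_b2} verbatim'': for points close to the flat part of the slit but far behind the edge, the distance to $\partial\Omega$ is only $|x_{n+1}|$, and a chain of interior cylinders cannot connect the two sides of the slit. The paper handles this (Cases 2 and 4 of its proof) by splitting $u$ into $u\chi_{\{x_{n+1}>0\}}$ and $u\chi_{\{x_{n+1}<0\}}$ and extending each oddly across $\{x_{n+1}=0\}$, which turns those cylinders into genuine interior ones. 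You should add this reflection step to your outline.
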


We begin introducing parabolic homogeneous solutions in \textit{parabolic slit cones}, which will play the role of the powers of the regularized distance in Section \ref{sect:bdry_growth_reg}.

\begin{prop}[cf. \protect{\cite[Lemma 5.8]{FRS23}}]\label{prop:cone_solns}
There exists $\varepsilon_0 > 0$, only depending on the dimension, such that for all $\varepsilon \in (0,\varepsilon_0)$, there exist sufficiently small $\eta_-, \eta_+ > 0$, only depending on the dimension and $\varepsilon$, such that there exist unique positive solutions of
$$\p_t\varphi_- - \Delta\varphi_- = 0 \ \text{in} \ Q_1\setminus \mathcal C^-_{\eta_-}\quad \text{and}\quad \p_t\varphi_+ - \Delta\varphi_+ = 0 \ \text{in} \ Q_1\setminus\mathcal C^+_{\eta_+},$$
parabolically homogeneous of degree $\frac{1}{2} \pm \varepsilon$, i.e.
$$\varphi_-(\lambda x,\lambda^2 t) = \lambda^{\frac{1}{2}-\varepsilon}\varphi_-(x,t)\quad\text{and}\quad\varphi_+(\lambda x,\lambda^2 t) = \lambda^{\frac{1}{2}+\varepsilon}\varphi_+(x,t) \quad \forall \lambda > 0,$$
such that $\|\varphi_-\|_{L^\infty(Q_1)} = \|\varphi_+\|_{L^\infty(Q_1)} = 1$, where
$$C^+_{\eta_+} := \{x_n \leq \eta_+(|x'|+|t|^{1/2}), x_{n+1} = 0\}$$
and
$$C^-_{\eta_-} := \{x_n \leq -\eta_-(|x'|+|t|^{1/2}), x_{n+1} = 0\}.$$

Moreover, $\eta_- \rightarrow 0$ and $\eta_+ \rightarrow 0$ monotonically as $\varepsilon \rightarrow 0$, and there exists $m > 0$ such that
$$\varphi_\pm \geq m \ \text{in} \ Q_2\cap\left\{|x_{n+1}|\geq\frac{1}{n+1}\right\}.$$
\end{prop}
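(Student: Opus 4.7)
The plan is to construct $\varphi_\pm$ as parabolically homogeneous positive caloric functions in the full parabolic cone $\Omega^\pm_\eta := \R^{n+2}\setminus\widetilde{\mathcal C}^\pm_\eta$, where $\widetilde{\mathcal C}^\pm_\eta$ is the extension of $\mathcal C^\pm_\eta$ to all $t \in \R$ by parabolic scaling. The homogeneity degree $\kappa(\eta)$ is uniquely determined by $\eta$ as a principal eigenvalue, so the strategy is to show $\kappa(0) = \tfrac12$, verify that $\kappa$ is continuous and monotone in $\eta$, and apply the intermediate value theorem to match $\kappa = \tfrac12 \pm \varepsilon$.

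For fixed $\eta$, existence and uniqueness of a positive parabolically $\kappa(\eta)$-homogeneous solution vanishing on the slit follow from a parabolic Krein--Rutman argument. Concretely, one can reduce to the self-similar profile via $u(x,t) = (-t)^{\kappa/2}\psi(x/\sqrt{-t})$ for $t<0$, which turns the heat equation into an Ornstein--Uhlenbeck eigenvalue problem
\[\Delta\psi - \tfrac12\, y\cdot\nabla\psi + \tfrac{\kappa}{2}\,\psi = 0\]
in the elliptic complement of the slit profile, with Dirichlet boundary condition on the slit and an appropriate Gaussian weight at infinity. Krein--Rutman produces a unique positive eigenfunction at a unique $\kappa(\eta)$; normalizing so that $\|\varphi_{\pm,\eta}\|_{L^\infty(Q_1)} = 1$ selects the solution. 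Alternatively, one may argue directly by iterating the parabolic Harnack inequality (Theorem \ref{thm:interior_Harnack}) on rescalings, as in the elliptic analogue \cite[Lemma 5.8]{FRS23}.

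For the base value, at $\eta=0$ the slit is $\{x_{n+1}=0,\,x_n\le 0\}$ and the harmonic function $\psi(x_n,x_{n+1}) := \mathrm{Re}\,(x_n+ix_{n+1})^{1/2}$, constant in $(x',t)$, is $\tfrac12$-homogeneous and a positive caloric function vanishing on the slit, so $\kappa(0)=\tfrac12$. Monotonicity in $\eta$ is domain monotonicity of principal eigenvalues: $\kappa(\eta_+)$ is nondecreasing in $\eta_+$ because $\widetilde{\mathcal C}^+_{\eta_+}$ grows, and $\kappa(-\eta_-)$ is nonincreasing because $\widetilde{\mathcal C}^-_{\eta_-}$ shrinks. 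Continuity at $\eta=0$ is obtained by sandwiching $\kappa(\eta)$ between the homogeneities of explicit sub- and supersolution barriers that approach the $\tfrac12$-homogeneous function as $\eta\to 0$, via a perturbation of the base profile analogous to \cite[Lemma 5.8]{FRS23}. An application of the intermediate value theorem then yields $\eta_\pm = \eta_\pm(\varepsilon)\searrow 0$ realizing $\kappa = \tfrac12\pm\varepsilon$. Finally, the uniform lower bound on $Q_2\cap\{|x_{n+1}|\ge\tfrac{1}{n+1}\}$ follows by chaining the interior Harnack inequality along a finite family of parabolic cylinders of size depending only on $\varepsilon_0$ and $n$, staying at positive parabolic distance from the slit, and connecting the normalization region to every point of this set.

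The main obstacle is the continuity and quantitative control of $\kappa(\eta)$ near $\eta = 0$: monotonicity is soft, but matching the singular $\tfrac12$-Hölder base solution with the perturbed problem is delicate. The most elegant route, following the cited elliptic construction \cite[Lemma 5.8]{FRS23}, is to build explicit barriers in parabolic spherical coordinates whose homogeneity is exactly $\tfrac12\pm\delta$ and whose vanishing set matches $\widetilde{\mathcal C}^\pm_\eta$ on the thin plane $\{x_{n+1}=0\}$; this produces quantitative upper and lower bounds on $\kappa(\eta)$ that guarantee the existence of $\eta_\pm(\varepsilon) \to 0$. A technical subtlety to handle is that, unlike the elliptic case, the self-similar reduction introduces an unbounded drift term whose spectral theory on the unbounded self-similar domain is nonstandard; this is the place where leaning on the template of \cite{FRS23} (sub/supersolution barriers rather than functional analysis) keeps the argument tractable.
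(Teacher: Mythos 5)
Your overall architecture coincides with the paper's: pass to the self-similar profile, identify the homogeneity as the principal eigenvalue of the Ornstein--Uhlenbeck operator on the complement of the rescaled slit, use the explicit $\tfrac12$-homogeneous solution $\operatorname{Re}\sqrt{x_n+ix_{n+1}}$ to anchor the base value, and conclude by monotonicity, continuity and the intermediate value theorem. Where you diverge is precisely at the two steps that carry the real work, and in both places the sketch has a gap. First, the continuity of $\kappa(\eta)$: you propose explicit sub/supersolution barriers of homogeneity $\tfrac12\pm\delta$ vanishing on $\widetilde{\mathcal C}^\pm_\eta$, ``analogous to'' the elliptic construction, but you do not build them, and in the parabolic setting the cone $\{x_n\le\eta(|x'|+|t|^{1/2})\}$ is genuinely time-dependent, so the elliptic perturbation of the angular profile does not transfer verbatim. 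The paper avoids barriers entirely: it stays variational, characterizes $\kappa(\eta)/2$ by the Gaussian-weighted Rayleigh quotient \eqref{eq:RL} (the drift term you worry about is harmless because $\mathcal L_{OU}$ is self-adjoint in $L^2(e^{-|x|^2/4})$), and proves continuity by an explicit bi-Lipschitz angular deformation of $\phi_{\eta_1}$ into a competitor for $\eta_2$, which gives the quantitative two-sided bound $\kappa(\eta_1)<\kappa(\eta_2)\le(1-\tfrac{3}{\pi}(\arctan\eta_2-\arctan\eta_1))^{-2}\kappa(\eta_1)$. If you keep your barrier route you must actually construct the parabolic barriers; otherwise the IVT step is unsupported.

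Second, the uniform lower bound $\varphi_\pm\ge m$ on $Q_2\cap\{|x_{n+1}|\ge\tfrac1{n+1}\}$ cannot be obtained by Harnack chains alone from the normalization $\|\varphi_\pm\|_{L^\infty(Q_1)}=1$: the supremum could a priori be attained arbitrarily close to the slit and at times close to $0$, so you have no uniform-in-$\varepsilon$ starting point from which to propagate forward in time. The paper supplies this starting point by proving $\|\phi_\eta\|_{L^\infty(B_1\cap\{|x_{n+1}|\ge\frac1{4(n+1)}\})}\ge c$ uniformly in $\eta$, via a compactness argument that uses the $L^2_w$-stability of the minimizers (Lemma \ref{lem:OU_minimizer_stability}) together with the strong maximum principle; only then do Harnack and homogeneity finish the job. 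Relatedly, you never verify that the homogeneous extension of the $L^2_w$-eigenfunction is bounded on $Q_1$, which is needed for the normalization to make sense; the paper checks this by a Gaussian convolution estimate on $\tilde\varphi_\varepsilon$. These are fixable omissions, but as written they are the substance of the proposition rather than routine details.
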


We defer the proof to Appendix \ref{sect:app}. Notice that $\varphi_-$ and $\varphi_+$ satisfy the following Hopf-type estimate.

\begin{lem}\label{lem:slit_hopf}
Let $\varphi_\pm$ be as in Proposition \ref{prop:cone_solns}. Then,
$$\varphi_\pm \geq c|x_{n+1}| \quad \text{in} \ Q_{1/2},$$
for a dimensional constant $c > 0$.
\end{lem}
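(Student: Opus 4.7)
The plan is to combine a parabolic Hopf lemma at interior points of the slit with the uniform lower bound $\varphi_\pm\ge m$ on $\{|x_{n+1}|\ge 1/(n+1)\}$ provided in Proposition \ref{prop:cone_solns}, after reducing to a compact parabolic shell via homogeneity.

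First I would exploit the evenness of $\varphi_\pm$ in $x_{n+1}$ (which follows from the uniqueness part of Proposition \ref{prop:cone_solns}, since reflection preserves the PDE, the cone $\mathcal C^\pm_{\eta_\pm}\subset\{x_{n+1}=0\}$, the homogeneity, and the normalization) to reduce to proving $\varphi_\pm(x,t)\ge c\,x_{n+1}$ on $Q_{1/2}\cap\{x_{n+1}\ge 0\}$. Using the parabolic homogeneity of degree $\tfrac12\pm\varepsilon$, the inequality at a point $(x,t)\in Q_{1/2}\setminus\{0\}$ rescales to the same inequality at $(y,s)=(x/\lambda,t/\lambda^2)$ with $|(y,s)|_p=1/2$ and $\lambda=2|(x,t)|_p\le 1$, up to a factor $\lambda^{-1/2\pm\varepsilon}\ge 1$ valid for $|\varepsilon|<1/2$. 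Thus the lemma reduces to proving a uniform linear lower bound on the compact shell $Q_{1/2}\setminus Q_{1/4}$.

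On this shell, I would split into cases according to the position of $(y,s)$. If $y_{n+1}\ge 1/(n+1)$ then Proposition \ref{prop:cone_solns} already gives $\varphi_\pm(y,s)\ge m\ge m\,y_{n+1}$. If $y_{n+1}<1/(n+1)$ and the projection $(y',y_n,0,s)$ lies strictly off the slit, i.e.\ $y_n>\eta_\pm(|y'|+|s|^{1/2})$, then on a small parabolic neighbourhood the slit is absent, so by evenness $\varphi_\pm$ is caloric across $\{x_{n+1}=0\}$ there; chaining the interior parabolic Harnack (Theorem \ref{thm:interior_Harnack}) from a point with $|x_{n+1}|=1/(n+1)$, where $\varphi_\pm\ge m$, yields a uniform positive lower bound $\varphi_\pm\ge c_1$ at $(y,s)$, which dominates $c\,y_{n+1}$. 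The remaining Hopf regime, where $(y',y_n,0,s)$ lies in the interior of the slit, is handled by noting that the slit is then locally the flat hyperplane $\{x_{n+1}=0\}$ and $\varphi_\pm$ is a nonnegative caloric function in $\{x_{n+1}>0\}$ vanishing on a flat piece of its lateral boundary. I would apply the weak maximum principle in a small box $D=Q_r(y',y_n,0,s)\cap\{x_{n+1}>0\}$ against an explicit subcaloric barrier of the form
\[
\underline w(x,t)=c\,x_{n+1}\Bigl(r^2-|x'-y'|^2-(x_n-y_n)^2\Bigr)_{\!+}\,\eta(t-s),
\]
with a suitable time cutoff $\eta$. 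Direct computation of $(\partial_t-\Delta)\underline w$ makes it subcaloric for $c$ small (the $-\Delta x_{n+1}(\cdots)$ term produces a positive constant multiple of $x_{n+1}$, absorbed into the negative contributions from $\eta'$ and the quadratic cutoff); on the top face of $D$ we have $\underline w\le c\,m$, while on the lateral and bottom faces of $D$ the cutoffs force $\underline w=0\le \varphi_\pm$. The weak maximum principle then gives $\underline w\le \varphi_\pm$ in $D$, hence $\varphi_\pm(y,s)\ge c\,y_{n+1}$.

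The main obstacle is the careful construction and verification of the subcaloric barrier in the Hopf regime, together with the uniformity of $c$ as the interior slit point varies: one must ensure that $r$ can be chosen uniformly positive over the compact shell (which follows from the strict openness of the interior of the slit), and that the constants from the maximum principle do not degenerate. Once this uniform $c$ on the shell is obtained, the homogeneity rescaling of the first step propagates it to all of $Q_{1/2}$, completing the proof.
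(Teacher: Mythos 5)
Your overall strategy (use $\varphi_\pm\ge m$ on $\{|x_{n+1}|\ge \tfrac1{n+1}\}$ plus a Hopf-type comparison at the flat hyperplane $\{x_{n+1}=0\}$) is the intended one — the paper dispatches the lemma with exactly ``Proposition \ref{prop:cone_solns} and Hopf's lemma'' — but your execution has two genuine problems. The decisive one is the barrier: writing $\rho^2=|x'-y'|^2+(x_n-y_n)^2$, in $\{\rho<r\}$ one computes
$$(\partial_t-\Delta)\underline w = c\,x_{n+1}\bigl[(r^2-\rho^2)\,\eta'+2n\,\eta\bigr],$$
and as $\rho\uparrow r$ this tends to $2nc\,x_{n+1}\eta\ge 0$, so no nonnegative, nontrivial time cutoff makes $\underline w$ subcaloric; shrinking $c$ is useless because the inequality is homogeneous in $c$, and the ``negative contribution from the quadratic cutoff'' you invoke is in fact the positive term $2n\eta$. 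More structurally, no barrier of the pure product form $x_{n+1}g(x',x_n,t)$ can work: $(\partial_t-\Delta)(x_{n+1}g)=x_{n+1}(\partial_t-\Delta_{x',x_n})g$, and a $g$ that is subcaloric in the primed variables and $\le 0$ on the parabolic boundary of the cross-sectional cylinder is $\le 0$ everywhere by the maximum principle. You need genuine convexity in $x_{n+1}$ (e.g.\ a $+Kx_{n+1}^2$ or $\sinh(\beta x_{n+1})$ factor), or better, replace the explicit barrier by the caloric function $v$ in the slab with data $0$ on the bottom, lateral and initial faces and $m$ times a fixed bump on the top, deduce $\varphi_\pm\ge v$ by comparison, and apply the classical pointwise Hopf lemma plus compactness to the single fixed function $v$.

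The second problem is uniformity. Splitting according to whether the projection of $(y,s)$ onto $\{x_{n+1}=0\}$ lies off the slit or in its interior makes both constants degenerate as the projection approaches the edge $\{x_n=\eta_\pm(|x'|+|t|^{1/2})\}$: the Harnack chain in your second case must pass through a caloric neighbourhood whose size shrinks to zero, and the radius $r$ of the box in your Hopf case likewise shrinks to zero — ``strict openness of the interior of the slit'' gives no uniform lower bound on $r$. (There is also the issue that on the top face of a small box $\varphi_\pm\ge m$ is not known unless the box reaches $x_{n+1}=\tfrac1{n+1}$.) The fix is to not localize at the slit at all: run a single comparison in the full slab $\{0<x_{n+1}<\tfrac1{n+1}\}$ over, say, $B'_{3/2}\times(-\tfrac32,\tfrac32)\times(-\tfrac94,0)$, using only $\varphi_\pm\ge 0$ on the bottom face (true whether or not the point lies over the slit), $\varphi_\pm\ge m$ on the top face, and $\varphi_\pm\ge 0$ on the lateral and initial faces. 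This one comparison subsumes all your cases and renders the homogeneity reduction and the evenness discussion unnecessary.
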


\begin{proof}
It follows from Proposition \ref{prop:cone_solns} and Hopf's lemma.
\end{proof}

Now, we proceed with the same strategy as in Lemmas \ref{lem:barrier_super_abp} an \ref{lem:barrier_sub_abp} to obtain the desired bounds. We start with the upper bound:

\begin{lem}\label{lem:slit_super_abp}
Let $\varepsilon \in (0,1/2)$. There exist sufficiently small $\eta > 0$ and sufficiently large $K > 0$, only depending on the dimension and $\varepsilon$, such that the following holds.

Let $u$ be a solution to
\begin{equation*}
\left\{
\begin{array}{rclll}
u_t - \Delta u & \leq & f & \text{in} & \Omega\\
u & \leq & 1 & \text{on} & \p_p\Omega\\
u & \leq & 0 & \text{on} & \p_\Gamma\Omega\\
\end{array}
\right.
\end{equation*}
where $\|f\|_{L^{n+2}(\Omega)} \leq \eta$, and $\Omega$ is a parabolic slit domain in $Q_1$ in the sense of Definition \ref{defn:slit_domain} with Lipschitz constant $\eta$. Then,
$$\|u\|_{L^\infty(Q_r)} \leq K r^{1/2-\varepsilon}, \quad \forall r \in (0,1).$$
\end{lem}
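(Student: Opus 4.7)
The strategy reproduces that of Lemma \ref{lem:barrier_super_abp} in the slit setting: a base-case growth bound in the homogeneous regime is combined with a dyadic iteration that absorbs the right-hand side via Theorem \ref{thm:ABPKT} (applied in space dimension $n+1$, so with exponent $n+2$ instead of $n+1$ in the ABP estimate). The role played in Section \ref{sect:bdry_growth_reg} by the distance power $d^{1-\varepsilon}$ is taken here by the caloric homogeneous barrier $\varphi_-$ from Proposition \ref{prop:cone_solns}, chosen of parabolic homogeneity $\frac{1}{2}-\frac{\varepsilon}{2}$ in order to leave $\varepsilon/2$ of room for the iteration.

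For the base case I would fix such a $\varphi_-$ (with associated cone constant $\eta_-$) and require $\eta\le\eta_-$, so that $\mathcal C^-_{\eta_-}\subset\p_\Gamma\Omega$ and $\varphi_-$ is caloric on $\Omega\cap Q_1$, vanishing on a subset of the slit. Under the homogeneous hypothesis $f\equiv 0$, comparison with a supersolution of the form $v=K\varphi_-+G$ would then yield $\|u\|_{L^\infty(Q_r)}\le K_1 r^{\frac{1}{2}-\frac{\varepsilon}{2}}$ for $r\in(0,1)$. Here $G$ is a bounded super-caloric correction, designed so that $v\ge 1$ on the entire outer parabolic boundary $\p_p Q_1\cap\overline\Omega$, including the part where $\varphi_-$ degenerates near the slit. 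Its construction combines the bulk lower bound $\varphi_-\ge m$ on $\{|x_{n+1}|\ge 1/(n+1)\}$ (Proposition \ref{prop:cone_solns}) with the Hopf-type estimate $\varphi_-\ge c|x_{n+1}|$ of Lemma \ref{lem:slit_hopf}, supplemented by an explicit super-caloric term such as a multiple of the caloric measure of the outer face of $\Omega\cap Q_1$ or a carefully signed combination of $x_{n+1}^2$, $|x'|^2$, $x_n^2$ and $-t$.

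Granting the base case, set $u_j(x,t):=u(\rho^j x,\rho^{2j}t)/\rho^{j(\frac{1}{2}-\varepsilon)}$ for a small $\rho>0$ to be chosen. Each $u_j$ solves the heat equation on the rescaled slit domain $\tilde\Omega_j$ (which has the same Lipschitz constant $\le\eta$) with right-hand side $\tilde f_j(x,t)=\rho^{j(\frac{3}{2}+\varepsilon)}f(\rho^j x,\rho^{2j}t)$, and a direct change-of-variables gives
\[\|\tilde f_j\|_{L^{n+2}(Q_1)}=\rho^{j\left(\frac{n}{2(n+2)}+\varepsilon\right)}\|f\|_{L^{n+2}(Q_{\rho^j})}\le\eta.\]
Writing $a_j:=\|u_j\|_{L^\infty(\tilde\Omega_j\cap Q_1)}$, the base case applied to $u_j$ at scale $\rho$ combined with Theorem \ref{thm:ABPKT} yields the recursion
\[a_{j+1}\,\rho^{\frac{1}{2}-\varepsilon}\le K_1\,\rho^{\frac{1}{2}-\frac{\varepsilon}{2}}\,a_j+C\,\rho^{\frac{n+1}{n+2}}\|\tilde f_j\|_{L^{n+2}(Q_1)},\]
equivalently $a_{j+1}\le K_1\rho^{\varepsilon/2}a_j+C\eta$. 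Fixing $\rho$ small so that $K_1\rho^{\varepsilon/2}\le\frac{1}{2}$ and then $\eta$ small, induction gives $\sup_j a_j\le C$, which translates to $\|u\|_{L^\infty(Q_r)}\le Kr^{\frac{1}{2}-\varepsilon}$ for all $r\in(0,1)$. The main obstacle is squarely the construction of $G$ in the base case: unlike $d^{1-\varepsilon}$, which has a positive super-caloric excess of order $d^{-1-\varepsilon}$ that Lemma \ref{lem:barrier_super} exploits directly, $\varphi_-$ is exactly caloric and vanishes on the whole cone $\mathcal C^-_{\eta_-}$, so producing a genuine supersolution that both dominates the ambient bound $u\le 1$ on $\p_p(Q_1\cap\Omega)$ and preserves the decay $\varphi_-\sim r^{\frac{1}{2}-\frac{\varepsilon}{2}}$ in $Q_r$ is the delicate step.
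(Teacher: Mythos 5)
Your proposal follows essentially the same route as the paper: a homogeneous base case by comparison with $K\varphi_-$ (of homogeneity $\tfrac12-\tfrac\varepsilon2$) plus a polynomial correction, followed by the dyadic iteration absorbing $f$ via Theorem \ref{thm:ABPKT}, with the same scaling exponents. The ``delicate step'' you flag is resolved exactly by the ingredients you name: the paper takes $G=|x'|^2+x_n^2-2nx_{n+1}^2-2nt$ (which is exactly caloric) and uses the Hopf bound $a\varphi_-\ge(2n+1)|x_{n+1}|$ from Lemma \ref{lem:slit_hopf} to dominate the $-2nx_{n+1}^2$ term on the outer boundary, so no new idea is missing.
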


\begin{proof}
We will use the comparison principle with a barrier. Assume first that $f = 0$.

Let $\varphi_-$ be as in Proposition \ref{prop:cone_solns} with $\varepsilon/2$ instead of $\varepsilon$, let $\eta = \eta_-$ and define
$$v := a\varphi_- + |x'|^2 + x_n^2 - 2nx_{n+1}^2 - 2nt.$$

First, by Lemma \ref{lem:slit_hopf} and letting $a = (2n+1)/c$,
$$a\varphi_- \geq ac|x_{n+1}| \geq (2n+1)|x_{n+1}| \quad \text{in} \ Q_1.$$

Then, $v$ satisfies $v_t - \Delta v = 0$ in $Q_1 \setminus \mathcal C_{\eta_-}^- \supset \Omega$. Moreover, on the parabolic boundary of $\Omega$ we can distinguish the following cases:

\begin{itemize}
    \item When $t = -1$, $v \geq a\varphi_- + 2n-2nx_{n+1}^2$.
    \item When $|x'| = 1$, and when $|x_n| = 1$, $v \geq a\varphi_- +1 - 2nx_{n+1}^2$.
    \item When $|x_{n+1}| = 1$, $v \geq a\varphi_- - 2n$.
    \item On $\p_\Gamma\Omega \subset \{x_{n+1} = 0\}$, $v \geq 0$.
\end{itemize}

To treat the first two cases, note that
$$a\varphi_- + 1 - 2nx_{n+1}^2 \geq 1 + (2n+1)|x_{n+1}| - 2nx_{n+1}^2 \geq 1.$$
Then, when $|x_{n+1}| = 1$,
$$a\varphi_- - 2n \geq 1.$$

Therefore, by the comparison principle, $u \leq v$ in $\Omega$. To include the right-hand side, notice that we can always write $u = u_0 + u_f$, where
\[
\left\{
\begin{array}{rclll}
(\p_t - \Delta)u_0 & = & 0 & \text{in} & \Omega\\
u_0 & = & u & \text{on} & \p_p \Omega
\end{array}
\right.
\ \text{and}\
\left\{
\begin{array}{rclll}
(\p_t-\Delta)u_f & = & f & \text{in} & \Omega\\
u_f & = & 0 & \text{on} & \p_p \Omega.
\end{array}
\right.
\]

Then, by Theorem \ref{thm:ABPKT}, and applying the reasoning above to $u_0$,
$$u = u_0 + u_f \leq v + C\|f\|_{L^{n+2}(Q_1)} \leq v + C\eta.$$

Now, we do an iteration scheme as in Lemma \ref{lem:barrier_super_abp}. For that, let us define the rescaled functions
$$u_j(x,t) := \frac{u(\rho^jx,\rho^{2j}t)}{\rho^{j(1/2-\varepsilon)}},$$
with $\rho > 0$ to be chosen later. Now,
$$(\p_t-\Delta)u_j = \rho^{j(3/2+\varepsilon)}f_j,$$
with $f_j(x,t) := f(\rho^jx,\rho^{2j}t)$. Let $b_j := \|u_j\|_{L^\infty(Q_1)}$. Writing $u = u_0+u_f$ again,
$$b_0 \leq \|u_0\|_{L^\infty(Q_1)} + \|u_f\|_{L^\infty(Q_1)} \leq \|v\|_{L^\infty(Q_1)} + C\|f\|_{L^{n+2}(Q_1)} \leq a + 4n + 2 + C\eta.$$

We will show by induction that $b_j \leq b_0$ for all $j \geq 0$. Indeed, by the first part of the proof and induction hypothesis,
$$u_j \leq v + C\|\rho^{j(3/2+\varepsilon)}f_j\|_{L^{n+2}(Q_1)},$$
and hence, using that $v$ is a sum of terms with at least parabolic homogeneity $\frac{1-\varepsilon}{2}$,
\begin{align*}
    b_{j+1}\rho^{1/2-\varepsilon} &\leq \|v\|_{L^\infty(Q_\rho)} + C\rho^{j(3/2+\varepsilon)}\|f_j\|_{L^{n+2}(Q_1)}\\
    &\leq b_0\rho^{(1-\varepsilon)/2}+\rho^{j(3/2+\varepsilon-(n+3)/(n+2))}\|f\|_{L^{n+2}(Q_{\rho^j})}\\
    &\leq b_0\rho^{(1-\varepsilon)/2}+\|f\|_{L^{n+2}(Q_1)} \leq b_0\rho^{(1-\varepsilon)/2} + \eta \leq b_0\rho^{1/2-\varepsilon},
\end{align*}
after choosing sufficiently small $\eta$ and $\rho$.

Finally, given $r \in (0,1)$, let $j$ be such that $\rho^{j+1} < r \leq \rho^j$. Therefore,
$$\|u\|_{L^\infty(Q_r)} \leq \|u\|_{L^\infty(Q_{\rho^j})} = b_j\rho^{j(1/2-\varepsilon)} < b_0(r/\rho)^{1/2-\varepsilon}.$$
\end{proof}

We also deduce a lower bound:

\begin{lem}\label{lem:slit_sub_abp}
Let $\varepsilon \in (0,1/6)$ and $\mu \in (0,1)$. There exist sufficiently small $\eta, k > 0$, only depending on the dimension, $\mu$ and $\varepsilon$, such that the following holds.

Let $u \geq 0$ be a solution to
\begin{equation*}
\left\{
\begin{array}{rclll}
u_t - \Delta u & \geq & f & \text{in} & \Omega\\
u & \geq & 1 & \text{on} & \p_{\mathrm{up}}\Omega\\
\end{array}
\right.
\end{equation*}
where $\|f\|_{L^{n+2}(\Omega)} \leq \eta$, $\Omega$ is a parabolic slit domain in $Q_1$ in the sense of Definition \ref{defn:slit_domain} with Lipschitz constant $\eta$, and
$$\p_{\mathrm{up}}\Omega := \{x_n = 1\} \cap \overline{\Omega}.$$

Then,
$$u(re,0) \geq k r^{1/2+\varepsilon}, \quad \forall r \in \left(0,\frac{1}{4}\right),$$
for all unit vectors $e = \cos(\theta)e_n + \sin(\theta)e_{n+1}$ with $\cos(\theta) \geq -1+\mu$.
\end{lem}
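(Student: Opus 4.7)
The plan is to follow the rescaling-ABP template of Lemma \ref{lem:barrier_sub_abp}, with the homogeneous slit solution $\varphi_+$ of Proposition \ref{prop:cone_solns} replacing the power of the regularized distance used in the parabolic Lipschitz case. I would first establish the claim when $f\equiv 0$, and then absorb the right-hand side by exactly the same iteration.

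For $f\equiv 0$, I would choose the Lipschitz constant $\eta\le\eta_+$, so that the slit $\p_\Gamma\Omega$ lies inside $\mathcal{C}^+_{\eta_+}$ and $\varphi_+$, extended by zero on $\mathcal{C}^+_{\eta_+}$, is a (viscosity) subsolution of the heat equation on all of $Q_1$: the one-sided Hopf contribution of $\varphi_+$ at $\p\mathcal{C}^+_{\eta_+}$ has the correct sign. Arguing as in the proof of Lemma \ref{lem:barrier_sub}, the interior Harnack of Theorem \ref{thm:interior_Harnack} provides $u\ge c_0>0$ on an interior region of the form $\{|x_{n+1}|\ge c,\ x_n\in(r_0,1-r_0)\}$. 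Then I would introduce the subsolution barrier
\[
w(x,t) := k\varphi_+(x,t)-A|x|^2+Ct,
\]
with $C\le-2(n+1)A$ so that $(\p_t-\Delta)w\le 0$, and with $A,k,r_0$ tuned so that $w\le u$ on each face of $\p_p(\Omega\cap Q_{r_0})$: on the slit one has $w=-A|x|^2+Ct\le 0$; on the lateral and bottom faces the quadratic $-A|x|^2$ beats $k\varphi_+$; on the upper face $\{x_n=r_0\}$ one uses $u\ge c_0$ and $k\le c_0/\sup_{Q_{r_0}}\varphi_+$. The comparison principle then yields $u\ge w$ in $\Omega\cap Q_{r_0}$, and by parabolic homogeneity
\[
w(re,0)\ \ge\ k r^{1/2+\varepsilon}\varphi_+(e,0)-Ar^2.
\]
The admissible set of directions $\{\cos\theta\ge -1+\mu\}$ is a closed subset of the unit sphere that avoids the single bad direction $e=-e_n$ (the unique $e$ for which $(e,0)\in\mathcal{C}^+_{\eta_+}$), so the strict positivity and continuity of $\varphi_+$ on $Q_1\setminus\mathcal{C}^+_{\eta_+}$ together with compactness give $\varphi_+(e,0)\ge c(\mu)>0$ uniformly, whence the claim for $r$ small.

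To incorporate $f\not\equiv 0$ and reach all $r\in(0,1/4)$, I would define the rescalings $u_j(x,t):=\rho^{-j(1/2+\varepsilon)}u(\rho^jx,\rho^{2j}t)$ and show inductively that $\inf_{\p_{\mathrm{up}}\Omega}u_j\ge 1$, following the scheme of Lemma \ref{lem:barrier_sub_abp} verbatim. A direct change of variables shows that the rescaled right-hand side has $L^{n+2}(Q_1)$ norm equal to $\rho^{j(n/(2(n+2))-\varepsilon)}\|f\|_{L^{n+2}(Q_{\rho^j})}$, which decays geometrically thanks to the hypothesis $\varepsilon<\tfrac{1}{6}\le \tfrac{n}{2(n+2)}$; hence Theorem \ref{thm:ABPKT} absorbs the contribution of $f$ in the inductive step. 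Combined with the $f\equiv 0$ comparison applied to $u_j$, unscaling gives $u(\rho^je,0)\ge kc(\mu)\rho^{j(1/2+\varepsilon)}$, and interpolating between consecutive scales yields the desired estimate for all $r\in(0,1/4)$.

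The main obstacle is the $f\equiv 0$ step: verifying rigorously that the zero extension of $\varphi_+$ is a viscosity subsolution across $\p\mathcal{C}^+_{\eta_+}$, so that the comparison principle applies; and simultaneously pinning down $k$, $r_0$, $A$, $C$ so that $w$ is a subsolution and dominates every face of $\p_p(\Omega\cap Q_{r_0})$, including the upper face where only the weaker Harnack bound $u\ge c_0$ is available.
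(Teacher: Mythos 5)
Your overall strategy is the same as the paper's: get $u\ge c_0$ on a compact region away from the slit, compare with a barrier built from $\varphi_+$ plus polynomial corrections, and then run the rescaling/ABP iteration (your computation of the rescaled $L^{n+2}$ norm and the role of $\varepsilon<\tfrac16\le\tfrac{n}{2(n+2)}$ is exactly right, as is the compactness argument giving $\varphi_+(e,0)\ge c(\mu)$). However, there is a genuine gap in the barrier construction, and it is precisely the point you flagged as the ``main obstacle.'' Your barrier is $w=k\varphi_+-A|x|^2+Ct$ with $C\le -2(n+1)A<0$. Since $t<0$ throughout $Q_1$, the term $Ct=|C|\,|t|$ is \emph{positive}, so your claim that $w=-A|x|^2+Ct\le 0$ on the slit is false: at a point such as $(0,\dots,0,-\tau)$ of the slit one has $w=|C|\tau>0=u$, and the same problem occurs at the bottom face near $x=0$, where only $u\ge0$ is available. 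So the comparison principle cannot be applied to $w$ as written. If you flip the sign and take $C>0$ (so that $Ct\le0$ fixes the slit and the bottom face), then $(\p_t-\Delta)w=C+2(n+1)A>0$ and $w$ is no longer a subsolution; the term $-A|x|^2$, which you need to dominate $k\varphi_+$ on the lateral faces, itself contributes $+2(n+1)A$ to $\p_t w-\Delta w$, so there is no choice of $A,C$ alone that works.

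The structural fix, which is what the paper does, is to add a term $+Bx_{n+1}^2$ with $2B\ge C+2(n+1)A$: its contribution $-2B$ to $\p_t-\Delta$ restores the subsolution property while keeping the correct sign of the time term. The price is that $+Bx_{n+1}^2$ is large and positive on the faces $|x_{n+1}|=\mathrm{const}$, so the comparison domain must be truncated at $|x_{n+1}|=1/\sqrt{2B}$ (the paper takes $B=4n+13$ and the box $|x_{n+1}|<1/\sqrt{2(4n+13)}$), and it is on \emph{those} faces --- not on $\{x_n=r_0\}$ --- that the Harnack lower bound $u\ge c_0$ must be invoked. This also matters for your step 3: your region $\{|x_{n+1}|\ge c,\ x_n\in(r_0,1-r_0)\}$ would not cover the relevant faces of the truncated box (which extend to $x_n=-1/2$ in the paper), and on $\{x_n=r_0\}$ near $x_{n+1}=0$ the Harnack bound is unavailable anyway. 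The paper's barrier $v=c_0\bigl(\tfrac12\varphi_+ +2t+(4n+13)x_{n+1}^2-4|x'|^2-16(x_n+\tfrac14)_-^2\bigr)$ packages all of this; the rest of your argument (zero-extension of $\varphi_+$ as a viscosity subsolution, the iteration, and the final homogeneity step) goes through once the barrier is corrected.
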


\begin{proof}
The proof is very similar to that of Lemma \ref{lem:slit_super_abp}. Assume first that $f = 0$.

Let
$$\Omega^{(1)} := \left\{(x,t) \in \Omega : |x'| < \frac{1}{2}, |x_n| < \frac{1}{2}, |x_{n+1}| < \frac{1}{\sqrt{2(4n+13)}}, -\frac{1}{2} < t\right\}$$
and
$$\p_{\mathrm{up}}\Omega^{(1)} := \overline{\Omega^{(1)}}\cap\left\{|x_{n+1}| = \frac{1}{\sqrt{2(4n+13)}}\right\}.$$
By an analogous reasoning to the proof of Lemma \ref{lem:barrier_sub}, $u \geq c_0$ on $\p_{\mathrm{up}}\Omega^{(1)}$, and $u(re,0) \geq c_0$ for all $r \in (\frac{1}{\sqrt{2(4n+13)}},\frac{1}{2})$, given that $\eta$ is small enough. However, here $c_0$ depends on $\mu$.

Let $\varphi_+$ be as in Proposition \ref{prop:cone_solns} with $\varepsilon/2$ instead of $\varepsilon$, let $\eta \leq \eta_+$ and define
$$v := c_0\left(\frac{1}{2}\varphi_+ + 2t + (4n+13)x_{n+1}^2 - 4|x'|^2 - 16\left(x_n+\frac{1}{4}\right)_-^2\right).$$

Then, $v$ satisfies $v_t - \Delta v \leq 0$ in $Q_1 \setminus \mathcal C_{\eta_+}^+$. Moreover, on the parabolic boundary of $\Omega^{(1)}$ we can distinguish the following cases (recall that $\|\varphi_+\|_{L^\infty(Q_1)} = 1$ and that $|x_{n+1}| \leq 1/\sqrt{2(4n+13)}$).

\begin{itemize}
    \item When $t = -1/2$, $v \leq c_0(1/2-1+1/2) = 0$.
    \item When $|x'| = 1/2$, and when $x_n = -1/2$, $v \leq c_0(1/2+1/2-1) = 0$.
    \item When $x_n = 1/2$, $v \leq c_0(1/2+1/2) = c_0$.
    \item When $|x_{n+1}| = 1/\sqrt{2(4n+13)}$, $v \leq c_0(1/2+1/2) = c_0$.
    \item On $\p_\Gamma\Omega \subset C_{\eta_+}^+$, $v \leq 0$.
\end{itemize}

Therefore, by the comparison principle, $u \geq v$ in $\overline{\Omega^{(1)}}$. The right-hand side can be included in the same way as in Lemma \ref{lem:slit_super_abp}, giving
$$u \geq v - C\|f\|_{L^{n+2}(Q_1)} \geq v - C\eta \quad \text{in} \ \overline{\Omega^{(1)}}.$$

Now, we do an iteration scheme as in Lemma \ref{lem:barrier_sub_abp}. For that, let us define the rescaled functions
$$u_j(x,t) := \frac{u(\rho^jx,\rho^{2j}t)}{\rho^{j(1/2+\varepsilon)}},$$
with $\rho > 0$ to be chosen later. Now,
$$(\p_t-\Delta)u_j = \rho^{j(3/2-\varepsilon)}f_j,$$
with $f_j(x,t) := f(\rho^jx,\rho^{2j}t)$. Let 
$$b_j := \inf\limits_{\p_{\mathrm{up}}\Omega_j}u_j,$$
where $\Omega_j$ is the appropriate scaled domain of $u_j$ in $Q_1$. By hypothesis, $b_0 \geq 1$. We will show by induction that $b_j \geq 1$ for all $j \geq 0$.

Indeed, by the first part of the proof and induction hypothesis,
$$u_j \geq v - C\|\rho^{j(3/2-\varepsilon)}f_j\|_{L^{n+2}(Q_1)},$$
and hence, using the parabolic homogeneity of $\varphi_+$, and using the same scaling arguments as in Lemma \ref{lem:slit_super_abp}
\begin{align*}
    b_{j+1}\rho^{1/2+\varepsilon} &\geq c_0\frac{\inf\limits_{\p_{\mathrm{up}}\Omega_j}\varphi_+}{2}\rho^{(1+\varepsilon)/2} - (4n+14)c_0\rho^2 - C\rho^{j(3/2-\varepsilon)}\|f_j\|_{L^{n+2}(\Omega_j^{(1)})}\\
    &\geq c_0c\rho^{(1+\varepsilon)/2} - C\rho^2-C\rho^{j(3/2-\varepsilon-(n+3)/(n+2))}\eta \geq \rho^{1/2+\varepsilon},
\end{align*}
for sufficiently small $\eta$ and $\rho$.

Finally, by the first part of the proof,
$$u_j \geq v - C\eta \quad \text{in} \ \overline{\Omega^{(1)}},$$
and then
$$u_j \geq \frac{c_0}{2}\varphi_+ - C\eta \quad \text{in} \ \{t = 0, x' = 0, x_n \geq -1/4\},$$
which in turn implies $u_j(re,0) \geq c(\mu)r^{1/2+\varepsilon} - C\eta \geq kr^{1/2+\varepsilon}$ for all $r \in [1/(4\rho),1/4)$. The conclusion follows undoing the scaling.
\end{proof}

Combining Lemmas \ref{lem:slit_super_abp} and \ref{lem:slit_sub_abp}, we can deduce a growth estimate for all solutions.

\begin{prop}\label{prop:slit_growth}
Let $\varepsilon \in (0,1/6)$. There exists sufficiently small $\eta > 0$, only depending on $\varepsilon$ and the dimension, such that the following holds.

Let $\Omega$ be a parabolic slit domain in $Q_1$ in the sense of Definition \ref{defn:slit_domain}. Let $u$ be a solution to
$$\left\{\begin{array}{rclll}
u_t - \Delta u & = & f & \text{in} & \Omega,\\
u & = & 0 & \text{on} & \p_\Gamma\Omega.
\end{array}\right.$$
Assume that $\|u\|_{L^\infty(Q_1)} \leq 1$, and $\|f\|_{L^{n+2}(Q_1)} \leq \eta$.

Then,
$$|u| \leq C|(x_n - \Gamma(x',t),x_{n+1})|^{1/2-\varepsilon} \quad \text{in} \quad \Omega \cap Q_{3/4}.$$
Moreover, if $u$ is nonnegative, $m = u\left(\frac{e_n}{2},-\frac{3}{4}\right) > 0$, $\|f\|_{L^{n+2}(Q_1)} \leq \eta m$, then
$$u \geq cm|(x_n - \Gamma(x',t),x_{n+1})|^{1/2+\varepsilon} \quad \text{in} \quad \Omega \cap \mathcal{C} \cap Q_{3/4},$$
where $\mathcal{C}$ is the "cone" defined as
$$\mathcal{C} := \left\{(x,t) \in \R^{n+2} \ | \ x_n - \Gamma(x',t) + 10|x_{n+1}| \geq 0\right\}.$$
The constants $C$ and $c$ are positive and depend only on the dimension, $\varepsilon$, and the ellipticity constants.
\end{prop}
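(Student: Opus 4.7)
The plan is to mirror the proof of Proposition~\ref{prop:growth}, with Lemmas~\ref{lem:slit_super_abp} and \ref{lem:slit_sub_abp} playing the roles of the upper and lower barrier estimates. The quantity $r:=|(y_n-\Gamma(y',s),y_{n+1})|$ is comparable to the parabolic distance from $(y,s)$ to the slit $\p_\Gamma\Omega$, and the natural edge point associated to $(y,s)$ is $(x_0,t_0):=(y',\Gamma(y',s),0,s)\in\p_\Gamma\Omega$.

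For the upper bound, given $(y,s)\in\Omega\cap Q_{3/4}$ with $r<1/16$, the rescaled function $v(x,t):=u(x_0+x/4,t_0+t/16)$ satisfies the hypotheses of Lemma~\ref{lem:slit_super_abp}: the Lipschitz constant of the defining graph is unchanged, the $L^{n+2}$-norm of the rescaled right-hand side is controlled by a dimensional multiple of $\|f\|_{L^{n+2}(Q_1)}$, and $\|v\|_{L^\infty(Q_1)}\le 1$. The lemma yields $\|v\|_{L^\infty(Q_\rho)}\le K\rho^{1/2-\varepsilon}$, and evaluating at the image of $(y,s)$ gives $|u(y,s)|\le Cr^{1/2-\varepsilon}$. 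When $r\ge 1/16$, the estimate is immediate from $\|u\|_{L^\infty(Q_1)}\le 1$.

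For the lower bound with $u\ge 0$ and $m=u(e_n/2,-3/4)>0$, I first use the parabolic interior Harnack inequality (Theorem~\ref{thm:interior_Harnack}) combined with Theorem~\ref{thm:ABPKT} to absorb $f$ and propagate the lower bound along a Harnack chain, obtaining $u\ge c_1 m$ on a compact region $E\subset\Omega\cap Q_1$ chosen so that, for every $(y,s)\in Q_{3/4}$, the top face $\overline{\Omega\cap Q_{1/8}(x_0,t_0)}\cap\{x_n=x_{0,n}+1/8\}$ is contained in $E$. Then for $(y,s)\in\mathcal{C}\cap\Omega\cap Q_{3/4}$ with $r$ small, I rescale $v(x,t):=u(x_0+x/8,t_0+t/64)/(c_1 m)$, verify the hypotheses of Lemma~\ref{lem:slit_sub_abp}, and apply it along the direction $e$ from $(x_0,t_0)$ to $(y,s)$ to deduce $u(y,s)\ge cmr^{1/2+\varepsilon}$. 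Points with $r$ bounded below are already controlled by the Harnack chain bound.

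The main obstacle is that Lemma~\ref{lem:slit_sub_abp} is only valid along directions $e=\cos\theta\, e_n+\sin\theta\, e_{n+1}$ with $\cos\theta\ge -1+\mu$, which is precisely why the cone $\mathcal C$ must be introduced. The direction from $(x_0,t_0)$ to $(y,s)$ has $e_n$-component $(y_n-\Gamma(y',s))/r$ and $e_{n+1}$-component $y_{n+1}/r$, so the defining inequality $y_n-\Gamma(y',s)+10|y_{n+1}|\ge 0$ of $\mathcal C$ translates into $(y_n-\Gamma(y',s))/r\ge -10|y_{n+1}|/r$; combined with $|e|=1$ this forces a uniform lower bound on $\cos\theta+1$. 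Hence $\mathcal C$ corresponds to a definite $\mu>0$ in Lemma~\ref{lem:slit_sub_abp}, and choosing the constants $\varepsilon$ and $\eta$ there accordingly closes the argument.
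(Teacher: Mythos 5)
Your proposal is correct and follows essentially the same route as the paper: the upper bound by rescaling Lemma \ref{lem:slit_super_abp} at the edge point $(y',\Gamma(y',s),0,s)$, and the lower bound by a Harnack chain (with the ABP estimate absorbing $f$) to get $u\ge c_1m$ on the top faces, followed by Lemma \ref{lem:slit_sub_abp}. Your explicit verification that the cone condition $y_n-\Gamma+10|y_{n+1}|\ge 0$ forces $\cos\theta\ge -10/\sqrt{101}$, hence a definite $\mu>0$ in Lemma \ref{lem:slit_sub_abp}, makes precise a point the paper leaves implicit.
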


\begin{proof}
We will follow the same strategy as in Proposition \ref{prop:growth}. The proof of the upper bound is exactly the same, using Lemma \ref{lem:slit_super_abp} instead of Lemma \ref{lem:barrier_super_abp}.

For the second estimate, let $\Omega(x_0,t_0) := \Omega \cap Q_{1/8}(x_0,t_0)$ and notice that
$$\bigcup\limits_{(x_0,t_0) \in \p_\Gamma\Omega\cap Q_{3/4}} \p_{\mathrm{up}}\Omega(x_0,t_0) \subset E := \overline{B_{7/8}'}\times[1/8-2\eta,1/8+2\eta]\times[-1/8,1/8]\times[-37/64,0],$$
where $\p_{\mathrm{up}}\Omega(x_0,t_0) := \overline{\Omega(x_0,t_0)}\cap\{x_n = x_{0,n}+1/8\}$, analogously to Lemma \ref{lem:slit_sub_abp}. The rest follows as in Proposition \ref{prop:growth}, but here we use the interior Harnack to see that $u \geq c_1m$ in
$$F := Q_{3/4} \setminus \{x_n < 1/32 - 2\eta, \ |x_{n+1}| < 1/32\}$$
instead of $E$. Here we use $\eta < 1/256$ to ensure that there is a uniform positive distance from $F$ to the boundary of $\Omega$.
\end{proof}

Finally, we can deduce the Hölder regularity up to the boundary.

\begin{proof}[Proof of Proposition \ref{prop:slit_bdry_Calpha}]

We will use the same strategy as in Proposition \ref{prop:bdry_Calpha}. Let us assume without loss of generality that $\|u\|_{L^\infty(Q_1)} \leq 1$ and $K_0 = \eta$ (with $\eta$ from Proposition \ref{prop:slit_growth}). Then, by Proposition \ref{prop:slit_growth},
$$|u| \leq C(|x_n - \Gamma(x',t)|+|x_{n+1}|)^\gamma \quad \text{in} \ \Omega\cap Q_{3/4}.$$

Then, we will use interior estimates in combination with Lemma \ref{lem:teo_b2}. Let $p = (y',y_n,y_{n+1},s)$ and $\rho \in (0,\frac{1}{16})$ such that $Q_{2\rho}(p) \subset \Omega\cap Q_{5/8}$, and let
$$R := \max\left\{\rho,\frac{|y_{n+1}|}{3},\frac{|y_n-\Gamma(y',s)|}{3}\right\}.$$

We distinguish four cases:

\textit{Case 1.} $R \geq \frac{1}{16}$, and $y_n > \Gamma(y',s) - 3R$ or $|y_{n+1}| \geq \frac{1}{8}$. Then, $Q_{1/8}(p) \subset \Omega\cap Q_{3/4}$. Then, by Theorem \ref{thm:interior_Calpha} combined with the fact that $\|u\|_{L^\infty(Q_1)} \leq 1$ and $\|f\|_{L^{n+2}(Q_1)} \leq 1$, we obtain
$$[u]_{C^{0,\gamma}_p(Q_\rho(p))} \leq[u]_{C^{0,\gamma}_p(Q_{1/8}(p))} \leq C.$$

\textit{Case 2.} $R \geq \frac{1}{16}$, $y_n = \Gamma(y',s) - 3R$ and $|y_{n+1}| < \frac{1}{8}$. Then, $Q_{1/8}(p) \subset Q_{3/4}$ and
$$Q_{1/8}(p)\cap\{x_{n+1}=0\} \subset \{x_n \leq \Gamma(x',t), x_{n+1} = 0\}.$$

Hence, $u \equiv 0$ on $Q_{1/8}(p)\cap\{x_{n+1}=0\}$, and we can decompose $u = u_1 + u_2$, with $u_1 = u\chi_{\{x_{n+1}>0\}}$ and $u_2 = u\chi_{\{x_{n+1}<0\}}$.

Now, let $\tilde u_1$ and $\tilde u_2$ be the odd reflections of $u_1$ and $u_2$ across $\{x_{n+1} = 0\}$, and note that they satisfy $(\p_t - \Delta)\tilde u_i = f_i$ in $Q_{1/8}(p)$, with $f_i$ being the appropriate reflection of $f$. Therefore, by Theorem \ref{thm:interior_Calpha} again as in \textit{Case 1}, $[u]_{C^{0,\gamma}_p(Q_\rho(p))} \leq C$.

\textit{Case 3.} $R < \frac{1}{16}$, and $y_n > \Gamma(y',s) - 3R$ or $|y_{n+1}| \geq 2R$. Then, $Q_{2R}(p) \subset \Omega \cap Q_{3/4}$. Moreover, since $|y_{n+1}| \leq 3R$ and $|y_n - \Gamma(y',s)| \leq 3R$, $|u| \leq K'R^\gamma$ in $Q_{2R}(p)$. Therefore,
\begin{align*}
    [u]_{C^{0,\gamma}_p(Q_R(p))} &\leq C(R^{-\gamma}\|u\|_{L^\infty(Q_{2R}(p))} + R^{(n+1)/(n+2)-\gamma}\|f\|_{L^{n+2}(Q_{2R}(p))})\\
    &\leq C(K'+R^{(n+1)/(n+2)-\gamma}\|f\|_{L^{n+2}(Q_1)}) \leq C'.
\end{align*}

\textit{Case 4.} $R < \frac{1}{16}$, $y_n = \Gamma(y',s) - 3R$ and $|y_{n+1}| < 2R$. We proceed by an odd reflection as in \textit{Case 2} and then use the estimates of \textit{Case 3}.
\end{proof}

\subsection{Special solution}\label{subsect:slit_special}

Our goal now is to construct a special solution which is \textit{almost homogeneous with parabolic homogeneity $\frac{1}{2}$} (cf. Proposition \ref{prop:special} for the same result in \textit{one-sided} Lipschitz domains).

\begin{prop}\label{prop:slit_special}
Let $\varepsilon \in (0,\varepsilon_0)$, with $\varepsilon_0$ from Proposition \ref{prop:cone_solns}. Then, there exist $\delta > 0$ and $C > 0$, only depending on $\varepsilon$ and the dimension, such that the following holds.

Let $\Omega$ be a parabolic slit domain in $Q_1$ in the sense of Definition \ref{defn:slit_domain} with Lipschitz constant $\delta$. Let $\varphi_\pm$ be the parabolically homogeneous solutions introduced in Proposition \ref{prop:cone_solns}, in a way that $\eta_\pm > \delta$ so that
$$Q_1\setminus C^+_{\eta^+} \subset \Omega \subset Q_1 \setminus C^-_{\eta-}.$$

Then, there exists $\varphi : \Omega \to \R$ such that
$$\left\{\begin{array}{rclll}
     \varphi_t - \Delta\varphi & = & 0 & \text{in} & \Omega\\
     \varphi & = & 0 & \text{on} & \p_\Gamma\Omega,
\end{array}\right.$$
$\varphi \geq 0$, $\varphi$ is even in $x_{n+1}$, $\|\varphi\|_{L^\infty(Q_1)} = 1$,
$$\frac{1}{C}\varphi_+ \leq \varphi \leq C\varphi_-,$$
and for all $0 < r_1 < r_2 \leq 1$,
$$\frac{\sup\limits_{Q_{r_1}}\varphi}{\sup\limits_{Q_{r_2}}\varphi} \geq \frac{1}{8}\left(\frac{r_1}{r_2}\right)^{1/2+\varepsilon}.$$
\end{prop}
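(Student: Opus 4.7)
The plan is to mirror the three-step construction of Section~\ref{sect:special_soln}, with the powers of the regularized distance $d^{1\pm\varepsilon}$ replaced throughout by the parabolically homogeneous solutions $\varphi_\pm$ from Proposition~\ref{prop:cone_solns}. Evenness in $x_{n+1}$ will come for free from the even boundary data imposed at each stage.

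First, for the existence step (analog of Lemma~\ref{lem:special_exists}), I would fix $R = 2^{c_\ast/\varepsilon}$ with $c_\ast$ dimensional, view $\Omega$ as a slit domain in $Q_R$, and let $\tilde\varphi$ solve the heat equation in $\Omega$ with vanishing lateral data on $\partial_\Gamma\Omega$ and boundary data comparable to $\varphi_-$ on the rest of $\partial_p\Omega$. Since $\varphi_-$ is nonnegative and caloric on the larger set $Q_R\setminus C^-_{\eta_-}\supset\Omega$, the comparison principle gives $\tilde\varphi\leq C\varphi_-$ on $\Omega$. For the reverse inequality, I would restrict to $Q_R\setminus C^+_{\eta_+}\subset\Omega$, where $\varphi_+$ is caloric and vanishes on $C^+_{\eta_+}$; on the interior portion of this boundary $\tilde\varphi\geq 0=\varphi_+$, and on the outer part $\partial Q_R$ the inequality $\tilde\varphi\geq c\varphi_+$ is a consequence of the homogeneities $1/2\pm\varepsilon$ (which make $\varphi_-/\varphi_+$ comparable to $R^{-2\varepsilon}$ there, a factor that can be absorbed into the boundary data constant). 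Comparison then yields $\tilde\varphi\geq c\varphi_+$, and after normalizing $\varphi := \tilde\varphi/\|\tilde\varphi\|_{L^\infty(Q_1)}$ one obtains $c\varphi_+\leq\varphi\leq C\varphi_-$ together with $\|\varphi\|_{L^\infty(Q_r)}\leq Cr^{1/2-\varepsilon}$ for $r\in[1,R]$.

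Second, I would prove the one-scale lower bound (analog of Lemma~\ref{lem:special_1/2}): pick an integer $n_0>1/\varepsilon$, set $R_0=2^{n_0}$, and show that any such $\varphi$ satisfying $\|\varphi\|_{L^\infty(Q_1)}=1$ and the dyadic growth $\|\varphi\|_{L^\infty(Q_{2^k})}\leq C\cdot 2^{k(1/2+1/n_0+\varepsilon)}$ for $k=0,\ldots,n_0$ must satisfy $\sup_{Q_{1/2}}\varphi\geq (1/2)^{1/2+1/n_0+\varepsilon}$. Arguing by contradiction, a violating sequence $\varphi_j$ in slit domains with Lipschitz constants tending to zero converges locally uniformly, thanks to Proposition~\ref{prop:slit_bdry_Calpha} and Arzel\`a--Ascoli, to a function $\varphi_0$ that is caloric in $\R^{n+2}\setminus\{x_n\leq 0,\,x_{n+1}=0\}$, vanishes on the flat slit, is even in $x_{n+1}$, normalized on $Q_1$, and grows at most like $R^{1/2+\varepsilon}$ on $Q_R$. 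A parabolic Liouville theorem on the slit half-space then forces $\varphi_0$ to equal the unique nonnegative even parabolically $\frac{1}{2}$-homogeneous caloric function in that geometry, yielding $\sup_{Q_{1/2}}\varphi_0=(1/2)^{1/2}$, which contradicts the strict inequality inherited from the sequence.

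Third, for the iteration (analog of Lemma~\ref{lem:special_induction}), the rescaling $\bar\varphi(x,t) := \varphi(x/2,t/4)/\sup_{Q_{1/2}}\varphi$ again satisfies the one-scale hypothesis, so iterating dyadically gives $\sup_{Q_{2^{-k}}}\varphi\geq 2^{-k(1/2+1/n_0+\varepsilon)}$ for every $k\geq 0$; interpolating between consecutive dyadic scales yields the desired growth ratio with constant $1/8$, and running the whole argument with $\varepsilon/2$ in place of $\varepsilon$ absorbs the $1/n_0$ loss into the final exponent $1/2+\varepsilon$. The main obstacle will be the Liouville classification for $\varphi_0$ in Step~2, since the direct argument of Theorem~\ref{thm:liouville} does not apply: the slit $\{x_n\leq 0,\,x_{n+1}=0\}$ has codimension two and the boundary regularity is at best $C^{1/2}_p$, so there is no $C^{1,\alpha_0}_p$ boundary estimate to bootstrap. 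My workaround is to re-blow up $\varphi_0$ via $\varphi_0(rx,r^2t)/r^{1/2+\varepsilon}$ and pass to a further subsequential limit using Proposition~\ref{prop:slit_bdry_Calpha}, then identify the resulting parabolically $\frac{1}{2}$-homogeneous function with the canonical solution via the uniqueness part of Proposition~\ref{prop:cone_solns} (in the limit $\varepsilon\to 0$); rigidity at infinity then propagates back to $\varphi_0$ itself.
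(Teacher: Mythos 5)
Your three-step scheme (existence of $\varphi$ by comparison with $\varphi_\pm$, a one-scale lower bound by compactness, dyadic iteration) is exactly the strategy the paper follows, and Steps 1 and 3 are essentially correct. Note only that the absorption of the factor $R^{2\varepsilon}$ in Step 1 rests on the unit-scale comparison $\varphi_+\leq C\varphi_-$ with $C$ independent of $\varepsilon$; this is not a consequence of homogeneity alone but a separate ingredient, proved in Lemma \ref{lem:phi+_leq_phi-} via the almost-positivity Lemma \ref{lem:almost_positivity}, which you should cite.

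The genuine gap is the Liouville classification of the blow-up limit $\varphi_0$ in Step 2. You rightly observe that the bootstrap proof of Theorem \ref{thm:liouville} does not transfer to slit geometry, but your proposed workaround does not close the argument: a rescaling $\varphi_0(rx,r^2t)/r^{1/2+\varepsilon}$ has no reason to converge to a parabolically homogeneous function along a subsequence (no monotonicity formula is available in this setting), and even a complete classification of a blow-down limit would not \emph{propagate back} to determine $\varphi_0$ itself --- that implication is precisely the content of a Liouville theorem and cannot be assumed. The correct tool is Theorem \ref{thm:slit_liouville}, which the paper states and proves for exactly this purpose: it classifies even caloric functions on the complement of the flat slit with growth $\|u\|_{L^\infty(Q_R)}\leq C(1+R^{1+\alpha})$, $\alpha<1/2$, as multiples of $\operatorname{Re}\bigl(\sqrt{x_n+ix_{n+1}}\bigr)$, and its proof circumvents the missing $C^{1,\alpha}$ boundary estimate by taking discrete increments in the translation-invariant directions $(x',t)$ and reducing to two-dimensional harmonic functions on $\R^2\setminus(-\infty,0]$. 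Since your limit $\varphi_0$ is even in $x_{n+1}$, nonnegative, normalized in $Q_1$, and satisfies $\|\varphi_0\|_{L^\infty(Q_{2^k})}\leq C\,2^{k(1/2+1/n_0+\varepsilon)}$, that theorem applies directly and gives $\varphi_0=\operatorname{Re}\bigl(\sqrt{x_n+ix_{n+1}}\bigr)$, hence $\sup_{Q_{1/2}}\varphi_0=2^{-1/2}$ and the desired contradiction; with this substitution your argument becomes the paper's proof.
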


First, we prove the following almost positivity property for supersolutions to the heat equation.

\begin{lem}\label{lem:almost_positivity}
Let $E \subset \{x_{n+1} = 0\}\cap Q_2$ be a closed set, and let $w$ be a supersolution to
$$\left\{\begin{array}{rclll}
w_t - \Delta w & \geq & 0 & \text{in} & Q_2 \setminus E\\ 
w & = & 0 & \text{on} & E\\
w & \geq & 1 & \text{in} & Q_2\cap\{|x_{n+1}| \geq \frac{1}{n+1}\}\\
w & \geq & -1 & \text{in} & Q_2.
\end{array}\right.$$

Then,
$$w \geq 0 \quad \text{in} \ Q_1.$$
\end{lem}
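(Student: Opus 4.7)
My plan is to argue by contradiction via the parabolic strong minimum principle, exploiting the fact that $E$ sits inside a codimension-one hyperplane.

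Suppose $w$ takes a value less than $0$ somewhere in $\overline{Q_1}$ and set $m := \inf_{\overline{Q_1}} w$, which is attained at some $(x_0,t_0) \in \overline{Q_1}$ by continuity. Since $w = 0$ on $E$ and $w \geq 1$ on $\{|x_{n+1}|\geq 1/(n+1)\}\cap Q_2$, the minimizer must lie in $\overline{Q_1}\setminus E$ with $|x_{0,n+1}| < 1/(n+1)$. The key structural ingredient is that $E$ lies entirely in the hyperplane $\{x_{n+1}=0\}$, which has codimension one in space-time. Consequently $Q_2 \setminus E$ is a connected open set, and, crucially, for any interior point $(y,s)$ one can build a continuous curve $\gamma$ in $Q_2 \setminus E$ with monotone nondecreasing time coordinate reaching a point of the thick region $\{|x_{n+1}|\geq 1/(n+1)\}\cap Q_2$: whenever a direct linear path would cross $E$, we detour by slightly perturbing the $x_{n+1}$-coordinate away from zero, taking advantage of the codimension of $E$.

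Applying the parabolic strong minimum principle to the supersolution $w$ on the connected domain $Q_1 \setminus E$: if $(x_0,t_0)$ is a parabolic-interior point of $Q_1$ (meaning not on a lateral face $\{|x_i|=1\}$ and not on the initial slice $\{t=-1\}$; the upper slice $\{t=0\}$ counts as interior in the parabolic sense), the strong minimum principle forces $w \equiv m$ on the backward parabolic component of $(x_0,t_0)$ in $\overline{Q_1 \setminus E}$. By the connectivity construction this component meets $\{|x_{n+1}| \geq 1/(n+1)\}\cap Q_1$, on which $w \geq 1 > m$; this is the sought contradiction.

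The remaining case is that $(x_0,t_0)$ lies on the parabolic boundary of $Q_1$, i.e., on some lateral face or on the initial slice, and satisfies $|x_{0,n+1}|<1/(n+1)$. Here I would dispose of it by comparing $w$ with an explicit polynomial subsolution $\underline\psi$ of the heat equation on $Q_2 \setminus E$, built as a combination of a term proportional to $(n+1)^2 x_{n+1}^2$ with the parabolic-harmonic polynomial $|x'|^2 + x_n^2 + 2nt$, and tuned so that $\underline\psi \leq 0$ on $\{x_{n+1}=0\}\cap Q_2$, $\underline\psi \leq -1$ on $\partial_p Q_2 \cap \{|x_{n+1}|<1/(n+1)\}$, $\underline\psi \leq 1$ on the thick part of $\partial_p Q_2$, and $\underline\psi \geq 0$ on $\overline{Q_1}$. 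Comparison then gives $w \geq \underline\psi \geq 0$ in $\overline{Q_1}$.

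The main obstacle is the barrier construction. The requirement $\underline\psi \leq 0$ on $\{x_{n+1}=0\}$ (imposed by the Dirichlet condition on $E$) has to be balanced against $\underline\psi \geq 0$ on $\overline{Q_1}$; this is a delicate trade-off, and the specific threshold $1/(n+1)$ in the hypothesis on the thick region is precisely what makes the coefficient $(n+1)^2$ in the natural quadratic ansatz lead to a subsolution compatible with all the boundary inequalities. I expect that verifying the existence of such $\underline\psi$ (and, more importantly, checking all four constraints simultaneously against the fixed geometry of $Q_1 \subset Q_2$) is the technically delicate step that requires genuine work.
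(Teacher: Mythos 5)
Your overall strategy (strong minimum principle for interior minima, barrier for the rest) is in the right spirit, but the step you yourself flag as delicate — the construction of the single global subsolution $\underline\psi$ — is not just delicate: it fails. With the ansatz $\underline\psi = A\,x_{n+1}^2 - B\left(|x'|^2+x_n^2+2nt\right) - C$ (with $A\ge 0$ so that $\underline\psi$ is a subsolution), the constraint $\underline\psi \ge 0$ on $\overline{Q_1}$ forces $C\le 0$ (evaluate at the origin at $t=0$), the constraint $\underline\psi \le -1$ at the point $x=0$, $t=-4$ of $\partial_p Q_2\cap\{x_{n+1}=0\}$ forces $8nB - C \le -1$, hence $B<0$, while the same constraint at $|x_n|=2$, $x'=0$, $x_{n+1}=0$, $t=0$ forces $-4B - C\le -1$, hence $B>0$: a contradiction. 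More structurally, requiring simultaneously $\underline\psi\le 0$ on $\{x_{n+1}=0\}\cap Q_2$ and $\underline\psi\ge 0$ on $\overline{Q_1}$ pins $\underline\psi\equiv 0$ on the large set $\overline{Q_1}\cap\{x_{n+1}=0\}$, which no nontrivial polynomial of your form can do while also dropping below $-1$ elsewhere on the same hyperplane. So no single barrier of the kind you describe can close the boundary case, and without it the strong minimum principle alone only tells you that a negative minimum must sit on $\partial_p Q_1$, where you have no contradiction.

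The paper avoids this by making the barrier \emph{point-dependent}: to show $w(y,s)\ge 0$ for a fixed $(y,s)\in Q_1$ with $|y_{n+1}|<\frac{1}{n+1}$, it sets $v = w + 2|(x',x_n)-(y',y_n)|^2 + 4(s-t) - 2(n+1)x_{n+1}^2$ (the correction is exactly caloric) and applies the minimum principle on the cylinder $B_1(y',y_n)\times(-\frac{1}{n+1},\frac{1}{n+1})\times(s-1,s)\setminus E$ centered at $(y,s)$. On each piece of the parabolic boundary one of the nonnegative terms $2|(x',x_n)-(y',y_n)|^2$ or $4(s-t)$ is large enough (at least $2$, resp.\ $4$) to absorb both the $-1$ lower bound for $w$ and the loss $2(n+1)x_{n+1}^2\le \frac{2}{n+1}$, while on the slit $v\ge w=0$; hence $v(y,s)\ge 0$ and $w(y,s)=v(y,s)+2(n+1)y_{n+1}^2\ge 0$. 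Note that only the conclusion at the center point is extracted, so the impossible requirement "$\ge 0$ on all of $\overline{Q_1}$, $\le 0$ on the slit" never arises. If you replace your global barrier by this point-centered comparison, your argument closes (and the strong minimum principle step becomes unnecessary).
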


\begin{proof}
We will proceed by comparison with a barrier. Let $(y,s) \in Q_1$. If $y_{n+1} \geq \frac{1}{n+1}$, there is nothing to prove. Otherwise, consider the set
$$\Omega := B_1(y',y_n)\times\left(-\frac{1}{n+1},\frac{1}{n+1}\right)\times(s-1,s) \setminus E.$$

By construction, $\Omega \subset Q_2 \setminus E$. Now, consider
$$v = w + 2|(x',x_n) - (y',y_n)|^2+4(s-t)-2(n+1)|x_{n+1}|^2,$$
which by construction is also a supersolution for the heat equation in $\Omega$. Moreover, on the parabolic boundary we can distinguish the following cases:
\begin{itemize}
    \item When $t = s - 1$, $v \geq -1 + 4 - \frac{2}{n+1} \geq 0$.
    \item When $|(x',x_n) - (y',y_n)| = 1$, $v \geq -1 + 2 - \frac{2}{n+1} \geq 0$.
    \item When $|x_{n+1}| = \frac{1}{n+1}$, $v \geq 1 - \frac{2}{n+1} \geq 0$.
    \item On $E$, $v \geq w \geq 0$.
\end{itemize}

Therefore, by the comparison principle, $v(y,s) \geq 0$, and it follows that 
$$w(y,s) = v(y,s) + 2(n+1)|y_{n+1}|^2 \geq 0.$$
\end{proof}

As a consequence, we can deduce the following.

\begin{lem}\label{lem:phi+_leq_phi-}
There exist $C, \varepsilon_0 > 0$, only depending on the dimension, such that for all $\varepsilon \in (0,\varepsilon_0)$,
$$\varphi_+ \leq C\varphi_- \quad \text{in} \ Q_1,$$
where $\varphi_\pm$ are the parabolically homogeneous solutions introduced in Proposition \ref{prop:cone_solns}.
\end{lem}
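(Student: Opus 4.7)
The plan is to apply Lemma~\ref{lem:almost_positivity} to the function $w := C\varphi_- - \varphi_+$, for a sufficiently large constant $C$ to be chosen, with the obstacle set taken to be $E := C^+_{\eta_+}$. First I extend both $\varphi_\pm$ to $Q_2$ using their respective parabolic homogeneities $\tfrac12 \pm \varepsilon$, and extend $\varphi_+$ by zero across $C^+_{\eta_+}$ (a continuous extension, since $\varphi_+$ vanishes on $C^+_{\eta_+}$ by its boundary condition). Because $\eta_-, \eta_+ > 0$ force $C^-_{\eta_-} \subset C^+_{\eta_+}$, both $\varphi_-$ and $\varphi_+$ are caloric on $Q_2 \setminus C^+_{\eta_+}$, so $(\partial_t - \Delta)w = 0$ in $Q_2 \setminus E$; in particular $w$ is a supersolution there.

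To verify the remaining hypotheses, note that on $E = C^+_{\eta_+}$ the boundary condition gives $\varphi_+ = 0$ and $\varphi_- \geq 0$, so $w \geq 0$ on $E$. Parabolic homogeneity yields $\|\varphi_+\|_{L^\infty(Q_2)} \leq 2^{1/2+\varepsilon}$, so $w \geq -C'$ in $Q_2$ for $C' := 2^{1/2+\varepsilon}$. On $Q_2 \cap \{|x_{n+1}| \geq 1/(n+1)\}$, Proposition~\ref{prop:cone_solns} gives $\varphi_- \geq m$, so choosing $C := 2C'/m$ yields $w \geq C'$ there. Dividing $w$ by $C'$ puts it in the normalized form required by Lemma~\ref{lem:almost_positivity}, which then concludes $w \geq 0$ in $Q_1$, i.e. $\varphi_+ \leq C\varphi_-$ in $Q_1$.

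The one subtlety — and the only real obstacle — is that Lemma~\ref{lem:almost_positivity} is stated with the condition $w = 0$ on $E$, whereas here we have only $w \geq 0$ on $E$ (with strict inequality on $C^+_{\eta_+} \setminus C^-_{\eta_-}$, where $\varphi_- > 0$ by the strong maximum principle). However, inspection of the proof of that lemma shows that the Dirichlet value on $E$ enters only through the estimate $v \geq w \geq 0$ on the portion of the parabolic boundary of the comparison domain $\Omega$ lying on $E$; consequently the argument goes through verbatim assuming only $w \geq 0$ on $E$. This observation being recorded, the verification above completes the proof.
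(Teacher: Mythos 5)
Your proof is correct, and the core mechanism is the same as the paper's: apply Lemma \ref{lem:almost_positivity} to a linear combination of $\varphi_-$ and $\varphi_+$, using the lower bound $\varphi_\pm \geq m$ on $Q_2\cap\{|x_{n+1}|\geq\frac{1}{n+1}\}$ from Proposition \ref{prop:cone_solns} and the homogeneity bound $\|\varphi_+\|_{L^\infty(Q_2)} \leq 2^{1/2+\varepsilon}$. The only divergence is the choice of the obstacle set. The paper takes $E = C^-_{\eta_-} = \{\varphi_-=0\}$, on which $w$ vanishes identically (since $C^-_{\eta_-}\subset C^+_{\eta_+}$), so Lemma \ref{lem:almost_positivity} applies as stated; the price is that $w$ is then only a \emph{supersolution} on $Q_2\setminus C^-_{\eta_-}$, because $\varphi_+$ extended by zero across $C^+_{\eta_+}\setminus C^-_{\eta_-}$ is merely a subsolution there. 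You instead take $E = C^+_{\eta_+}$, where $w$ is genuinely caloric off $E$ but only satisfies $w\geq 0$ on $E$; your observation that the proof of Lemma \ref{lem:almost_positivity} uses the Dirichlet datum on $E$ only through the inequality $v \geq w \geq 0$ is accurate, so the relaxed hypothesis suffices. Either bookkeeping choice works, and your constant $C = 2^{3/2+\varepsilon+1}/m$ depends only on the dimension as required.
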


\begin{proof}
Let
$$w := \frac{2}{m}\varphi_- - \frac{1}{2}\varphi_+$$
with $m$ from Proposition \ref{prop:cone_solns}.

Since $\{\varphi_- = 0\} \subset \{\varphi_+ = 0\}$, $w$ is a supersolution for the heat equation in $Q_2 \setminus \{\varphi_- = 0\}$. Moreover, since $\|\varphi_+\|_{L^\infty(Q_2)} \leq 2^{1/2+\varepsilon}$ by homogeneity,
$w \geq -1$ in $Q_2$, and $w \geq 1$ whenever $|x_{n+1}| \geq \frac{1}{n+1}$.

Hence, by Lemma \ref{lem:almost_positivity}, $w \geq 0$ in $Q_1$ and the conclusion follows.
\end{proof}

We will also need a Liouville theorem for slit domains:

\begin{thm}\label{thm:slit_liouville}
Let $\alpha \in (0,\frac{1}{2})$, and let $u$ be a solution to
$$\left\{\begin{array}{rcccl}
(\p_t - \Delta)u & = & 0 & \text{in} & \left(\R^{n+1} \setminus \{x_n \leq 0, x_{n+1} = 0\}\right)\times(-\infty,0)\\
u & = & 0 & \text{on} & \{x_n \leq 0, x_{n+1} = 0\}\\
u(x',x_n,x_{n+1},t) & = & u(x',x_n,-x_{n+1},t) & \text{in} & \R^{n+1}\times(-\infty,0)
\end{array}\right.$$
with the growth control
$$\|u\|_{L^\infty(Q_R)} \leq C(1+R^{1+\alpha}), \ \forall R \geq 1.$$
Then, $u = a\varphi_0$ for some $a \in \R$, where
$$\varphi_0(x,t) := \operatorname{Re}(\sqrt{x_n + ix_{n+1}}).$$
\end{thm}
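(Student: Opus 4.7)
The plan is to mirror the proof of Theorem~\ref{thm:liouville}: combine parabolic scaling invariance with a boundary expansion estimate at the slit edge. For $r \geq 1$, I would introduce the rescaled functions
\[
u_r(x,t) := \frac{u(rx, r^2 t)}{r^{1+\alpha}},
\]
which are even in $x_{n+1}$, vanish on $\{x_n \leq 0,\, x_{n+1} = 0\}$, solve the heat equation in the slit domain, and satisfy $\|u_r\|_{L^\infty(Q_R)} \leq C(1 + R^{1+\alpha})$ for all $R \geq 1$. The profile $\varphi_0$ is time-independent and spatially $1/2$-homogeneous, hence parabolically $1/2$-homogeneous, so this rescaling is compatible with the expected conclusion $u = a\varphi_0$.

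The core step is a boundary expansion estimate: there exists a universal $\alpha_0 \in (\alpha, 1)$ such that for every caloric $v$ in $Q_2 \setminus \{x_n \leq 0,\, x_{n+1} = 0\}$ that is even in $x_{n+1}$ and vanishes on the slit, there is a unique $a(v) \in \mathbb{R}$ with $|a(v)| \leq C\|v\|_{L^\infty(Q_2)}$ and
\[
\|v - a(v)\,\varphi_0\|_{L^\infty(Q_\rho)} \leq C\,\rho^{1/2+\alpha_0}\,\|v\|_{L^\infty(Q_2)} \qquad \text{for all } \rho \in (0, 1/2).
\]
Granted this, applying the estimate to each $u_r$ at scale $\rho = R/r$ (for $r \geq 2R$) and undoing the scaling via the $1/2$-homogeneity of $\varphi_0$ yields
\[
\|u - a_r\, r^{1/2+\alpha}\,\varphi_0\|_{L^\infty(Q_R)} \leq C R^{1/2+\alpha_0}\, r^{\alpha - \alpha_0},
\]
which tends to $0$ as $r \to \infty$ since $\alpha_0 > \alpha$. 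This forces $a_r\, r^{1/2+\alpha}$ to converge to some $a \in \mathbb{R}$ and $u \equiv a\varphi_0$ on $Q_R$; since $R$ was arbitrary, $u = a\varphi_0$ on the whole domain.

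The main obstacle is the boundary expansion estimate. I would prove it by a contradiction-compactness argument in the spirit of Proposition~\ref{prop:expansion}: take $a(v)$ to be the $L^2(Q_\rho)$-projection of $v$ onto $\varphi_0$, suppose the estimate fails, and build blow-ups $w_k = (v_k - a_k\,\varphi_0)/\theta_k$ with $\|w_k\|_{L^\infty(Q_1)} = 1$, orthogonal to $\varphi_0$ on $Q_1$, and growth $\|w_k\|_{L^\infty(Q_R)} \leq C R^{1/2+\alpha_0}$. The uniform $C^{0,\gamma}_p$ estimates of Proposition~\ref{prop:slit_bdry_Calpha} (with $\gamma$ close to $1/2$) give subsequential convergence to a function $w_\infty$ defined on the full slit parabolic domain, still caloric, even in $x_{n+1}$, zero on the slit, orthogonal to $\varphi_0$ on $Q_1$, and of polynomial growth $R^{1/2+\alpha_0}$. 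The hard part is the classification of $w_\infty$: separation of variables in the angular coordinate $\theta \in (-\pi, \pi)$ around the slit edge produces the eigenmodes $\cos((k+\tfrac12)\theta)$, and combining with parabolic Hermite polynomials in $(x',t)$ yields the spectrum of admissible parabolic homogeneities $\{k + \tfrac12 + j : k, j \in \mathbb{Z}_{\geq 0}\}$; since $1/2 < 1/2+\alpha_0 < 3/2$, the only admissible profile matching the growth is a multiple of $\varphi_0$, which the orthogonality condition excludes. Hence $w_\infty \equiv 0$, contradicting the normalization $\|w_\infty\|_{L^\infty(Q_1)} = 1$.
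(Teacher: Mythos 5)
Your overall architecture (prove a boundary expansion estimate of order $\rho^{1/2+\alpha_0}$ around $\varphi_0$, apply it to the rescalings $u_r$, let $r\to\infty$) is a legitimate strategy in principle, but as written it has a genuine circularity at its core. The compactness proof of your expansion estimate requires classifying the blow-up limit $w_\infty$: a global ancient caloric function in the slit parabolic domain, even in $x_{n+1}$, vanishing on the slit, with growth $O(R^{1/2+\alpha_0})$, $\alpha_0<1$. That classification \emph{is} Theorem \ref{thm:slit_liouville} (for growth strictly below $R^{3/2}$, which is exactly the range covered by both statements). Indeed, the paper's own expansion result in slit domains, Proposition \ref{prop:slit_expansion}, is derived \emph{from} Theorem \ref{thm:slit_liouville}, not the other way around. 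Your proposed way out --- separation of variables into angular modes $\cos((k+\tfrac12)\theta)$ times ``parabolic Hermite polynomials'' in $(x',t)$ --- presupposes that an arbitrary polynomial-growth ancient solution admits such a mode decomposition; justifying that decomposition is precisely the content of the theorem and is the step you have not supplied. There is also a smaller but consequential bookkeeping error: undoing the scaling with the $\tfrac12$-homogeneity of $\varphi_0$ gives $\|u-a_r r^{1/2+\alpha}\varphi_0\|_{L^\infty(Q_R)}\leq CR^{1/2+\alpha_0}r^{1/2+\alpha-\alpha_0}$, not $r^{\alpha-\alpha_0}$, so you need $\alpha_0>\tfrac12+\alpha$ rather than $\alpha_0>\alpha$; this is still attainable since $\tfrac12+\alpha<1$, but the condition you state is insufficient.

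The paper closes the gap by a completely different, non-compactness route (following \cite{FR17}): it picks $\gamma\in(0,\tfrac12)$ with $3\gamma>1+\alpha$, uses the uniform boundary estimate of Proposition \ref{prop:slit_bdry_Calpha} on the rescalings to get $[u]_{C^{0,\gamma}(Q_{2r})}\leq Cr^{1+\alpha-\gamma}$, and then takes three successive finite differences in the translation-invariant directions $x'$ and $t$ (increments tangent to the slit preserve the problem). After three iterations the growth exponent $1+\alpha-3\gamma$ is negative, forcing the third differences to vanish; hence $u=\phi+\psi\cdot x'+\zeta t$ with $\phi,\psi,\zeta$ two-dimensional harmonic functions in $\R^2\setminus(-\infty,0]$, which are then classified by the explicit half-integer expansion, and the growth plus evenness leave only $a\,\mathrm{Re}\big((x_n+ix_{n+1})^{1/2}\big)$. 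If you want to salvage your plan, you would have to prove the classification of $w_\infty$ by exactly this kind of dimension-reduction argument, at which point the detour through the expansion estimate is unnecessary.
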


\begin{proof}
    We will proceed as in \cite[Theorem 4.11]{FR17}. Let $\gamma \in (0,\frac{1}{2})$ such that $3\gamma > 1 + \alpha$, and let
    $$u_r(x,t) := \frac{u(rx,r^2t)}{r^{1+\alpha}}$$
    for $r \geq 1$. Notice how $u_r$ also satisfies the hypotheses.

    Now, by Proposition \ref{prop:slit_bdry_Calpha},
    $$[u]_{C^{0,\gamma}(Q_{2r})} = r^{1+\alpha-\gamma}[u_r]_{C^{0,\gamma}(Q_2)} \leq Cr^{1+\alpha-\gamma}\|u_r\|_{L^\infty(Q_4)} \leq Cr^{1+\alpha-\gamma}.$$
    Then, given $h \in B_1$ such that $h_n = h_{n+1} = 0$ and $\tau \in (-1,1)$, let 
    $$u^{(1)}(x,t) := u(x+h,t+\tau) - u(x,t)$$
    and notice that it is also an even solution to the heat equation in the same domain. Hence,
    $$\|u^{(1)}\|_{L^\infty(Q_r)} \leq Cr^{1+\alpha-\gamma}.$$
    Now we can repeat the procedure starting with $u^{(i)}$ instead of $u$ to obtain that
    $$\|u^{(2)}\|_{L^\infty(Q_r)} \leq Cr^{1+\alpha-2\gamma}$$
    and
    $$\|u^{(3)}\|_{L^\infty(Q_r)} \leq Cr^{1+\alpha-3\gamma},$$
    where $u^{(i+1)} := u^{(i)}(\cdot+h,\cdot+\tau) - u^{(i)}$. Letting $r \rightarrow \infty$ in the last expression, we obtain that $u^{(3)}$ is identically zero, and therefore $u(\cdot,x_n,x_{n+1},\cdot)$ is a third order polynomial, but from the growth condition on $u$ we deduce that actually $u$ is of the form
    $$u(x,t) = \phi(x_n,x_{n+1}) + \psi(x_n,x_{n+1})\cdot x' + \zeta(x_n,x_{n+1})t.$$
    Since the domain is translation invariant in the $x'$ and $t$ directions, we deduce that $\phi,\psi,\zeta$ are two-dimensional solutions to the heat equation that do not depend on time, i.e., harmonic functions in $\R^2 \setminus (-\infty,0)$. Hence, they are of the form
    $$\sum\limits_{k = 0}^\infty a_k \operatorname{Re}((x_n+ix_{n+1})^{k+1/2}) + \sum\limits_{k=1}^\infty b_k \operatorname{Im}((x_n+ix_{n+1})^k).$$
    Finally, from the growth and the even symmetry of $u$, we deduce that $\phi \equiv a_0\operatorname{Re}((x_n+ix_{n+1})^{1/2})$ and $\psi\equiv\zeta\equiv0$, as we wanted to prove.
\end{proof}

\begin{obs}
    The assumption of $u$ being even in the $x_{n+1}$ direction is necessary to discard linear terms of the form $b_1x_{n+1}$. This is motivated by the study of free boundary problems such as the parabolic Signorini problem, where the solution being even is a natural assumption (see for instance \cite{DGPT17}).
\end{obs}

Now, we are ready to replicate the strategy of Section \ref{sect:special_soln}. First, we construct solutions with a controlled growth.
\begin{lem}\label{lem:slit_special_exists}
Let $\varepsilon \in (0,\varepsilon_0)$, with $\varepsilon_0$ from Proposition \ref{prop:cone_solns}. There exist $C > 0$, only depending on the dimension, and $\delta_1 \in (0,\varepsilon)$, only depending on the dimension and $\varepsilon$, such that the following holds.

Let $R = 2^{1/\varepsilon}$ and let $\Omega$ be a parabolic slit domain in $Q_R$ in the sense of Definition \ref{defn:slit_domain} with Lipschitz constant $\delta_1$. Then, there exists $\varphi: \Omega \to \R$ such that
$$\left\{\begin{array}{rclll}
     \varphi_t - \Delta\varphi & = & 0 & \text{in} & \Omega\\
     \varphi & = & 0 & \text{on} & \p_\Gamma\Omega,
\end{array}\right.$$
$\varphi \geq 0$, $\varphi$ is even in $x_{n+1}$, $\|\varphi\|_{L^\infty(Q_1)} = 1$, and
$$\frac{1}{C}\varphi_+ \leq \varphi \leq C\varphi_- \quad \text{in} \ Q_R.$$
In particular, $\|\varphi\|_{L^\infty(Q_r)} \leq Cr^{1/2-\varepsilon}$ for all $r \in [1,R]$.
\end{lem}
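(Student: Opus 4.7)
The plan is to mirror the argument of Lemma~\ref{lem:special_exists}, with the cone solutions $\varphi_-$ and $\varphi_+$ from Proposition~\ref{prop:cone_solns} playing the roles that $d^{1-\varepsilon}$ and $d^{1+\varepsilon}$ played in the one-sided Lipschitz case.

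First I would pin down $\delta_1 := \frac{1}{2}\min\{\eta_-, \eta_+, \varepsilon\}$, so that the cones and the slit are properly nested: $C^-_{\eta_-} \subset E \cap Q_R \subset C^+_{\eta_+}$, which yields the geometric sandwich $Q_R \setminus C^+_{\eta_+} \subset \Omega \subset Q_R \setminus C^-_{\eta_-}$. Extending $\varphi_\pm$ to $\R^{n+2} \setminus C^\pm_{\eta_\pm}$ by their parabolic homogeneity keeps them caloric there. Using Lemma~\ref{lem:phi+_leq_phi-} together with the scaling identity $R^{2\varepsilon} = (2^{1/\varepsilon})^{2\varepsilon} = 4$, one obtains $\varphi_+ \leq C_0 \varphi_-$ throughout $Q_R$, with $C_0$ depending only on the dimension.

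Next, I would define $\tilde\varphi$ as the solution to the heat equation in $\Omega$ with Dirichlet data $C_0\varphi_-$ on $\p_p\Omega \setminus \p_\Gamma\Omega$ and $0$ on the slit $\p_\Gamma\Omega$, constructed for instance by Perron's method (or equivalently as $\tilde\varphi = C_0\varphi_- - w$ where $w$ is the caloric function with continuous boundary data $w = C_0\varphi_-$ on $\p_\Gamma\Omega$ and $w = 0$ on $\p_p\Omega \setminus \p_\Gamma\Omega$). Comparing $\tilde\varphi$ with the caloric barrier $C_0\varphi_-$ on $\Omega$ gives the upper bound $\tilde\varphi \leq C_0\varphi_-$. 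For the lower bound, I would use that $\varphi_+$ is a viscosity subsolution of the heat equation on all of $\Omega$: it is a classical solution off $C^+_{\eta_+}$, and at any point of $C^+_{\eta_+} \cap \Omega$ it vanishes, so any smooth function touching it from above attains a local minimum there, forcing the subsolution inequality. Since $\varphi_+ = 0 = \tilde\varphi$ on $\p_\Gamma\Omega \subset C^+_{\eta_+}$ and $\varphi_+ \leq C_0\varphi_- = \tilde\varphi$ on $\p_p\Omega \setminus \p_\Gamma\Omega$, the comparison principle yields $\varphi_+ \leq \tilde\varphi$ in $\Omega$.

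Finally, I would set $\varphi := \tilde\varphi / \|\tilde\varphi\|_{L^\infty(Q_1)}$. The normalizing factor is pinned between $\|\varphi_+\|_{L^\infty(Q_1)} = 1$ and $C_0\|\varphi_-\|_{L^\infty(Q_1)} = C_0$, so $\frac{1}{C_0}\varphi_+ \leq \varphi \leq C_0\varphi_-$ in $Q_R$; the growth $\|\varphi\|_{L^\infty(Q_r)} \leq Cr^{1/2-\varepsilon}$ for $r \in [1,R]$ is then immediate from the parabolic homogeneity of $\varphi_-$. Evenness in $x_{n+1}$ is inherited from the symmetry of the problem: $\Omega$ is invariant under $x_{n+1}\mapsto -x_{n+1}$, and $\varphi_-$ is even by the uniqueness clause of Proposition~\ref{prop:cone_solns} applied to the reflected solution, so $\tilde\varphi$'s data is even, hence so is $\tilde\varphi$ by uniqueness. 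The one delicate point I expect is the mild incompatibility of the prescribed data at the codimension-two corner $\p_\Gamma\Omega \cap \p Q_R$, which is a standard Perron issue and does not affect the interior estimates needed here.
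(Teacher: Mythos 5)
Your proposal is correct and follows essentially the same route as the paper: sandwich the slit between the two cones, use Lemma \ref{lem:phi+_leq_phi-} plus homogeneity (with $R^{2\varepsilon}=4$) to get $\varphi_+\leq C\varphi_-$ in $Q_R$, construct $\tilde\varphi$ by comparison between the sub- and supersolution $\varphi_+$ and $C\varphi_-$, and normalize. Your additional details (explicit Dirichlet data for $\tilde\varphi$, the viscosity-subsolution property of $\varphi_+$ extended by zero across the thin cone, and the evenness via symmetry and uniqueness) only make explicit what the paper leaves implicit.
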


\begin{proof}
First, by construction and Proposition \ref{prop:cone_solns}, if $\delta_1$ is small enough $\varphi_+$ is a subsolution and $\varphi_-$ is a supersolution for the heat equation in $\Omega$.

Now, from Lemma \ref{lem:phi+_leq_phi-} and the different parabolic homogeneities of $\varphi_+$ and $\varphi_-$, we deduce that
$$\varphi_+ \leq CR^{2\varepsilon}\varphi_- = 4C\varphi_- \quad \text{in} \ Q_R.$$
Note that $C$ does not depend on $\varepsilon$.

Therefore, by the comparison principle there exists $\tilde\varphi \geq 0$, a solution to the heat equation in $\Omega$, vanishing on $\p_\Gamma\Omega$, that satisfies
$$\varphi_+ \leq \tilde\varphi \leq 4C\varphi_- \quad \text{in} \ Q_R.$$

By Proposition \ref{prop:cone_solns}, $1 \leq \|\tilde\varphi\|_{L^\infty(Q_1)} \leq 4C$, and therefore
$$\varphi := \frac{\tilde\varphi}{\|\tilde\varphi\|_{L^\infty(Q_1)}}$$
satisfies the first estimate. The second estimate follows directly from the parabolic scaling of $\varphi_-$.
\end{proof}

Now, the proof continues as in the one sided case.

\begin{proof}[Proof of Proposition \ref{prop:slit_special}]
We follow the same strategy as in the proof of Proposition \ref{prop:special}. First, Lemma \ref{lem:slit_special_exists} replaces Lemma \ref{lem:special_exists}. Then, an analogue to Lemma \ref{lem:special_1/2} can be proved by the same type of blow-up argument. To do so, we use Proposition \ref{prop:slit_bdry_Calpha} for the boundary regularity, and Theorem \ref{thm:slit_liouville} to classify the blow-up limit.

The conclusion follows by an inductive argument as in Lemma \ref{lem:special_induction}, and combining the estimates as in the proof of Proposition \ref{prop:special}.
\end{proof}

\subsection{Expansion in slit domains}\label{subsect:slit_expansion}

The following proposition follows the lines of Proposition \ref{prop:expansion}, adapted to slit domains.

\begin{prop}\label{prop:slit_expansion}
Let $\alpha \in (0,\frac{1}{2})$. There exists $\varepsilon_0 \in (0,1)$, only depending on $\alpha$ and the dimension such that the following holds.

Let $\Omega$ be a parabolic slit domain in $Q_1$ in the sense of Definition \ref{defn:slit_domain} with Lipschitz constant $\varepsilon_0$. Let $u$ be a solution to
$$\left\{\begin{array}{rclll}
u_t - \Delta u & = & f & \text{in} & \Omega\\ 
u & = & 0 & \text{on} & \p_\Gamma\Omega,
\end{array}\right.$$
where $u$ is even in $x_{n+1}$, $\|u\|_{L^\infty(Q_1)} \leq 1$, and $\|f\|_{L^q(Q_1)} \leq 1$ with ${q = (n+3)/(1-\alpha)}$.

Then, for each $r \in (0,1]$ there exists $K_r \in \R$ such that $|K_r| \leq C$ and
$$\|u-K_r\varphi\|_{L^\infty(Q_r)} \leq Cr^{1+\alpha},$$
where $\varphi$ is the special solution introduced in Proposition \ref{prop:slit_special} and $C$ depends only on $\alpha$ and the dimension.
\end{prop}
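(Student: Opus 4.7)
The plan is to follow the proof of Proposition \ref{prop:expansion}, substituting the slit tools of Subsections \ref{subsect:slit_growth_reg}--\ref{subsect:slit_special} for their one-sided counterparts, and to fix $\varepsilon = \alpha/2$ in Proposition \ref{prop:slit_special}. I would first establish the stronger claim that $|K_r| \le C_0 r^{-\alpha}$ and $\|u - K_r\varphi\|_{L^\infty(Q_r)} \le C r^{1+\alpha}$ for every $r \in (0,1]$, arguing by contradiction. Negation produces sequences $u_j$, $\varphi_j$, $\Omega_j$ (with Lipschitz constants $\le 1/j$), all even in $x_{n+1}$, with the $L^2$-projections
\[ K_{r,j} := \frac{\int_{Q_r} u_j\,\varphi_j}{\int_{Q_r} \varphi_j^2}. \]
Propositions \ref{prop:slit_growth} and \ref{prop:slit_special} yield $|u_j| \le C r^{1/2-\varepsilon}$ and $c r^{1/2+\varepsilon} \le \sup_{Q_r}\varphi_j \le C r^{1/2-\varepsilon}$, so Cauchy--Schwarz gives $|K_{r,j}| \le (\int u_j^2)^{1/2}/(\int \varphi_j^2)^{1/2} \le C_0 r^{-2\varepsilon} = C_0 r^{-\alpha}$, fixing $C_0$.

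Next, Lemma \ref{lem:aux2}, applied with $\gamma = 1/2+\varepsilon$ and $\beta = 1+\alpha$, extracts scales $\rho_m \downarrow 0$ and the rescaled pair
\[ w_m(x,t) := \frac{(u_{j_m} - K_{\rho_m,j_m}\varphi_{j_m})(\rho_m x, \rho_m^2 t)}{M_m}, \qquad \tilde\varphi_m(x,t) := \frac{\varphi_{j_m}(\rho_m x, \rho_m^2 t)}{\|\varphi_{j_m}\|_{L^\infty(Q_{\rho_m})}}, \]
with $M_m \ge \tfrac12 \theta(\rho_m)\rho_m^{1+\alpha}$. The forcing for $w_m$ equals $(\rho_m^2/M_m)\, f_{j_m}(\rho_m\cdot,\rho_m^2\cdot)$; its $L^q$-norm on any $Q_R$ scales as $\rho_m^{2-(n+3)/q}/M_m$, and the critical choice $q = (n+3)/(1-\alpha)$ reduces this to $\rho_m^{1+\alpha}/M_m \le 2/\theta(\rho_m) \to 0$. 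Meanwhile, by Lemma \ref{lem:aux2} and Proposition \ref{prop:slit_special}, $\|w_m\|_{L^\infty(Q_R)} \le C R^{1+\alpha}$ and $\|\tilde\varphi_m\|_{L^\infty(Q_R)} \le 8 R^{1/2+\varepsilon}$ for every $R \in [1, 1/\rho_m)$.

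Proposition \ref{prop:slit_bdry_Calpha} then gives uniform $C^{0,\gamma}_p$ estimates on compact subsets of the rescaled slit domains (whose Lipschitz constants tend to $0$), so Arzel\`a--Ascoli and a standard viscosity-limit argument yield local-uniform limits $w_m \to w$ and $\tilde\varphi_m \to \tilde\varphi$, both solving the heat equation off the slit $\{x_n \le 0,\, x_{n+1}=0\}$, vanishing there, even in $x_{n+1}$, and satisfying polynomial growth of orders $1+\alpha$ and $1/2+\varepsilon$ respectively. Theorem \ref{thm:slit_liouville} then forces $w = a\varphi_0$ and $\tilde\varphi = b\varphi_0$ for some $a, b \in \R$, with $b \ne 0$ by the normalization $\|\tilde\varphi_m\|_{L^\infty(Q_1)} = 1$. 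The orthogonality $\int_{Q_1} w_m\,\tilde\varphi_m = 0$ built into the definition of $K_{\rho_m,j_m}$ passes to $ab\int_{Q_1}\varphi_0^2 = 0$, forcing $a = 0$ and contradicting $\|w\|_{L^\infty(Q_1)} = 1$ (which persists in the limit by the first clause of Lemma \ref{lem:aux2}).

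This proves $|K_r| \le C_0 r^{-\alpha}$; the uniform bound $|K_r| \le C$ then follows by the standard telescoping based on Proposition \ref{prop:slit_special}: the inequality
\[ |K_r - K_{r/2}|\,(r/4)^{1/2+\varepsilon} \le \|u - K_r\varphi\|_{L^\infty(Q_r)} + \|u - K_{r/2}\varphi\|_{L^\infty(Q_{r/2})} \le C r^{1+\alpha} \]
yields $|K_r - K_{r/2}| \le C r^{\alpha/2}$, and summing the geometric series on dyadic scales produces the uniform bound. The main obstacle is the Liouville step, which must simultaneously rule out the imaginary half-integer harmonic contributions and linear terms $b_1 x_{n+1}$; both are eliminated by the evenness hypothesis via Theorem \ref{thm:slit_liouville}, and this is the essential ingredient that makes the scheme carry over verbatim from the one-sided setting.
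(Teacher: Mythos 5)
Your proposal is correct and follows essentially the same route as the paper: the same four-step contradiction-compactness scheme as in the one-sided case, with the critical $L^q$ scaling $q=(n+3)/(1-\alpha)$, the slit boundary regularity of Proposition \ref{prop:slit_bdry_Calpha}, and the even Liouville theorem \ref{thm:slit_liouville} replacing their one-sided counterparts. The only cosmetic caveat is that the choice $\varepsilon=\alpha/2$ must also respect the threshold $\varepsilon_0$ of Proposition \ref{prop:cone_solns} (take $\varepsilon=\min\{\alpha/2,\varepsilon_0/2\}$, which still gives $|K_{r,j}|\leq C_0r^{-2\varepsilon}\leq C_0r^{-\alpha}$ and a summable telescoping increment), so the argument stands as written.
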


\begin{proof}
We follow the same four steps as in Proposition \ref{prop:expansion}.

Steps 1 and 4 are identical. Steps 2 and 3 have to be modified in the same way. We will only write the modified Step 2.

\textit{Modified Step 2.} We prove that $w_m \rightarrow w$ locally uniformly along a subsequence, where $w$ is a solution to
\begin{equation}\label{eq:slit_limit_w}
\left\{\begin{array}{rclll}
w_t - \Delta w & = & 0 & \text{in} & \R^{n+2} \setminus \{x_n \leq 0, x_{n+1} = 0\}\\
w & = & 0 & \text{on} & \{x_n \leq 0, x_{n+1} = 0\}.
\end{array}\right.
\end{equation}

Then, by the construction of $w_m$ (omitting the dependence of $f$ on $j_m$),
$$|(\p_t - \Delta)w_m| \leq \frac{2\rho_m^{1-\alpha}}{\theta(\rho_m)}f(\rho_m x, \rho_m^2t),$$
and estimating the right-hand side
$$\left\|\frac{2\rho_m^{1-\alpha}}{\theta(\rho_m)}f(\rho_m x,\rho_m^2 t)\right\|_{L^q(Q_1)} = \frac{2\rho_m^{1-\alpha}}{\theta(\rho_m)}\rho_m^{-(n+3)/q}\|f\|_{L^q(Q_{\rho_m})} \leq \frac{2\|f\|_{L^q(Q_1)}}{\theta(\rho_m)} \leq \frac{2}{\theta(\rho_m)},$$
where we used that $(n+3)/q = 1 - \alpha$.

Moreover, $w_m = 0$ on the appropriate rescaling of $\p_\Gamma\Omega_{j_m}$, and, for every $R \geq 1$, $\|w_m\|_{L^\infty(Q_R)} \leq CR^{1+\alpha}$ for sufficiently large $m$. Hence, by Proposition \ref{prop:slit_bdry_Calpha},
$$\|w_m\|_{C^{0,1/4}_p(Q_R)} \leq C(R),$$
uniformly in $m$, for $m$ large enough. Then, by Arzelà-Ascoli and  Proposition \ref{prop:viscosity_limit}, we obtain that
$$w_m \rightarrow w \in C(\R^{n+2}),$$
locally uniformly along a subsequence, where $w$ is a viscosity solution to \eqref{eq:slit_limit_w} and $\|w\|_{L^\infty(Q_R)} \leq CR^{1+\alpha}$ for all $R \geq 1$. Therefore, by Theorem \ref{thm:slit_liouville} and the fact that $\|w\|_{L^\infty(Q_1)} = 1$, $w = \varphi_0$. In the rest of the proof, $\varphi_0$ plays the role of $(x_n)_+$ in the proof of Proposition \ref{prop:expansion}.

\end{proof}

\subsection{The Boundary Harnack in slit domains}\label{subsect:slit_main}

\begin{proof}[Proof of Theorem \ref{thm:slit_main}]
The strategy of the proof is the same as in Theorem \ref{thm:main}. Let $\alpha = 1 - \frac{n+3}{q}$, and $\varepsilon \in (0,\varepsilon_0)$ (with $\varepsilon_0$ from Proposition \ref{prop:slit_special}) such that $\gamma \leq \frac{1}{2}+\alpha-17\varepsilon$.

Let $R_0 \in (0,\frac{1}{48})$ to be chosen later, $p = (y',y_n,s)$ and $\rho \in (0,R_0)$ such that $Q_{2\rho}(p) \subset \Omega\cap Q_{5/8}$. Assume without loss of generality that $y_{n+1} \geq 0$, and let
$$R := \max\left\{\rho,\frac{y_{n+1}}{8},\frac{|y_n-\Gamma(y',s)|}{8}\right\}.$$
Then, we distinguish four cases (cf. Proposition \ref{prop:slit_bdry_Calpha}).

\textit{Case 1.} $R \geq R_0$, and $y_n > \Gamma(y',s) - 8R$ or $y_{n+1} \geq 2R$. Then, $Q_{2R}(p) \subset \Omega\cap Q_{3/4}$. Now, let us consider two subcases:
\begin{itemize}
    \item If $y_{n+1} < 2R$, $y_n = \Gamma(y',s) + 8R$. Therefore, for all $(x,t) \in Q_R(p)$,
    $$x_n \geq \Gamma(x',t) + (7-2c_0)R > \Gamma(x',t) + 6R.$$
    \item If $y_{n+1} \geq 2R$, since $y_n \geq \Gamma(y',s) - 8R$, for all $(x,t) \in Q_R(p)$,
    $$x_n \geq \Gamma(x',t) - (9+2c_0)R > \Gamma(x',t) - 10R \geq \Gamma(x',t) - 10x_{n+1}.$$
\end{itemize}
In both cases it holds that, for all $(x,t) \in Q_R(p)$,
$$x_n - \Gamma(x',t) + 10|x_{n+1}| > 0,$$
and
$$|x_n - \Gamma(x',t)|+|x_{n+1}| > 2R.$$

Hence, by Proposition \ref{prop:slit_growth}, $v \geq cm > 0$ in $Q_R(p)$. Furthermore, by Proposition \ref{prop:slit_bdry_Calpha}, $\|u\|_{C^{0,\gamma}_p(Q_R(p))} \leq C$ and $\|v\|_{C^{0,\gamma}_p(Q_R(p))} \leq C$. Therefore,
$$\left\|\frac{u}{v}\right\|_{C^{0,\gamma}_p(Q_\rho(p))} \leq \frac{\|u\|_{C^{0,\gamma}_p(Q_\rho(p))}\|v\|_{L^\infty(Q_\rho(p))}+\|u\|_{L^\infty(Q_\rho(p))}\|v\|_{C^{0,\gamma}_p(Q_\rho(p))}}{\inf\limits_{Q_\rho(p)}v^2} \leq Cm^{-2}.$$

\textit{Case 2.} $R \geq R_0$, $y_n = \Gamma(y',s) - 8R$ and $y_{n+1} < 2R_0$. Let 
$$E := Q_{1/8}^+(y',y_n,0,s) := Q_{1/8}(y',y_n,0,s)\cap\{x_{n+1} > 0\} \subset Q_{3/4}.$$
Then,
$$\overline{E}\cap\{x_{n+1}=0\} \subset \{x_n \leq \Gamma(x',t), \ x_{n+1} = 0\}.$$
Moreover, $\rho < R_0 < \frac{1}{48}$, and hence $Q_\rho(p) \subset Q_{1/16}^+(y',y_n,0,s)$.

We will apply Theorem \ref{thm:main} with the following functions defined in $Q_1^+$:
$$\tilde u(x,t) := u\left((y',y_n,0)+\frac{x}{8},s+\frac{t}{64}\right)\quad \text{and}\quad \tilde v(x,t) := \frac{v\left((y',y_n,0)+\frac{x}{8},s+\frac{t}{64}\right)}{\|v\|_{L^\infty(Q_{1/8}^+(y',y_n,0,s))}}.$$

First, $\|\tilde u\|_{L^\infty(Q_1^+)} \leq 1$, $v > 0$ and $\|\tilde v\|_{L^\infty(Q_1^+)} = 1$, and the domain is a half-space so it has Lipschitz constant $0$. Then, about the right-hand side of the equation,
$$\tilde u_t - \Delta \tilde u = \frac{1}{64}f_1\left((y',y_n,0)+\frac{x}{8},s+\frac{t}{64}\right),$$
and hence
$$\left\|\tilde u_t - \Delta \tilde u\right\|_{L^q(Q_1^+)} \leq \frac{8^{(n+3)/q}}{64}\|f\|_{L^q(Q_1)} \leq 1.$$

Moreover,
$$\tilde v\left(\frac{e_{n+1}}{2},-\frac{3}{4}\right) = \frac{v((y',y_n,\frac{1}{16}),-\frac{3}{256})}{\|v\|_{L^\infty(Q_{1/8}^+(y',y_n,0,s))}} \geq \frac{c_2m}{\|v\|_{L^\infty(Q_{1/8}^+(y',y_n,0,s))}} > 0,$$
by Proposition \ref{prop:slit_growth}.

Therefore, we can apply Theorem \ref{thm:main} to obtain
$$\left\|\frac{\tilde u}{\tilde v}\right\|_{C^{0,\gamma}_p(Q_{1/2}^+)} \leq C\|v\|_{L^\infty(Q_{1/8}^+(y',y_n,0,s))}^2m^{-2},$$
and hence
$$\left\|\frac{u}{v}\right\|_{C^{0,\gamma}_p(Q_\rho(p))} \leq C\|v\|_{L^\infty(Q_{1/16}^+(y',y_n,0,s))}m^{-2} \leq Cm^{-2}.$$

\textit{Case 3.} $R < R_0$, and $y_n > \Gamma(y',s) - 8R$ or $y_{n+1} \geq 2R$. Then, $Q_{2R}(p) \subset \Omega\cap Q_{3/4}$. Analogously to \textit{Case 1}, we can apply Proposition \ref{prop:slit_growth} to obtain $v \geq cmR^{1/2+\varepsilon}$ in $Q_R(p)$. Then, the right-hand side of the equation for $u$ can be estimated in $L^{n+2}$ as
\begin{align*}
    \|f_1\|_{L^{n+2}(Q_{2R}(p))} &\leq R^{(n+3)(1/(n+2)-1/q)}\|f_1\|_{L^q(Q_{2R}(p))}\\
    &\leq R^{1/(n+2)+\alpha}\|f_1\|_{L^q(Q_1)} \leq R^{1/(n+2)+\alpha},
\end{align*}
and analogously $\|f_2\|_{L^{n+2}(Q_{2R}(p))} \leq c_0mR^{1/(n+2)+\alpha}$.

Now, let $\varphi$ be the special solution introduced in Proposition \ref{prop:slit_special}, centered at $(y',\Gamma(y',s),0,s)$. Then, $w_1 = u - K_u\varphi$ and $w_2 = v - K_v\varphi$ satisfy
$$\|w_1\|_{L^\infty(Q_{2R}(p))} \leq CR^{1+\alpha} \quad \text{and} \quad \|w_2\|_{L^\infty(Q_{2R}(p))} \leq CR^{1+\alpha}$$
by a translation of Proposition \ref{prop:slit_expansion}.

Finally, we proceed as in the proof of Theorem \ref{thm:main}. By the interior estimates in Theorem \ref{thm:interior_Calpha}, and the growth of $v$ and $\varphi$ (see Propositions \ref{prop:slit_growth} and \ref{prop:slit_special}), and using that $u = w_1 + K_u\varphi$, we estimate
\begin{align*}
    [w_1/v]_{C^{0,\gamma}_p(Q_R(p))} &\leq \frac{[w_1]_{C^{0,\gamma}_p(Q_R(p))}\|v\|_{L^\infty(Q_R(p))}+\|w_1\|_{L^\infty(Q_R(p))}[v]_{C^{0,\gamma}_p(Q_R(p))}}{\inf\limits_{Q_R(p)}v^2}\\
    &\leq C\frac{R^{1+\alpha-\gamma}R^{1/2-\varepsilon}+R^{1+\alpha}R^{1/2-\gamma-\varepsilon}}{m^2R^{2(1/2+\varepsilon)}} \leq Cm^{-2}
\end{align*}
and
\begin{align*}
    [\varphi/v]_{C^{0,\gamma}_p(Q_R(p))} &\leq \frac{[\varphi]_{C^{0,\gamma}_p}\|w_2\|_{L^\infty}+[w_2]_{C^{0,\gamma}_p}\|\varphi\|_{L^\infty}}{\inf(v/\varphi)^2\inf\varphi^2}\\
    &\leq C\frac{R^{1/2-\varepsilon-\gamma}R^{1+\alpha}+R^{1+\alpha-\gamma}R^{1/2-\varepsilon}}{(mR^{2\varepsilon})^2(R^{1/2+\varepsilon})^2} \leq 2Cm^{-2},
\end{align*}
where we omitted the domain to improve readability, and therefore
$$[u/v]_{C^{0,\gamma}_p(Q_R(p))} \leq [w_1/v]_{C^{0,\gamma}_p(Q_R(p))} + |K_u|[\varphi/v]_{C^{0,\gamma}_p(Q_R(p))} \leq Cm^{-2}.$$

\textit{Case 4.} $R < R_0$, $y_n = \Gamma(y',s) - 8R$ and $y_{n+1} < 2R$. Let
$$E := Q_{6R}^+(y',y_n,0,s) \subset Q_{3/4}.$$
Then,
$$\overline{E}\cap\{x_{n+1} = 0\} \subset \{x_n \leq \Gamma(x',t)\}.$$
Moreover, $y_{n+1} + \rho < 3R$, and then $Q_\rho(p) \subset Q_{3R}^+(y',y_n,0,s)$.
As in \textit{Case 3}, we can apply Proposition \ref{prop:slit_expansion} to obtain $w_1 = u - K_u\varphi$ and $w_2 = v - K_v\varphi$ satisfying
$$\|w_1\|_{L^\infty(E)} \leq CR^{1+\alpha} \quad \text{and} \quad \|w_2\|_{L^\infty(E)} \leq CR^{1+\alpha}.$$
Assume without loss of generality that $C \geq 36$. Now, let $x_0 = (y',y_n,0)$ and let
$$\tilde w_1(x,t) := \frac{w_1(x_0+6Rx,s+36R^2t)}{CR^{1+\alpha}} \quad \text{and} \quad \tilde v(x,t) := \frac{v(x_0+6Rx,s+36R^2t)}{\|v\|_{L^\infty(E)}}.$$
Since $y_n = \Gamma(y',s) - 8R$, $(y',y_n,3R,s) \in E$ satisfies
$$y_n - \Gamma(y',s) + 10\cdot3R \geq 0,$$
and by Proposition \ref{prop:slit_growth}, $v(y',y_n,3R,s) \geq cmR^{1/2+\varepsilon}$, and thus $\|v\|_{L^\infty(E)} \geq cmR^{1/2+\varepsilon}$.
Therefore, $\|w_1\|_{L^\infty(Q_1^+)} \leq 1$, $\|v\|_{L^\infty(Q_1^+)} = 1$, and the right-hand sides satisfy
$$\|(\p_t-\Delta)\tilde w_1\|_{L^q(Q_1^+)} = \frac{36R^2}{CR^{1+\alpha}}(6R)^{-(n+3)/q}\|f_1\|_{L^q(E)} \leq 1$$
and
\begin{align*}
    \|(\p_t-\Delta)\tilde v\|_{L^q(Q_1^+)} &= \frac{36R^2}{\|v\|_{L^\infty(E)}}(6R)^{-(n+3)/q}\|f_2\|_{L^q(E)}\\
    &\leq \frac{36R^2(6R)^{\alpha-1}}{cmR^{1/2+\varepsilon}}c_0m \leq c_0(cmR^{2\varepsilon}),
\end{align*}
provided that $R \leq R_0$ is small enough, where we also used that ${(n+3)/q=1-\alpha}$. Finally, by Proposition \ref{prop:slit_growth}, $v(x_0+3Re_n,s-27R^2) \geq cmR^{1/2+\varepsilon}$ which implies $\tilde v(\frac{e_n}{2},-\frac{3}{4}) \geq cmR^{2\varepsilon}$, and also $\|v\|_{L^\infty(E)} \leq CR^{1/2-\varepsilon}$. Thus, by Theorem \ref{thm:main},
$$[\tilde w_1/\tilde v]_{C^{0,\gamma}_p(Q_1^+)} \leq C(m)R^{-4\varepsilon},$$
and undoing the scaling,
$$[w_1/v]_{C^{0,\gamma}_p(E)} \leq \frac{CR^{1+\alpha}}{\|v\|_{L^\infty(E)}}R^{-\gamma}[\tilde w_1/\tilde v]_{C^{0,\gamma}_p(Q_1^+)} \leq C(m)R^{1/2+\alpha-\gamma-5\varepsilon} \leq C(m).$$

Consider now
$$\tilde w_2(x,t) := \frac{w_2(x_0+6Rx,s+36R^2t)}{CR^{1+\alpha}} \quad\text{and}\quad \tilde\varphi(x,t) := \frac{\varphi(x_0+6Rx,s+36R^2t)}{\|\varphi\|_{L^\infty(E)}}.$$
By the parabolic homogeneity of $\varphi$, we get $\tilde\varphi(\frac{e_n}{2},-\frac{3}{4}) \geq cR^{2\varepsilon}$; see Proposition \ref{prop:slit_special}. We also have that 
$$\|(\p_t-\Delta)\tilde w_2\|_{L^q(Q_1^+)} \leq c_0m$$
by the same reasoning as with $\tilde w_1$.

By Theorem \ref{thm:main},
$$[\tilde w_2/\tilde\varphi]_{C^{0,\gamma}_p(Q_1^+)} \leq C(m)R^{-4\varepsilon},$$
and undoing the scaling,
$$[w_2/\varphi]_{C^{0,\gamma}_p(E)} \leq \frac{CR^{1+\alpha}}{\|\varphi\|_{L^\infty(E)}}R^{-\gamma}[\tilde w_2/\tilde \varphi]_{C^{0,\gamma}_p(Q_1^+)} \leq C(m)R^{1/2+\alpha-\gamma-5\varepsilon}.$$
On the other hand, by Corollary \ref{cor:main_carleson},
$$c(m)R^{4\varepsilon} \leq \frac{\tilde v}{\tilde\varphi} \leq C(m)R^{-4\varepsilon} \ \text{in} \  Q_1^+,$$
which after undoing the scaling (by Proposition \ref{prop:slit_growth}) becomes
$$c(m)R^{6\varepsilon} \leq \frac{v}{\varphi} \leq C(m)R^{-6\varepsilon} \ \text{in} \  E.$$

Hence, we can compute
$$[\varphi/v]_{C^{0,\gamma}_p(E)} \leq \frac{[v/\varphi]_{C^{0,\gamma}_p(E)}}{\inf\limits_E(v/\varphi)^2} = \frac{[w_2/\varphi]_{C^{0,\gamma}_p(E)}}{\inf\limits_E(v/\varphi)^2}$$
and applying the previous estimates
$$[\varphi/v]_{C^{0,\gamma}_p(E)} \leq C(m)R^{1/2+\alpha-\gamma-17\varepsilon} \leq C(m).$$

Finally, as in \textit{Case 3},
$$[u/v]_{C^{0,\gamma}_p(Q_R(p))} \leq [w_1/v]_{C^{0,\gamma}_p(Q_R(p))} + |K_u|[\varphi/v]_{C^{0,\gamma}_p(Q_R(p))} \leq C(m),$$
as we wanted to prove.
\end{proof}

\section{Applications to free boundary problems}\label{sect:FB}

\subsection{$C^{1,\alpha}$ free boundary regularity for the parabolic obstacle problem}

The argument in this proof is standard, we write it for the sake of completeness.

\begin{proof}[Proof of Corollary \ref{cor:classic_parabolic_obstacle}]
Let $e \in \R^{n+1}$ be a unit vector. Then, since $u \in C^1$, $u_e$ is a solution to
$$\left\{
\begin{array}{rclll}
\p_t u_e - \Delta u_e & = & f_e & \text{in} & \{u > 0\}\\
u_e & = & 0 & \text{on} & \p\{u > 0\}.
\end{array}
\right.$$

Let now $r > 0$ to be chosen later, and define the functions
$$w_1 := \frac{u_e(rx,r^2t)}{\max\{\|u_e\|_{L^\infty(Q_r)},Cr\}} \quad \text{and} \quad w_2 := \frac{u_n(rx,r^2t)}{\|u_n\|_{L^\infty(Q_r)}}.$$
Then, $\|w_1\|_{L^\infty(Q_1)} \leq 1$, $\|w_2\|_{L^\infty(Q_1)} = 1$ and $w_2 > 0$. Furthermore, $\|u_n\|_{L^\infty(Q_r)} \leq Cr$ by the $C^{1,1}_x$ regularity of $u$, and it follows that
$$w_2\left(\frac{e_n}{2},-\frac{3}{4}\right) \geq \frac{cd(re_n/2,-3r^2/4)}{Cr} \geq \frac{c}{4C},$$
using that $d(re_n/2,-3r^2/4) \geq r/4$ if the Lipschitz constant of the domain is small enough. Finally,
$$\|(\p_t - \Delta)w_1\|_{L^q(Q_1)} \leq \frac{r^{2-(n+2)/q}\|\nabla f\|_{L^q(Q_r)}}{Cr} \leq \frac{\|\nabla f\|_{L^q(Q_1)}}{C}r^{1-(n+2)/q}$$
and
$$\|(\p_t - \Delta)w_2\|_{L^q(Q_1)} \leq \frac{r^{2-(n+2)/q}\|\nabla f\|_{L^q(Q_r)}}{u_n(re_n/2,-r^2/2)} \leq \frac{4\|\nabla f\|_{L^q(Q_1)}}{c}r^{1-(n+2)/q}.$$

Therefore, choosing $r > 0$ small enough (independent of $e$) we can apply Theorem \ref{thm:main} to $w_1$ and $w_2$ and obtain that $w_1/w_2 \in C^{0,\alpha}_p(Q_{1/2}\cap\{w_2 > 0\})$. Now letting $e = e_i$ for all vectors of the coordinate basis, we obtain that
$$\frac{(\nabla u,u_t)}{u_n} \in C^{0,\alpha}_p(Q_{r/2}\cap\{u > 0\}) \subset C^{0,\alpha/2}(Q_{r/2}\cap\{u > 0\}).$$
Notice also that the modulus of this function is bounded below by $1$.

Now, the normal vector to the level sets $\{u = t\}$ for $t > 0$ can be written as
$$\hat n = \frac{(\nabla u,u_t)}{\sqrt{|\nabla u|^2+u_t^2}} = \dfrac{\frac{(\nabla u,u_t)}{u_n}}{\left|\frac{(\nabla u,u_t)}{u_n}\right|} \in C^{0,\alpha/2}(Q_{r/2}\cap\{u > 0\}),$$
hence $\{u = t\}$ is a $C^{1,\alpha/2}$ hypersurface, and taking the limit as $t \downarrow 0$ (uniformly because $u \in C^1$), we obtain that $\p\{u > 0\}$ is $C^{1,\alpha/2}$ as well.
\end{proof}

\subsection{$C^{1,\alpha}$ free boundary regularity for the parabolic Signorini problem}

In the case of slit domains, the proof has to be slightly modified to account for the different scaling of the solution.

\begin{proof}[Proof of Corollary \ref{cor:parabolic_signorini}]
Let $e \in \R^{n+2}$ be a unit vector. Then, since $u \in C^1$, $u_e$ is a solution to
$$\left\{
\begin{array}{rclll}
\p_t u_e - \Delta u_e & = & f_e & \text{in} & Q_1 \setminus \Lambda(u)\\
u_e & = & 0 & \text{on} & \Lambda(u).
\end{array}
\right.$$

Let now $r > 0$ to be chosen later, and define the functions
$$w_1 := \frac{u_e(rx,r^2t)}{\max\{\|u_e\|_{L^\infty(Q_r)},Cr^{1/2}\}} \quad \text{and} \quad w_2 := \frac{u_n(rx,r^2t)}{\|u_n\|_{L^\infty(Q_r)}}.$$
Then, $\|w_1\|_{L^\infty(Q_1)} \leq 1$, $\|w_2\|_{L^\infty(Q_1)} = 1$ and $w_2 > 0$. Furthermore, ${\|u_n\|_{L^\infty(Q_r)} \leq Cr^{1/2}}$ by the $C^{3/2}_x$ regularity of $u$, and it follows that
$$w_2\left(\frac{e_n}{2},-\frac{3}{4}\right) \geq \frac{cd(re_n/2,-3r^2/4)^{1/2}}{Cr^{1/2}} \geq \frac{c}{2C},$$
using that $d(re_n/2,-3r^2/4) \geq r/4$ if the Lipschitz constant of the domain is small enough. Finally,
$$\|(\p_t - \Delta)w_1\|_{L^q(Q_1)} \leq \frac{r^{2 - (n+3)/q}\|\nabla f\|_{L^q(Q_r)}}{Cr^{1/2}} \leq \frac{\|\nabla f\|}{C}r^{3/2 - (n+3)/q}$$
and
$$\|(\p_t - \Delta)w_2\|_{L^q(Q_1)} \leq \frac{r^{2 - (n+3)/q}\|\nabla f\|_{L^q(Q_r)}}{u_n(re_n/2,-r^2/2)} \leq \frac{4\|\nabla f\|}{c}r^{3/2 - (n+3)/q}.$$

Therefore, choosing $r > 0$ small enough (independent of $e$) we can apply Theorem \ref{thm:slit_main} to $w_1$ and $w_2$ and obtain that $w_1/w_2 \in C^{0,\alpha}_p(Q_{1/2}\cap\{w_2 > 0\})$. From here the argument goes on as in the proof of Corollary \ref{cor:classic_parabolic_obstacle}.
\end{proof}

\section{The elliptic boundary Harnack with right-hand side}\label{sect:elliptic}
Applying similar reasoning as in Sections \ref{sect:bdry_growth_reg}, \ref{sect:special_soln}, and \ref{sect:proof_BH}, but using elliptic instead of parabolic theory, one can arrive to the following result, that generalizes the right-hand sides considered in \cite{AS19} and \cite{RT21} for non-divergence operators.

It is noteworthy that even for the Laplacian, this is the first optimal regularity result for quotients of solutions in domains that are less regular than $C^1$.

First, let us define a Lipschitz domain in the elliptic setting.
\begin{defn}\label{defn:elliptic_domains}
We say $\Omega$ is a Lipschitz domain in $B_R$ with Lipschitz constant $L$ if $\Omega$ is the epigraph of a Lipschitz function $\Gamma : B_R' \to \R$, with $\Gamma(0,0) = 0$:
$$\Omega = \big\{(x',x_n) \in B'_R\times(-R,R) \ | \ x_n > \Gamma(x') \big\}, \quad \|\Gamma\|_{C^{0,1}} \leq L.$$
In this context, we will denote the lateral boundary
$$\p_\Gamma\Omega := \big\{(x,x_n) \in B'_R\times(-R,R) \ | \ x_n = \Gamma(x') \big\},$$
which is a subset of the topological boundary of $\Omega$.
\end{defn}

\begin{obs}
    To extend the concept of regularized distance to \textit{one-sided} elliptic Lipschitz domains, a simple approach is to establish a correspondence between elliptic domains and time-independent parabolic domains by adding a dummy variable.
\end{obs}

Finally, it is worth highlighting that the key difference in the proof is the change in scaling between the parabolic ABP estimate, Theorem \ref{thm:ABPKT}, and its elliptic counterpart, which we state below.

\begin{thm}[\protect{\cite[Theorem 3.2]{CC95}}]\label{thm:ABP}
Let $\L$ be a non-divergence form operator as in \eqref{eq:non-divergence_operator} and let $u \in W^{2,n}_\loc$ be a solution to $\L u = f$ in $B_r$, with $f \in L^{n}(B_r)$.

Then,
$$\sup\limits_{B_r} u \leq \sup\limits_{\p B_r} u^+ + Cr\|f\|_{L^{n}(B_r)},$$
where $C$ depends only on the dimension and the ellipticity constants.
\end{thm}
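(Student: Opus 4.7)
\medskip

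\noindent\textbf{Proof proposal.} The statement is the classical Alexandrov–Bakelman–Pucci estimate for strong/$W^{2,n}$ solutions, and the plan is to reproduce the convex-envelope argument of Cabré–Caffarelli. After subtracting $\sup_{\p B_r} u^+$, I would replace $u$ by $\tilde u := (u-\sup_{\p B_r}u^+)_+$ extended by $0$ outside $B_r$, so that $\tilde u \geq 0$, $\tilde u \equiv 0$ on $\p B_r$, and $M := \sup_{B_r} \tilde u = \sup_{B_r} u - \sup_{\p B_r} u^+$ is the quantity to bound. The goal is then to prove $M \leq C r \|f^+\|_{L^n(B_r \cap \{u=\Gamma\})}$, where $\Gamma$ is the concave envelope of $\tilde u$ in $B_{2r}$, and $\{u=\Gamma\}$ denotes the upper contact set.

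The key geometric input is the standard fact that the gradient map $x\mapsto \nabla \Gamma(x)$, restricted to the contact set, covers the ball $B_{M/(2r)}(0)\subset \R^n$; this follows from the observation that for every $p\in B_{M/(2r)}(0)$ the affine function $\ell_p(x)=p\cdot x + c$ can be raised until it first touches $\tilde u$ at some interior point, which must lie in $B_r$ and at which $\nabla \Gamma = p$. Combining this with the area formula for Lipschitz maps, I obtain
\[
\left|B_{M/(2r)}\right| \;\leq\; \int_{\{u=\Gamma\}} |\det D^2 \Gamma(x)|\,dx.
\]
At every contact point $\Gamma$ touches $\tilde u$ from above, so $D^2 \tilde u \leq D^2 \Gamma \leq 0$ (the latter because $\Gamma$ is concave), which in particular forces $-D^2 u$ to be positive semidefinite almost everywhere on the contact set inside $\{u>\sup_{\p B_r}u^+\}$.

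The crucial analytic step is the pointwise bound $|\det D^2 u| \leq (f^+/(n\lambda))^n$ on the contact set. At a contact point one has $\sum a_{ij}\p_{ij}u = f$ with $A\in[\lambda I,\Lambda I]$ and $D^2 u \leq 0$; diagonalizing and applying the arithmetic–geometric mean inequality to the eigenvalues of $-A^{1/2}D^2 u\, A^{1/2}$ yields
\[
n\lambda\,|\det D^2 u|^{1/n} \;\leq\; \operatorname{Tr}(-A D^2 u) \;=\; -f \;\leq\; f^-(x)+f^+(x),
\]
and on $\{u=\Gamma\}$ one in fact has $f \leq f^+$ because $-\L u = -f$ must be $\geq 0$ there (the concave envelope test). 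Chaining this estimate with the area-formula inequality above gives
\[
c_n \left(\tfrac{M}{r}\right)^n \;\leq\; \int_{\{u=\Gamma\}} \Bigl(\tfrac{f^+}{n\lambda}\Bigr)^n dx \;\leq\; C\|f^+\|_{L^n(B_r)}^n,
\]
which is the desired estimate after taking $n$-th roots.

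\textbf{Main obstacle.} The only delicate step is justifying the pointwise inequality $D^2 u \leq D^2 \Gamma$ a.e.\ on the contact set for $W^{2,n}$ solutions (as opposed to $C^2$ ones), together with the second-order Taylor expansion that underlies the area-formula argument; this is handled by the standard Alexandrov-type differentiation theorem for concave functions, which guarantees a.e.\ second-order differentiability of $\Gamma$, combined with Calderón–Zygmund/Lebesgue differentiation applied to the $L^n$ Hessian of $u$ on the contact set. Once this technical point is in place, the rest of the argument is essentially algebraic.
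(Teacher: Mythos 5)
The paper does not prove this theorem at all: it is quoted verbatim from \cite[Theorem 3.2]{CC95} as a classical result, so there is no internal proof to compare against. Your sketch is the standard convex-envelope ABP argument, i.e.\ essentially the proof in the cited source, and the overall structure is right. Two points need correction, though.

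First, a sign error. On the upper contact set $D^2u\leq 0$ forces $\operatorname{Tr}(AD^2u)=f\leq 0$, so $-f=f^-$ there, and the sharp form of the estimate controls $M$ by $\|f^-\|_{L^n(\{u=\Gamma\})}$, \emph{not} by $\|f^+\|$ as you claim (test with $u=1-|x|^2$, $f=\Delta u=-2n$: then $f^+\equiv 0$ but $\sup u>\sup_{\p B_1}u^+$). This does not harm the theorem as stated, since $f^-\leq|f|$, but your intermediate claim is false as written. Second, the inequality $|B_{M/(2r)}|\leq\int_{\{u=\Gamma\}}|\det D^2\Gamma|$ does not follow from the area formula plus Alexandrov differentiability for a general concave $\Gamma$: for merely concave functions the measure of the (sub)gradient image \emph{dominates} $\int\det(-D^2\Gamma)\,dx$ and can exceed it because of a singular part (e.g.\ $\Gamma(x)=-|x|$, whose subdifferential at the origin has positive measure while $\det D^2\Gamma=0$ a.e.), so the inequality you need points in the dangerous direction. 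The standard fixes are to first prove $\Gamma\in C^{1,1}$ near the contact set using the touching paraboloids supplied by the equation (this is \cite[Lemma 3.5]{CC95}), or, for strong $W^{2,n}$ solutions, to run the normal-mapping argument on $u$ itself and pass to the limit from $C^2$ approximations as in Gilbarg--Trudinger, Chapter 9. Your ``main obstacle'' paragraph gestures at the right neighborhood but a.e.\ second-order differentiability of $\Gamma$ alone does not close this step.
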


Now we are ready to state our main elliptic result.

\begin{thm}\label{thm:elliptic_main}
Let $0 < \gamma < \alpha < \alpha_0$, $m \in (0,1]$ and let $\L$ be a non-divergence form operator as in \eqref{eq:non-divergence_operator}. There exists $c_0 \in (0,1)$, only depending on $\alpha$, $\gamma$, the dimension and the ellipticity constants, such that the following holds.

Let $\Omega$ be a Lipschitz domain in $B_1$ in the sense of Definition \ref{defn:elliptic_domains} with Lipschitz constant $L \leq c_0$. Let $u$ and $v$ be solutions to
$$\left\{\begin{array}{rclll}
\L u & = & f_1 & \text{in} & \Omega\\ 
u & = & 0 & \text{on} & \p_\Gamma\Omega
\end{array}\right.\quad\text{and}\quad\left\{\begin{array}{rclll}
\L v & = & f_2 & \text{in} & \Omega\\ 
v & = & 0 & \text{on} & \p_\Gamma\Omega,
\end{array}\right.$$
and assume that $\|u\|_{L^\infty(B_1)} \leq 1$, $\|v\|_{L^\infty(B_1)} = 1$, $v > 0$, $v\left(\frac{e_n}{2}\right) \geq m$, and that $f_i = g_i + h_i$ with
$$\|d^{1-\alpha}g_1\|_{L^\infty(B_1)}+\|d^{-\alpha}h_1\|_{L^n(B_1)} \leq 1$$
and
$$\|d^{1-\alpha}g_2\|_{L^\infty(B_1)}+\|d^{-\alpha}h_2\|_{L^n(B_1)} \leq c_0m,$$
where $d$ is the regularized distance introduced in Remark \ref{obs:elliptic_regularized_distance}.

Then,
$$\left\|\frac{u}{v}\right\|_{C^{0,\gamma}(\Omega\cap B_{1/2})} \leq C,$$ where $C$ depends only on $m$, $\alpha$, $\gamma$, the dimension and the ellipticity constants.
\end{thm}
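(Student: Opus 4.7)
The plan is to run the entire machinery developed in Sections \ref{sect:bdry_growth_reg}, \ref{sect:special_soln}, and \ref{sect:proof_BH} verbatim, but with parabolic objects replaced by elliptic ones throughout. The shift from $L^{n+1}$ (with weight $d^{-1/(n+1)-\alpha}$ on $h_i$) in the parabolic case to $L^n$ (with weight $d^{-\alpha}$) here is dictated purely by the different scaling of the elliptic ABP estimate (Theorem \ref{thm:ABP}) compared with Theorem \ref{thm:ABPKT}: an $L^n$ right-hand side scales like $r$ instead of like $r^{n/(n+1)}$, which is exactly the exponent that drives the iteration arguments of Lemmas \ref{lem:barrier_super_abp} and \ref{lem:barrier_sub_abp}. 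After adding a dummy time variable, the regularized distance of Lemma \ref{lem:regularized_distance} with $\partial_t d=0$ provides the correct barrier tool in the elliptic setting.

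First I would establish an elliptic analogue of Proposition \ref{prop:bdry_Calpha}: solutions vanishing on $\partial_\Gamma\Omega$ satisfy the two-sided growth bound $cm\, d^{1+\varepsilon}\le u\le C d^{1-\varepsilon}$ in $\Omega\cap B_{3/4}$, and hence belong to $C^{0,\gamma}$ up to the boundary. The proof reuses the computation $(\Mplus)(d^{1-\varepsilon})\le -\varepsilon(1-\varepsilon)d^{-1-\varepsilon}(c\lambda-C L)$, $(\Mminus)(d^{1+\varepsilon})\ge \varepsilon(1+\varepsilon)d^{-1+\varepsilon}(c\lambda-CL)$, which are exactly the space parts of the parabolic computations in Lemmas \ref{lem:barrier_super}-\ref{lem:barrier_sub}. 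Then I construct the near-linear special solution $\varphi$ as in Section \ref{sect:special_soln}: first a barrier-based existence result squeezing $\varphi$ between $d^{1+\varepsilon}$ and $d^{1-\varepsilon}$ on $B_R$ with $R=2^{1/\varepsilon}$, then a blow-up contradiction argument (elliptic analogue of Lemma \ref{lem:special_1/2}) producing the doubling estimate $\sup_{B_{r_1}}\varphi/\sup_{B_{r_2}}\varphi\ge \tfrac18(r_1/r_2)^{1+\varepsilon}$. The blow-up limit is classified by the elliptic half-space Liouville theorem, which follows from the stationary $C^{1,\alpha_0}$ boundary estimate of \cite[Theorem 2.1]{Wan92b} via the same rescaling argument as in Theorem \ref{thm:liouville}.

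Next I prove the expansion result: for each $r\in(0,1]$ there exists $K_r$ with $|K_r|\le C$ and $\|u-K_r\varphi\|_{L^\infty(B_r)}\le Cr^{1+\alpha}$, the elliptic analogue of Proposition \ref{prop:expansion}. The argument is again a contradiction-compactness scheme using Lemma \ref{lem:aux2} (which is purely functional and transfers without modification) with $\gamma=1+\alpha/2$, $\beta=1+\alpha$, and the choice $K_{r,j}=\int_{B_r}u_j\varphi_j\,/\,\int_{B_r}\varphi_j^2$; the contradiction arises from $w_m\to (x_n)_+$ and $\tilde\varphi_m\to(x_n)_+$ locally uniformly, combined with the orthogonality $\int_{B_1}w_m\tilde\varphi_m=0$. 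With the expansion in hand, the final step mirrors the proof of Theorem \ref{thm:main}: distinguish whether a point $p$ with $B_{2\rho}(p)\subset\Omega\cap B_{5/8}$ lies at scale $R\ge 1/16$ (where interior elliptic $C^{0,\gamma}$ estimates plus the lower bound $v\ge cm$ give the quotient bound directly) or at scale $R<1/16$ (where one writes $u=w_1+K_u\varphi$, $v=w_2+K_v\varphi$, bounds $[w_i]_{C^{0,\gamma}}\lesssim R^{1+\alpha-\gamma}$ from the expansion and interior estimates, uses $v\ge cmd^{1+\varepsilon}$, and assembles the estimates as in Theorem \ref{thm:main}). A covering lemma analogous to Lemma \ref{lem:teo_b2}, which is completely geometric, finishes the argument.

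The main obstacle is not conceptual but bookkeeping: one has to verify that every quantitative scaling in the iteration of Lemmas \ref{lem:barrier_super_abp}-\ref{lem:barrier_sub_abp} still closes with the elliptic exponent $1$ in place of the parabolic $n/(n+1)$, and that the distance exponent $-\alpha$ on $h_i$ (rather than $-1/(n+1)-\alpha$) is exactly what makes $\|d^{-\alpha}h\|_{L^n}$ invariant under the relevant rescaling. The only step requiring independent input is the elliptic half-space Liouville theorem used in the blow-up classifications of Steps 2 and 3, but since \cite[Theorem 2.1]{Wan92b} applies equally to the stationary case, this is immediate.
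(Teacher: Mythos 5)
Your proposal is correct and follows exactly the route the paper takes: the paper's proof of Theorem \ref{thm:elliptic_main} is precisely "repeat Sections \ref{sect:bdry_growth_reg}--\ref{sect:proof_BH} with elliptic theory, substituting Theorem \ref{thm:ABP} for Theorem \ref{thm:ABPKT}," and your identification of the ABP scaling ($r\|f\|_{L^n}$ versus $r^{n/(n+1)}\|f\|_{L^{n+1}}$) as the reason for the change of weighted space matches Remark \ref{obs:elliptic_regularized_distance}, as does the dummy-time-variable trick for the regularized distance.
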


\begin{obs}\label{obs:elliptic_regularized_distance}
The function space considered for the right-hand side is the most general allowed by our proof. Notice how the weighted $L^{n+1}$ norm of the parabolic result translates into a weighted $L^n$ norm in the elliptic setting, due to the different scaling of the ABP estimate.

By a similar argument to Proposition \ref{prop:dist_interpolation}, Theorem \ref{thm:elliptic_main} allows for $f_i \in L^q$ with $q = n/(1-\alpha)$, generalizing \cite{RT21}. It also allows for $|f_i| \leq c_0md^{\alpha - 1}$ as in \cite{AS19}.
\end{obs}

\begin{proof}
It follows from the proof of Theorem \ref{thm:main} and the previous lemmas, using Theorem \ref{thm:ABP} instead of Theorem \ref{thm:ABPKT}. Notice that if we consider the solution to an elliptic problem as a stationary solution for the parabolic problem and try to apply Theorem \ref{thm:main} directly, we obtain a weaker result.
\end{proof}

If we can interchange the roles of $u$ and $v$, we derive a corollary in a similar manner to the parabolic case.

\begin{cor}\label{cor:elliptic_carleson}
Let $\alpha \in (0,\alpha_0)$, $m \in (0,1]$ and let $\L$ be a non-divergence form operator as in \eqref{eq:non-divergence_operator}. There exists $c_0 \in (0,1)$, only depending on $\alpha$, the dimension and the ellipticity constants, such that the following holds.

Let $\Omega$ be a Lipschitz domain in $B_1$ in the sense of Definition \ref{defn:elliptic_domains} with Lipschitz constant $L \leq c_0$. Let $u$ and $v$ be positive solutions to
$$\left\{\begin{array}{rclll}
\L u & = & f_1 & \text{in} & \Omega\\ 
u & = & 0 & \text{on} & \p_\Gamma\Omega
\end{array}\right.\quad\text{and}\quad\left\{\begin{array}{rclll}
\L v & = & f_2 & \text{in} & \Omega\\ 
v & = & 0 & \text{on} & \p_\Gamma\Omega,
\end{array}\right.$$
and assume that $\|u\|_{L^\infty(B_1)} = \|v\|_{L^\infty(B_1)} = 1$, $v\left(\frac{e_n}{2}\right) \geq m$, $u\left(\frac{e_n}{2}\right) \geq m$, and that $f_i = g_i + h_i$ with
$$\|d^{1-\alpha}g_i\|_{L^\infty(B_1)}+\|d^{-\alpha}h_i\|_{L^n(B_1)} \leq c_0m,$$
where $d$ is the regularized distance introduced in Remark \ref{obs:elliptic_regularized_distance}.

Then,
$$\frac{1}{C} \leq \frac{u}{v} \leq C \quad \text{in} \ \Omega\cap B_{1/2},$$ 
where $C$ depends only on $m$, $\alpha$, the dimension and the ellipticity constants.
\end{cor}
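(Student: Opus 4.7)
The plan is to deduce the two-sided bound from Theorem \ref{thm:elliptic_main} by a symmetry argument. The key observation is that the hypotheses on $u$ and $v$ in Corollary \ref{cor:elliptic_carleson} are invariant under interchanging the two functions, whereas Theorem \ref{thm:elliptic_main} is asymmetric: one function plays the role of numerator (whose right-hand side need only be bounded by $1$) and the other plays the role of denominator (for which a \emph{small} right-hand side and the interior positivity condition at $\tfrac{e_n}{2}$ are required).

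First, I would fix an auxiliary exponent $\gamma \in (0,\alpha)$ depending only on $\alpha$, say $\gamma = \alpha/2$, and let $c_0 > 0$ be the constant produced by Theorem \ref{thm:elliptic_main} for this pair $(\alpha, \gamma)$. Then I would apply that theorem directly to the pair $(u,v)$: since $c_0 m \leq 1$, the assumption $\|d^{1-\alpha} g_1\|_{L^\infty} + \|d^{-\alpha} h_1\|_{L^n} \leq c_0 m$ from the corollary yields a fortiori the bound $\leq 1$ required of the numerator in Theorem \ref{thm:elliptic_main}, while the hypotheses on $v$ and on $f_2$ match verbatim. The resulting $C^{0,\gamma}$ estimate on $u/v$ gives, in particular,
\[
\frac{u}{v} \leq C \quad \text{in } \Omega \cap B_{1/2}.
\]

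Second, I would apply Theorem \ref{thm:elliptic_main} once more, this time to the pair $(v,u)$ with the roles of numerator and denominator exchanged. The hypothesis $u(\tfrac{e_n}{2}) \geq m$, unused in the first step, now supplies the nondegeneracy required of the new denominator, and $\|d^{1-\alpha} g_2\|_{L^\infty} + \|d^{-\alpha} h_2\|_{L^n} \leq c_0 m \leq 1$ handles the new numerator. This yields $v/u \leq C$ on $\Omega \cap B_{1/2}$, equivalently $u/v \geq 1/C$, and combining the two bounds gives the claim.

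There is no genuine obstacle to this argument: it is pure bookkeeping once Theorem \ref{thm:elliptic_main} is granted. The only point to monitor is that the constants $c_0$ and $C$ in the corollary depend only on the parameters listed there and not on the auxiliary exponent $\gamma$, which is ensured by fixing $\gamma$ as a function of $\alpha$ at the outset and then taking the maximum of the two constants arising from the two applications.
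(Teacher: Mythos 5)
Your proposal is correct and is exactly the argument the paper intends: the paper derives the Carleson estimate from the boundary Harnack "using symmetry" by applying Theorem \ref{thm:elliptic_main} to the pair $(u,v)$ and then to $(v,u)$, just as you do. Fixing $\gamma$ as a function of $\alpha$ so that $c_0$ and $C$ depend only on the stated parameters is the right bookkeeping.
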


\begin{obs}
    The analogous theorems hold for the right notion of \textit{elliptic slit domains} with a right-hand side with the same conditions as in Theorem \ref{thm:elliptic_main}.
\end{obs}

\section{Proof of Corollary \ref{cor:hopf_parabolic_rhs}}\label{sect:hopf}
Using the boundary Harnack, we can combine it with the Hopf lemma to get a Hopf-type estimate for solutions of parabolic equations with a right-hand side. Let us start by defining Dini continuity and the interior $C^{1,\mathrm{Dini}}$ condition.

\begin{defn}
    We say $\omega : [0,\infty) \rightarrow [0,\infty)$ is a Dini modulus of continuity if it is nondecreasing and there exists $r_0 > 0$ such that
    $$\int_0^{r_0}\omega(r)\frac{\mathrm{d}r}{r} < \infty.$$
\end{defn}

\begin{defn}\label{defn:interior_C1_Dini}
    Given $\Omega$ a parabolic Lipschitz domain in $Q_R$, we say $\Omega$ satisfies the interior parabolic $C^{1,\mathrm{Dini}}$ condition at $0$ if (possibly after a rotation) there exists $r_0 > 0$ and a Dini modulus of continuity $\omega$ such that
    $$\{(x',x_n,t) \in Q_{r_0} \ | \ x_n > (|x'|+|t|^{1/2})\omega(|x'|+|t|^{1/2})\} \subset \Omega.$$
\end{defn}

Our starting point will be the following boundary point lemma for parabolic $\mathcal{C}^{1,\mathrm{Dini}}$ domains. We were surprised to not find it in the literature, so we provide it here for completeness. The proof follows the steps in \cite{LZ23} and relies on a standard iteration scheme.

\begin{thm}\label{thm:hopf_parabolic}
Let $\L$ be a non-divergence form operator as in \eqref{eq:non-divergence_operator}. Let $\Omega$ be a parabolic Lipschitz domain in $Q_1$ in the sense of Definition \ref{defn:lipschitz_domain}, and assume that it satisfies the interior $C^{1,\mathrm{Dini}}$ condition at $0$.

Let $u$ be a positive solution to
$$\left\{\begin{array}{rclll}
u_t - \L u & = & 0 & \text{in} & \Omega\\ 
u & = & 0 & \text{on} & \p_\Gamma\Omega,
\end{array}\right.$$
and assume that $u\left(\frac{e_n}{2},-\frac{3}{4}\right) = 1$. Then, for all $r \in (0,\delta)$,
$$u(re_n,0) \geq cr,$$
where $c > 0$ and $\delta$ depend only on the dimension, the ellipticity constants, and the modulus of continuity of the domain.
\end{thm}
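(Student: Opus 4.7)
The proof is a dyadic iteration at scales $r_k := 2^{-k}$, combining the parabolic interior Harnack (Theorem \ref{thm:interior_Harnack}) with a Hopf-type barrier adapted to the interior $C^{1,\mathrm{Dini}}$ geometry. Set $m_k := u(r_k e_n, 0)$ and $\omega_k := \omega(C_* r_k)$ for a universal $C_* > 0$. The aim is to show $m_k \geq c r_k$ uniformly in $k$ via a telescoping product whose convergence is guaranteed by the Dini condition. The base case $m_{k_0} \geq c_0 r_{k_0}$ for a fixed universal $k_0$ follows from a finite Harnack chain connecting $(e_n/2, -3/4)$ to a point at scale $r_{k_0}$, with all cylinders staying inside the interior $C^{1,\mathrm{Dini}}$ cone of $\Omega$.

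\textbf{Inductive step via rescaling.} Define $\tilde u_k(y,s) := u(r_ky,r_k^2 s)/m_k$, a positive solution to a parabolic equation with identical ellipticity constants, vanishing on $\partial_\Gamma \tilde\Omega_k$, with $\tilde u_k(e_n, 0) = 1$. The interior $C^{1,\mathrm{Dini}}$ hypothesis gives $\tilde\Omega_k \cap Q_2 \supset \{y_n > C_*\omega_k\} \cap Q_2$. Construct a Friedman-type subsolution
\[
\phi_k(y, s) := \alpha\left(e^{-\mu[|y-y_0|^2 + (s_0-s)]} - e^{-\mu R^2}\right)
\]
in the parabolic ball $\{|y-y_0|^2+(s_0-s) < R^2\}$ whose boundary touches the shifted flat hyperplane $\{y_n = C_*\omega_k\}$ at a single point near the origin. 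A direct Pucci computation (analogous to Lemmas \ref{lem:barrier_super}, \ref{lem:barrier_sub}) shows that $(\partial_s - \L)\phi_k \leq 0$ for $\mu$ large enough depending only on $R$ and the ellipticity constants. Applying interior Harnack to $\tilde u_k$, chained \emph{forward in time} from $(e_n, 0)$ inside the flat region $\{y_n > 1/2\}$ of $\tilde\Omega_k$, yields a universal lower bound $\tilde u_k \geq c_1$ on the spherical cap of $\partial_p D_k$ that lies inside $\tilde\Omega_k$. Fixing $\alpha$ so that $\phi_k \leq c_1$ on this cap (and $\phi_k = 0$ on the parabolic sphere, $\phi_k \leq 0$ on $\{y_n = C_*\omega_k\} \cap \overline{D_k}$), the comparison principle gives $\tilde u_k \geq \phi_k$ in $D_k$. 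Evaluating at $(e_n/2, 0)$, whose location the barrier's center $y_0$, focal time $s_0$, and radius $R$ are chosen to reach with a linear-in-distance growth rate from the touching point, one obtains
\[
\tilde u_k(e_n/2, 0) \geq \tfrac{1}{2}(1 - C\omega_k),
\]
where the loss $C\omega_k$ arises from the vertical $\omega_k$-shift of the touching point on the shifted flat boundary.

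\textbf{Telescoping and closing via Dini.} Since $m_{k+1} = m_k \tilde u_k(e_n/2, 0)$ and $r_{k+1} = r_k/2$,
\[
\frac{m_{k+1}}{r_{k+1}} \geq (1 - C\omega_k)\,\frac{m_k}{r_k}.
\]
Iterating from $k_0$ and using monotonicity of $\omega$,
\[
\frac{m_k}{r_k} \geq c_0 \exp\!\left(-C \sum_{j \geq k_0} \omega(C_* r_j)\right),
\]
and comparing the series with the integral yields $\sum_j \omega(C_* r_j) \lesssim \int_0^{r_0}\omega(r)\,\frac{dr}{r} < \infty$ by the Dini assumption. Hence $m_k/r_k \geq c > 0$ uniformly, and interior Harnack interpolation between dyadic scales gives $u(re_n, 0) \geq c' r$ for all $r \in (0, \delta)$. \textbf{Main obstacle.} The delicate step is the universal lower bound on $\tilde u_k$ over the spherical cap of the barrier. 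Because the parabolic interior Harnack only propagates lower bounds forward in time, the cap must be placed in the future of $(e_n, 0)$ and must stay inside the flat region $\{y_n > 1/2\}$ of $\tilde\Omega_k$ for every $k$ large enough; both constraints together fix a rigid choice of the barrier's geometry (with $y_0$ displaced in the $e_n$ direction and $s_0 < 0$), and one must check that the comparison at $(e_n/2, 0)$ still produces the sharp linear rate $1 - C\omega_k$.
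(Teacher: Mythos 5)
There is a genuine gap at the heart of your inductive step, and it is exactly the difficulty that makes the Hopf lemma in Dini domains nontrivial. After rescaling so that $\tilde u_k(e_n,0)=1$, your only quantitative input on the spherical cap of $\partial_p D_k$ is the Harnack-chain bound $\tilde u_k \geq c_1$, where $c_1<1$ is a \emph{universal constant strictly less than $1$} (each application of Theorem \ref{thm:interior_Harnack} costs a factor $C>1$, so $c_1=C^{-N}$ for however many links the chain has). The barrier $\phi_k$ must then be normalized by $\alpha\lesssim c_1$, so at $(e_n/2,0)$ the comparison principle can only return $\tilde u_k(e_n/2,0)\gtrsim c_1\cdot\tfrac12$, never $\tfrac12(1-C\omega_k)$: a subsolution bounded by $c_1$ on the whole parabolic boundary cannot exceed $c_1$ inside. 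The resulting recursion is $m_{k+1}/r_{k+1}\geq c_1\, m_k/r_k$ with a \emph{fixed} loss $c_1<1$ per dyadic scale, whose infinite product vanishes; the Dini condition never gets a chance to act, because the loss is not of size $\omega_k$. A secondary but also real problem is the time direction: your reference point $(e_n,0)$ sits at the final time of the cylinder, so there is no "future" inside $Q_1$ into which to chain the parabolic Harnack inequality, and the cap of $D_k$ together with the evaluation point $(e_n/2,0)$ cannot all be placed consistently with the waiting time — you flag this yourself but do not resolve it.

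The missing idea, which is how the paper's proof works, is to never re-derive the lower bound at the next scale from a single point value (that is what forces a Harnack constant into the product). Instead one propagates an \emph{affine minorant}: $u\geq a_kx_n-b_k$ in $\Omega^+_{2^{-k}}$. Writing $v=u-a_kx_n$, which is again $\L$-caloric, one decomposes the negative boundary data of $v$ into $w_1$ (equal to $-b_k$ on the outer parabolic boundary of the flat half-cylinder, zero on $\{x_n=0\}$) plus $w_2$ (equal to $-a_kx_n$ on $\p_\Gamma\Omega$, zero elsewhere). Boundary Lipschitz regularity in the half-space gives $w_1\geq -C2^kb_k\,x_n$, and the maximum principle gives $w_2\geq -2^{-k}\omega(2^{-k})a_k$, yielding the recurrences $a_{k+1}=a_k-C2^kb_k$ and $b_{k+1}=2^{-k}\omega(2^{-k})a_k$. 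The slope therefore decays only by the Dini-summable amount $C\omega(2^{-k+1})a_{k-1}$ per step (Lemma \ref{lem:aux_seq} then keeps $a_k$ bounded below), precisely because the correction is linear in the previous error $b_k$ rather than multiplicative in a Harnack constant. If you want to keep a barrier-based formulation, you would have to compare $u$ with the affine function inherited from the previous scale rather than with a barrier normalized by a Harnack-chain infimum.
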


We start with an auxiliary lemma for sequences.

\begin{lem}\label{lem:aux_seq}
Let $C > 0$, and let $\{a_k\}$ and $\{w_k\}$ be sequences of positive real numbers such that $a_{k+2} = a_{k+1} - Cw_ka_k$. Then, if
$$\sum w_k \leq \frac{1}{2C}\left(2 + \frac{a_1}{a_2}\right)^{-1},$$
then $a_k > a_2/6$ for all $k \geq 2$.
\end{lem}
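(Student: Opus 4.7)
The plan is to combine the monotonicity of $(a_k)_{k \geq 2}$ with a telescoping identity. Since all $a_k$, $w_k$, and $C$ are positive, the recurrence $a_{k+2} = a_{k+1} - Cw_k a_k$ immediately yields $a_{k+2} \leq a_{k+1}$, so the sequence is non-increasing from index $2$ onwards; in particular $a_j \leq a_2$ for every $j \geq 2$ (note that $a_1$ may be larger, which is exactly why it has to be treated separately in the hypothesis).

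Iterating the recurrence from $k = 1$ yields the telescoping identity
\[ a_{k+2} = a_2 - C \sum_{j=1}^{k} w_j a_j. \]
Splitting off the $j = 1$ term, using $a_j \leq a_2$ for $j \geq 2$, and writing $S := \sum_{j} w_j$, we get
\[ a_{k+2} \;\geq\; a_2 - C S a_1 - C S a_2 \;=\; a_2\left(1 - CS\left(1 + \frac{a_1}{a_2}\right)\right). \]
Plugging in the hypothesis $CS \leq \tfrac{1}{2}(2 + a_1/a_2)^{-1}$ and using the trivial bound $1 + a_1/a_2 \leq 2 + a_1/a_2$, one concludes $a_{k+2} \geq a_2/2 > a_2/6$.

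The argument reduces to two crude ingredients (monotonicity past index $2$ plus telescoping), so there is no serious obstacle. In fact the constant $1/6$ in the statement is not sharp — the argument above gives $1/2$ — and I expect $1/6$ is kept only for convenience when this lemma is iterated inside the proof of Theorem \ref{thm:hopf_parabolic}.
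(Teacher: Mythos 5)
Your proof is correct, and it takes a genuinely simpler route than the paper's. The paper first proves by induction that $2a_{k+1}>a_k$ for $k\ge 2$ (using $w_k<1/4$), then writes $a_k>a_3\prod_{j\ge 3}(1-2w_j)$ and estimates the product from below by $e^{-4\sum w_j}>1/e$, which is where the constant $1/6$ comes from. You instead observe monotonicity of $(a_k)_{k\ge2}$ and telescope the recurrence to $a_{k+2}=a_2-C\sum_{j=1}^k w_ja_j$, bound the $j=1$ term by $CSa_1$ and the rest by $CSa_2$, and conclude directly. Both arguments use only the assumed positivity of the $a_k$ (yours for monotonicity and for $\sum_{j\ge2}w_ja_j\le a_2S$, the paper's throughout the induction), so neither is circular. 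Your version avoids the multiplicative induction and the exponential product estimate entirely, and as you note it yields the sharper lower bound $a_k\ge a_2/2$; the weaker constant $1/6$ in the statement is an artifact of the paper's product estimate and is all that is needed in the application to Theorem \ref{thm:hopf_parabolic}.
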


\begin{proof}
    First, notice that we may assume $C = 1$ without loss of generality. Then,
    $$a_3 = a_2 - w_1a_1 \geq a_2\left(1 - w_1\frac{a_1}{a_2}\right) > a_2\left(1 - \frac{1}{2}\right) = \frac{a_2}{2}.$$
    Furthermore, if we assume that $2a_{k+1} > a_k$,
    $$a_{k+2} = a_{k+1}\left(1 - w_k\frac{a_k}{a_{k+1}}\right) > a_{k+1}\left(1 - \frac{a_k}{4a_{k+1}}\right) > \frac{a_{k+1}}{2}.$$
    Hence, by induction we see that $2a_{k+1} > a_k$ for all $k \geq 2$.

    Now, by iterating the recurrence we also have, for $k \geq 3$,
    $$a_k > a_3(1 - 2w_3)(1 - 2w_4)\cdot\ldots\cdot(1 - 2w_{k-1}),$$
    and since $2w_k < 1/2$ for all $k$,
    $$a_k > a_3e^{-4w_3}e^{-4w_4}\cdot\ldots\cdots e^{-4w_{k-1}} > a_3e^{-4\sum w_j} > \frac{a_3}{e} > \frac{a_2}{6},$$
    where we used that $e^{-2x} < (1 - x)$ for all $x \in (0,1/2)$.
\end{proof}

Now we are ready to prove the main statement of the section.

\begin{proof}[Proof of Theorem \ref{thm:hopf_parabolic}]
First, from the interior Dini condition we get that there exists $r_0 > 0$ such that
$$\omega(r_0) \leq c_0 \quad \text{and} \quad \int_0^{r_0}\omega(r)\frac{\mathrm{d}r}{r} \leq c_0,$$
where $c_0 > 0$ is a small constant to be chosen later. After scaling, and using the parabolic Harnack inequality, we may assume $r_0 = 1$ and $u(e_n/2,-1/2) = 1$.

Then, we denote
$$\Omega_r^+ := \Omega \cap Q_r \cap \{x_n > 0\}.$$

By the Hopf lemma for flat boundaries applied to the set $\{x_n > \omega(1)\}$, we obtain
$u \geq c_1(x_n - \omega(1))$ in $Q_{1/2}\cap\{x_n > \omega(1)\}$, and using that $u \geq 0$,
$$u \geq c_1x_n - c_1\omega(1) \quad \text{in} \ \Omega_{1/2}^+.$$
Let $a_1 := c_1$ and $b_1 := c_1\omega(1)$. We will prove by induction that
$$u \geq a_kx_n - b_k \quad \text{in} \ \Omega_{2^{-k}}^+,$$
for some positive sequences $a_k$ and $b_k$ satisfying the recurrence relations
$$\begin{cases}
    a_{k+1} &= \ a_k - 2^kCb_k\\
    b_{k+1} &= \ 2^{-k}\omega(2^{-k})a_k.
\end{cases}$$

Indeed, assume it true for a certain $k$ and let $v = u - a_kx_n$, which is $\L$-caloric in $\Omega_{2^{-k}}^+$. On the one hand, $v \geq -b_k$ by induction hypothesis. On the other hand, $v \geq -a_kx_n$ because $u \geq 0$.

Now, let us estimate $v$ in $\Omega_{2^{-k-1}}^+$ from below. To do so, let us define $w_1$ and $w_2$ as
$$\left\{\begin{array}{rclll}
(\p_t - \L)w_1 & = & 0 & \text{in} & Q_{2^{-k}}\cap\{x_n > 0\}\\ 
w_1 & = & -b_k & \text{on} & \p_p Q_{2^{-k}}\cap\{x_n > 0\}\\
w_1 & = & 0 & \text{on} & \{x_n = 0\},
\end{array}
\right.$$
and
$$\left\{\begin{array}{rclll}
(\p_t - \L)w_2 & = & 0 & \text{in} & \Omega\cap Q_{2^{-k}}\\ 
w_2 & = & 0 & \text{on} & \p_p (\Omega\cap Q_{2^{-k}}) \setminus \p_\Gamma\Omega\\
w_2 & = & -a_kx_n & \text{on} & \p_\Gamma\Omega.
\end{array}
\right.$$
Now,
\begin{itemize}
    \item On $\p_p (\Omega\cap Q_{2^{-k}}) \setminus \p_\Gamma\Omega$, $v \geq -b_k = w_1 + w_2$,
    \item On $\p_\Gamma\Omega\cap\{x_n > 0\}$, $v \geq -a_kx_n = w_2 > w_1 + w_2$,
    \item On $\{x_n = 0\}\cap\Omega$, $v \geq 0 = w_1 > w_1 + w_2$.
\end{itemize}
Hence, by the comparison principle, $v \geq w_1 + w_2$ in $\Omega_{2^{-k}}^+$.

Then, we estimate $w_1$ by the boundary Lipschitz regularity of solutions (for flat boundaries), scaling and linearity to obtain
$$w_1 \geq -Cb_k(2^kx_n) \quad \text{in} \ Q_{2^{-k-1}}\cap\{x_n > 0\}$$
and we estimate $w_2$ with the maximum principle with
$$w_2 \geq -a_k\sup\limits_{x \in \p_\Gamma\Omega\cap Q_{2^{-k}}}\{x_n\} \geq -2^{-k}\omega(2^{-k})a_k.$$
Putting everything together one obtains
$$u \geq a_kx_n + w_1 + w_2 \geq (a_k - 2^kCb_k)x_n - 2^{-k}\omega(2^{-k})a_k \quad \text{in} \ \Omega_{2^{-k-1}}^+.$$

Moreover, notice that $a_{k+2} = a_{k+1} - C\omega(2^{-k})a_k$, 
$$a_2 = a_1 - 2Cb_1 = c_1(1 - 2C\omega(1)) \geq c_1(1 - 2Cc_0) \geq \frac{c_1}{2},$$
and
$$\sum\omega(2^{-k}) \leq \sum\frac{1}{\ln(2)}\int_{2^{-k}}^{2^{-k+1}}\omega(r)\frac{\mathrm{d}r}{r} \leq \frac{c_0}{\ln(2)}.$$
Hence, choosing $c_0$ small enough we can apply Lemma \ref{lem:aux_seq} and obtain $a_k \geq c_1/12$ for all $k$. On the other hand, for $k \geq 2$,
$$b_k = 2^{-k+1}a_{k-1}\omega(2^{-k+1}) \leq 2^{-k+1}c_1c_0,$$
using that $a_k$ is decreasing and $\omega$ is nondecreasing. Now if we choose $c_0 \leq 1/96$, we have $b_k \leq 2^{-k-2}a_k$ for all $k \geq 2$, and then for all $r \in [2^{-k-1},2^{-k})$,
$$u(re_n,0) \geq a_kr - b_k \geq a_k(r - 2^{-k-2}) \geq \frac{a_kr}{2} \geq \frac{c_1}{24}r,$$
and the conclusion follows.
\end{proof}

Finally, after combining it with the boundary Harnack and the near-linear solution from Section \ref{sect:special_soln}, we can prove our Hopf lemma for equations with right-hand side.

\begin{proof}[Proof of Corollary \ref{cor:hopf_parabolic_rhs}]
    Let $\varphi$ be the special solution defined in Proposition \ref{prop:special}, and assume that $\varphi(e_n/2,-1/2) = 1$ after normalizing. From Theorem \ref{thm:hopf_parabolic}, for all $r \in (0,\delta)$ and some $c > 0$,
    $$\varphi(re_n,0) \geq cr.$$
    
    Then, divide $u$ by a constant so that $\|u\|_{L^\infty(Q_1)} = 1$. Now we can apply Corollary \ref{cor:main_carleson} to $\varphi$ and $u$ to obtain
    $$\frac{\varphi}{u} \leq C \quad \text{in} \ Q_{1/2},$$
    and hence 
    $$u(re_n,0) \geq C^{-1}\varphi(re_n,0) \geq C^{-1}cr,$$
    for all $r \in (0,\min\{1/2,\delta\})$.
\end{proof}

\appendix

\section{Auxiliary results}\label{sect:app}

\subsection{The space of the right-hand sides}

We will prove an interpolation inequality between weighted $L^p$ spaces that seems classical but we were not able to find in the literature.

\begin{lem}\label{lem:lp_interpolation_1D}
Let $\alpha \in (0,1)$, $p \geq 1$, and let $q = (p+1)/(1-\alpha)$. Let  $f \in L^q((0,1))$. Then,
$$\inf\limits_{\lambda > 0}\left[\lambda + \left(\int_0^1(|f| - \lambda x^{\alpha-1})^p_+x^{-1-p\alpha}\mathrm{d}x\right)^{\frac{1}{p}}\right] \leq 2\|f\|_{L^q((0,1))}.$$
In particular,
$$L^q \subset L^p((0,1);x^{-\frac{1}{p}-\alpha})+L^\infty((0,1);x^{1-\alpha}).$$
\end{lem}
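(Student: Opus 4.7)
The plan is to exhibit an explicit choice of $\lambda$ that splits $|f|$ into a ``large part'' and a ``small part'', and to check via direct algebra that $q=(p+1)/(1-\alpha)$ is exactly the scaling at which both parts are controlled by $\|f\|_{L^q}$.

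Concretely, I would take $\lambda := \|f\|_{L^q((0,1))}$ and consider the super-level set
\[
A := \bigl\{x\in(0,1): |f(x)| > \lambda x^{\alpha-1}\bigr\}.
\]
Outside $A$, the integrand $(|f|-\lambda x^{\alpha-1})_+^p x^{-1-p\alpha}$ vanishes. On $A$ I would use the crude bound $(|f|-\lambda x^{\alpha-1})_+\le |f|$, reducing the problem to estimating $\int_A |f|^p x^{-1-p\alpha}\,dx$. The key observation is that the defining inequality of $A$ is equivalent to
\[
x^{-(1+p\alpha)} \;\leq\; \bigl(|f|/\lambda\bigr)^{(1+p\alpha)/(1-\alpha)}\quad \text{on } A,
\]
which lets us trade the singular weight for an extra power of $|f|$.

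Plugging this in gives
\[
\int_A |f|^p x^{-1-p\alpha}\,dx \;\le\; \lambda^{-(1+p\alpha)/(1-\alpha)} \int_A |f|^{\,p+(1+p\alpha)/(1-\alpha)}\,dx.
\]
At this point the critical algebraic identities are the two checks
\[
p + \frac{1+p\alpha}{1-\alpha} = \frac{p(1-\alpha)+1+p\alpha}{1-\alpha} = \frac{p+1}{1-\alpha} = q,
\qquad
\frac{1+p\alpha}{1-\alpha} = q-p,
\]
both of which follow from $q(1-\alpha)=p+1$. Using them, the right-hand side becomes $\lambda^{-(q-p)}\|f\|_{L^q}^{\,q}=\|f\|_{L^q}^{\,p}$, so the $p$-th root of the original integral is at most $\|f\|_{L^q}$. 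Adding $\lambda=\|f\|_{L^q}$ yields the bound $2\|f\|_{L^q}$.

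For the ``in particular'' statement, I would set
\[
f_1 := \mathrm{sign}(f)\min\{|f|,\lambda x^{\alpha-1}\},\qquad f_2 := f-f_1,
\]
so that $\|x^{1-\alpha}f_1\|_{L^\infty}\le \lambda$ by construction, while the previous bound gives $\|x^{-1/p-\alpha}f_2\|_{L^p}\le \|f\|_{L^q}$; this realizes the sum decomposition. I do not expect any serious obstacle: the only nontrivial step is recognizing the sharp exponent identity $q=(p+1)/(1-\alpha)$ and using the definition of $A$ as a pointwise comparison between $x^{\alpha-1}$ and $|f|/\lambda$ to absorb the singular weight into $|f|^{q-p}$.
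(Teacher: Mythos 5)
Your proof is correct and rests on the same core mechanism as the paper's: choose $\lambda=\|f\|_{L^q}$ and use the super-level-set condition $|f|>\lambda x^{\alpha-1}$ à la Chebyshev to convert the singular weight (resp.\ the excess over $\lambda$) into the extra factor $(|f|/\lambda)^{q-p}$, the exponent identities $p+\frac{1+p\alpha}{1-\alpha}=q$ and $\frac{1+p\alpha}{1-\alpha}=q-p$ being exactly the ones the paper exploits. The only difference is organizational: the paper first performs the substitution $t=(px^p)^{-1}$, $f(x)=x^{\alpha-1}h(t)$ to reduce to the unweighted inequality $\inf_\lambda[\lambda+\|(h-\lambda)_+\|_{L^p}]\le 2\|h\|_{L^q}$, whereas you run the same estimate directly on the weighted integral, which is slightly more streamlined.
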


\begin{proof}
Assume without loss of generality that $f \geq 0$. Then, let us do the change of variables $t = (px^p)^{-1}$, and let us also define $h : (\frac{1}{p},\infty) \to \R$ as the function satisfying
$$f(x) = x^{\alpha-1}h(t).$$

On the one hand,
$$\|h\|_{L^q}^q = \int_\frac{1}{p}^\infty h(t)^q\mathrm{d}t = \int_0^1(f(x)x^{1-\alpha})^q\frac{\mathrm{d}x}{x^{p+1}} = \int_0^1f(x)^q\mathrm{d}x = \|f\|_{L^q}^q.$$

On the other hand,
\begin{align*}
    \int_0^1(f(x)-\lambda x^{\alpha-1})^p_+x^{-1-p\alpha}\mathrm{d}x &= \int_\frac{1}{p}^\infty(h(t)-\lambda)^p_+x^{p(\alpha-1)}x^{-1-p\alpha}x^{p+1}\mathrm{d}t\\
    &= \int_\frac{1}{p}^\infty(h(t)-\lambda)_+^p\mathrm{d}t = \|(h - \lambda)_+\|_{L^p}^p.
\end{align*}

Therefore, it suffices to prove that, given $1 \leq p \leq q$,
$$\inf\limits_{\lambda > 0}\left[\lambda + \|(h - \lambda)_+\|_{L^p}\right] \leq 2\|h\|_{L^q}.$$

For that purpose, we estimate
$$\|(h-\lambda)_+\|_{L^p}^p \leq \int(h-\lambda)_+^p \leq \int \chi_{\{h > \lambda\}}\frac{h^q}{\lambda^{q-p}} \leq \lambda^{p-q}\int h^q = \lambda^{p-q}\|h\|_{L^q}^q.$$

Then, if we consider $\lambda = \|h\|_{L^q}$,
$$\inf\limits_{\lambda > 0}\left[\lambda + \|(h-\lambda)_+\|_{L^p}\right] \leq \inf\limits_{\lambda > 0}\left[\lambda + \lambda^{1-q/p}\|h\|_{L^q}^{q/p}\right] \leq 2\|h\|_{L^q}.$$
\end{proof}

As a consequence, we can interpolate between the weighted Lebesgue spaces used in the one-sided boundary Harnack.

\begin{prop}\label{prop:dist_interpolation}
Let $\alpha \in (0,1)$, $q = (n+2)/(1-\alpha)$, and let $\Omega$ be a parabolic Lipschitz domain in $Q_1$ in the sense of Definition \ref{defn:lipschitz_domain}. Let $f \in L^q(\Omega)$.

Then, there exist $g, h : \Omega \to \R$ such that $f = g + h$ and
$$\|d^{1-\alpha}g\|_{L^\infty(\Omega)}+\|d^{-1/(n+1)-\alpha}h\|_{L^{n+1}(\Omega)} \leq 2\|f\|_{L^q(\Omega)},$$
where $d(x',x_n,t) = x_n - \Gamma(x',t)$.
\end{prop}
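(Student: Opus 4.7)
The plan is to reduce the problem to the one-dimensional statement in Lemma \ref{lem:lp_interpolation_1D} by slicing $\Omega$ in the $x_n$-direction. Set $\lambda := \|f\|_{L^q(\Omega)}$ and define
$$g(x,t) := \operatorname{sign}(f(x,t))\min\{|f(x,t)|,\lambda\, d(x,t)^{\alpha-1}\},\qquad h := f - g.$$
By construction $|d^{1-\alpha} g|\leq \lambda$ a.e., so $\|d^{1-\alpha} g\|_{L^\infty(\Omega)}\leq \lambda = \|f\|_{L^q(\Omega)}$. Also $|h| = (|f|-\lambda d^{\alpha-1})_+$, and both $g$ and $h$ are measurable because $d$ is measurable. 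So it only remains to estimate $\|d^{-1/(n+1)-\alpha} h\|_{L^{n+1}(\Omega)}$.

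For almost every $(x',t)$ such that the fiber $\{x_n : (x,t)\in\Omega\}$ is nonempty, the affine change of variables $\xi = x_n - \Gamma(x',t)$ turns this fiber into an interval $(0,D(x',t))$ with $D(x',t)\leq 1+2L\leq 2$, and $d(x,t) = \xi$. Writing $F_{x',t}(\xi) := f(x',\Gamma(x',t)+\xi,t)$, the proof of Lemma \ref{lem:lp_interpolation_1D} yields (with $p = n+1$, so that $q = (p+1)/(1-\alpha) = (n+2)/(1-\alpha)$) the pointwise-in-$(x',t)$ bound
$$\int_0^{D(x',t)}\bigl(|F_{x',t}(\xi)|-\lambda\xi^{\alpha-1}\bigr)_+^{\,n+1}\,\xi^{-1-(n+1)\alpha}\,d\xi \;\leq\; \lambda^{(n+1)-q}\int_0^{D(x',t)}|F_{x',t}(\xi)|^{q}\,d\xi.$$
Here we use that the proof of the one-dimensional lemma only relies on the elementary inequality $(|F|\xi^{1-\alpha}-\lambda)_+^{p}\leq \lambda^{p-q}(|F|\xi^{1-\alpha})^{q}\chi_{\{|F|\xi^{1-\alpha}>\lambda\}}$ followed by the change of variables $\tau = 1/(p\xi^p)$, none of which is sensitive to the length of the interval of integration.

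Integrating the previous inequality in $(x',t)\in B'_1\times(-1,0)$ and invoking Fubini on both sides gives
$$\|d^{-1/(n+1)-\alpha}h\|_{L^{n+1}(\Omega)}^{\,n+1} \;\leq\; \lambda^{(n+1)-q}\,\|f\|_{L^{q}(\Omega)}^{\,q}.$$
Plugging in $\lambda = \|f\|_{L^q(\Omega)}$ collapses the right-hand side to $\|f\|_{L^q(\Omega)}^{n+1}$, so $\|d^{-1/(n+1)-\alpha}h\|_{L^{n+1}(\Omega)}\leq \|f\|_{L^{q}(\Omega)}$. Adding this to the $L^\infty$ bound for $g$ produces the claimed constant $2$. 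The only point that is not completely automatic is noting that the one-dimensional inequality used here is valid on arbitrary intervals rather than just on $(0,1)$; no regularity assumption on $\Gamma$ beyond measurability is required, since the fiberwise change of variables is affine.
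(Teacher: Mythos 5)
Your proof is correct and follows essentially the same route as the paper, which reduces to Lemma \ref{lem:lp_interpolation_1D} with $p=n+1$ by slicing in the $x_n$-variable and integrating in $(x',t)$. Your write-up usefully makes explicit the one point the paper's one-line proof glosses over — that one must fix a single global threshold $\lambda=\|f\|_{L^q(\Omega)}$ and use the pointwise inequality from the lemma's proof rather than applying the lemma's infimum fiberwise (which would produce a fiber-dependent $\lambda$ and break the $L^\infty$ bound) — so no changes are needed.
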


\begin{proof}
Consider the bi-Lipschitz change of variables ${(y',y_n,s) = (x',x_n - \Gamma(x',t),t)}$. Then, it suffices to apply Lemma \ref{lem:lp_interpolation_1D} with $p = n+1$ in the variable $y_n$ and integrate in $y'$ and $s$.
\end{proof}

\subsection{The regularized distance}
We will give some ideas on how the construction of the regularized distance is done.

\begin{proof}[Sketch of the proof of Lemma \ref{lem:regularized_distance}]
We follow the construction in \cite[Section IV.5]{Lie96}. Let $\varphi \in C^\infty(B_1)$ and $\eta \in C^\infty((0,1))$ be nonnegative cutoff functions with
$$\int_{\R^n}\varphi = \int_{\R}\eta = 1.$$
Let $A = 4\sqrt{L^2+1}$ and $K = \|\eta'\|_{L^1(0,1)}$. We then define
$$F(x, t, \rho) = x_n - \int_{\R^n}\int_{\R}\Gamma\left(x' - \frac{\rho}{A}y,t - \frac{\rho^2}{2(1+K)^2A^2}s\right)\eta(s)\varphi(y)\mathrm{d}s\mathrm{d}y.$$
Recall that $\p_\Gamma\Omega = \{x_n = \Gamma(x',t)\}$.

Then, it can be shown that for each $(x,t) \in \Omega\cap Q_1$, there is a unique $\rho$ such that $F(x,t,\rho) = \rho$, and we choose $d(x,t)$ to be equal to this $\rho$.

From the proof in \cite[Section IV.5]{Lie96}, we obtain
\begin{gather*}
    \frac{1}{2}(x_n - \Gamma(x',t)) \leq d \leq \frac{3}{2}(x_n - \Gamma(x',t)),\\
    \p_nd \geq \frac{2}{3} \quad \text{and} \quad |\nabla_x d| \leq \frac{A}{2}.
\end{gather*}

To obtain the last estimate, one needs to proceed as in \cite[Theorem 3.1]{Lie85}, notice that since $L \leq 1$, $A \in [4,4\sqrt{2}]$ and it can be absorbed into $C_2$, and since $\Gamma$ is not parabolic $C^1$ but only parabolic Lipschitz, one needs to repeat the computations done with the modulus of continuity of $\nabla_x \Gamma$ substituting it by appropriate expressions concerning the regularity of $\Gamma$.

Indeed,
$$|\nabla_x\Gamma(x_1,t_1) - \nabla_x\Gamma(x_2,t_2)| \leq \xi(|x_1-x_2|) + \xi(|t_1-t_2|^{1/2})$$
becomes
$$|\nabla_x\Gamma(x_1,t_1) - \nabla_x\Gamma(x_2,t_2)| \leq 2\|\nabla_x g\|_{L^\infty} = 2L,$$
and
$$|\Gamma(x,t_1) - \Gamma(x,t_2)| \leq |t_1-t_2|^{1/2}\xi(|t_1-t_2|^{1/2})$$
becomes
$$|\Gamma(x,t_1) - \Gamma(x,t_2)| \leq L|t_1-t_2|^{1/2}.$$

After these changes, carrying out the rest of the computations in the proof of \cite[Theorem 3.1]{Lie85} one can deduce that
$$|\p_td| + |D_x^2d| \leq \frac{C_2L}{d}.$$
\end{proof}

\subsection{Blow-up construction}

Let us prove how we can construct the blow-up.

\begin{proof}[Proof of Lemma \ref{lem:aux2}]
First, by the definition of $\theta$, $\theta(r) < \infty$ for each $r > 0$ because $\|u_j\|_{L^\infty(Q_1)} \leq 1$, $\|\varphi_j\|_{L^\infty(Q_1)} = 1$, the $K_{r,j}$ are bounded for a fixed $r$, and by hypothesis $\lim\limits_{r\rightarrow0}\theta(r) = \infty$.

Then, for every positive integer $m$, there exist $\rho_m \geq 1/m$ and $j_m$ such that
$$\rho_m^{-\beta}\|u_{j_m}-K_{\rho_m,j_m}\varphi_{j_m}\|_{L^\infty(Q_{\rho_m})} \geq \frac{1}{2}\theta(1/m) \geq \frac{1}{2}\theta(\rho_m).$$
Let us choose $\rho_m \downarrow 0$ as follows: if $\theta(1/(m+1)) = \theta(1/m)$, we take $\rho_{m+1} = \rho_m$, and if $\theta(1/(m+1)) > \theta(1/m)$, then there is a suitable $\rho_{m+1} \in [1/(m+1),1/m)$.

To compute the growth of the $w_m$, we first need to estimate $\|(K_{2r,j} - K_{r,j})\varphi_j\|_{L^\infty(Q_r)}$. Indeed, using the definition of $\theta$,
\begin{align*}
 \frac{\|K_{2r,j}\varphi_j - K_{r,j}\varphi_j\|_{L^\infty(Q_r)}}{r^\beta\theta(r)}
    &\leq \frac{2^\beta\theta(2r)}{\theta(r)}\frac{\|K_{2r,j}\varphi_j - u_j\|_{L^\infty(Q_{2r})}}{(2r)^\beta\theta(2r)} + \frac{\|u_j - K_{r,j}\varphi_j\|_{L^\infty(Q_r)}}{r^\beta\theta(r)}\\
    &\leq 2^\beta+1.
\end{align*}
Hence, $\|(K_{2r,j} - K_{r,j})\varphi_j\|_{L^\infty(Q_r)} \leq Cr^\beta\theta(r)$. Analogously, for any $\mu \in [1,2]$, 
$$\|(K_{\mu r,j} - K_{r,j})\varphi_j\|_{L^\infty(Q_r)} \leq Cr^\beta\theta(r).$$
Furthermore, given $1 \leq a \leq b$,
\begin{align*}
    \|(K_{2ar,j} - K_{ar,j})\varphi_j\|_{L^\infty(Q_{br})} &\leq |K_{2ar,j}-K_{ar,j}|\|\varphi_j\|_{L^\infty(Q_{br})}\\
    &\leq c_1^{-1}|K_{2ar,j}-K_{ar,j}|(b/a)^{\gamma}\|\varphi_j\|_{L^\infty(Q_{ar})}\\
    &\leq c_1^{-1}(b/a)^{\gamma}\|(K_{2ar,j} - K_{ar,j})\varphi_j\|_{L^\infty(Q_{ar})}\\
    &\leq C(ar)^\beta(b/a)^{\gamma}\theta(ar) \leq Cr^\beta a^{\beta-\gamma}b^{\gamma}\theta(r).
\end{align*}

Our next step is the following computation. If $R = 2^NR_0 \leq 1/r$ with $R_0 \in [1,2)$,
\begin{align*}
    \|(K_{Rr,j} - K_{r,j})\varphi_j\|_{L^\infty(Q_{Rr})} &\leq \|(K_{R_0r,j} - K_{r,j})\varphi_j\|_{L^\infty(Q_{Rr})}\\
    &\quad + \sum\limits_{m = 0}^{N-1} \|(K_{2^{m+1}R_0r,j} - K_{2^mR_0r,j})\varphi_j\|_{L^\infty(Q_{Rr})}\\
    &\leq CR^\gamma r^\beta\theta(r) + CR^{\gamma}r^\beta\theta(r)\sum\limits_{m=0}^{N-1}(2^mR_0)^{\beta-\gamma}\\
    &\leq CR^\gamma r^\beta\theta(r) + CR^\gamma r^\beta\theta(r)R^{\beta-\gamma} \leq C(Rr)^\beta\theta(r).
\end{align*}

Finally, for any $1 \leq R \leq 1/\rho_m$,
\begin{align*}
    \|w_m\|_{L^\infty(Q_R)} &= \frac{\|u_{j_m} - K_{\rho_m,j_m}\varphi_{j_m}\|_{L^\infty(Q_{R\rho_m})}}{\|u_{j_m} - K_{\rho_m,j_m}\varphi_{j_m}\|_{L^\infty(Q_{\rho_m})}}\\
    &\leq \frac{2\|u_{j_m} - K_{\rho_m,j_m}\varphi_{j_m}\|_{L^\infty(Q_{R\rho_m})}}{\rho_m^\beta\theta(\rho_m)}\\
    &\leq \frac{2R^\beta\|u_{j_m} - K_{R\rho_m,j_m}\varphi_{j_m}\|_{L^\infty(Q_{R\rho_m})}}{(R\rho_m)^\beta\theta(\rho_m)}\\
    &\quad + \frac{2\|K_{R\rho_m,j_m}\varphi_{j_m} - K_{\rho_m,j_m}\varphi_{j_m}\|_{L^\infty(Q_{R\rho_m})}}{\rho_m^\beta\theta(\rho_m)}\\
    &\leq \frac{2R^\beta\theta(R\rho_m)}{\theta(\rho_m)} + CR^\beta \leq CR^\beta.
\end{align*}
\end{proof}

\subsection{Homogeneous solutions in the complement of \textit{thin cones}} We will prove Proposition \ref{prop:cone_solns}. Let us change a bit the notation for convenience of the proof. If $\varphi$ is a positive solution to
$$\p_t\varphi - \Delta\varphi = 0 \ \text{in} \ Q_1\setminus C_\eta,$$
where
$$C_\eta := \{x_n \leq \eta(|x'|+|t|^{1/2}), \ x_{n+1} = 0\},$$
and $\varphi$ satisfies $\varphi(\lambda x,\lambda^2 t) = \lambda^\kappa\varphi(x,t)$, for some $\kappa$, then $\kappa$ is uniquely determined (as a function of $\eta$).

Indeed, we can write $\varphi(x,t) = |t|^{\kappa/2}\phi(x/|t|^{1/2})$, and $\phi$ solves the following eigenvalue problem for the Ornstein-Uhlenbeck operator (see \cite[Lemma 5.8]{FRS23}):
\begin{equation}\label{eq:OU_eigenvalue}
    \left\{\begin{array}{rclll}
\L_{OU}\phi + \frac{\kappa}{2}\phi & = & 0 & \text{in} & \R^{n+1} \setminus \tilde C_\eta\\
\phi & = & 0 & \text{on} & \tilde C_\eta,
\end{array}\right.
\end{equation}
where
$$\tilde C_\eta = \{x_n \leq \eta(|x'|+1), \ x_{n+1} = 0\}$$
and
$$\L_{OU}\phi(x) := \Delta\phi(x) - \frac{x}{2}\cdot\nabla\phi(x) = e^{|x|^2/4}\operatorname{Div}(e^{-|x|^2/4}\nabla\phi).$$

Since $\phi$ is positive, it is the first eigenfunction for $\L_{OU}$ in this domain, and therefore by the Rayleigh quotient characterization,
\begin{equation}\label{eq:RL}
\frac{\kappa}{2} = \inf\limits_{u \in C^{0,1}_c(\R^{n+1} \setminus \tilde C_\eta), \ \|u\|_{L^2_w} = 1}\int|\nabla u|^2e^{-|x|^2/4},
\end{equation}
where 
$$\|u\|_{L^2_w}^2 := \int u^2e^{-|x|^2/4}$$
and the infimum is attained by a unique function $\phi_\eta \in L^2_w$ by standard arguments. 

Now we are ready to start the proof of Proposition \ref{prop:cone_solns}. First we will show the stability of minimizers of the Rayleigh quotient, in the following sense:
\begin{lem}\label{lem:OU_minimizer_stability}
Let $\eta \in (-\frac{1}{3},\frac{1}{3})$ and let $\kappa = \kappa(\eta)$ as in \eqref{eq:RL}. Then, for all $\varepsilon > 0$ there exists $\delta > 0$ such that if $\|u\|_{L^2_w} = 1$, $u$ vanishes on $\tilde C_\eta$ and
$$\int|\nabla u|^2e^{-|x|^2/4} < \frac{\kappa}{2} + \delta,$$
then
$$\|u - \phi_\eta\|_{L^2_w} < \varepsilon.$$
\end{lem}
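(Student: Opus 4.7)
The plan is to argue by contradiction and compactness, exploiting the spectral theory of the Ornstein--Uhlenbeck operator. Suppose the conclusion fails: there exist $\varepsilon_0>0$ and a sequence $u_k$ vanishing on $\tilde C_\eta$, with $\|u_k\|_{L^2_w}=1$ and
\[
\int|\nabla u_k|^2e^{-|x|^2/4}\,dx \;\longrightarrow\; \frac{\kappa}{2},
\]
yet $\|u_k-\phi_\eta\|_{L^2_w}\geq\varepsilon_0$ for every $k$.

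First, I would observe that $(u_k)$ is bounded in the weighted Sobolev space
\[
H^1_w(\R^{n+1}) := \Bigl\{u\in L^2_w \ \Big|\ \int(|\nabla u|^2+u^2)e^{-|x|^2/4}\,dx<\infty\Bigr\}.
\]
By the Rellich-type theorem for the Gaussian weight (equivalently, the fact that $\L_{OU}$ has discrete spectrum, of which \eqref{eq:RL} gives the smallest Dirichlet eigenvalue), the embedding $H^1_w\hookrightarrow L^2_w$ is compact; see e.g. the standard references on the Ornstein--Uhlenbeck semigroup. Thus, up to a subsequence, $u_k \rightharpoonup u_\infty$ weakly in $H^1_w$ and $u_k \to u_\infty$ strongly in $L^2_w$. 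In particular $\|u_\infty\|_{L^2_w}=1$, so $u_\infty\not\equiv 0$.

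Next, I would check that $u_\infty$ belongs to the admissible class of \eqref{eq:RL}: the trace of $u_k$ on the codimension-one set $\{x_{n+1}=0\}$ passes to the limit because on any bounded subset this is a standard weighted-Sobolev trace, and so $u_\infty=0$ on $\tilde C_\eta$. Lower semicontinuity of the Dirichlet energy with respect to weak convergence gives
\[
\int|\nabla u_\infty|^2 e^{-|x|^2/4}\,dx \;\leq\; \liminf_{k\to\infty}\int|\nabla u_k|^2 e^{-|x|^2/4}\,dx \;=\; \frac{\kappa}{2},
\]
while the reverse inequality holds by the variational characterization of $\kappa/2$. Hence $u_\infty$ is itself a minimizer of the Rayleigh quotient.

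Finally, I would invoke uniqueness of the minimizer: since $\phi_\eta$ is the first Dirichlet eigenfunction of $\L_{OU}$ in $\R^{n+1}\setminus\tilde C_\eta$, it is simple and strictly of one sign, so any other minimizer with unit weighted norm equals $\pm\phi_\eta$. Replacing $u_k$ by $-u_k$ if necessary (which does not affect the hypotheses), we may assume $u_\infty=\phi_\eta$, contradicting $\|u_k-\phi_\eta\|_{L^2_w}\geq\varepsilon_0$. The main technical obstacle is the compact embedding $H^1_w\hookrightarrow L^2_w$ together with the identification of the trace on the thin cone; once these are established, the stability statement is a routine consequence of uniqueness and simplicity of the ground state of $\L_{OU}$.
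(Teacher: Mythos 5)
Your argument is correct in substance but follows a genuinely different route from the paper. The paper's proof is direct and quantitative: it expands $u=\alpha\phi_\eta+\sum_i c_i\phi_{\eta,i}$ in the orthonormal eigenbasis of the Dirichlet Ornstein--Uhlenbeck operator on $\R^{n+1}\setminus\tilde C_\eta$, and the energy bound together with $\alpha^2+\sum_i c_i^2=1$ forces $\sum_i c_i^2\bigl(\kappa_i-\tfrac{\kappa}{2}\bigr)<\delta$, whence $\|u-\phi_\eta\|_{L^2_w}^2\leq 2\sum_i c_i^2<2\delta/(\kappa_2-\kappa/2)$. This produces an explicit $\delta(\varepsilon)$ in terms of the spectral gap $\kappa_2-\kappa/2$. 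Your compactness-and-contradiction scheme rests on the same structural fact (discreteness of the spectrum of $\L_{OU}$ on the slit domain, equivalently compactness of the embedding $H^1_w\hookrightarrow L^2_w$), but packages it softly: it is non-quantitative, and it requires two extra ingredients that the eigenbasis expansion gives for free, namely passing the vanishing condition on $\tilde C_\eta$ to the weak limit and the simplicity of the ground state. Both of those are true and standard, so your plan is viable; the paper's version is simply shorter and yields an effective constant.

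One step does not work as written, and it points to a defect shared with the paper's own proof: the sign of $u$. Replacing $u_k$ by $-u_k$ is not a legitimate reduction, because the quantity being contradicted, $\|u_k-\phi_\eta\|_{L^2_w}\geq\varepsilon_0$, is not invariant under $u_k\mapsto-u_k$; if $u_\infty=-\phi_\eta$ you reach no contradiction. Indeed $u\equiv-\phi_\eta$ satisfies every hypothesis of the lemma as literally stated while $\|u-\phi_\eta\|_{L^2_w}=2$, so the statement is false without a sign normalization. The paper's computation has the identical issue: $\|u-\phi_\eta\|_{L^2_w}^2=(\alpha-1)^2+\sum_i c_i^2$ is controlled by $2\sum_i c_i^2$ only when $\alpha\geq0$. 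The lemma must be read with the implicit hypothesis $u\geq0$ (or $\langle u,\phi_\eta\rangle_{L^2_w}\geq0$), which holds in the only place it is applied, namely to the positive eigenfunctions $\phi_{\eta_2}$ in the proof of Proposition \ref{prop:cone_solns}. So this is a flaw of the statement rather than of your strategy, but in your write-up you should impose the sign condition on $u$ at the outset instead of attempting to flip $u_k$ inside the contradiction argument.
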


\begin{proof}
    Consider the spectral decomposition of $-\L_{OU}$ in the domain $\R^{n+1} \setminus \tilde C_\eta$, so that the eigenvalues are
    $$0 < \frac{\kappa}{2} < \kappa_2 \leq \ldots$$
    and the eigenfunctions are
    $$\{\phi_\eta, \phi_{\eta,2}, \ldots\}$$
    and form an orthonormal basis with respect to the weighted scalar product
    $$\langle f,g\rangle = \int fge^{-|x|^2/4}.$$
    Then, if we write $u = \alpha\phi_\eta + \sum c_i\phi_{\eta,i}$,
    $$\int|\nabla u|^2e^{-|x|^2/4} = -\int u\L_{OU}u = \alpha^2\frac{\kappa}{2} + \sum c_i^2\kappa_i < \frac{\kappa}{2} + \delta.$$
    On the other hand,
    $$1 = \|u\|_{L^2_w}^2 = \alpha^2 + \sum c_i^2,$$
    and then
    $$\sum c_i^2\kappa_i < \sum c_i^2\frac{\kappa}{2} + \delta \ \Rightarrow \ \sum c_i^2\left(\kappa_i - \frac{\kappa}{2}\right) < \delta,$$
    so it follows that
    $$\|u - \phi_\eta\|_{L^2_w}^2 = 2\sum c_i^2 < \frac{\delta}{\kappa_2 - \kappa/2} < \varepsilon,$$
    as required.
\end{proof}

Then, we see the monotonicity and continuity of the eigenvalue with respect to the domain.
\begin{lem}\label{lem:OU_vap_cont}
Let $\kappa : (-\frac{1}{3},\frac{1}{3}) \rightarrow \R$ as in \eqref{eq:RL}. Then, $\kappa$ is strictly increasing and continuous.
\end{lem}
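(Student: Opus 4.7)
\textit{Monotonicity.} Since $\eta_1\le\eta_2$ gives $\tilde C_{\eta_1}\subset\tilde C_{\eta_2}$, the admissible class of Lipschitz functions vanishing outside the cone in \eqref{eq:RL} for $\kappa(\eta_2)/2$ is contained in the one for $\kappa(\eta_1)/2$, whence $\kappa(\eta_1)\le\kappa(\eta_2)$. For the strict inequality when $\eta_1<\eta_2$, I would argue by contradiction: equality would make $\phi_{\eta_2}$ (which is admissible for $\kappa(\eta_1)$, since it vanishes on the larger set $\tilde C_{\eta_2}\supset\tilde C_{\eta_1}$) a minimizer also for $\kappa(\eta_1)/2$, and by uniqueness of the positive first eigenfunction one would have $\phi_{\eta_2}\equiv \phi_{\eta_1}$. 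But then $\phi_{\eta_1}$ would vanish identically on the nonempty relatively open set $\tilde C_{\eta_2}\setminus\tilde C_{\eta_1}\subset\R^{n+1}\setminus\tilde C_{\eta_1}$, contradicting the strong maximum principle / Harnack inequality applied to the non-negative solution $\phi_{\eta_1}$ of the uniformly elliptic equation $\L_{OU}\phi_{\eta_1}+(\kappa(\eta_1)/2)\phi_{\eta_1}=0$ on the connected domain $\R^{n+1}\setminus\tilde C_{\eta_1}$.

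\textit{Continuity via a transplant map.} The key device for continuity is the bi-Lipschitz homeomorphism
\[
T_{\alpha,\beta}(x):=x+(\alpha-\beta)(|x'|+1)\,e_n,
\]
which enjoys three properties: (a) $T_{\alpha,\beta}$ bijectively maps $\tilde C_\beta$ onto $\tilde C_\alpha$ (a direct check from the definition of $\tilde C_{\,\cdot\,}$); (b) its Jacobian determinant is identically $1$, because $DT_{\alpha,\beta}-I=(\alpha-\beta)\,e_n\otimes\nabla(|x'|+1)$ is a rank-one perturbation with $\langle e_n,\nabla(|x'|+1)\rangle=0$; and (c) $|DT_{\alpha,\beta}-I|\le |\alpha-\beta|$ with $|T_{\alpha,\beta}(x)-x|\le|\alpha-\beta|(|x'|+1)$. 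Consequently, $\phi_\alpha\circ T_{\alpha,\beta}$ vanishes on $\tilde C_\beta$ and is an admissible competitor for $\kappa(\beta)/2$; after the change of variables $y=T_{\alpha,\beta}(x)$ (unit Jacobian) its Rayleigh quotient equals
\[
\frac{\displaystyle\int\bigl|(DT_{\alpha,\beta})^T\nabla\phi_\alpha(y)\bigr|^{2}\,e^{-|T_{\alpha,\beta}^{-1}(y)|^{2}/4}\,dy}{\displaystyle\int|\phi_\alpha(y)|^{2}\,e^{-|T_{\alpha,\beta}^{-1}(y)|^{2}/4}\,dy}.
\]
Fixing $\alpha=\eta$ and letting $\beta=\eta'\to\eta$, dominated convergence (using property (c) and the polynomial growth of $\phi_\eta$ and $\nabla\phi_\eta$) shows this quotient tends to $\kappa(\eta)/2$, yielding the upper semi-continuity $\limsup_{\eta'\to\eta}\kappa(\eta')\le\kappa(\eta)$. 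Swapping roles and using $\phi_{\eta'}\circ T_{\eta',\eta}$ as a test function for $\kappa(\eta)/2$ gives $\kappa(\eta)\le \kappa(\eta')+o(1)$ as $\eta'\to\eta$, i.e. the matching lower semi-continuity; combined with monotonicity this pins down both one-sided limits to $\kappa(\eta)$.

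\textit{Expected main obstacle.} The delicate step is justifying dominated convergence in the lower semi-continuity direction, because $|T^{-1}_{\eta',\eta}(y)-y|\lesssim|\eta-\eta'|(|y'|+1)$ grows at infinity, so the perturbed weight is only controlled by $e^{-|T^{-1}(y)|^{2}/4}\lesssim e^{-|y|^{2}/8}$ — strictly larger than $e^{-|y|^2/4}$ — and one cannot absorb this with the bare $H^1_w$ bound on $\phi_{\eta'}$. The remedy is a uniform-in-$\eta'$ polynomial growth estimate on $\phi_{\eta'}$ and $\nabla\phi_{\eta'}$ for $\eta'$ in a neighbourhood of $\eta$. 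I would obtain it via the Hermite substitution $u:=e^{-|x|^2/8}\phi_{\eta'}$, which recasts the eigenvalue problem as an $L^2$ Dirichlet eigenfunction for the harmonic oscillator $-\Delta+|x|^{2}/16$ on $\R^{n+1}\setminus\tilde C_{\eta'}$ with eigenvalue $\kappa(\eta')/2+(n+1)/4$, uniformly bounded on compact $\eta'$-subintervals by monotonicity; standard Agmon-type exponential decay estimates then produce a dominating function independent of $\eta'$, closing the argument.
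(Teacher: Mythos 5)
Your monotonicity argument is essentially the paper's (the paper goes straight from ``$\phi_{\eta_2}$ minimizes the Rayleigh quotient for $\eta_1$'' to ``$\phi_{\eta_2}$ solves the eigenvalue equation on $\R^{n+1}\setminus\tilde C_{\eta_1}$'' without invoking uniqueness, but both versions land on the same strong-maximum-principle contradiction, and both are fine). For continuity, however, you take a genuinely different route, and the difference matters. The paper transplants $\phi_{\eta_1}$ by an \emph{angular} deformation $\rho(r,\theta)=(r,\tau(\theta))$ that preserves the radius $|x|$; consequently the Gaussian weight $e^{-|x|^2/4}$ is \emph{exactly} invariant under the change of variables, the Jacobian is explicitly pinched between $1\pm\frac{3\alpha}{\pi}$, and one gets the clean two-sided bound $\kappa(\eta_1)<\kappa(\eta_2)\le\bigl(1-\tfrac{3}{\pi}(\arctan\eta_2-\arctan\eta_1)\bigr)^{-2}\kappa(\eta_1)$ with constants that do not see the eigenfunction at all. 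Your shear $T_{\alpha,\beta}$ has unit Jacobian but does \emph{not} preserve $|x|$, which is precisely what creates the obstacle you flag: the transplanted weight $e^{-|T^{-1}(y)|^2/4}$ is only comparable to $e^{-(1-C|\alpha-\beta|)|y|^2/4}$, and the integrals against it are not controlled by the $L^2_w$ and $H^1_w$ norms of the eigenfunctions.

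That obstacle is a genuine gap in the argument as written, not a routine verification: for left-continuity you must transplant the \emph{varying} family $\phi_{\eta'}$ into the fixed domain, so you need growth bounds that are uniform in $\eta'$, and these do not follow from the variational formulation alone. Your proposed repair (Hermite substitution plus Agmon decay for the harmonic-oscillator reformulation, with eigenvalues uniformly bounded by monotonicity) is plausible, but note that Agmon estimates yield decay of $u=e^{-|x|^2/8}\phi_{\eta'}$ like $e^{-(1-\varepsilon)(|x|^2/8-E\log|x|)}$, hence only sub-Gaussian control $\phi_{\eta'}\lesssim e^{\varepsilon|x|^2}$ rather than the polynomial growth you assert; this still closes the dominated-convergence argument (since $\varepsilon+C|\alpha-\beta|<1$ for $|\alpha-\beta|$ small), but the statement should be corrected and the uniformity in $\eta'$ actually proved. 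In short: your scheme can be completed, but the paper's radius-preserving deformation dispenses with the entire difficulty, and in your version the key analytic input is currently only sketched.
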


\begin{proof}
    First we prove the monotonicity. Let $-\frac{1}{3} < \eta_1 < \eta_2 < \frac{1}{3}$. Then, $\tilde C_{\eta_1} \subset \tilde C_{\eta_2}$, so $\R^{n+1} \setminus \tilde C_{\eta_2} \subset \R^{n+1} \setminus \tilde C_{\eta_1}$, and since the infimum in the Rayleigh quotient is taken over more functions in the case of $\eta_1$, we get $\kappa(\eta_1) \leq \kappa(\eta_2)$.

    Now, if $\kappa(\eta_1) = \kappa(\eta_2)$, this means that $\phi_{\eta_2}$ is a solution to \eqref{eq:OU_eigenvalue} with $\eta_1$. But $\phi_{\eta_2}$ is identically zero in $\tilde C_{\eta_2}\setminus\tilde C_{\eta_1}$, and hence we have a solution to an elliptic equation that vanishes in an open subset of the domain, contradicting the strong maximum principle. Therefore it cannot be $\kappa(\eta_1) = \kappa(\eta_2)$ and it must be $\kappa(\eta_1) < \kappa(\eta_2)$.

    On the other hand, to prove continuity we will obtain an upper bound for $\kappa(\eta_2)$ in terms of $\kappa(\eta_1)$ by deforming the domain and the solution for $\eta_1$ to get a competitor.

    Let 
    $$\alpha = \arctan(\eta_2) - \arctan(\eta_1) < 2\arctan(1/3) < \pi/3,$$
    and define $\tau : \mathbb{S}^1 \rightarrow \mathbb{S}^1$ as
    $$\tau(\theta) = \begin{cases}
    \left(1+\frac{3\alpha}{\pi}\right)\theta & |\theta| \leq \frac{\pi}{3},\\
    \theta + \alpha\operatorname{sgn}(\theta) & \frac{\pi}{3} < |\theta| \leq \frac{2\pi}{3},\\
    \left(1-\frac{3\alpha}{\pi}\right)\theta + 3\alpha\operatorname{sgn}(\theta) & \frac{2\pi}{3} < |\theta|.
    \end{cases}$$
    Then, let $\rho : \R^2 \rightarrow \R^2$ be defined in polar coordinates as $\rho(r,\theta) = (r,\tau(\theta))$, and let $J : \R^2 \rightarrow \R^2$ defined as $J(x,y) = (y,x)$. Thus,
    $$\psi(x_1,\ldots,x_{n-1},x_n,x_{n+1}) := \phi_{\eta_1}(x_1,\ldots,J(\rho(J(x_{n-1},x_n))),x_{n+1})$$
    is a positive function that vanishes on $\tilde C_{\eta_2}$, and we can get an upper bound for $\kappa(\eta_2)$ by computing the value of its Rayleigh quotient, i.e.
    $$\frac{\kappa(\eta_2)}{2} \leq \frac{1}{\|\psi\|_{L^2_w}}\int|\nabla\psi|^2e^{-|x|^2/4}.$$
    Observe that the deformation of space introduced by $\rho$ changes volume in a factor of $1 + \frac{3\alpha}{\pi}$, $1$ or $1 - \frac{3\alpha}{\pi}$. Hence, by scaling we obtain the following estimates:
    $$\|\psi\|_{L^2_w} \geq \left(1-\frac{3\alpha}{\pi}\right)\|\phi_{\eta_1}\|_{L^2_w} = 1 - \frac{3\alpha}{\pi}$$
    and
    $$\int |\nabla\psi|^2e^{-|x|^2/4} \leq \left(1 - \frac{3\alpha}{\pi}\right)^{-1}\int |\nabla\phi_{\eta_1}|^2e^{-|x|^2/4} \leq \left(1 - \frac{3\alpha}{\pi}\right)^{-1}\frac{\kappa(\eta_1)}{2},$$
    and combining them we get
    $$\kappa(\eta_1) < \kappa(\eta_2) \leq \left(1 - \frac{3}{\pi}(\arctan(\eta_2) - \arctan(\eta_1))\right)^{-2}\kappa(\eta_1),$$
    which already implies that $\kappa$ is continuous.
\end{proof}

To conclude, we write the following:

\begin{proof}[Proof of Proposition \ref{prop:cone_solns}]
Let $\kappa : (-\frac{1}{3},\frac{1}{3}) \rightarrow \R$ as in Lemma \ref{lem:OU_vap_cont}, that is strictly increasing and continuous, and observe that $\kappa(0) = 1$ because $\varphi_0(x,t) := \operatorname{Re}(\sqrt{x_n + ix_{n+1}})$ is the solution to the original parabolic problem for $\eta = 0$.

Then, there exists $\varepsilon_0 > 0$ such that $\kappa^{-1} : (1 - 2\varepsilon_0,1+2\varepsilon_0) \rightarrow [-\frac{1}{4},\frac{1}{4}]$ is well defined, continuous and strictly increasing. Moreover, for any $\varepsilon \in (-\varepsilon_0,\varepsilon_0)$, let
$$\varphi_\varepsilon(x,t) := \frac{\tilde\varphi_\varepsilon}{\|\tilde\varphi_\varepsilon\|_{L^\infty(Q_1)}} := \frac{|t|^{1/2 + \varepsilon}\phi_\eta(x/|t|^{1/2})}{\||t|^{1/2 + \varepsilon}\phi_\eta(x/|t|^{1/2})\|_{L^\infty(Q_1)}},$$
where $\eta = \kappa^{-1}(1+2\varepsilon)$. To see that $\varphi_\varepsilon$ is well defined, we need to check that $\tilde\varphi_\varepsilon \in L^\infty(Q_1)$ and that it is not identically zero in $Q_1$.

First, recall that $\tilde\varphi_\varepsilon$ is a positive solution to the heat equation in $\R^{n+2} \setminus C_\eta$ that vanishes on $C_\eta$, so in particular it is a subsolution in the full space. Now, $\tilde\varphi_\varepsilon(\cdot,-1) \equiv \phi_\eta$, and then, for all $(x,t) \in Q_{1/2}$,
\begin{align*}
    \tilde\varphi_\varepsilon(x,t) &\leq C\int\phi_\eta(y)e^{-|x-y|^2/4} \leq C\left(\int e^{-|x-y|^2/6}\right)^{1/2}\left(\int\phi_\eta(y)^2e^{-|x-y|^2/3}\right)^{1/2}\\
    &\leq C\left(\int\phi_\eta(y)^2e^{-|y|^2/4}\right)^{1/2} = C,
\end{align*}
where we used that for all $x \in B_{1/2}$,
$$-\frac{|x-y|^2}{3} \leq C - \frac{|y|^2}{4}.$$
Hence, by homogeneity, $\|\tilde\varphi_\varepsilon\|_{L^\infty(Q_1)} \leq 2^{1/2+\varepsilon}C$.

On the other hand, since for all $-\frac{1}{3} < \eta_1 < \eta_2 < \frac{1}{3}$, $\phi_{\eta_2}$ vanishes on $\tilde C_{\eta_1}$, by Lemmas \ref{lem:OU_minimizer_stability} and \ref{lem:OU_vap_cont},
$$\lim\limits_{\eta_2 \rightarrow \eta_1}\|\phi_{\eta_2} - \phi_{\eta_1}\|_{L^2_w} = 0.$$

Now, let $E = B_1\cap\{|x_{n+1}| \geq \frac{1}{4(n+1)}\}$. We claim that, for all $\eta \in [-\frac{1}{4},\frac{1}{4}]$, $\|\phi_\eta\|_{L^\infty(E)} \geq c > 0$, where $c$ is independent of $\eta$. Let us prove it by contradiction. If not, there would exist $\{\eta_k\}$ such that $\|\phi_{\eta_k}\|_{L^\infty(E)} < 1/k$, and therefore, after choosing a subsequence, $\eta_{k_m} \rightarrow \eta_0 \in [-\frac{1}{4},\frac{1}{4}]$, and by continuity $\phi_{\eta_{k_m}} \rightarrow \phi_{\eta_0}$ strongly in $L^2_w$, and hence $\phi_{\eta_0} \equiv 0$ in $E$, contradicting the strong maximum principle.

Furthermore, by the parabolic interior Harnack inequality, $\tilde\varphi_\varepsilon \geq c'$ in the set $Q_{1/2}\cap\{x_{n+1} \geq \frac{1}{4(n+1)}\}$, and by homogeneity $\tilde\varphi_\varepsilon \geq c'$ also in $Q_2\cap\{|x_{n+1}| \geq \frac{1}{n+1}\}$.

Therefore, for all $\varepsilon \in (-\varepsilon_0,\varepsilon_0)$, $\tilde\varphi_\varepsilon \geq c'$ in $Q_2\cap\{|x_{n+1}| \geq \frac{1}{n+1}\}$ and ${\|\tilde\varphi_\varepsilon\|_{L^\infty(Q_1)} \leq C}$, and thus the conclusion follows.
\end{proof}

\textbf{Data availability statement.}
Data sharing not applicable to this article as no datasets were generated or analysed during the current study.

\textbf{Conflict of interest statement.}
The author has no competing interests to declare that are relevant to the content of this article.

\end{document}